        \setlist[itemize]{leftmargin=*}
        \newcommand{\N}{\mathbb{N}}
        \newcommand{\Z}{\mathbb{Z}}
        \newcommand{\R}{\mathbb{R}}
        \newcommand{\C}{\mathbb{C}}
        \newcommand{\E}{\bm{\mathrm{E}}}
        \newcommand{\G}{\mathcal{G}}
        \newcommand{\de}{\partial}
        \newcommand{\mz}{\frac{1}{2}}
        \newcommand{\uno}{\bm{1}}
        \newcommand{\weakto}{\rightharpoonup}
        \newcommand{\weakstarto}{\stackrel{*}{\rightharpoonup}}
        \renewcommand{\bar}{\overline}
        \DeclareMathOperator{\dist}{dist}
        \newcommand{\hau}{\mathcal{H}}
        \DeclareMathOperator{\spt}{spt}
        \newcommand{\vol}{\mathrm{vol}}
        \newcommand{\ang}[1]{\langle #1\rangle}
        \renewcommand{\epsilon}{\varepsilon}
        \DeclareMathOperator{\Gr}{Gr}
        \theoremstyle{definition}
        \newtheorem{definition}{Definition}
        \newtheorem{rmk}[definition]{Remark}
        \newtheorem*{definition*}{Definition}
        \newtheorem*{rmk*}{Remark}
        \newtheorem*{ack*}{Acknowledgement}
        \newtheorem*{acks*}{Acknowledgements}
        \theoremstyle{plain}
        \newtheorem{thm}[definition]{Theorem}
        \newtheorem{lemma}[definition]{Lemma}
        \newtheorem{corollary}[definition]{Corollary}
        \newtheorem{proposition}[definition]{Proposition}
        \newtheorem{conj}[definition]{Conjecture}
        \newtheorem*{thm*}{Theorem}
        \newtheorem*{lemma*}{Lemma}
        \newtheorem*{corollary*}{Corollary}
        \newtheorem*{proposition*}{Proposition}
        \newtheorem*{claim*}{Claim}
        \newtheorem*{conj*}{Conjecture}
        \numberwithin{equation}{section}
        \numberwithin{definition}{section}
        \renewcommand{\ln}{\log}
        \renewcommand{\backslash}{\setminus}
        \renewcommand{\textbf}[1]{\bm{\mathrm{#1}}}
        \renewcommand{\div}{\operatorname{div}}
        \renewcommand{\deg}{\operatorname{deg}}
\renewcommand{\tocsection}[3]{%
  \indentlabel{\@ifnotempty{#2}{\bfseries\ignorespaces#1 #2\quad}}\bfseries#3}
\renewcommand{\tocsubsection}[3]{%
  \indentlabel{\@ifnotempty{#2}{\ignorespaces#1 #2\quad}}#3}
\newcommand\@dotsep{4.5}
\def\@tocline#1#2#3#4#5#6#7{\relax
  \ifnum #1>\c@tocdepth 
  \else
    \par \addpenalty\@secpenalty\addvspace{#2}%
    \begingroup \hyphenpenalty\@M
    \@ifempty{#4}{%
      \@tempdima\csname r@tocindent\number#1\endcsname\relax
    }{%
      \@tempdima#4\relax
    }%
    \parindent\z@ \leftskip#3\relax \advance\leftskip\@tempdima\relax
    \rightskip\@pnumwidth plus1em \parfillskip-\@pnumwidth
    #5\leavevmode\hskip-\@tempdima{#6}\nobreak
    \leaders\hbox{$\m@th\mkern \@dotsep mu\hbox{.}\mkern \@dotsep mu$}\hfill
    \nobreak
    \hbox to\@pnumwidth{\@tocpagenum{\ifnum#1=1\bfseries\fi#7}}\par
    \nobreak
    \endgroup
  \fi}
\renewcommand\csname r@tocindent0\endcsname{0pt}
\def\l@subsection{\@tocline{2}{0pt}{2.5pc}{5pc}{}}
\begin{document}
        \title[Decay of excess for the abelian Higgs model]{Decay of excess for the abelian Higgs model}
        
        \author[G.\ De Philippis]{Guido De Philippis}
        \address{Courant Institute of Mathematical Sciences,
        New York University.
        251 Mercer Street,
        New York, NY 10012-1185}
        \email{guido@cims.nyu.edu}
        
        \author[A.\ Halavati]{Aria Halavati}
        \address{Courant Institute of Mathematical Sciences,
        New York University.
        251 Mercer Street,
        New York, NY 10012-1185}
        \email{aria.halavati@cims.nyu.edu}
        
        \author[A.\ Pigati]{Alessandro Pigati}
        \address{Bocconi University, Department of Decision Sciences. Via Guglielmo Roentgen 1, 20136,
        Milano, Italy}
        \email{alessandro.pigati@unibocconi.it}
        	\begin{abstract}
                In this article we prove that entire critical points $(u,\nabla)$ of the self-dual $U(1)$-Yang--Mills--Higgs functional $E_1$, with energy
                $$E_1(u,\nabla;B_R):=\int_{B_R}\left[|\nabla u|^2+\frac{(1-|u|^2)^2}{4}+|F_\nabla|^2\right]\leq(2\pi+\tau(n)) \omega_{n-2}R^{n-2}$$
                for all $R>0$, have unique blow-down.
                Moreover, we show that they are two-dimensional in ambient dimension $2\leq n\leq4$, or in any dimension $n\ge2$ assuming that $(u,\nabla)$ is a local minimizer, thus establishing a co-dimension-two analogue of Savin's theorem. The main ingredient is an Allard-type improvement of flatness.
        	\end{abstract}

            \maketitle
            \tableofcontents
            \tableofcontents
            \frenchspacing
            \section{Introduction}
            \subsection{Background on the Allen--Cahn and abelian Higgs models}
           Area  of geometric shapes is one of  the oldest geometric functional considered in mathematics.
            Given an ambient Riemannian manifold
            $(M^n,g)$ (possibly the flat Euclidean space $\R^n$) and given an integer $1\le k\le n-1$,
            one  looks for $k$-dimensional objects, such as $k$-dimensional submanifolds or singular versions of them,
            which are \emph{critical points} for the $k$-area $\mathcal{H}^k$.
            These are called \emph{minimal submanifolds} (provided they are regular enough, depending on the context).

            Besides its intrinsic interest, the study of minimal submanifolds
            in a given ambient often reveals global topological structure, especially when coupled with curvature information.
           
            These applications motivate a systematic existence and regularity theory of such critical points.
            In spite of its apparent simplicity, it is notoriously difficult to use the area functional directly in the context of the \emph{calculus of variations}, especially when $k\ge2$.
            Leaving out a number of very important ways to deal with this problem,
            such as the approach via parametrizations when $k=2$, see, e.g., \cite{Douglas,Rado,SU,Riviere} among others,
            area-minimizing \emph{currents} and \emph{sets of finite perimeter} in the context of minimization, \cite{DeGiorgi54, FedererFleming60}  and the monographs   \cite{Federer,Simon}, and the Almgren--Pitts theory involving \emph{varifolds}, \cite{Allard-1,Pitts}. In this paper we focus on the  approximation of minimal surfaces as limit of  \emph{diffuse}  physical energies.

            Starting from  the pioneering ideas of  De Giorgi, Modica \cite{Modica}, Ilmanen \cite{Ilmanen}, and
            Hutchinson--Tonegawa \cite{HutTon},
            it was understood that smooth critical points $u:M\to\R$ for
            the \emph{Allen--Cahn energy}
            $$E_\epsilon(u):=\int_M\left[\epsilon|du|^2+\frac{(1-u^2)^2}{4\epsilon}\right]$$
            are effective diffuse approximations of minimal hypersurfaces.
            The Allen--Cahn functional  is a well studied model for phase transitions; a typical critical point $u$ takes values in $[-1,1]$, with $u\approx\pm1$ (the pure phases) except in a transition region of thickness $\approx \epsilon$, where most of the energy concentrates. Roughly speaking, this region is an $\epsilon$-neighborhood of a minimal hypersurface, which acts as an interface between the two phases, and the energy density decays exponentially fast away from this interface.

            This understanding brought a novel, PDE-based
            way to attack variational problems for the co-dimension-one area \cite{Guaraco},
            which often allows to obtain more refined results compared to other methods \cite{CM}.

            In co-dimension two, similar attempts have been made by looking at the same energy for maps $u:M\to\C$, replacing $u$ with $|u|$ in the second term. This corresponds to a  simplified version of the Ginzburg--Landau model of superconductivity,
            popularized by Bethuel--Brezis--H\'elein \cite{BBH}, where one neglects  the magnetic field. The asymptotic analysis of this energy is substantially more involved, due to the lack of the aforementioned exponential decay, and brought mixed results:
            see, for instance, \cite{LR,BBO} in the positive direction
            and \cite{PigatiSternGL} in the negative one.

            On the other hand, including the magnetic field and looking at the so-called \emph{self-dual regime} (also called \emph{critical coupling}),
            we can consider the alternative energy
            $$E_\epsilon(u,\alpha):=\int_M\left[|du-i\alpha u|^2+\frac{(1-|u|^2)^2}{4\epsilon^2}+\epsilon^2|d\alpha|^2\right].$$
            Apart from the different normalization, it differs from the previous energies by an additional variable, the one-form $\alpha\in\Omega^1(M;\R)$, which twists the Dirichlet term and appears in the Yang--Mills term $|d\alpha|^2$ (indeed,
            the latter equals $|F_\nabla|^2$, where $F_\nabla$ is the curvature of the connection $\nabla:=d-i\alpha$ on the trivial complex line bundle $\C\times M$).

            This energy, in this specific self-dual regime (i.e., the choice of constants in front of each term), is well known in gauge theory, where it is often called \emph{$U(1)$-Yang--Mills--Higgs},
            or simply  \emph{abelian Higgs model}.
            It received a thorough treatment in dimension \(2\), with a complete classification of critical planar pairs $(u,\nabla)$ of finite energy by Taubes \cite{Taubes,Taubes-1}. See also \cite{HJS} for the case of  Riemann surface and \cite{Bradlow} for  K\"ahler manifolds.
            Recently, in \cite{Pigati-1}, Stern and the third-named author developed the asymptotic analysis in arbitrary Riemannian manifolds, obtaining the precise co-dimension-two analogue of  the result by Hutchinson--Tonegawa: see \cref{varifold-limit-theorem} below. Related facts, including $\Gamma$-convergence and the gradient flow convergence to mean curvature flow, have also been verified, by Parise, Stern, and the third-named author \cite{Pigati-2,Pigati-3}.

            Based on some new functional inequalities \cite{Halavati-inequality}, the second-named author recently obtained
            a quantitative refinement of the work of Taubes, who showed (among other facts)
            that critical pairs on the plane minimize the energy among pairs with the same degree at infinity:
            namely, in \cite{Halavati-stability} a quantitative \emph{stability} is proved; the precise statement is recalled in
            \cref{ymh-dim-2-thm}. Together with the main result from \cite{Pigati-1}, this result will be instrumental for the analysis in the present paper.
            
            \subsection{Savin's theorem}

            Since the work of De Giorgi \cite{Degriogi-1}
            and Allard \cite{Allard-1}, it is known that almost-flat
            minimal submanifolds enjoy an \emph{improvement of flatness},
            i.e., they become even closer to a plane at smaller scales, in a quantitative way. Iteration of this improvement of flatness is the key mechanism in proving (quantitative) regularity of minimal surfaces. The key analytical fact behind this decay property is the observation  that the linearization of the minimal graph equation is the Laplace equation, whose solutions enjoy similar decay properties.

            A related question, in the spirit of the classical Liouville theorem,  is  whether globally defined objects should be planar. The famous \emph{Bernstein's conjecture} predicts that this is always true for minimal graphs $\R^{n-1}\to\R$, which are automatically (locally) area-minimizing hypersurfaces.
            In view of the improvement of flatness, this question quickly reduces to understanding whether any blow-down is necessarily a hyperplane. Bernstein's question was answered affirmatively
            by the works of Fleming, De Giorgi, Almgren, and Simons for $n\le 8$, while Bombieri--De Giorgi--Giusti produced a counterexample for $n=9$, whose blow-down corresponds to the Simons cone, in \cite{BDDG}.

            By analogy, De Giorgi conjectured that
            critical points $u:\R^n\to\R$ of the Allen--Cahn energy
            with $\frac{\de u}{\de x_n}>0$ (so that level sets are graphs)
            are just rotations of a one-dimensional solution $u=u(x_n)$, at least when \(n \le 8\).
            The question has been solved  by  Ghoussoub--Gui for \(n=2\), in \cite{GG}, by  Ambrosio--Cabr\'e for \(n=3\), in \cite{AC}, and by Barlow--Bass--Gui  under additional regularity for the level sets, in \cite{BBG}. Finally, in \cite{Savin-1} Savin settled the conjecture  for all $n\le 8$ under the  assumption that $u(x',x_n)\to\pm1$ as $x_n\to\pm\infty$, for any fixed $x'\in\R^{n-1}$. In fact, his main contribution could be phrased as follows.

            \begin{thm}[Savin's theorem]\label{savin.cheat}
                A local minimizer $u$ for Allen--Cahn enjoys improvement of flatness. In particular, if any blow-down is a hyperplane,
                then the blow-down is unique.
            \end{thm}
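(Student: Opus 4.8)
The plan is to reconstruct Savin's argument \cite{Savin-1}, whose engine is a quantitative \emph{improvement of flatness} for the diffuse interface. Normalize $\epsilon=1$ (admissible after replacing $u$ by $u(R\,\cdot)$). Let $g$ be the one-dimensional heteroclinic solution of the Allen--Cahn equation, with $g(\pm\infty)=\pm1$, $g'>0$, and exponentially fast convergence to $\pm1$, and let $L$ be the linearization of the Allen--Cahn operator about $g(x_n)$; then $Lg'(x_n)=0$ with $g'(x_n)>0$, and the presence of this positive kernel element is what makes a Harnack-type principle available near the one-dimensional solution. Say the diffuse interface $T_u:=\{\,|u|<1-\delta_0\,\}$ (for a small universal $\delta_0$) is \emph{$\theta$-flat in $B_\ell$} if $T_u\cap B_\ell\subset\{\,|x\cdot\nu|\le\theta\ell\,\}$ for some unit vector $\nu$. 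The core assertion is: there are universal $\rho,\eta\in(0,1)$ and $\theta_0,\ell_0>0$ such that, if $u$ is a local minimizer in $B_\ell$ with $\ell\ge\ell_0$ and $T_u$ is $\theta$-flat in $B_\ell$ with $\theta\le\theta_0$, then $T_u$ is $(\eta\theta)$-flat in $B_{\rho\ell}$. Granting this, one iterates at the scales $\rho^k\ell$: the optimal directions $\nu_k$ vary by $O(\eta^k)$ and the relative width of $T_u$ decays geometrically, so $T_u$ is a $C^{1,\gamma}$ graph with quantitative bounds. For the Liouville statement, if $u$ is an entire local minimizer whose every blow-down is a hyperplane, then along a subsequence the rescalings $u(R\,\cdot)$ have interface that is $\theta_R$-flat in a fixed large ball with $\theta_R\to0$; applying and iterating the improvement forces the optimal directions $\nu_R$ to form a Cauchy family as $R\to\infty$, so the blow-down is unique, which is the ``in particular'' clause.

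The improvement of flatness I would obtain in two steps. \emph{Step 1 (barrier trapping).} Using the density estimates and the clearing-out lemma for Allen--Cahn minimizers --- here minimality enters, through energy comparison with a ``filled'' competitor --- one upgrades $\theta$-flatness to: after rotating $\nu$ to $e_n$, $g(x_n-a)\le u\le g(x_n-b)$ throughout $B_{\ell/2}$ with $0\le a-b\le C(\theta\ell+1)$; that is, $u$ is trapped between two translates of the profile whose \emph{gap} $a-b$ is comparable to $\theta\ell$ up to a universal additive constant reflecting the width of $g$. \emph{Step 2 (Harnack inequality for the gap).} This is the crux: if $g(x_n-a)\le u\le g(x_n-b)$ in $B_\ell$ with $a-b$ below a universal threshold and $\ell\ge\ell_0$, then in $B_{\ell/2}$, up to tilting $e_n$ by an angle $O((a-b)/\ell)$, the trapping persists with gap $\le(1-c)(a-b)$, $c$ universal. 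The mechanism is a measure dichotomy about the midpoint $m:=\tmz(a+b)$: at least one of $\{u\le g(x_n-m)\}$, $\{u\ge g(x_n-m)\}$ occupies at least half of $B_{\ell/2}$. In the first case set $t^\ast:=\sup\{\,t: g(x_n-t)\ge u\ \text{in}\ B_{\ell/2}\,\}\ge b$; the nonnegative function $w:=g(x_n-t^\ast)-u$ solves a linear elliptic equation $\Delta w=Vw$ with $V$ bounded, so the ratio $\phi:=w/g'(x_n-t^\ast)$ solves a uniformly elliptic equation with bounded coefficients and small zeroth-order term --- this is where the positive kernel element $g'$ is essential --- and the weak Harnack inequality applies: since $\phi\ge c_0(a-b)$ on half of $B_{\ell/2}$, one gets $\phi\ge c(a-b)$ throughout $B_{\ell/4}$, which unwinds to $u\le g\bigl(x_n-t^\ast-c'(a-b)\bigr)$, contradicting the maximality of $t^\ast$ unless $t^\ast\ge b+c'(a-b)$; in either case the lower constraint improves. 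The second case improves the upper constraint symmetrically, and in both the gap shrinks by the factor $1-c$. Iterating this over the range of dyadic scales on which the gap still exceeds the universal additive threshold, and absorbing the accumulated tilt, yields the $(\eta\theta)$-flatness at scale $\rho\ell$, provided $\ell_0$ is large and $\theta_0$ small.

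It is worth noting the alternative ``linearization'' proof of the same improvement, by compactness: were it to fail along minimizers $u_k$ with $T_{u_k}$ being $\theta_k$-flat and $\theta_k\to0$, one rescales $T_{u_k}$ vertically by $\theta_k^{-1}$; the gap Harnack inequality of Step 2 provides exactly the equicontinuity needed to extract a limit, which one identifies as the graph of a function $w$ on a ball in $\R^{n-1}$ that is \emph{harmonic}, with $w(0)=0$ and $|\nabla w|$ bounded; then $|w(x')-\nabla w(0)\cdot x'|\le C|x'|^2$ by the mean value property, which unwinds to improved flatness at small scale, a contradiction. Either way, the main obstacle is Step 2, the Harnack inequality for the interface: it is the only place where minimality is used in an essential, non-perturbative way (through the clearing-out estimate and the sliding barrier), and it must be run with no a priori regularity of the level set, so the comparison and the measure dichotomy are carried out purely at the level of the diffuse energy. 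Once Step 2 is in hand, the iteration, the extraction of blow-downs, and the Cauchy property of the optimal directions are all soft.
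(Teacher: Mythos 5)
You should first note that the paper offers no proof of \cref{savin.cheat} at all: it is recalled from Savin \cite{Savin-1}, and the variational re-proof of Wang \cite{kelei-wang} (an Allard-style excess decay: Lipschitz approximation, Caccioppoli inequality, harmonic approximation) is what the present paper actually adapts to the abelian Higgs setting. So your sketch must stand or fall as a reconstruction of Savin's argument, and as written its crux step has a genuine gap.

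The problem is the regime in which your Step 2 operates. In the improvement-of-flatness iteration the hypothesis is flatness \emph{relative to the scale}: the interface lies in a slab of width $\theta\ell$ with $\theta\le\theta_0$ but $\ell$ arbitrarily large, so the absolute width $\theta\ell$ --- and with it the gap $a-b$ of your trapping --- is typically much larger than the interface thickness (order $1$ after your normalization $\epsilon=1$). Your Harnack mechanism, weak Harnack for $\phi=w/g'(x_n-t^\ast)$, is genuinely perturbative: the zeroth-order coefficient in the equation for $\phi$ is controlled by the difference of the potential's second derivative at $u$ and at $g(x_n-t^\ast)$, which is small only when $u$ is uniformly close to that single translate, i.e.\ when $a-b$ is already below a universal threshold; moreover the natural weight $(g')^2$ decays exponentially in $x_n$, so the Harnack constant degenerates across vertical distances larger than $O(1)$ --- exactly the distances over which the interface is allowed to wander. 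Your clause about ``iterating over the dyadic scales on which the gap still exceeds the universal additive threshold'' cannot repair this, since the inequality you propose to iterate is only claimed below that threshold. This large-oscillation regime is precisely the difficulty Savin's theorem addresses: his Harnack inequality for flat level sets is proved non-perturbatively, by sliding families of comparison surfaces built from the one-dimensional profile and an ABP/Krylov--Safonov-type measure estimate on the contact set (as the introduction of this paper itself indicates), not by linearizing at a single translate of $g$; Wang's route avoids the maximum principle altogether. Your Step 1 trapping and the soft parts --- the iteration scheme, the tilting bookkeeping, and the deduction of uniqueness of the blow-down from geometric decay of the optimal directions --- are fine; what is missing is a proof of the Harnack step valid when $a-b\gg1$, which is the heart of the theorem.
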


            Here the blow-downs can be understood in terms of energy concentration, or by looking at the blow-downs of the zero set $\{u=0\}$ with respect to the (local) Hausdorff convergence of sets.

            The previous statement implies the resolution of De Giorgi's conjecture for $n\le 8$, with the extra assumption mentioned above.
            Indeed, it is known that this condition,  together with $\frac{\de u}{\de x_n}>0$,  implies that $u$ is a local minimizer; moreover, any blow-down gives a vertical area-minimizing cone in $\R^n$, hence an area-minimizing cone in $\R^{n-1}$, which is known to be necessarily a hyperplane for $n\le8$.

            In other words, uniqueness of the blow-down relies on two ingredients: improvement of flatness and a classification of blow-downs. While the second one can be directly exported from the setting of minimal hypersurfaces, the first one needs to be proved  \emph{before} passing to the limit $\epsilon\to0$,  and this is the difficult part settled by Savin.
            
            Finally, using the maximum principle (see, e.g., \cite{Farina,Berestycki}),  one can deduce  the following.

            \begin{corollary}
                Under the previous assumptions, $u$ is one-dimensional.
            \end{corollary}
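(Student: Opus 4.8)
The plan is to combine the unique flat blow-down---guaranteed by \cref{savin.cheat} together with the reduction to a hyperplane blow-down recalled above---with a maximum-principle argument in the spirit of \cite{Berestycki,Farina}. Write $W(t)=\frac{(1-t^2)^2}{4}$, so that $u$ solves $\Delta u=W'(u)$ on $\R^n$, and let $g(t)=\tanh(t/\sqrt2)$ be the unique (up to translation) solution of $g''=W'(g)$ with $g(\pm\infty)=\pm1$. Recall that under the stated hypotheses $u$ is a local minimizer whose level sets are graphs $\{x_n=\phi_c(x')\}$ (by $\frac{\de u}{\de x_n}>0$ and the limits $u\to\pm1$), and that its blow-down is a hyperplane and hence, by \cref{savin.cheat}, unique; after a rotation we may take this hyperplane to be $\{x_n=0\}$. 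The goal is to prove $u(x)=g(x_n+b)$ for some $b\in\R$.

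First I would upgrade ``flat blow-down'' to ``exactly flat''. For every $x_0\in\R^n$ the excess $\mathcal E(x_0,r)$ of $u$---the quantity controlled by Savin's improvement of flatness---tends to $0$ as $r\to\infty$, since the blow-down is the hyperplane $\{x_n=0\}$; picking $R$ so large that $\mathcal E(x_0,R)$ lies below the improvement-of-flatness threshold $\epsilon_0$ and iterating toward smaller scales yields $\mathcal E(x_0,r)\le C(r/R)^\beta\,\mathcal E(x_0,R)$ for universal $C,\beta>0$ and all $r\le R$, whence $\mathcal E(x_0,r)=0$ for all $x_0$ and $r$ upon letting $R\to\infty$. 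Vanishing excess forces $u$ to coincide, on each ball, with a one-dimensional profile $g(\ang{\cdot,\nu}+b)$; patching over overlapping balls and using that the blow-down is the \emph{horizontal} hyperplane pins $\nu=e_n$ and $b$ constant, so that $u(x)=g(x_n+b)$. Equivalently---and this is the route suggested by the reference to the maximum principle---one extracts from the same iteration only that $\{u=0\}$ is the hyperplane $\{x_n=b\}$ and that $u(x',x_n)\to\pm1$ \emph{uniformly} in $x'$ as $x_n\to\pm\infty$, and then runs the classical sliding method: the number $t^\ast:=\inf\{t:\ g(x_n-t)\ge u(x)\ \text{for all }x\}$ is finite by the uniform limits, the functions $u$ and $g(\cdot-t^\ast)$ have an interior contact point by compactness (again using the uniform decay), and since their difference solves a linear elliptic equation the strong maximum principle gives $u\equiv g(x_n-t^\ast)$. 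Either way $u$ is one-dimensional.

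I expect the main obstacle to be precisely this first step. Savin's improvement of flatness is a one-point, two-scale statement, so some care is needed both to chain it across infinitely many scales and---more importantly---to ensure that the excess functional it controls is one whose vanishing genuinely forces coincidence with a one-dimensional solution; in the sliding variant, one instead has to verify that the chained estimate upgrades the sublinear ``slab'' around $\{x_n=0\}$ into a true hyperplane and that the passage to $\pm1$ is uniform in the transverse variables, so that the infimum defining $t^\ast$ is realized at an interior contact point rather than only in a limit. Once this rigidity is secured, the remaining ingredients---local minimality of monotone layer solutions, the blow-down and cone classification recalled above, and the sliding method---are classical and can be quoted from the literature.
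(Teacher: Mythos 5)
The paper itself does not prove this corollary; it simply invokes the classical maximum-principle results of \cite{Farina,Berestycki}: once Savin's theorem traps the level set $\{u=0\}$ near a fixed hyperplane, monotonicity gives uniform convergence $u(x',x_n)\to\pm1$ as $x_n\to\pm\infty$, and the sliding method then yields $u(x)=g(x_n+b)$. Your second ("sliding") variant is essentially this argument, and your instinct that the delicate point is upgrading flatness to \emph{uniform} limits is the right one. Two caveats there: the iteration does not give you that $\{u=0\}$ \emph{is} the hyperplane $\{x_n=b\}$, only that it is confined to a slab of bounded width around it (with tilts summing geometrically); and that is all you need, since slab confinement plus $\partial_{x_n}u>0$ plus the exponential convergence of solutions away from the interface already give the uniform limits required to make $t^\ast$ finite and to produce a contact point.

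Your first, preferred route has a genuine gap. Improvement of flatness for the diffuse functional operates only at scales larger than the interface width (here $\epsilon=1$ for an entire solution), so the downward iteration from scale $R$ must stop at unit scale: the dichotomy always contains a second alternative in which the excess is already at the $\epsilon$-threshold and no further decay is asserted (this is exactly the structure of \cref{tilt-excess-decay-statement} and of \cref{global-tilt-excess-decay-thm}, where the residual term is of order $R^{-2}$ or $e^{-KR}$). Consequently you cannot conclude $\mathcal E(x_0,r)=0$ for fixed $r$, and "vanishing excess forces coincidence with a one-dimensional profile" is not available; what the chaining gives is smallness of order $r^{-2}$ at scale $r$, i.e.\ flatness of the zero set and uniqueness of the blow-down, not exact rigidity. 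That a separate maximum-principle ingredient is indispensable is also visible in the paper itself: the authors point out that in their codimension-two setting, where no maximum principle is available, unique blow-down does \emph{not} let them conclude two-dimensionality, and they must leave the analogous statement as a conjecture (\cref{main-result-global-n4} handles only $n\le4$ or minimizers, by a different mechanism). If your excess-vanishing argument were correct, it would equally apply there and settle that conjecture, which signals the flaw. So the corollary should be proved by your sliding variant (with the slab-plus-uniform-limits intermediate statement), not by exact excess annihilation.
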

                
            As for minimal graphs, De Giorgi's conjecture (even with the extra assumption used by Savin) is false for $n\ge9$:
            a counterexample has been constructed by Del Pino--Kowalczyk--Wei, in \cite{DPKW}.

            Savin's approach uses viscosity techniques, resembling the Krylov--Safanov theory in spirit. In particular,
            while his groundbreaking methods have a wide range of applicability, even beyond variational equations, it is not always clear how one can extend these techniques to the  vectorial setting, where the maximum principle does not apply; see however \cite{Savin-system, DGS-23}.
            
           Recently, Wang \cite{kelei-wang} obtained a variational proof of Savin's theorem, following   the strategy of  Allard's proof of excess decay for stationary varifolds. Wang's paper has been the starting point for our investigation of the regularity properties of the zero set of solutions of the Yang--Mills--Higgs equations.

            \subsection{Main results}
            We consider the energy
            $$E_\epsilon:=\int e_\epsilon(u,\nabla), \quad
            e_\epsilon(u,\nabla):=|\nabla u|^2 + \frac{(1-|u|^2)^2}{4\epsilon^2} + \epsilon^2|F_\nabla|^2. $$
            Note that $E_\epsilon$ is just a rescaling of $E_1$, for $\epsilon>0$.            
            The main result of the paper could be summarized as follows.

            \begin{thm}
                Savin's result, as stated in \cref{savin.cheat},
                holds for critical pairs $(u,\nabla)$ for $E_1$, in any dimension $n\ge2$.
            \end{thm}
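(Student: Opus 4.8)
The plan is to transpose, into the codimension-two gauge-theoretic setting, the variational proof of Savin's theorem due to Wang \cite{kelei-wang}, with the self-dual structure and Taubes's two-dimensional classification \cite{Taubes} playing the role of the one-dimensional analysis of Allen--Cahn. First I would record the reductions. By the scaling $(u,\nabla)\mapsto(u(R\,\cdot),\nabla(R\,\cdot))$, studying the blow-down of an entire $E_1$-critical pair is the same as studying $E_\epsilon$-critical pairs on $B_1$ as $\epsilon\to0$, under the uniform bound $E_\epsilon(\cdot\,;B_1)\le(2\pi+\tau(n))\omega_{n-2}$. The self-dual monotonicity formula (cf.~\cite{Pigati-1}) then makes the normalized energy $\Theta(R):=E_1(u,\nabla;B_R)\,/\,(2\pi\omega_{n-2}R^{n-2})$ almost-monotone, so the density $\Theta_\infty$ at infinity exists and lies in $[1,1+\tfrac{\tau(n)}{2\pi}]$, dominates every density ratio, and — by \cref{varifold-limit-theorem} and compactness — every blow-down is a stationary integral $(n-2)$-cone of density $\Theta_\infty$. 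We may assume the vortex set is nonempty, the complementary case being gauge-trivial.

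The heart of the matter is an Allard-type improvement of flatness, which I would state as follows: there are $\theta\in(0,1)$, $\alpha\in(0,1)$ and $\delta_0,\tau_0>0$ such that any $E_\epsilon$-critical pair on $B_1$ with $\epsilon\le\delta_0$, normalized energy $\le1+\tau_0$, and \emph{excess} $\mathcal E(B_1)\le\delta_0$ — where $\mathcal E$ denotes the normalized $L^2$ deviation of the energy measure, equipped with its stress-induced tangent-plane field, from the nearest $(n-2)$-plane $\pi$ — satisfies $\mathcal E(B_\theta)\le\tfrac12\theta^{2\alpha}\mathcal E(B_1)$ relative to some $(n-2)$-plane $\pi'$ with $\dist(\pi',\pi)\le C_0\,\mathcal E(B_1)^{1/2}$. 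I would prove this by contradiction and blow-up: along a sequence of pairs with excess $\eta_k\to0$ violating the decay, one rescales the vortex sets by $\eta_k$; using \cref{varifold-limit-theorem} and monotonicity to control the vortex set and its convergence, these rescalings converge, over the limiting plane, to the graph of a map $\psi\colon\R^{n-2}\to\R^2$, and the \emph{quantitative} planar stability of \cref{ymh-dim-2-thm} furnishes the energy expansion $E_\epsilon=2\pi\big(\text{area of the graph of }\psi\big)+o(\eta_k^2)$ with no first-order defect — it is precisely self-duality that makes the transverse vortex a quantitatively nondegenerate minimizer in its degree class and reduces the linearized system to the vector Laplacian. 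Hence $\psi$ is harmonic, so after subtracting its $1$-jet at the origin it obeys the quadratic decay of the oscillation at scale $\theta$, contradicting the assumed violation for $k$ large. This blow-up — fixing a good gauge, tracking the vortex set through the varifold regularity, and ruling out a first-order error in the energy expansion — is the step I expect to be the main obstacle.

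Granting the improvement of flatness, the conclusion follows as in Savin's theorem. If some blow-down of $(u,\nabla)$ is a multiplicity-one $(n-2)$-plane then $\Theta_\infty=1$, so $\Theta(R)\nearrow1$ and \emph{every} blow-down is a stationary integral cone of vertex density $1$, hence (by Allard) a multiplicity-one plane; consequently the excess $e(R):=\mathcal E$ of the rescaling at scale $R$ tends to $0$ as $R\to\infty$. Fix $R_1$ with $e(\rho)\le\delta_0$ and $1/\rho\le\delta_0$ for all $\rho\ge R_1$; applying the improvement of flatness at the scales $\rho_0/\theta^{\,j}$ (all $\ge R_1$, with effective parameter $\theta^{\,j}/\rho_0\to0$) yields $e(\rho_0)\le(\tfrac12\theta^{2\alpha})^{j}\,e(\rho_0/\theta^{\,j})\le(\tfrac12\theta^{2\alpha})^{j}\delta_0$ for every $j\ge0$ and every $\rho_0\ge R_1$, whence $e(\rho_0)=0$. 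Thus $(u,\nabla)$ has vanishing excess at all large scales, and a rigidity argument (the excess controlling the deviation of the stress tensor from that of a vortex cylinder — equivalently, slicing by affine $2$-planes transverse to the optimal direction and invoking Taubes's classification) forces $(u,\nabla)$ to be, up to gauge, the cylinder over the degree-one planar vortex: in particular two-dimensional, with a unique blow-down. Finally, the hypothesis that some blow-down be a multiplicity-one plane holds unconditionally when $2\le n\le4$, where every non-flat stationary integral $(n-2)$-cone has density at least $\tfrac32$ so that any $\tau(n)<\pi$ rules all of them out, and when $(u,\nabla)$ is a local minimizer in any dimension, where the blow-down is area-minimizing and a compactness argument keeps the density of non-flat minimizing cones bounded away from $1$; this gives the stated two-dimensionality in those cases and, in general, the full statement of Savin's theorem for $E_1$-critical pairs.
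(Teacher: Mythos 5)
Your overall architecture (scale to $E_\epsilon$ on $B_1$, prove an Allard/Wang-type improvement of flatness via blow-up and the quantitative two-dimensional stability, then iterate over scales and classify blow-downs) is the right family of ideas and is indeed the paper's strategy. But there is a genuine gap at the central step: your improvement of flatness is stated \emph{unconditionally}, with no lower threshold on the excess in terms of $\epsilon$, and that version is not available (and is almost certainly false). The decay mechanism — Lipschitz approximation of the slices of the Jacobian current, harmonic approximation, and a Caccioppoli-type inequality from the stress-energy tensor — degenerates once the excess becomes comparable to $\epsilon$-scale quantities, because at scale $\epsilon$ the vortex is genuinely diffuse and the error terms (exponential tails, the $\E_2$-contributions, the barycenter-versus-zero-set discrepancy) are no longer negligible relative to the excess. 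This is exactly why \cref{tilt-excess-decay-statement} is a dichotomy: either $\E_1$ decays at scale $\rho$, or $\E_1(u,\nabla,B_1^n,S)\le\max\{C\epsilon^2|\log\E|^2\sqrt{\E},e^{-K/\epsilon}\}$, and the second alternative cannot be removed for general critical points. Your iteration $e(\rho_0)\le(\tfrac12\theta^{2\alpha})^j e(\rho_0/\theta^j)$ with $j\to\infty$, concluding $e(\rho_0)=0$ and hence exact cylindrical rigidity, uses precisely the removed threshold; note that it would prove two-dimensionality of \emph{every} entire critical pair with small density at infinity in \emph{every} dimension, which the paper explicitly does not obtain and formulates as an open conjecture. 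With the correct dichotomy, the iteration only yields $R^2\min_S\E_1(u,\nabla,B_R^n,S)\to0$, and converting this decay rate into rigidity is where the real case distinction enters.

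Relatedly, your use of the hypotheses ``$n\le4$'' and ``local minimizer'' is misplaced. The classification of blow-down cones is not the issue: by Allard's theorem, once $\tau_0(n)$ is small every blow-down is a multiplicity-one plane in \emph{all} dimensions (this is \cref{excess-vanishes}), so uniqueness of the blow-down (\cref{coroll.intro}) indeed follows from the excess decay alone for all $n$. The restriction $n\le4$ is needed later, to pass from $R^2\E_1\to0$ to $\int_{\R^n}[\sum_{a\ge3}|\nabla_{e_a}u|^2+\dots]=0$ via Fatou, since the unnormalized excess carries the weight $R^{4-n}$; and the minimizer hypothesis is not used to classify cones but to upgrade the decay so that the full excess drops below an arbitrary power $\epsilon^\beta$ (with $\beta=n-2$), which in the paper requires the construction of competitors: the pullback pair along the Lipschitz approximation, the interpolation gauge built from local Coulomb gauges patched at scale $\epsilon|\log\epsilon|$, and the strong harmonic approximation of \cref{strong-approximation-of-excess-proposition}. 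None of this comparison machinery appears in your proposal, and without it (or some substitute) the minimizer case in dimensions $n\ge5$ does not follow. Finally, the blow-up proof you sketch for the flatness improvement — ``no first-order defect, hence the limit is harmonic'' — is the part carrying all the difficulty (good gauge, slicing, bad-set energy control, tilting), and as sketched it would at best give the dichotomy version, not the unconditional one your later argument needs.
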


            The following is the precise statement of the excess decay for critical points.

            \begin{thm}[Tilt-excess decay]\label{tilt-excess-decay-statement}
                For any $n \geq 3$ and small enough $0<\rho\leq\rho_0(n)$, there exist constants $\epsilon_0(n,\rho),\tau_0(n,\rho)$ such that the following holds. Let $(u,\nabla)$ be a critical point for $E_\epsilon$ on the unit ball $B_1^n\subset\R^n$,
                with $\epsilon \leq \epsilon_0$, $u(0)=0$, and the energy bound
                \begin{align}\label{energy-bound}
                    \frac{1}{|B^{n-2}_1|}\int_{B^n_1} e_\epsilon(u,\nabla) \leq 2\pi+\tau_0.
                \end{align}
                Then at least one of the following statements is true:
                {either}
                \begin{align*}\E_1(u,\nabla,B_\rho^n,\bar{S}) \leq C\rho^2\E_1(u,\nabla,B_1^n,S),
                \end{align*}
                for some $(n-2)$-plane $\bar{S}$ with $\|P_{\bar{S}} - P_{S}\| \leq C\sqrt{\E_1(u,\nabla,B_1^n,S)}$, where $P_S$ is the orthogonal projection onto $S$, the plane minimizing $\E(u,\nabla,B_\rho^n,\cdot)$, {or}
                \begin{align*}
                    \E_1(u,\nabla,B_1^n,S) \leq \max\{C\epsilon^2 |\log\E|^2\sqrt{\E},e^{-{K}/{\epsilon}}\},
                \end{align*}
                where $\E=\E(u,\nabla,B_1^n,S)$ and $C=C(n)$, $K=K(n)$  are independent of $\rho$.
            \end{thm}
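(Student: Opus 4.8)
The plan is to prove the estimate by a blow-up (compactness–contradiction) argument, following the variational proof of Allard-type excess decay and, more specifically, Wang's proof \cite{kelei-wang} of Savin's theorem. The two new features are that the concentration set is now a codimension-two vortex set and that the natural linearized unknown is a section of the rank-two normal bundle of a plane, rather than a scalar graph. Since the energy density, the zero set, and the vorticity of $(u,\nabla)$ are gauge invariant, the gauge freedom plays no role once the blow-up quantities are defined in these terms.

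\emph{Set-up and limit plane.} Suppose the conclusion fails: there are $\rho\le\rho_0$ and critical points $(u_k,\nabla_k)$ on $B_1^n$ with $\epsilon_k\to0$, $\tau_k\to0$, $u_k(0)=0$, satisfying \cref{energy-bound} with $\tau_0=\tau_k$, for which \emph{both} alternatives fail. By \cref{energy-bound} and the almost-monotonicity formula of \cite{Pigati-1}, the optimal tilt-excess $\E_k:=\E(u_k,\nabla_k,B_1^n,S_k)$ obeys $\E_k\le C\tau_k\to0$, where $S_k$ is a minimizing plane; along a subsequence $S_k\to S_\infty$. Failure of the second alternative gives
\[
\E_k>\max\bigl\{C\epsilon_k^2|\log\E_k|^2\sqrt{\E_k},\ e^{-K/\epsilon_k}\bigr\},
\]
and the sole role of this inequality is to keep all $\epsilon$-dependent error terms below a small, $\rho^2$-proportional fraction of $\E_k$ throughout: the exponential term absorbs the tails produced by the exponential decay of the vortex profile away from its core, and the polynomial term the errors coming from the inner-variation identities and the quantitative two-dimensional analysis. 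By \cite{Pigati-1} (\cref{varifold-limit-theorem}), the measures $e_{\epsilon_k}(u_k,\nabla_k)\,dx$ subconverge to $2\pi\|V\|$ for a stationary integral $(n-2)$-varifold $V$ in $B_1$; monotonicity and $u_k(0)=0$ place $0\in\spt V$ with density $\ge1$, while $\E_k\to0$ and $\tau_k\to0$ force every tangent plane of $V$ to equal $S_\infty$ and force the mass of $V$ not to exceed that of one sheet. Hence $V$ is the multiplicity-one plane $S_\infty$, and the concentration set lies in a thin tube around $S_\infty$ in $B_{3/4}$.

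\emph{Harmonic approximation.} Clearing-out estimates together with the quantitative two-dimensional analysis (\cref{ymh-dim-2-thm}, resting on \cite{Halavati-inequality,Halavati-stability}), applied on the affine two-planes orthogonal to $S_k$, show that on $B_{3/4}$ the restriction of $(u_k,\nabla_k)$ to such a slice is quantitatively close to a single standard degree-one vortex, which vanishes transversally. This produces a gauge-invariant ``sheet'' $w_k\colon B'_{3/4}\subset S_k\to S_k^{\perp}\cong\R^2$ (here $B'_r$ is a ball of radius $r$ in the relevant $(n-2)$-plane) — for instance the slicewise barycenter, in the directions normal to $S_k$, of $e_{\epsilon_k}(u_k,\nabla_k)\,dx$ or of the vorticity two-form — together with a Caccioppoli-type bound $\int_{B'_{1/2}}(|w_k|^2+|\nabla w_k|^2)\le C\E_k$ and a reverse inequality $\E_k\le C\int_{B'_{5/8}}|\nabla w_k|^2+C\epsilon_k^2|\log\E_k|^2\sqrt{\E_k}+Ce^{-K/\epsilon_k}$. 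Set $\bar w_k:=\E_k^{-1/2}w_k$, a bounded sequence in $H^1(B'_{1/2})$; along a subsequence $\bar w_k\weakto w$ in $H^1$ and strongly in $L^2$. Testing the inner-variation identity for $E_{\epsilon_k}$ against vector fields tangent to the directions normal to $S_k$, localizing near $S_\infty$, and dividing by $\sqrt{\E_k}$, every nonlinear and every $\epsilon$-dependent term tends to $0$ by the estimates above (this is exactly where the failure of the second alternative is used), leaving the linearized equation $\Delta w=0$ on $B'_{1/2}$ — the Jacobi operator of a flat plane in flat $\R^n$ being the componentwise Laplacian on its trivial, rank-two normal bundle. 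Strong $L^2$ convergence in the reverse inequality also gives $\int_{B'_{5/8}}|\nabla w|^2\ge c>0$, so $w$ is nonconstant. I expect this to be the main obstacle: it requires promoting the qualitative convergence of \cite{Pigati-1} to quantitative, $\epsilon$-uniform control of the pair at the vortex scale — graphicality of the concentration set and the sharp polynomial and exponential error bounds — and obtaining the reverse Poincaré inequality with admissible errors is a further delicate point.

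\emph{Decay and conclusion.} Interior estimates for harmonic functions give, with $\ell(x'):=w(0)+\nabla w(0)\cdot x'$ and any constant vector $c$,
\[
\frac{1}{|B'_\rho|}\int_{B'_\rho}|\nabla(w-\ell)|^2\ \le\ C_0\,\rho^2\,\frac{1}{|B'_{1/2}|}\int_{B'_{1/2}}|\nabla w-c|^2,\qquad C_0=C_0(n).
\]
Transported back through the blow-up, the affine maps $\ell_k(x'):=w_k(0)+\nabla w_k(0)\cdot x'$ define $(n-2)$-planes $\bar S_k:=\operatorname{graph}_{S_k}(\ell_k)$ with $\|P_{\bar S_k}-P_{S_k}\|\le C|\nabla w_k(0)|\le C\sqrt{\E_k}$ (the slope of the sheet is the tilt of the plane, and $|\nabla w_k(0)|\le C\|\nabla\bar w_k\|_{L^2(B'_{1/2})}\sqrt{\E_k}\le C\sqrt{\E_k}$), and the displayed decay, up to the vanishing $H^1$-defect of the harmonic approximation and the same $\epsilon$-errors, becomes $\E(u_k,\nabla_k,B_\rho^n,\bar S_k)\le C_1\rho^2\,\E_k$ for $k$ large, with $C_1=C_1(n)$. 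This also explains the shape of the second alternative: the $\epsilon$-error carries a factor $\rho^2$, just like the main term, so the dichotomy ``either the main term dominates, giving decay, or the $\epsilon$-error dominates, giving $\E_k\le C\epsilon_k^2|\log\E_k|^2\sqrt{\E_k}+e^{-K/\epsilon_k}$'' produces a threshold constant independent of $\rho$. Taking the constant $C$ in the first alternative large enough in terms of $n$, the last display contradicts its failure for $k$ large, which would complete the proof.
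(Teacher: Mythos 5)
Your outline follows the same broad strategy as the paper (Lipschitz/harmonic approximation in the spirit of Allard and of Wang's proof of Savin's theorem), but it leaves unproved exactly the steps where the content of the theorem lies. The decisive gap is the sentence asserting that the harmonic decay, "transported back through the blow-up," becomes $\E(u_k,\nabla_k,B^n_\rho,\bar S_k)\le C_1\rho^2\E_k$. To pass from decay of the limiting harmonic function to decay of the excess at the fixed scale $\rho$ one needs a reverse-Poincar\'e (Caccioppoli) inequality \emph{at scale $\rho$ and with respect to the tilted plane}, i.e.\ a bound of $\int\phi^2(\E_1)_z$ by second moments of the energy on slices, together with the identification of those second moments as the universal constant $\epsilon^2v_0$ (which cancels because $\int\Delta\phi^2=0$) plus errors of the form $|h-c|^2\sqrt{\E_2}+\epsilon^2|\log\E_2|^2\sqrt{\E_2}$. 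In the paper this is \cref{Caccioppoli-prop-varifold} (testing the divergence-free stress-energy tensor with the quadratic vector fields) combined with \cref{zero-set-distance-barycenter-slice} and \cref{variance-of-good-slices}, which rely on the sharp two-dimensional stability of the Jacobian and of the energy density from \cite{Halavati-stability}; this is precisely where the error $\epsilon^2|\log\E|^2\sqrt{\E}$ in the second alternative originates. Your unit-scale "reverse inequality" (used only to make $w$ nonconstant, which the argument does not actually require) is not a substitute, and the compactness framework cannot generate it: the inequality must hold quantitatively at scale $\rho$, with constants uniform along the sequence, \emph{before} passing to the limit. You flag this as "a further delicate point," but without it the contradiction in your last paragraph is not reached.

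Two further items are needed and are nontrivial in this gauge-theoretic setting. First, the construction of the sheet $w_k$ with uniform Lipschitz/$W^{1,2}$ control is not automatic: the paper slices the Jacobian current (Jerrard--Soner-type BV estimates, \cref{BV-type-estimate}) and truncates via a maximal-function good/bad set, with a separate bound for the energy over bad slices (\cref{Lipschitz-approximation-thm}); the linearization step then comes from matching the inner-variation identity with the slice identity, the discrepancy being controlled by $\sqrt{\E_1\E_2}$ (\cref{harmonic-approx-prop}). Second, the first alternative involves a rotated plane $\bar S$, and the slicewise barycenter does \emph{not} obviously transform like a graph under rotations; the paper must route the comparison through the Lipschitz approximation $h_0$ of the zero set (\cref{Lipschitz-approximation-of-zero-loci-lemma} and \cref{Lipschitz-tilt-estimate}), whose graph does rotate correctly, again using the 2D stability to bound $|h-h_0|$. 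Your proposal does not address this tilt-compatibility at all. Finally, two smaller inaccuracies: the bound $\E_k\le C\tau_k$ is unjustified (only qualitative smallness of the excess follows, by compactness as in \cref{excess-vanishes}), and the claim that "the $\epsilon$-error carries a factor $\rho^2$" is not what the computation yields — the errors carry factors like $\rho^{-n}$ from $\|D^2\phi\|_\infty$ and the normalization, and the $\rho$-bookkeeping in the dichotomy requires fixing $\eta,\rho$ first and then the auxiliary parameters $\delta,\sigma,\nu$.
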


            \begin{rmk}
                To be precise, we assume also the pointwise bounds \cref{u-le-1}--\cref{modica-bounds}, which are automatically true if $(u,\nabla)$ is a critical pair on $\R^n$ with energy growth $O(R^{n-2})$ on $B_R^n$.
            \end{rmk}

            Here $\E$ is the \emph{excess}, defined in \cref{excess-definition} below, which naturally splits into two parts, $\E_1$ and $\E_2$, measuring  how far a solution is from being two-dimensional and from solving the first order \emph{vortex equations}, respectively. We also note that $\E_1$ parallels the notion of excess in the theory of varifolds and does not depend on the orientation, while $\E$ sees the orientation and should be thought of as the stronger notion of excess in the setting of currents. While in principle the previous result establishes a quantitative decay only for $\E_1$, it is enough to obtain the following.

            \begin{corollary}\label{coroll.intro}
                If $(u,\nabla)$ is an entire critical point on $\R^n$, with
                $$0<\lim_{R\to\infty}\frac{1}{|B^{n-2}_R|}\int_{B^n_R} e_\epsilon(u,\nabla) \leq 2\pi+\tau_0(n),$$
                then this limit is $2\pi$ and the blow-down is a unique plane.
            \end{corollary}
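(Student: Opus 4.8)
The plan is to iterate the excess decay of \cref{tilt-excess-decay-statement} \emph{towards large scales}, in the spirit of Allard's proof of regularity, with the compactness theory of \cite{Pigati-1} supplying both the starting smallness and the classification of blow-downs. Rescaling, we may assume the pair is critical for $E_1$, and, translating to a zero of $u$ (one exists because $L:=\lim_{R\to\infty}\omega_{n-2}^{-1}R^{2-n}E_1(u,\nabla;B_R)>0$; recentering does not change the blow-down), that $u(0)=0$, so that every rescaling $u_R:=u(R\,\cdot)$ also vanishes at $0$. The monotonicity formula of \cite{Pigati-1} makes $R\mapsto\omega_{n-2}^{-1}R^{2-n}E_1(u,\nabla;B_R)$ non-decreasing up to a correction vanishing as $R\to\infty$; hence $L$ exists, $L\in(0,2\pi+\tau_0]$, and, after rescaling by $R$, \cref{energy-bound} holds for each $u_R$ — which is critical for $E_{1/R}$ — once $R$ is large. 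The pointwise bounds noted after \cref{tilt-excess-decay-statement} are automatic for an entire solution of this growth, so \cref{tilt-excess-decay-statement} applies to every such $u_R$ with $\epsilon=1/R$.

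Next I would classify the blow-downs. Given $R_k\to\infty$, \cref{varifold-limit-theorem} together with the uniform energy bound yields, along a subsequence, a weak-$*$ limit $2\pi\|V\|$ of the rescaled energy measures, with $V$ a stationary integral $(n-2)$-varifold; since its density ratios at the origin are all equal to $L$, $V$ is a cone with vertex $0$. As $\|V\|(B_1)=(2\pi)^{-1}L\,\omega_{n-2}>0$, the cone is nontrivial, so $\Theta_V(0)\ge1$ and $L\ge2\pi$; since also $L/(2\pi)\le1+\tau_0/(2\pi)$, choosing $\tau_0=\tau_0(n)$ below the threshold in Allard's regularity theorem \cite{Allard-1} forces $V$ to be regular at $0$, hence — being a cone — an $(n-2)$-plane through $0$ of multiplicity $1$. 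Thus $L=2\pi$ and every subsequential blow-down is a plane; a routine compactness argument upgrades this to $\E(u_R,\nabla_R;B_1,S_R)\to0$ as $R\to\infty$, where $S_R$ minimizes $\E(u_R,\nabla_R;B_1,\cdot)$.

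The core of the proof is the iteration. Fix $\rho\le\rho_0(n)$ small enough that $C\rho^2\le\tfrac12$, $C=C(n)$ being the constant of \cref{tilt-excess-decay-statement}, and set $\gamma(R):=\min_S\E_1(u_R,\nabla_R;B_1,S)$, which is bounded by $2\pi+\tau_0$ and tends to $0$. Applying \cref{tilt-excess-decay-statement} at scale $R$ and book-keeping the optimal planes and the error from changing the reference plane, both alternatives give
\begin{equation*}
\gamma(\rho R)\le C\rho^2\,\gamma(R)+C\rho^{2-n}\max\{C R^{-2}|\log\gamma(R)|^2\sqrt{\gamma(R)},\ e^{-KR}\},
\end{equation*}
where in the second alternative one also uses the trivial scaling bound $\gamma(\rho R)\le\rho^{2-n}\gamma(R)$. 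Iterating along $R_j:=\rho^{-j}R\to\infty$, and using $C\rho^2\le\tfrac12$, the uniform bound on $\gamma$, and the error terms $\lesssim R_j^{-2}=\rho^{2j}R^{-2}$, one obtains $\gamma(R)\lesssim R^{-2}$ for $R$ large. Feeding this into the plane-displacement estimate in the first alternative of \cref{tilt-excess-decay-statement} (and, in the second alternative, into the elementary fact that two near-optimal planes at comparable scales are $O(\sqrt{\mathrm{excess}})$-close), one gets $\|P_{S_{R_j}}-P_{S_{R_{j+1}}}\|\lesssim\sqrt{\gamma(R_{j+1})}\lesssim\rho^{j+1}R^{-1}$, summable in $j$ with total $\lesssim R^{-1}$. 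Hence $\{P_{S_R}\}_{R\ge R_0}$ is Cauchy as $R\to\infty$; its limit $P_\Pi$ defines the blow-down plane, which is unique because every subsequential blow-down is a plane that must equal $\lim_k S_{R_k}=\Pi$. (By \cite{Pigati-1}, $\Pi$ also governs the blow-down of the nodal set $\{u=0\}$.)

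The step I expect to be the main obstacle is this iteration: a priori the $\epsilon$-dependent error terms in \cref{tilt-excess-decay-statement} could accumulate as the scale tends to infinity. What rescues the argument is that along an entire solution the effective parameter at scale $R$ is $\epsilon=1/R$, so the error decays like $R^{-2}$ — fast enough to be absorbed by the geometric gain $C\rho^2<1$ and to leave a net polynomial decay of $\gamma$, which is in turn summable after the square root in the plane-displacement bound. Two subsidiary points need care: handling the dichotomy "$\mathrm{(i)}$ or $\mathrm{(ii)}$" uniformly — in alternative $\mathrm{(ii)}$ the excess is already below the error floor, so the crude scaling bound suffices for the next step — and supplying the classification of blow-downs, that $L=2\pi$ and that a blow-down is a plane, which here, exactly as in codimension one, comes from \cref{varifold-limit-theorem} and Allard's theorem rather than from the excess decay itself.
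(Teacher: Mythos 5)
Your argument is correct in substance and relies on exactly the same ingredients as the paper, but it is assembled along a different route than the paper's own proof of \cref{coroll.intro}. The paper first shows (in \cref{excess-vanishes}, via \cref{varifold-limit-theorem} and Allard) that the limit is $2\pi$ and the excess of the rescalings tends to $0$ — this is your second paragraph, and note that your ``routine compactness argument'' is precisely \cref{excess-vanishes}, whose entire-case proof is not entirely routine (one must handle the conjugation/orientation of the limit cycle and the alignment of nearby optimal planes), though it is available as a black box. For uniqueness of the blow-down the paper then invokes \cref{small.oscill.planes}: a \emph{downward} iteration of \cref{tilt-excess-decay-statement} between two fixed large scales $R<R'$ (after scaling by $(R')^{-1}$), which shows the oscillation of the optimal planes is $\le C[\sqrt{\E_1(\text{top scale})}+1/M]$ and hence needs no decay rate at infinity, only $\E_1\to0$. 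You instead iterate \emph{upward} to extract the quantitative rate $\min_S\E_1(u,\nabla,B_R,S)\lesssim R^{-2}$ — this is essentially the iteration the paper performs in the proof of \cref{iteration.pf} (there phrased via the functions $f,g$ to get the slightly stronger $o(R^{-2})$) — and then sum the $O(\sqrt{\E_1})$ tilts geometrically; your summation and the absorption of the $\epsilon=1/R$ error floor are sound, since the floor contributes $O(R_j^{-1})$ after the square root. The one step you should make explicit is the plane bookkeeping you flag: alternative (i) of \cref{tilt-excess-decay-statement} bounds $\E_1$ at the smaller scale by $\E_1$ evaluated at the theorem's reference plane, and to close the recursion for $\gamma(R)=\min_S\E_1$ you need this to be the $\E_1$-minimizing plane (or comparable to it); as in the paper, this is resolved by observing that the proof of the decay theorem is in fact carried out with $\R^{n-2}$ minimizing $\E_1(u,\nabla,B_1^n,\cdot)$, and that the $\E$- and $\E_1$-optimal planes are asymptotically aligned by the linear-algebra argument leading to \cref{e:eppoi}. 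In exchange for this extra care, your route yields a rate at infinity (not needed for the corollary, but morally the content of \cref{iteration.pf}), whereas the paper's route through \cref{small.oscill.planes} is softer and works directly from excess-vanishing.
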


            The previous limit always exists by the monotonicity formula for  $E_\epsilon$, see \cite{Pigati-1}. By a simple compactness argument and Allard's theorem, it is easy to see that the assumption guarantees that any blow-down is an $(n-2)$-dimensional plane. The key assertion is that, in view of improvement of flatness, the blow-down is \emph{unique}.

            Another simple consequence of the techniques is the following fact, a diffuse version of the $C^{1,\alpha}$ regularity of minimal graphs.

            \begin{thm}\label{Main-result-epsilon-neighbourhood-of-graph}
                Let $(u,\nabla)$ be a critical point for $E_\epsilon$ as above.
                Given $\alpha\in[0,1)$ and $\gamma>0$, if $\epsilon\le\epsilon_0(n,\alpha,\gamma)$ and $\tau_0\le\tau_0(n,\alpha,\gamma)$ then the vorticity set $\{|u|\le\frac34\}\cap B_{1/2}^n$ is contained in a $C(n,\alpha,\gamma)\epsilon^{1/(1+\alpha)}$-neighborhood of the graph of a function
                $$f:B_1^{n-2}\to\R^2$$
                with $\|f\|_{C^{1,\alpha}}\le \gamma$.
            \end{thm}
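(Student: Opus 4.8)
The plan is to mimic, in the diffuse setting, the classical De Giorgi--Allard derivation of $C^{1,\alpha}$ regularity for almost-flat minimal graphs: iterate the single-step improvement of flatness, \cref{tilt-excess-decay-statement}, across dyadic scales. The one structural difference from the classical picture is that the iteration cannot be pushed below the scale at which the deviation from flatness and the width of the diffuse vortex core balance, which is $r_*\approx\epsilon^{1/(1+\alpha)}$, and this terminal scale is precisely what accounts for the size of the neighborhood in the statement.

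\emph{Set-up and iteration.} Given $\alpha<1$, fix $\rho=\rho(n,\alpha)\le\rho_0(n)$ small enough that $C(n)\rho^{2}\le\rho^{2\alpha}$, where $C(n)$ is the constant in \cref{tilt-excess-decay-statement}, and take $\epsilon_0(n,\alpha,\gamma)\le\epsilon_0(n,\rho)$, $\tau_0(n,\alpha,\gamma)\le\tau_0(n,\rho)$, shrunk further so that the energy bound \cref{energy-bound} forces the starting excess $\E_0:=\E(u,\nabla,B_1^n,S_0)$ to satisfy $C\sqrt{\E_0}\le\gamma$ (recall the excess is controlled by the energy deficit, via the monotonicity formula of \cite{Pigati-1}). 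Fix a point $p\in B_{1/2}^n$ in the vorticity set. The rescalings $u_{p,r}(x):=u(p+rx)$ are critical for $E_{\epsilon/r}$, so we may apply \cref{tilt-excess-decay-statement} at scales $r=\rho^k\ge r_*$: at each such scale, either the first alternative holds, giving a plane $S_{k+1}$ with
\[
\E(u,\nabla,B_{\rho^{k+1}}(p),S_{k+1})\le\rho^{2\alpha}\,\E(u,\nabla,B_{\rho^{k}}(p),S_{k}),\qquad \|P_{S_{k+1}}-P_{S_k}\|\le C\sqrt{\E(u,\nabla,B_{\rho^{k}}(p),S_k)},
\]
or the excess already lies below the diffuse floor $C(\epsilon/\rho^k)^2|\log\E|^2\sqrt{\E}$, in which case, by the monotonicity formula, it stays comparably small at all finer admissible scales. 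Either way one obtains, for every $r\in[r_*,1]$, a best plane $S_r=S_r(p)$ with $\E(u,\nabla,B_r(p),S_r)\le C(\E_0 r^{2\alpha}+(\epsilon/r)^2)$ and, summing the geometric series of tilts, $\|P_{S_r}-P_{S_0(p)}\|\le C\sqrt{\E_0}\,r^{\alpha}$ for a limiting $(n-2)$-plane $S_0(p)$; note $\epsilon/r\le\epsilon/r_*\le r_*^{\alpha}\le r^{\alpha}$, so this last bound really is of order $r^\alpha$ down to $r_*$.

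\emph{Construction of $f$ and conclusion.} All the limit planes $S_0(p)$ lie within $C\sqrt{\E_0}$ of one fixed $(n-2)$-plane, which after rotation we take to be $\R^{n-2}\times\{0\}$. A degree argument (as in the two-dimensional theory recalled in \cref{ymh-dim-2-thm}, using $u(0)=0$ and the energy bound, which pin down a single unit vortex line) shows that every slice $\{x'\}\times\R^2$ with $x'\in B_1^{n-2}$ meets the vorticity set, and the tilt-excess estimates force its intersection with $B_{1/2}^n$ to lie in a ball of radius $\le Cr_*$ about a well-defined center; define $f(x')$ to be this center, mollified at scale $r_*$. Feeding the decay $\E(u,\nabla,B_r(p),S_r)\le C(\E_0r^{2\alpha}+(\epsilon/r)^2)$ into a Campanato--Morrey-type lemma converts the scale-by-scale tilt control into $f\in C^{1,\alpha}(B_1^{n-2};\R^2)$ with $\|f\|_{C^{1,\alpha}}\le C\sqrt{\E_0}\le\gamma$, and shows that over each $B_r(x')$ with $r\ge r_*$ the vorticity set stays $C\sqrt{\E(u,\nabla,B_r,S_r)}\,r\le Cr^{1+\alpha}$-close to the affine plane tangent to $\operatorname{graph}f$ at $x'$. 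Below scale $r_*$ the vorticity set over $x'$ is confined, by the slice-diameter bound, to $\{\dist(\cdot,\operatorname{graph}f)\le Cr_*\}$; since $r_*=C\epsilon^{1/(1+\alpha)}$ this yields $\{|u|\le\frac34\}\cap B_{1/2}^n\subset\{\dist(\cdot,\operatorname{graph}f)\le C\epsilon^{1/(1+\alpha)}\}$.

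\emph{Main difficulty.} As in Wang's proof of Savin's theorem, the delicate part is the bookkeeping of the iteration: the second alternative of \cref{tilt-excess-decay-statement} simultaneously caps the decay exponent one can propagate and the smallest admissible scale, so $\rho$, $\epsilon_0$, $\tau_0$ and the stopping scale $r_*$ must be fixed in the right order, and one has to verify that the excess never re-grows once it touches the diffuse floor — which is where the almost-monotonicity of the rescaled energy enters. Turning the scale-dependent optimal planes $S_r(p)$ into a single $C^{1,\alpha}$ function $f$, rather than a plane varying with the base point, is the standard but technical Campanato step, and confirming that the slices of the vorticity set are genuine small blobs near scale $\epsilon$ is where the two-dimensional classification of \cref{ymh-dim-2-thm} is used once more.
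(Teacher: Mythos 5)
Your proposal follows essentially the same route as the paper: iterate \cref{tilt-excess-decay-statement} at points of the vorticity set down to the stopping scale $\max\{M\epsilon,\epsilon^{1/(1+\alpha)}\}$, obtain $\E_1(x,r)\le Cr^{2\alpha}$ together with H\"older control of the optimal planes, and then glue and mollify local approximations at the stopping scale (the paper uses the barycenter Lipschitz approximations and a partition of unity) to produce the $C^{1,\alpha}$ graph, with the vorticity set confined within $C\epsilon^{1/(1+\alpha)}$ of it. The only cosmetic differences are that the smallness of the initial excess comes from the compactness statement \cref{excess-vanishes} (with \cref{Z.instead.of.0} allowing centers in the vorticity set) rather than from the monotonicity formula, and that the persistence of smallness once the excess reaches the diffuse floor follows from the trivial scaling comparison of the excess plus re-applying the dichotomy at each scale, exactly as in the paper's iteration.
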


            Differently from the co-dimension one setting,
            where uniqueness of the blow-down (with multiplicity one)
            implies via the maximum principle that $u$ is one-dimensional, at the present time we are not able to conclude that, in the setting of \cref{coroll.intro},
            the solution $u$ is two-dimensional. Here we formulate the following variant of the \emph{Gibbons conjecture}.

            \begin{conj}
            An entire critical point $(u,\nabla)$ on $\R^n$ satisfying
                $$\lim_{R\to\infty}\frac{1}{|B^{n-2}_R|}\int_{B^n_R} e_\epsilon(u,\nabla) = 2\pi$$
                and, writing any $x\in\R^n$ as $x=(y,z)\in\R^2\times\R^{n-2}$, also
            $$\lim_{|y|\to\infty}|u(y,z)|=1,\quad\text{uniformly in $z$},$$
            is necessarily two-dimensional, i.e., it is the pullback through the projection $\R^n\to\R^{2}$ of the standard solution in $\R^2$ with degree $\pm1$, up to translation and change of gauge.
            \end{conj}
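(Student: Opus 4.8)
\emph{Reduction to a bounded worldsheet.}
For $n\le 4$ the conclusion already follows from the main theorem of this paper: the two-dimensional factor then has energy $2\pi$ and degree $\pm1$ and is thus the standard vortex by Taubes, so I would focus on $n\ge5$ and, after rescaling, take $\epsilon=1$ and write $x=(y,z)\in\R^2\times\R^{n-2}$. Since the energy density tends to $2\pi<2\pi+\tau_0(n)$, \cref{coroll.intro} gives a unique $(n-2)$-plane blow-down, which we rotate to $\Pi=\{0\}\times\R^{n-2}$. The function $1-|u|^2\ge0$ satisfies $\Delta(1-|u|^2)\le 1-|u|^2$ (using only $|u|\le1$ and the Euler--Lagrange equation for $u$), so comparing with the barrier $e^{-2(|y|-M)}$ and using the hypothesis that $|u(y,z)|\to1$ uniformly in $z$, one gets
\[ 1-|u(y,z)|^2\le e^{-2(|y|-M)}\qquad(|y|\ge M),\quad M=M(u), \]
uniformly in $z$, whence the whole density $e_1(u,\nabla)$ decays exponentially off the tube $\{|y|\le M\}$; in particular $\{|u|\le\tfrac34\}\subset\{|y|\le M\}$. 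Applying \cref{Main-result-epsilon-neighbourhood-of-graph} at every point of the vorticity set and every scale, and gluing the local graphs via uniqueness of the blow-down plane, one obtains $\{|u|\le\tfrac34\}=\{(f(z),z):z\in\R^{n-2}\}$ for a single bounded $f\colon\R^{n-2}\to\R^2$ with $\|\nabla f\|_\infty$ as small as desired; iterating \cref{tilt-excess-decay-statement}, whose error floor scales with the rescaled parameter $1/R\to0$ at large scales, moreover gives $|\nabla f(z)|\to0$ as $|z|\to\infty$.

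\emph{Slicing and the energy budget.}
Writing $\nabla=d-i\alpha$ and $\alpha=\alpha_y+\alpha_z\,dz$ accordingly, one has the pointwise identity
\[ e_1(u,\nabla)=e_1^{2D}\!\big(u(\cdot,z),\nabla_y\big)+\big|(\de_z-i\alpha_z)u\big|^2+|F_{yz}|^2+|F_{zz}|^2, \]
with the last three terms nonnegative. For each $z$ the slice $(u(\cdot,z),\nabla_y)$ is a finite-energy planar pair whose degree at infinity is constant in $z$ and equals $\pm1$ (the multiplicity of the blow-down, cf. \cref{varifold-limit-theorem}), so Taubes's energy bound in its quantitative form (\cref{ymh-dim-2-thm}) gives $E_1^{2D}(u(\cdot,z),\nabla_y)\ge2\pi$, with near-equality forcing the slice to be close — modulo gauge and translation — to the standard degree-$\pm1$ vortex centered near $f(z)$. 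Integrating the identity over a cylinder $B^2_{M_R}\times B^{n-2}_R$ with $M_R\to\infty$ slowly, discarding $\{|y|>M_R\}$ by the exponential tail, and using the hypothesis $\frac{1}{|B^{n-2}_R|}\int_{B^n_R}e_1\to2\pi$, one finds that both
\[ \int_{B^{n-2}_R}\!\big(E_1^{2D}(u(\cdot,z),\nabla_y)-2\pi\big)\,dz \quad\text{and}\quad \int_{B^2_{M_R}\times B^{n-2}_R}\!\big(|(\de_z-i\alpha_z)u|^2+|F_{yz}|^2+|F_{zz}|^2\big) \]
are $o(R^{n-2})$; in particular, since the translation component of $(\de_z-i\alpha_z)u$ equals $-\nabla_y u\cdot\nabla_z f$ and contributes $\simeq|\nabla_z f(z)|^2$ (times a positive constant) to the inner integral, one has $\frac{1}{|B^{n-2}_R|}\int_{B^{n-2}_R}|\nabla f|^2\to0$.

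\emph{Rigidity.}
It remains to promote these averaged bounds to the vanishing of the three nonnegative terms above, after a suitable gauge. I would argue via the second variation: fixing the gauge so that the infinitesimal-gauge component $i(\de_z\chi)u$ of $(\de_z-i\alpha_z)u$ is killed, the $z$-derivative field $\xi:=\big((\de_z-i\alpha_z)u,\,F_{\cdot z}\big)$ solves the Jacobi (linearized Yang--Mills--Higgs) equation on $\R^n$; testing this equation with a cutoff times $\xi$ and using the cylinder bounds above to control the boundary terms, the second variation of $E_1$ at $(u,\nabla)$ evaluated at (cutoff) $\xi$ is $o(R^{n-2})$. On the other hand, since $\|\nabla f\|_\infty$ is tiny the configuration is a small perturbation of a straight vortex line, for which the second variation dominates $\int|\nabla_z\xi|^2+\int_z Q^{2D}\!\big(\xi(\cdot,z)\big)$; by the nondegeneracy of the standard planar vortex (Taubes), $Q^{2D}\ge0$ with kernel spanned by the gauge and translation modes. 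Combining, one would deduce that $\xi$ is asymptotically a $z$-independent translation mode $-\nabla_y u\cdot a$, i.e. that $f$ is affine; boundedness of $f$ then forces $a=0$, so $\xi\equiv0$ and, after a final gauge fixing, $(u,\nabla)$ is independent of $z$. Each slice is then a finite-energy critical planar pair of degree $\pm1$, hence the standard vortex by Taubes's classification, and $(u,\nabla)$ is its pullback — the assertion.

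\emph{Main obstacle.}
The genuinely hard point is the last step — upgrading the averaged $L^2$-smallness of the $z$-derivative energy to its pointwise vanishing. The squared norm $\Psi(z):=\int_{\R^2}|\xi(\cdot,z)|^2\,dy$ satisfies only a differential inequality of the form $\Delta_z\Psi+\lambda\Psi\ge0$, whose constant $\lambda$ reflects the (sign-indefinite) potential of the Jacobi operator inside the fixed-size core; the resulting Moser estimate has the wrong sign at large scales, so the averaged smallness $\frac{1}{|B^{n-2}_R|}\int_{B^{n-2}_R}\Psi\to0$ cannot be fed into it. Replacing $\Psi$ by its ``massive'' part (orthogonal to the slicewise zero modes) makes the potential favorable but destroys the equation, since that projection is not local. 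Overcoming this seems to require a global center-manifold / Lyapunov--Schmidt reduction carrying summable error estimates, well beyond the one-scale improvement of flatness used in this paper. In the Allen--Cahn analogue of the Gibbons conjecture the corresponding step is handled by a sliding argument built on the maximum principle, unavailable in the present gauge-theoretic setting; finding a purely variational or spectral substitute that does not assume local minimality is exactly what keeps the statement open.
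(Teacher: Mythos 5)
The statement you are addressing is posed in the paper as an open conjecture, not a theorem: the paper itself only establishes it for critical points in dimensions $2\le n\le4$ and for local minimizers in every dimension (\cref{main-result-global-n4}, via \cref{tilt-excess-decay-minimizer}), and explicitly states that for general critical points in higher dimensions the two-dimensionality cannot yet be concluded. Your proposal does not close this gap either, and you say so yourself: the entire weight of the argument falls on the ``Rigidity'' step, and your own ``Main obstacle'' paragraph concedes that upgrading the averaged $o(R^{n-2})$ smallness of the $z$-derivative energy to its pointwise vanishing is exactly what is missing. Moreover, the mechanism you invoke there is not available under the hypotheses: you test the linearized equation with the field $\xi=((\de_z-i\alpha_z)u,F_{\cdot z})$ and then argue from a lower bound on the second variation (``the second variation dominates \dots by the nondegeneracy of the standard planar vortex''), but for a mere critical point the second variation has no sign, and the slicewise quadratic form controls only the part of $\xi$ orthogonal to the gauge and translation modes --- precisely the projection you note is nonlocal and destroys the equation. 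So the argument as written would only have a chance under a stability or local-minimality assumption, which is the case the paper already handles by a completely different route (competitor construction and the stronger excess decay of \cref{tilt-excess-decay-minimizer} with $\beta=n-2$), not by second-variation positivity.

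Two secondary points in the parts you do carry out. First, the barrier inequality is stated in the wrong direction: from the modulus equation one gets $\Delta(1-|u|^2)\le 1-|u|^2$ only as a supersolution property, which gives no decay; what you need (and what the paper proves on $\{|u|\ge\frac34\}$, cf.\ \cref{exponential-decay-proposition}) is $\Delta(1-|u|^2)\ge c\,(1-|u|^2)$, obtained using $|u|^2\ge\frac12$ together with the Modica bound to absorb $-2|\nabla u|^2$. Second, \cref{Main-result-epsilon-neighbourhood-of-graph} yields that the vorticity set is contained in an $\epsilon^{1/(1+\alpha)}$-neighborhood of a $C^{1,\alpha}$ graph after rescaling; at the fixed scale $\epsilon=1$ it does not identify $\{|u|\le\frac34\}$ with a graph, so the claimed equality $\{|u|\le\frac34\}=\{(f(z),z)\}$ and the asserted decay $|\nabla f(z)|\to0$ need genuine additional arguments. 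These are repairable, but the rigidity step is not, and with it the proposal does not prove the conjecture.
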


            It is interesting to note that, if we allow for multiplicity higher than one in the blow-down,
            this conjecture (with the appropriate energy assumption) does \emph{not} hold for the non-magnetic Ginzburg--Landau energy mentioned before, see \cite{DP}. It is not clear if such rigidity with higher multiplicity should be expected for the energy considered in the present work.
            On the other hand, our excess decay is strong enough to give an affirmative answer up to dimension $4$. With a more involved argument, we are able to settle it also for local minimizers in all dimensions $n\geq 2$, thus obtaining a full analogue of Savin's theorem. 
            
            \begin{thm}\label{main-result-global-n4}
                The previous conjecture holds for critical points in dimension $2\le n\le 4$, as well as for local minimizers in all dimensions $n\ge2$,
                even without the second assumption that $\lim_{|y|\to\infty}|u(y,z)|=1$ uniformly in $z$: the pair $(u,\nabla)$ is two-dimensional, up to rotation and change of gauge.
            \end{thm}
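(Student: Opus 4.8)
The case $n=2$ is vacuous, so assume $n\ge 3$. The plan is to bootstrap from the uniqueness of the blow-down. By \cref{coroll.intro} the density at infinity equals $2\pi$ and, after a rotation, the unique blow-down is the plane $P_0:=\{0\}\times\R^{n-2}$. First I would apply the tilt-excess decay \cref{tilt-excess-decay-statement} to the rescalings $(u_\lambda,\nabla_\lambda)(x):=(u(\lambda x),\lambda\alpha(\lambda x))$, which solve the $E_{1/\lambda}$-equations on $B_1^n$ and, since the normalized energy on $B_1$ tends to $2\pi$, satisfy \cref{energy-bound} for $\lambda$ large (the pointwise bounds \cref{u-le-1}--\cref{modica-bounds} being automatic under the energy growth assumption). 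Iterating the first alternative — noting that the second one is harmless, since its right-hand side is $O(\lambda^{-2}\log^2\lambda)$, and that the competitor plane moves by at most $C\sqrt{\E_1}$ at each step — yields a genuine rate $\E_1(u,\nabla,B_R^n,S_R)\le CR^{-2\mu}$ together with a limiting plane, which by \cref{coroll.intro} is $P_0$. Feeding this into \cref{Main-result-epsilon-neighbourhood-of-graph} at every dyadic scale, and using the exponential decay of $1-|u|^2$ away from the vorticity set encoded in \cref{modica-bounds}, the vorticity set $\{|u|\le\frac34\}$ is a global $C^{1,\alpha}$ graph $z\mapsto(f(z),z)$ over $P_0$ with $\|\nabla f\|_{C^{0,\alpha}(B_R^{n-2})}\le CR^{-\mu}$; in particular $f$ has sublinear growth.

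Second, I would show that $f$ is constant, so that after translating in the $\R^2$ factor the vorticity set is exactly $P_0$. In the regime where the configuration is $C^{1,\alpha}$-close to a plane, $f$ solves a quasilinear elliptic system whose linearization at $\nabla f=0$ is the componentwise Laplacian — the very mechanism behind the improvement of flatness. An entire solution of such a system with $\nabla f(z)\to 0$ as $|z|\to\infty$ is affine by a Liouville-type argument, and the decay rate then forces $\nabla f\equiv 0$.

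The main step is to upgrade ``the vorticity set is $P_0$'' to ``$(u,\nabla)$ is the pullback of the planar degree-$\pm1$ Taubes solution''. The idea is to slice in $z$: writing $x=(y,z)\in\R^2\times\R^{n-2}$ and decomposing $\nabla=(\nabla_y,\nabla_z)$ and $F_\nabla$ into its $(yy)$, $(yz)$, $(zz)$ blocks, one has the identity
\[
E_1(u,\nabla;B^2_{R'}\times B^{n-2}_R)=\int_{B^{n-2}_R}E_1\big(u(\cdot,z),\nabla_y(\cdot,z);B^2_{R'}\big)\,dz+\int_{B^2_{R'}\times B^{n-2}_R}\big(|\nabla_z u|^2+|F_{yz}|^2+|F_{zz}|^2\big).
\]
Since the vorticity set meets each slice in the single point $(0,z)$ and, by the density constraint together with the exponential decay away from $P_0$, each slice is asymptotically a finite-energy planar configuration of degree $\pm1$, Taubes' two-dimensional lower bound \cite{Taubes} gives $E_1(u(\cdot,z),\nabla_y(\cdot,z);\R^2)\ge 2\pi$; choosing $R'=\sqrt R$ and comparing with the ambient energy bound forces each slice to have energy $2\pi+\delta(z)$ with $\int_{B^{n-2}_R}\delta(z)\,dz=o(R^{n-2})$, and the remainder terms to be of the same order. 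The quantitative stability \cref{ymh-dim-2-thm} then makes each slice $O(\delta(z)^{1/2})$-close — in the appropriate norm, modulo translation and change of gauge — to a standard vortex $U_{c(z),\theta(z)}$; once $\delta$ and the remainder terms are shown to vanish identically this gives $c\equiv 0$ (because the zero set is $P_0$), $\theta'$ gaugeable away, and $F_{yz}=F_{zz}=0$, i.e.\ $u$ and $\nabla$ are $z$-independent and equal to the standard vortex.

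For $2\le n\le 4$ this scheme closes directly: the $z$-space has dimension at most $2$, so a Liouville/removable-average argument upgrades ``$\delta$ and the remainder terms have vanishing average over large slabs'' to ``they vanish identically''. In general dimension the averages no longer control the pointwise values, and this is where local minimality enters: exploiting the self-dual structure one derives the analogue of Simons' inequality and a comparison with the plane competitor, ruling out any $z$-dependence of a minimizer in every dimension. I expect the genuinely hard points to be (i) making the slicing quantitative enough for the error terms to be absorbed — in particular the uniformity in $z$ of both the exponential decay and the stability constant in \cref{ymh-dim-2-thm} as a slice degenerates — and (ii) the stability argument in the local-minimizer case, which requires carefully using the Bogomol'nyi/self-dual structure to obtain the right second-variation estimate.
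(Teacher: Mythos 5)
Your first stage (rescale, apply \cref{tilt-excess-decay-statement}, iterate) is the same as the paper's, but the way you try to close the argument contains two genuine gaps. For critical points with $n\le 4$, the paper's mechanism is purely quantitative: the iteration is run so as to give $R^{2}\,\E_1(u,\nabla,B_R^n,S(R))\to 0$ (not merely a rate $R^{-2\mu}$ for some unspecified $\mu$), hence the unnormalized transverse energy $\int_{B_R^n}\bigl[\sum_{a\ge3}|\nabla_{e_a}u|^2+\sum_{(a,b)\neq(1,2)}\omega(e_a,e_b)^2\bigr]=R^{n-2}\E_1(R)=o(R^{n-4})$, which tends to $0$ precisely because $n\le4$; Fatou then forces these nonnegative terms to vanish identically, and two-dimensionality follows as in \cref{excess-vanishes-when-excess-1-vanishes} plus Taubes' classification. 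Your version of this step does not close: from ``$\int_{B_R^{n-2}}\delta(z)\,dz=o(R^{n-2})$ and the remainder terms are of the same order'' one cannot conclude that $\delta$ or the remainder vanish -- $o(R^{n-2})$ may still diverge, and there is no elliptic equation for $\delta(z)$ to which a ``Liouville/removable-average argument'' could apply. Moreover, with only $\E_1\le CR^{-2\mu}$ the bookkeeping fails at $n=4$ even if done correctly, since there one needs the full exponent $2$. (Relatedly, your intermediate claims that the graph $f$ of the vorticity set solves a quasilinear elliptic system with Laplacian linearization, and that one should first prove $f$ is constant, are unsupported -- the paper deliberately avoids attributing any PDE to the vorticity set because of gauge invariance -- and they are also unnecessary once the transverse energy is shown to vanish.)

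For local minimizers in general dimension the gap is larger. Ruling out $z$-dependence requires beating the threshold $R^{-(n-2)}$ in the same energy bookkeeping, i.e.\ a decay of the excess down to errors of size $\epsilon^{\beta}$ with $\beta=n-2$; this is exactly \cref{tilt-excess-decay-minimizer}, whose proof occupies the second half of the paper: one builds a competitor by pulling back the planar Taubes solution along the Lipschitz approximation (\cref{pull-back-proposition}), constructs an interpolation gauge on thin annuli with Poincar\'e--Gaffney estimates (\cref{Interpolation-gauge-proposition}), and proves a strong harmonic approximation for minimizers (\cref{strong-approximation-of-excess-proposition}) before re-running the iteration with $\beta=n-2$. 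Your sketch replaces all of this with ``an analogue of Simons' inequality and a comparison with the plane competitor,'' which is not an argument: no second-variation inequality of Simons type is available or used here, and the comparison with a planar competitor is precisely the step that requires the elaborate gauge-fixing construction. As it stands, neither the $n=4$ case nor the minimizer case is proved by your proposal.
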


            The techniques used in this paper resemble those of \cite{kelei-wang} at several places.
            However, there are several  key  differences which require substantially new ideas. For instance, in order to construct the Lipschitz approximation, Wang uses a generic level set of $u$.
            The fact that typical level sets share effectively properties of minimal hypersurfaces is often used in \cite{kelei-wang}, as well as in \cite{Ilmanen,TW,CM} and many other works in the Allen--Cahn setting.
            For the abelian Higgs model, level sets of $u$ can be arbitrarily irregular, due to gauge invariance; while we can always pass to a local Coulomb gauge, we do not expect such effective properties of typical preimages of $u$.

            Rather, in the present setting, we rely on the results from \cite{Halavati-stability} in order to control in a fine way the behavior of $u$ on many (but not all) two-dimensional slices perpendicular to the reference plane. For instance, we are able to bound the distance of the actual zero set from a certain function giving the ``center of mass'' of each slice, which is used as a Lipschitz approximation and allows to derive a Caccioppoli-type inequality.
            
            In the case of minimizers, this refined control also allows us to deform a nearly flat minimizing pair $(u,\nabla)$ in the interior to gain a \emph{stronger} decay of the excess. This deformation process also requires a very involved gauge fixing argument, since generically $(u,\nabla)$ could be very irregular in an arbitrary gauge. The following theorem is the precise statement of the improved tilt-excess decay that we obtain for minimizers. Note that in the statement below,  \(\beta\) can indeed be any power. This is fundamental for proving \cref{main-result-global-n4} for minimizers, where we need to take \(\beta=n-2\).
        
            \begin{thm}\label{tilt-excess-decay-minimizer}
               For any $\beta>0$ and small enough $0<\rho\le\rho_0(n,\beta)$ there exist $\tau_0(n,\beta,\rho)>0$, $\epsilon_0(n,\beta,\rho)> 0$ with the following property. Let $(u,\nabla)$ be a local minimizer of $E_\epsilon$ in $B^n_1$ with $\epsilon \leq \epsilon_0$ and $u(0)=0$ such that
               \begin{align*}
                   \frac{1}{|B^{n-2}_1|}\int_{B^n_1}e_\epsilon(u,\nabla) \leq 2\pi + \tau_0,
               \end{align*}
               and let $S$ minimize $\E(u,\nabla, B_1^n,S)$.
               Then, after a suitable rotation, at least one of the following statements is true: either
               \begin{align*}
                   \E(u,\nabla,B_\rho^n,\bar S) \leq C\rho^2\E(u,\nabla,B_1^n,S),
               \end{align*}
               for some new oriented $(n-2)$-plane $\bar S$ with $\|P_{\bar S}-P_S\| \leq C\sqrt{\E}$, or
               \begin{align*}
                   \E(u,\nabla,B^n_1,\R^{n-2}) \leq \epsilon^\beta,
               \end{align*}
               where $C=C(n,\beta)$ is independent of $\rho$.
            \end{thm}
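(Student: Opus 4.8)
Throughout write $\E:=\E(u,\nabla,B_1^n,S)=\E_1+\E_2$, where $\E_1$ is the orientation-blind, varifold-type part measuring the failure to be two-dimensional and $\E_2$ the defect in the first-order vortex equations. Rotating so that $S$ becomes $\R^{n-2}$ turns the conclusion $\E\le\epsilon^\beta$ into $\E(u,\nabla,B_1^n,\R^{n-2})\le\epsilon^\beta$, so we may assume we are in the regime $\E>\epsilon^\beta$ and must then establish the quadratic decay of the \emph{full} excess. The decay of $\E_1$ by itself is \cref{tilt-excess-decay-statement}; the two new points are (i) to absorb $\E_2$ into $\E_1$ using minimality, which already handles $\beta<4$, and (ii) for a general power $\beta$, to replace $(u,\nabla)$ in the interior by a genuine planar vortex via a deformation whose error is \emph{super-polynomially} small in $\epsilon$, hence beaten by $\epsilon^\beta$ for every $\beta$.

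Identify $S=\R^{n-2}$ and write $x=(y,z)\in\R^2\times\R^{n-2}$, with the foliation by the slices $\R^2\times\{z\}$. Off a small bad set of $z$'s, the restriction of $(u,\nabla)$ to a slice has degree $\pm1$ and energy $2\pi+\delta(z)$ with $\delta\ge0$ small, so by the quantitative stability of Taubes' solutions (\cref{ymh-dim-2-thm}, from \cite{Halavati-stability}) it is, after a slice-wise change of gauge, within $C\delta(z)$ in an $H^1$-type norm of a standard vortex $V_{f(z)}$ vanishing at a point $f(z)\in\R^2$; this defines the ``center of mass'' map $f\colon B_1^{n-2}\to\R^2$, which by (the proof of) \cref{Main-result-epsilon-neighbourhood-of-graph} has small $C^{1,\alpha}$ norm, vanishes at $0$, and whose graph $C\epsilon$-approximates $\{|u|\le\tfrac{3}{4}\}$. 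All of this first requires a \emph{global} gauge fixing across the annulus $B_1\setminus B_{1/2}$, obtained by patching local Coulomb gauges while quantitatively controlling the transition functions, since in an arbitrary gauge the pair and its slices can oscillate wildly. Using $f$ as a Lipschitz approximation and comparing $(u,\nabla)$ with the pull-back of a vortex along the nearest-graph retraction $(y,z)\mapsto(f(z),z)$, minimality produces a Caccioppoli-type inequality bounding the $L^2$-deviation of the slices of $(u,\nabla)$ from the affine family $\{V_{\bar f(z)}\}$, with $\bar f$ the first-order Taylor polynomial of $f$ at $0$, by the non-affine Dirichlet energy $\int_{B_1}|\nabla(f-\bar f)|^2$, up to errors of size $e^{-c/\epsilon}$ coming from the exponential tails $|\,|u|-1|\lesssim e^{-c\,\dist(x,\{u=0\})/\epsilon}$ of the planar solutions. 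The same competitor — straightening each slice along the vortex family — shows that for a minimizer $\E_2$ cannot be large compared with $\E_1$: one has $\E_2\lesssim\E_1+e^{-c/\epsilon}$, so $\E\approx\E_1$ in our regime, and with \cref{tilt-excess-decay-statement} this already gives the theorem whenever its error term $\epsilon^2|\log\epsilon|^2$ is dominated by $\sqrt{\E_1}\gtrsim\epsilon^{\beta/2}$, i.e., for $\beta<4$.

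For an arbitrary power $\beta$ one pushes the deformation further: using the gauge fixing and the slice control, build an admissible competitor equal to $(u,\nabla)$ near $\partial B_1$ and, on a ball $B_\rho$, equal to the pull-back of the standard degree-$\pm1$ vortex along the retraction onto the graph of $\bar f_\rho$ (the first-order Taylor polynomial of $f$ at $0$), interpolating along the vortex family on the intermediate shell; since the zeros of $(u,\nabla)$ lie within $C\epsilon$ of the curve one is retracting to, the interpolation moves each vortex only by $|f(z)-\bar f_\rho(z)|$, costing the non-affine Dirichlet energy $\int_{B_\rho}|\nabla(f-\bar f_\rho)|^2$ rather than the far larger slice defect $\int\delta$, while the contributions away from the cores are super-polynomially small in $\epsilon$. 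As the reference vortex pull-back has vanishing excess on $B_{\rho/2}$, minimality and the comparison between the excess and the energy excess give $\E(u,\nabla,B_\rho^n,\bar S)\le C\big(\rho^{-(n-2)}\int_{B_\rho}|\nabla(f-\bar f_\rho)|^2+e^{-c/\epsilon}\big)$, with $\bar S$ the graph of $\bar f_\rho$ over $S$ (so $\|P_{\bar S}-P_S\|\lesssim|\nabla f(0)|\lesssim\sqrt\E$). A blow-up — normalizing the deviation of $(u,\nabla)$ from the reference pull-back by $\E^{-1/2}$ and letting $\E\to0$ — linearizes the Euler--Lagrange system and identifies the limit of $\E^{-1/2}f$ with an $\R^2$-valued harmonic function on $B_{1/2}^{n-2}$ (the error being $O(\E_2/\E_1)+o(1)$, hence negligible by the estimate $\E_2\lesssim\E_1$ in our regime), for which the Campanato estimate $\int_{B_\rho}|\nabla(f-\bar f_\rho)|^2\le C\rho^{n}\int_{B_{1/2}}|\nabla f|^2$ holds; since $\int_{B_{1/2}}|\nabla f|^2\lesssim\E$, this yields $\E(u,\nabla,B_\rho^n,\bar S)\le C\rho^2\E+Ce^{-c/\epsilon}$, and as $\E>\epsilon^\beta\gg e^{-c/\epsilon}$ for $\epsilon\le\epsilon_0(n,\beta,\rho)$ the last term is absorbed into the first — with no fixed power of $\epsilon$ ever entering, so $\beta$ is arbitrary. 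Composing with the initial rotation gives the stated dichotomy.

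The main obstacle is the deformation described above, in two respects. First, the global gauge fixing: turning a pair that may oscillate wildly into one whose slices are simultaneously comparable to genuine planar vortices, with controlled boundary values so the competitor is admissible — genuinely delicate because of the model's gauge invariance, and requiring quantitative control of the transition functions between the local Coulomb patches. Second, the energy accounting for the interpolation, which must be shown to cost only the non-affine Dirichlet energy $\int|\nabla(f-\bar f)|^2$, rather than the slice-stability defect $\int\delta(z)$ that is comparable to $\E$ and would ruin the decay; reconciling the two is precisely the role of the Caccioppoli inequality together with the exponential localization of the vortices. Once these are in hand, the blow-up and the harmonic Campanato estimate are routine, modulo the standard bookkeeping of the $\epsilon$-dependent errors inherited from \cref{tilt-excess-decay-statement}.
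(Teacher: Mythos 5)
Your outline follows the paper's strategy in broad strokes (pullback competitor along a graph, patched Coulomb gauges, minimality, harmonic/Campanato decay), but the quantitative heart of the argument is asserted rather than proved, and the assertion is wrong in a way that matters. You repeatedly claim that all errors in the gauge fixing, the Caccioppoli/competitor step, and the final comparison are of size $e^{-c/\epsilon}$, ``with no fixed power of $\epsilon$ ever entering, so $\beta$ is arbitrary.'' This cannot be right, and it is precisely why the theorem's second alternative is $\E\le\epsilon^\beta$ with all constants depending on $\beta$, rather than $\E\le e^{-K/\epsilon}$. The exponential decay $e^{-K\,\mathrm{dist}(\cdot,Z)/\epsilon}$ only produces genuinely exponential smallness at unit distance from the vorticity set; the gauge transition functions, however, must be matched on patches of size $O(\epsilon|\ln\epsilon|)$ around the cores (otherwise the Poincar\'e--Gaffney constants and the phase comparison $\theta-\theta_h$ are uncontrolled), and there the decay only buys $e^{-KC_0|\ln\epsilon|}=\epsilon^{KC_0}$, i.e.\ a \emph{fixed power} of $\epsilon$ determined by the patch-size constant $C_0(n,\beta)$; on top of this the slice-stability errors near the cores are of order $\sqrt{\E_z}$, and the bad set only admits crude $L^\infty$ bounds. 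This is exactly the content of \cref{Interpolation-gauge-proposition} and \cref{interp.cor}, whose remainders are $C|\ln\epsilon|^{10}\int\E_z+\epsilon^\beta$, and of the pigeonhole choice of a $\sqrt\epsilon$-wide gluing annulus in \cref{strong-approximation-of-excess-proposition}, which turns the first term into $o(\E)$ \emph{only} via a compactness/contradiction argument and only under the hypothesis $\E\ge C\epsilon^\beta$. In your write-up this entire mechanism --- the single hardest part of the proof --- is replaced by the unproved claim that the gluing costs the non-affine Dirichlet energy plus $e^{-c/\epsilon}$.

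Two further specific claims are unsupported. First, the intermediate statement ``$\E_2\lesssim\E_1+e^{-c/\epsilon}$ for minimizers, hence $\E\approx\E_1$, which already gives the theorem for $\beta<4$'' is not established by ``straightening each slice along the vortex family'': constructing such a competitor with matching boundary data requires exactly the interpolation gauge above, whose errors are not exponentially small, and the paper never proves (nor needs) a same-scale bound of $\E_2$ by $\E_1$; instead it controls the full slice excess $\E_z$ in the interior by $\E\int|dw|^2/2+\nu\E$ (item (iii) of \cref{strong-approximation-of-excess-proposition}). Second, your harmonic limit is obtained by ``linearizing the Euler--Lagrange system'' in a blow-up, whereas what is actually needed (and used in the paper) is the inner-variation identity relating the stress-energy tensor to the slices (\cref{harmonic-approx-prop}) for harmonicity, together with the minimality-based \emph{upper} bound on the interior excess; the blow-up alone does not give the reverse inequality. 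As it stands, the proposal would prove a stronger statement (exponentially small alternative, constants independent of $\beta$) than the one in \cref{tilt-excess-decay-minimizer}, and the steps that would have to deliver that strength are missing; to repair it you must carry out the gauge-patching estimates with their $|\ln\epsilon|$-losses and $\epsilon^\beta$ remainders, select the gluing region by pigeonholing, and run the compactness argument of \cref{strong-approximation-of-excess-proposition}, accepting that $\beta$ enters all constants.
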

            Then, by taking $\beta\ge n-2$, we obtain a direct proof of \cref{main-result-global-n4} in the case of minimizers.

            \begin{ack*}
            This article is part of the PhD thesis of A.H.\ and he would like to thank Fanghua Lin for his inspiring lectures on harmonic maps. The authors would like to thank  Robert V.\ Kohn, Zhengjiang Lin, Sylvia Serfaty, and Daniel Stern for their interest and related discussions.
            The work of G.D.P.\ and A.H.\ has been supported by the NSF grant DMS-2055686 and the Simons Foundation.
            \end{ack*}

\tableofcontents
            
            \section{Basic definitions}
                While we work on the trivial Hermitian line bundle over the Euclidean space $\R^n$, it is worth to recall the definition of Hermitian line bundle over a general manifold.
                \begin{definition}
                    A \emph{Hermitian line bundle} over a smooth manifold $M$ is a complex line bundle $L\rightarrow M$ (i.e., a complex vector bundle with typical fiber $\C$) equipped with a \emph{Hermitian metric}, whose real part will be denoted by $\ang{\cdot,\cdot}$;
                    thus, for any two smooth sections $s,t\in\Gamma(L)$,
                    the function $p\mapsto\ang{s(p),t(p)}$ is smooth and real-valued, and satisfies $\ang{is(p),it(p)}=\ang{s(p),t(p)}=\ang{t(p),s(p)}$.
                    %
                \end{definition}

                \begin{definition}
                    A \emph{metric connection} is a map $\nabla$ which assigns to each vector field $\xi\in\Gamma(TM)$ an endomorphism $\nabla_\xi: \Gamma(L)\rightarrow \Gamma(L)$ with the following properties:
                    \begin{enumerate}[label=(\roman*)]
                        \item $\nabla_{\xi+\eta} s = \nabla_{\xi} s+ \nabla_\eta s$;
                        \item $\nabla_{\phi\xi} s = \phi \nabla_\xi s$;
                        \item $\nabla_{\xi}(\phi s) = (\xi\phi)s + \phi\nabla_{\xi} s$;
                        \item $\xi(\ang{s,t})=\ang{\nabla_\xi s,t}+\ang{s,\nabla_\xi t}$,
                    \end{enumerate}
                    for any sections $s,t\in\Gamma(L)$, vector fields $\xi,\eta \in \Gamma(TM)$, and function $\phi \in C^{\infty}(M)$.
                    %
                \end{definition}

                On the trivial bundle $L=\C\times M$,
                we can always write a metric connection $\nabla$ as
                $$\nabla=d-i\alpha,$$
                for a real-valued one-form,
                meaning that $\nabla_\xi s=ds(\xi)-i\alpha(\xi)s$.
                
                In general, for two vector fields $\xi$ and $\eta$, typically $\nabla_\xi$ and $\nabla_\eta$ do not commute, meaning that the connection has nontrivial \emph{curvature}. Formally, the curvature $F_\nabla$ is given by
                \begin{align}\label{curvature-definition}
                    F_\nabla(\xi,\eta) (s) = [\nabla_\xi, \nabla_\eta] s - \nabla_{[\xi,\eta]} s.
                \end{align}
                A simple computation shows that $F_\nabla$ is a two-form with values in the Lie algebra of $U(1)$, i.e., in imaginary numbers; we will sometimes use the real-valued two-form $\omega$ given by
                \begin{align}\label{omega}
                    F_\nabla(\xi,\eta) (s)=:-i\omega(\xi,\eta)s.
                \end{align}
                On the trivial bundle, if $\nabla=d-i\alpha$ then we simply have
                $$ \omega=d\alpha. $$
                
                We will use the inner product on two-forms induced by the following quadratic form:
                                \begin{align*}
                    |\omega|^2 = \sum_{1\leq j < k \leq n} |\omega(e_j,e_k)|^2,
                \end{align*}
                where $\{e_k\}_{k=1}^n$ is a local orthonormal frame for $TM$.
                
            \section{The $U(1)$-Yang--Mills--Higgs equations}
            For a section $u\in \Gamma(L)$ and a (metric) connection $\nabla$ on a Hermitian line bundle $L\rightarrow M$ over a smooth Riemannian manifold $(M,g)$, given a parameter $\epsilon>0$, we define the $U(1)$-Yang--Mills--Higgs energy as
            \begin{align}\label{energy-definition}
                E_\epsilon(u,\nabla) := \int_{M} \left[|\nabla u|^2 + \epsilon^2|F_\nabla|^2 + \frac{(1-|u|^2)^2}{4\epsilon^2}\right],
            \end{align}
            where $F_\nabla$ is the curvature of $\nabla$ and $|F_\nabla|$ is defined to be $|\omega|$ (with $\omega$ as in \cref{omega}). Equivalently, on the trivial bundle, for any section $u$ (viewed as a function $M\to\C$) and connection $\nabla=d-i\alpha$ we have
            \begin{align*}
                E_\epsilon(u,\nabla=d-i\alpha) = \int_{M} \left[|du - iu\alpha|^2 + \epsilon^2|d\alpha|^2 + \frac{(1-|u|^2)^2}{4\epsilon^2}\right].
            \end{align*}
            
            A smooth pair $(u,\nabla)$ gives a critical point for the Yang--Mills--Higgs energy if and only if it satisfies the system of partial differential equations:
            \begin{align}
                \nabla^*\nabla u &= \frac{1}{2\epsilon^2}(1-|u|^2)u,\label{ymh-pde-1}\\
                \epsilon^2d^*\omega &= \ang{\nabla u , iu},\label{ymh-pde-2}
            \end{align}
            where $\nabla^*$ is the adjoint of $\nabla$, while $d^*$ is the adjoint of $d:\Omega^1(M)\rightarrow \Omega^2(M)$, given by
            \begin{align*}
                (d^*\omega)(e_k) = -\sum_{j=1}^n (\nabla_{e_j}\omega)(e_j,e_k)
            \end{align*}
for some (and hence any) orthonormal frame \(\{e_j\}\).

            We now recall some Bochner-type identities from \cite[Sections 2--3]{Pigati-1}.
            Since $\omega$ is a closed two-form, after taking the exterior derivative in \cref{ymh-pde-2} we get
            \begin{align}\label{curvature-pde}
                \epsilon^2\Delta_H \omega  +|u|^2\omega = \psi(u),
            \end{align}
            where $\Delta_H = dd^* + d^*d$ is the Hodge Laplacian and
            \begin{align}\label{psi-definition}
                \psi(u) (e_j,e_k) := 2\ang{i\nabla_{e_j}u,\nabla_{e_k}u}.
            \end{align}
            One easily sees that the modulus $|u|^2$ satisfies the equation
            \begin{align}\label{modulus-pde}
                \Delta \frac{1}{2}|u|^2 = |\nabla u|^2 - \frac{|u|^2}{2\epsilon^2}(1-|u|^2).
            \end{align}
            We also recall the following Bochner identity for $|\nabla u|^2$:
            \begin{align}\label{Bochner-1}
                \Delta\frac12|\nabla u|^2 = |\nabla^2 u|^2 +\frac{1}{2\epsilon^2} (3|u|^2 - 1)|\nabla u|^2 - 2\ang{\omega,\psi(u)} + \mathcal{R}_1(\nabla u,\nabla u),
            \end{align}
            where $\mathcal{R}_1 = \operatorname{Ric}(e_j,e_k) \ang{\nabla_{e_j}u,\nabla_{e_k}u}$ and $\nabla^2_{e_j,e_k}u = \nabla_{e_j}(\nabla_{e_k}u)$.
            
            Next, we define the gauge-invariant Jacobian, which plays an important role
            in the $\Gamma$-convergence theory \cite{Pigati-2}, similar to the classical Jacobian in the $\Gamma$-convergence for the Ginzburg--Landau energy with no magnetic field, see \cite{giovanni-orlandi-baldo, BBO,Jerrard-Soner}. It is the two-form given by
            \begin{align}\label{jacobian-definition}
                J(u,\nabla) := \psi(u) + (1-|u|^2)\omega.
            \end{align}
            We have the trivial pointwise bound
            \begin{align}\label{Jacobian-energy-bound}
                |J(u,\nabla)| \leq e_\epsilon(u,\nabla),
            \end{align}
            where $e_\epsilon(u,\nabla)$ is the integrand in \cref{energy-definition}.
            
            We define $\Gamma_\epsilon$ to be the dual current to the Jacobian, formally identified by the duality formula
            \begin{align}
                \ang{\Gamma_\epsilon,\xi} = \frac{1}{2\pi}\int_{M} J(u,\nabla) \wedge \xi,
            \end{align}
            for any $(n-2)$-form $\xi\in\Omega^{n-2}(M)$. Note that by \cref{curvature-pde} the Jacobian can be written as
            \begin{align*}
                J(u,\nabla) = \omega + \epsilon^2\Delta_H\omega.
            \end{align*}
            In particular, this shows that the gauge-invariant Jacobian is a closed two-form. This, in duality, implies that $\de\Gamma_\epsilon = 0$, i.e., $\Gamma_\epsilon$ is an $(n-2)$-dimensional cycle.
            
            \section{Preliminary estimates}
            \subsection{The energy concentration set}
            It was proved by the third-named author and Stern in \cite{Pigati-1} that, for a sequence $(u_\epsilon,\nabla_\epsilon)$ with $\epsilon\to 0$, one can extract a subsequence such that the energy density converges to (the weight of) a stationary integer-rectifiable $(n-2)$-varifold. We restate the main result of \cite{Pigati-1} in the following theorem, see \cite[eq. (6.35)]{Pigati-1} for the conclusion on the Jacobian.
            
            \begin{thm}[The varifold limit]\label{varifold-limit-theorem}
                Let $L\rightarrow M$ be a Hermitian line bundle over a closed, oriented Riemannian manifold $(M^n,g)$ of dimension $n\geq 2$ and let $(u_\epsilon,\nabla_\epsilon)$ be a family of critical points of $E_\epsilon$, satisfying the uniform energy bound
                \begin{align*}
                    E_{\epsilon}(u_\epsilon,\nabla_\epsilon) \leq \Lambda < \infty.
                \end{align*}
                Then, as $\epsilon\rightarrow 0$, the energy measures
                \begin{align*}
                    \mu_\epsilon = \frac{1}{2\pi}e_\epsilon(u_\epsilon,\nabla_\epsilon)\,\vol_g,
                \end{align*}
                converge subsequentially, in duality with $C^0(M)$, to a measure $\mu$ which is the weight of a stationary integral $(n-2)$-varifold $V$. Also, for all $0\leq\delta < 1$,
                \begin{align*}
                    \spt(V) = \lim_{\epsilon\rightarrow 0} \{|u_\epsilon|\leq\delta\},
                \end{align*}
                in the Hausdorff topology. The $(n-2)$-currents dual to the curvature forms $\frac{1}{2\pi} \omega_\epsilon$ and Jacobians $\frac{1}{2\pi}J(u_\epsilon,\nabla_\epsilon)$ converge subsequentially to the same limit, an integral cycle $\Gamma$ with $|\Gamma| \leq \mu$.
            \end{thm}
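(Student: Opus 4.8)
This is the main theorem of \cite{Pigati-1}; I would prove it by adapting to the magnetic, co-dimension-two problem the strategy of Hutchinson--Tonegawa for the Allen--Cahn equation. The first step is a \emph{monotonicity formula}. From the Euler--Lagrange system \cref{ymh-pde-1}--\cref{ymh-pde-2} one obtains the conservation law $\div T_\epsilon=0$, up to ambient curvature terms, for the stress--energy tensor
\[
 T_\epsilon := e_\epsilon(u_\epsilon,\nabla_\epsilon)\,g - 2\langle\nabla u_\epsilon\otimes\nabla u_\epsilon\rangle - 2\epsilon^2\,\omega_\epsilon\circ\omega_\epsilon;
\]
testing this against a radial vector field based at a point $x$ yields that $r\mapsto e^{Cr}r^{2-n}E_\epsilon(u_\epsilon,\nabla_\epsilon;B_r(x))$ is nondecreasing, with a nonnegative remainder controlling $\int\frac{(1-|u_\epsilon|^2)^2}{4\epsilon^2}$ and the radial derivative of $u_\epsilon$. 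Combined with $E_\epsilon\le\Lambda$ this bounds the rescaled energy uniformly, so that along a subsequence $\mu_\epsilon\weakstarto\mu$ and the diffuse varifolds $V_\epsilon$ --- obtained by coupling $\mu_\epsilon$ with the $(n-2)$-plane field read off from a renormalization of $-T_\epsilon$ --- converge to a varifold $V$ with $\|V\|\le\mu$.

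The crux is to show that the limit is \emph{stationary} with $\|V\|=\mu$, which amounts to proving that the discrepancy between the potential plus Yang--Mills density and the normal Dirichlet density vanishes in the limit. I would first establish, via the maximum principle for \cref{modulus-pde} (bounding $1-|u_\epsilon|^2$ by the local Dirichlet energy) and for the curvature equation \cref{curvature-pde} (bounding $\epsilon|\omega_\epsilon|$), a Modica-type inequality of the schematic form $\epsilon^2|\omega_\epsilon|^2+\frac{(1-|u_\epsilon|^2)^2}{4\epsilon^2}\le|\nabla_N u_\epsilon|^2+o_\epsilon(1)$ in an averaged sense, where $\nabla_N u_\epsilon$ denotes the component of $\nabla u_\epsilon$ orthogonal to the level sets of $|u_\epsilon|$. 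Inserting this into the monotonicity formula forces the discrepancy measures
\[
 \xi_\epsilon := \Big(\epsilon^2|\omega_\epsilon|^2+\tfrac{(1-|u_\epsilon|^2)^2}{4\epsilon^2}-|\nabla_N u_\epsilon|^2\Big)\vol_g \weakstarto 0,
\]
whence $\mu=\|V\|$ and, passing $\div T_\epsilon=0$ to the limit, $V$ is stationary. This is the main obstacle: one must rule out that the potential or curvature energy concentrates on a set where the Dirichlet energy does not, and the pointwise Bochner/maximum-principle estimates have to be combined delicately with monotonicity to do so.

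For rectifiability and integrality I would prove an \emph{$\epsilon$-regularity lemma}: there is $\eta_0(n)>0$ such that $r^{2-n}E_\epsilon(u_\epsilon,\nabla_\epsilon;B_r(x))\le\eta_0$ implies $|u_\epsilon|\ge\frac12$ on $B_{r/2}(x)$. Its contrapositive gives $\limsup\{|u_\epsilon|\le\delta\}\subseteq\spt V$ together with a uniform lower density bound $\Theta^{n-2}(\mu,x)\ge c(n)>0$ on $\spt V$, while the complementary estimate (the absence of vorticity forces small local energy) gives the reverse inclusion, so $\{|u_\epsilon|\le\delta\}\to\spt V$ in the Hausdorff topology for every $\delta\in[0,1)$; Allard's rectifiability theorem then shows $V$ is rectifiable. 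To see that $V$ is \emph{integral}, one blows up at a point of positive density: by monotonicity the blow-up is a stationary cone, and a slicing argument combined with the two-dimensional energy quantization of Taubes \cite{Taubes} (a finite-energy planar pair has energy $2\pi d$ with $d\in\Z$ its degree at infinity) forces $\frac{1}{2\pi}\|V\|$ to have integer density. This is the second genuinely nontrivial point of the proof.

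Finally, for the currents, note that by \cref{ymh-pde-2} --- equivalently $J(u_\epsilon,\nabla_\epsilon)=\omega_\epsilon+\epsilon^2\Delta_H\omega_\epsilon$ --- the $(n-2)$-currents dual to $\frac{1}{2\pi}\omega_\epsilon$ and to $\frac{1}{2\pi}J(u_\epsilon,\nabla_\epsilon)$ differ by a current of mass $O(\epsilon)$, and both are cycles since $J$ is closed. A Jerrard--Soner-type Jacobian estimate, together with the pointwise bound \cref{Jacobian-energy-bound} and $E_\epsilon\le\Lambda$, bounds their masses uniformly and identifies their common subsequential limit $\Gamma$ as an integral cycle with $|\Gamma|\le\mu$, completing the proof.
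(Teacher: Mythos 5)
You should note at the outset that the paper does not prove this theorem: it is quoted verbatim from \cite{Pigati-1} (with \cite[eq.\ (6.35)]{Pigati-1} for the statement on the Jacobians), so your sketch can only be compared with the proof given there. Several of your ingredients do match that proof: almost-monotonicity of $r^{2-n}E_\epsilon(B_r(x))$ obtained by testing $\operatorname{div}T_\epsilon=0$ with radial fields, a clearing-out/$\epsilon$-regularity lemma giving the lower density bound and the Hausdorff convergence of $\{|u_\epsilon|\le\delta\}$ to $\spt V$, integrality via blow-up, slicing and Taubes' two-dimensional quantization \cite{Taubes}, and the identification of the limits of the currents dual to $\frac{1}{2\pi}\omega_\epsilon$ and $\frac{1}{2\pi}J(u_\epsilon,\nabla_\epsilon)$ (although these differ by $O(\epsilon)$ only in the weak/flat sense, after moving $\epsilon^2\Delta_H$ onto a fixed test form; their mass difference is not $O(\epsilon)$, and the integrality of $\Gamma$ is itself a nontrivial compactness statement, not a consequence of Federer--Fleming, since the approximating currents are diffuse).

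The step you single out as the crux, however, is not correct. The inequality you propose, $\epsilon^2|\omega_\epsilon|^2+\frac{(1-|u_\epsilon|^2)^2}{4\epsilon^2}\le|\nabla_Nu_\epsilon|^2+o_\epsilon(1)$, is not the Modica-type bound available for this functional: what the maximum principle applied to \cref{modulus-pde} and \cref{curvature-pde} actually yields is $\epsilon|F_{\nabla_\epsilon}|\le\frac{1-|u_\epsilon|^2}{2\epsilon}+C\epsilon$ and $|\nabla_\epsilon u_\epsilon|\le\frac{1-|u_\epsilon|^2}{\epsilon}+C\epsilon$ (cf.\ \cref{modica-bounds}), i.e.\ bounds on the curvature and the gradient \emph{by} the potential, whose role in \cite{Pigati-1} is to give the sign, up to small errors, of the term $\int\bigl(\frac{(1-|u|^2)^2}{4\epsilon^2}-\epsilon^2|\omega|^2\bigr)$ in the monotonicity formula and to prove exponential decay off the vorticity set. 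More seriously, the vanishing-discrepancy claim $\xi_\epsilon\weakstarto0$, on which you base $\mu=\|V\|$ and stationarity, is false in this model: on blow-up the profile is Taubes' degree-one vortex $(u_0,\nabla_0)$, for which all three terms of $e_1(u_0,\nabla_0)$ carry fixed positive fractions of the limiting density $2\pi$, so your $\xi_\epsilon$ would converge to a multiple of $\mu$ given by $\int_{\R^2}\bigl[|F_{\nabla_0}|^2+\frac{(1-|u_0|^2)^2}{4}-\bigl|d|u_0|\bigr|^2\bigr]$, a quantity with no reason to vanish (it would amount to an exact, unmotivated integral identity for the vortex); there is no Allen--Cahn-type equipartition here. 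The actual mechanism in \cite{Pigati-1} is different: the weight of the varifold is the limit energy measure by construction, stationarity follows by passing $\int\langle T_\epsilon,DX\rangle=0$ to the limit, and the genuinely hard point is to show that the limit of the stress-energy tensors has the form $P_{T_xV}\,d\mu$ for an $(n-2)$-rectifiable, integer-multiplicity $V$; this is achieved through monotonicity, the density bounds, and two-dimensional slicing/degree arguments resting on Taubes' classification (cf.\ \cite[Section 6]{Pigati-1}, as recalled in the proof of \cref{excess-vanishes-when-excess-1-vanishes}). In the language of the present paper, what vanishes on blow-ups is the tilt part $\E_1$ of the excess, not your discrepancy; as written, your central step would fail and with it the identification of the limit as a stationary integral varifold.
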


            \begin{rmk}
                The previous result admits a local version, proved in the same way (assuming the bounds \cref{u-le-1} and \cref{modica-bounds} below, which in the closed case follow from the maximum principle): assume that we have an increasing sequence of open sets $U_\epsilon\subseteq\R^n$ and a sequence of smooth pairs $(u_\epsilon,\nabla_\epsilon)$, each defined on the trivial bundle $\C\times U_\epsilon$ and critical for $E_\epsilon$; if we have
                $$\limsup_{\epsilon\to0}\int_K e_\epsilon(u_\epsilon,\nabla_\epsilon)<\infty$$
                for any compact subset $K\subset U:=\bigcup_\epsilon U_\epsilon$, as well as \cref{u-le-1}--\cref{modica-bounds},
                then there exist a limiting varifold $V$ and a limiting cycle $\Gamma$ satisfying the same conclusions as above (up to a subsequence).
            \end{rmk}
            We will use the above theorem (in its local version) in several soft arguments by compactness and contradiction; in particular, we will use it to obtain information for any blow-down limit of an entire solution. 
            
            \subsection{Modica-type bounds and exponential decay}
            Actually, \cite{Pigati-1} contains some additional information which will be used frequently in the paper, including a Modica-type bound which was first proved in dimension two in \cite[Theorem III.8.1]{Taubes-2}. We record the following propositions in the non-compact case of $M = \R^{n}$, with the trivial bundle $L = \mathbb{C}\times\R^{n}$.

            \begin{proposition}\label{p:le1}
                A critical point $(u,\nabla)$ for $E_\epsilon$,
                on the trivial bundle on $\R^n$, satisfies
                \begin{equation}\label{u-le-1}
                    |u|\le1
                \end{equation}
                everywhere.
            \end{proposition}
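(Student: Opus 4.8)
The plan is to reduce \cref{u-le-1} to a maximum principle for the modulus $|u|$, using only the scalar equation \cref{modulus-pde} satisfied by $\tfrac12|u|^2$. Doubling \cref{modulus-pde} and writing $w:=|u|^2-1$, one gets
\[
\Delta w = 2|\nabla u|^2 + \frac{|u|^2}{\epsilon^2}\,w \ \geq\ \frac{|u|^2}{\epsilon^2}\,w ,
\]
since $|\nabla u|^2\geq0$; in particular, on the open set $\{w>0\}=\{|u|>1\}$ we have $|u|^2\geq1$, so $\Delta w\geq\epsilon^{-2}w$ there. Passing to the positive part $\phi:=w_+=(|u|^2-1)_+\ge0$, which is locally Lipschitz, Kato's inequality gives $\Delta\phi\ge \uno_{\{w>0\}}\Delta w\ge \epsilon^{-2}\phi$ in the sense of distributions on $\R^n$. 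Thus $\phi$ is a nonnegative subsolution of $\Delta-\epsilon^{-2}$, and it suffices to show $\phi\equiv0$.

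To this end I would run a Caccioppoli/Liouville argument adapted to the non-compact setting. Testing $\Delta\phi\ge\epsilon^{-2}\phi$ against $\phi\eta^2$ for a cutoff $\eta\in C^\infty_c(\R^n)$, integrating by parts, and absorbing the cross term via Young's inequality yields
\[
\frac{1}{\epsilon^2}\int_{\R^n}\phi^2\eta^2 \ \le\ \int_{\R^n}\phi^2|\nabla\eta|^2 .
\]
Choosing $\eta$ equal to $1$ on $B_R$, supported in $B_{R+1}$, with $|\nabla\eta|\le 2$, and setting $g(R):=\int_{B_R}\phi^2$, this reads $g(R)\le 4\epsilon^2\bigl(g(R+1)-g(R)\bigr)$, i.e.\ $g(R+1)\ge(1+\tfrac{1}{4\epsilon^2})g(R)$, so $g$ grows at least exponentially in $R$ as soon as it is positive somewhere. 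On the other hand $\phi^2\le(1-|u|^2)^2\le 4\epsilon^2\,e_\epsilon(u,\nabla)$ pointwise, so $g(R)\le 4\epsilon^2\int_{B_R}e_\epsilon(u,\nabla)$, which grows like $O(R^{n-2})$ under the standing energy-growth assumption on entire solutions (in the closed case it is simply finite). The polynomial bound contradicts exponential growth unless $g\equiv0$, whence $\phi\equiv0$ and $|u|\le1$ a.e., hence everywhere by continuity.

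The only real obstacle is the non-compactness of $\R^n$: there is no boundary on which to run the classical maximum principle, and no a priori control of $|u|$ at infinity — the differential inequality $\Delta\phi\ge\epsilon^{-2}\phi$ by itself admits nonzero solutions such as $e^{x_1/\epsilon}$. What rescues the argument is precisely the subexponential (here polynomial) growth of the energy, which is available for all solutions considered in this paper; this quantitative input plays the role of the maximum principle used in the compact case. A minor technical point is the justification of Kato's inequality for $w_+$ across the level set $\{w=0\}$, but since $\Delta w=2|\nabla u|^2\ge0$ there, the contribution of that set has the favorable sign and causes no difficulty.
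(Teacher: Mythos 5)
Your reduction is internally sound: starting from \cref{modulus-pde} you correctly get $\Delta w=2|\nabla u|^2+\epsilon^{-2}|u|^2w$ for $w=|u|^2-1$, hence $\Delta w_+\ge\epsilon^{-2}w_+$ distributionally, and the Caccioppoli step $\epsilon^{-2}\int\phi^2\eta^2\le\int\phi^2|\nabla\eta|^2$ with the resulting exponential-growth alternative for $g(R)=\int_{B_R}\phi^2$ is fine. The problem is the last step. To rule out exponential growth you invoke ``the standing energy-growth assumption'', i.e.\ $\int_{B_R}e_\epsilon(u,\nabla)=O(R^{n-2})$, but \cref{p:le1} carries no such hypothesis: it asserts $|u|\le1$ for \emph{every} entire critical point on the trivial bundle, and the growth assumption is introduced only afterwards, in \cref{Modica-bound-proposition} (``Assuming also that the energy on a ball $B_R$ is $O(R^{n-2})$\dots''). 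As you yourself observe, the differential inequality $\Delta\phi\ge\epsilon^{-2}\phi$ admits nonzero nonnegative solutions such as $e^{x_1/\epsilon}$, so without some global control the argument cannot close; and no such control is available from the hypotheses of the proposition. So what you have proved is the conditional statement (which suffices for the way the paper applies \cref{u-le-1} to entire solutions, where the $O(R^{n-2})$ bound is always assumed), not the proposition as stated.

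The paper's own proof sidesteps this entirely by being purely local: as in \cite[Proposition A.2]{Pigati-3}, one proves the interior estimate $|u|\le 1+C(n)\epsilon^2$ on the unit ball for any critical pair defined there, with no information at infinity and no energy bound, and then scales: a critical point of $E_\epsilon$ restricted to $B_R$ becomes, after dilation, a critical point of $E_{\epsilon/R}$ on $B_1$, so the local bound yields $|u|\le1+C(n)\epsilon^2/R^2$, and letting $R\to\infty$ gives $|u|\le1$ everywhere, unconditionally. Your Liouville-type mechanism (nonnegative subsolution of $\Delta-\epsilon^{-2}$ plus growth control) is in fact the same device the paper uses for the Modica-type bounds \cref{modica-bounds}, where the $O(R^{n-2})$ hypothesis is genuinely assumed. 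To repair your proof of \cref{p:le1} as stated you would need precisely the missing ingredient above: a local interior bound on $(|u|^2-1)_+$ (e.g.\ by Moser iteration on the subsolution inequality on a ball, which gives a bound depending only on $n$ and $\epsilon$), after which the scaling argument concludes without any assumption on the energy.
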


            \begin{proof}
                The bound $|u|\le 1+C(n)\epsilon^2$ on the unit ball $B_1(0)$
                can be shown as in \cite[Proposition A.2]{Pigati-3},
                and the claim follows by scaling.
            \end{proof}
            
            \begin{proposition}[Modica-type bounds]\label{Modica-bound-proposition}
                Assuming also that the energy on a ball $B_R$ is $O(R^{n-2})$ for $R$ large enough, we have the pointwise bounds
                \begin{align}\label{modica-bounds}
                    \epsilon|F_\nabla| \leq \frac{1-|u|^2}{2\epsilon},\quad |\nabla u| \leq \frac{1-|u|^2}{\epsilon}.
                \end{align}
                \begin{proof}
                    The proof is essentially the same as in \cite{Pigati-1}; however, in the Euclidean space, the Modica-type bound has no error terms. First, define $\xi_\epsilon$ to be the \emph{discrepancy}:
                    \begin{align}
                        \xi := \epsilon|F_\nabla|-\frac{1-|u|^2}{2\epsilon}.
                    \end{align}
                    Arguing as in \cite[Section 3]{Pigati-1}, we see that
                    \begin{align}\label{disrepency-pde-preliminary}
                        \Delta \xi \geq \frac{|u|^2}{\epsilon^2}\xi.
                    \end{align}
                    For the positive part $\xi^+$, this immediately implies that
                    $$\Delta\xi^+\ge0$$
                    in the distributional sense, i.e., $\xi$ is subharmonic.
                    Under the energy growth assumption, we have
                    $$\int_{B_R(0)}|\xi|=O(R^{n-1}),$$
                    which gives $\xi^+\equiv0$, as claimed.
                    
                    For the second bound,
                    proceeding as in \cite[eqs. (5.5)--(5.6)]{Pigati-1},
                    we check that
                    \begin{align*}
                        w := |\nabla u| - \frac{1-|u|^2}{\epsilon}
                    \end{align*}
                    satisfies
                    \begin{align*}
                        \Delta w \geq \frac{|u|^2}{\epsilon^2}w + \frac{1}{\epsilon}\left(w + \frac{1-|u|^2}{\epsilon}\right)\left(2w + \frac{1-|u|^2}{2\epsilon}\right).
                    \end{align*}
                    Again, this implies that $w^+$ is subharmonic,
                    and hence $w^+\equiv0$.
                \end{proof}
            \end{proposition}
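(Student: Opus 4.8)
The plan is to adapt the classical Modica--Taubes argument: out of the two sides of each desired inequality we build an auxiliary ``discrepancy'' function, show that its positive part is a nonnegative subharmonic function on all of $\R^n$, and then kill it using the $O(R^{n-2})$ energy growth via a Liouville-type argument. For the first inequality, set $\xi := \epsilon|F_\nabla|-\tfrac{1-|u|^2}{2\epsilon}$. Differentiating $|F_\nabla|=|\omega|$ with the Bochner identity \cref{curvature-pde} — on flat $\R^n$ the Hodge Laplacian is the rough Laplacian, so no curvature terms enter — together with a Kato-type inequality for $|\omega|$, and differentiating $\tfrac{1-|u|^2}{2\epsilon}$ with the modulus equation \cref{modulus-pde}, I would compute $\Delta\xi$. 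The self-dual choice of constants is exactly what makes the cross term $\ang{\omega,\psi(u)}$, pointwise bounded by $2|\omega|\,|\nabla u|^2$ since $|\psi(u)|\le 2|\nabla u|^2$ by \cref{psi-definition}, combine with the leftover terms to produce the clean inequality
\[
\Delta\xi \ \ge\ \frac{|u|^2}{\epsilon^2}\,\xi ,
\]
the Euclidean, error-free version of the computation in \cite[Section 3]{Pigati-1}. Wherever $\xi>0$ the right-hand side is nonnegative, so by Kato's inequality for distributions $\Delta\xi^+\ge 0$ in the distributional sense, i.e.\ $\xi^+$ is subharmonic on $\R^n$.

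Next comes the Liouville step. From the energy bound $\int_{B_R}\big(\epsilon^2|F_\nabla|^2+\tfrac{(1-|u|^2)^2}{4\epsilon^2}\big)=O(R^{n-2})$ and Cauchy--Schwarz one gets $\int_{B_R}\big(\epsilon|F_\nabla|+\tfrac{1-|u|^2}{2\epsilon}\big)\le |B_R|^{1/2}\,O(R^{(n-2)/2})=O(R^{n-1})$, hence $\int_{B_R}|\xi|=O(R^{n-1})$. A nonnegative subharmonic function $v=\xi^+$ with this growth must vanish identically: by the sub-mean-value inequality, for any fixed $x_0$,
\[
v(x_0)\ \le\ \frac{1}{|B_r|}\int_{B_r(x_0)} v\ \le\ \frac{C}{r^n}\int_{B_{r+|x_0|}(0)} v\ =\ O(1/r)\ \longrightarrow\ 0\quad\text{as }r\to\infty,
\]
so $\xi\le 0$ everywhere; combined with $|u|\le 1$ from \cref{p:le1} this is the first bound.

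For the second inequality I would argue in the same way with $w := |\nabla u|-\tfrac{1-|u|^2}{\epsilon}$, now using the Bochner identity \cref{Bochner-1} for $|\nabla u|^2$, a Kato-type inequality for $|\nabla u|$, the modulus equation \cref{modulus-pde}, and — crucially — the curvature bound just established to absorb the $\ang{\omega,\psi(u)}$ term. This yields, as in \cite[eqs. (5.5)--(5.6)]{Pigati-1},
\[
\Delta w\ \ge\ \frac{|u|^2}{\epsilon^2}\,w + \frac1\epsilon\,|\nabla u|\Big(2w+\frac{1-|u|^2}{2\epsilon}\Big),
\]
where I used $w+\tfrac{1-|u|^2}{\epsilon}=|\nabla u|\ge 0$. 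Since $|u|\le 1$, wherever $w>0$ we have $2w+\tfrac{1-|u|^2}{2\epsilon}>0$, so the whole right-hand side is nonnegative there and $w^+$ is again subharmonic; the same energy-growth estimate gives $\int_{B_R}|w|=O(R^{n-1})$, and the Liouville step forces $w^+\equiv 0$, i.e.\ the second bound.

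The only genuine work is the bookkeeping behind the two differential inequalities: one must track the sign of every term coming out of the Bochner identities and of the Kato-inequality gaps for $\omega$ and for $\nabla u$, and verify that the self-dual normalization makes the potentially bad cross terms cancel or get dominated. Once those inequalities are in hand, the distributional subharmonicity of the positive parts and the Liouville conclusion from the $O(R^{n-1})$ $L^1$-growth are routine, so this is where I expect the main difficulty (such as it is) to lie.
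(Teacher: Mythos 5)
Your proposal is correct and follows essentially the same route as the paper: define the discrepancies $\xi$ and $w$, import the differential inequalities $\Delta\xi\ge\frac{|u|^2}{\epsilon^2}\xi$ and the analogous one for $w$ from \cite[Section 3, eqs. (5.5)--(5.6)]{Pigati-1}, conclude that the positive parts are subharmonic, and kill them via the $O(R^{n-1})$ bound on $\int_{B_R}|\xi|$, $\int_{B_R}|w|$ coming from the $O(R^{n-2})$ energy growth. The only difference is that you spell out the Liouville step (Cauchy--Schwarz plus the sub-mean-value inequality) which the paper leaves implicit; the heuristic aside about deriving the $\xi$-inequality with the crude bound $|\psi(u)|\le 2|\nabla u|^2$ is not sharp enough on its own (the self-dual computation uses $|\psi(u)|\le|\nabla u|^2$), but since you, like the paper, ultimately defer that computation to \cite{Pigati-1}, this does not affect the argument.
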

            We also record the following exponential decay
            of energy, which plays a key role in the paper.
            \begin{proposition}[Exponential decay away from the vorticity set]\label{exponential-decay-proposition}
                There exist constants $K(n)>0$ and $C(n)>0$ such that, defining $Z := \{ |u| \leq \frac34\}$ and $r(p) := \operatorname{dist}(p,Z)$, we have
                \begin{align}\label{exponential-decay-estimate}
                    e_\epsilon(u,\nabla) \leq C \frac{e^{-Kr(p)/\epsilon}}{\epsilon^2}.
                \end{align}
            \end{proposition}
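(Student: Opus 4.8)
The plan is to reduce the estimate to an exponential decay bound for $v:=1-|u|^2$ and then run a comparison argument, following \cite{Pigati-1}. Since $0\le v\le 1$ by \cref{p:le1} and, by the Modica bounds \cref{modica-bounds}, $|\nabla u|^2\le\epsilon^{-2}v^2$ and $\epsilon^2|F_\nabla|^2\le\tfrac14\epsilon^{-2}v^2$, we have $e_\epsilon(u,\nabla)\le\tfrac32\epsilon^{-2}v^2\le\tfrac32\epsilon^{-2}v$; hence it suffices to show $v(p)\le C(n)e^{-K(n)r(p)/\epsilon}$, where $r(p):=\dist(p,Z)$. For the differential inequality, \cref{modulus-pde} gives $\Delta v=\epsilon^{-2}|u|^2v-2|\nabla u|^2\ge\epsilon^{-2}v(1-3v)$ after inserting $|\nabla u|^2\le\epsilon^{-2}v^2$; thus, fixing the universal constant $c_0:=\tfrac16$, on the open set $G:=\{|u|^2>1-c_0\}$ one has $1-3v>\tfrac12$, so $v$ is a nonnegative subsolution of $-\Delta+\tfrac{1}{2\epsilon^2}$ on $G$. (One can also check directly, using \cref{Bochner-1}, \cref{curvature-pde}, the pointwise bound $|\psi(u)|\le\sqrt2\,|\nabla u|^2$, and both Modica bounds, that $e_\epsilon(u,\nabla)$ itself is a subsolution of $-\Delta+\tfrac{c(n)}{\epsilon^2}$ on $G$ after possibly shrinking $c_0$; this is convenient but not essential.)

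The crucial intermediate step is to control the transition region $\{3/4<|u|<\sqrt{1-c_0}\,\}$; concretely, one needs
\[ \{\,|u|^2\le 1-c_0\,\}\ \subseteq\ \{\,x\,:\,\dist(x,Z)\le C(n)\epsilon\,\}. \]
This is the quantitative counterpart of the Hausdorff convergence $\{|u_\epsilon|\le\delta\}\to\spt V$ for all $\delta\in[0,1)$ recorded in \cref{varifold-limit-theorem}, and it rests on the $\epsilon$-regularity theory of \cite{Pigati-1}: near the vorticity set, after rescaling by $\epsilon$, the pair $(u,\nabla)$ is close to a standard planar vortex, whose sublevel sets $\{|u|\le\delta\}$ all lie in a fixed ball, so that the core $Z$ and the larger set $\{|u|^2\le 1-c_0\}$ sit within $C(n)\epsilon$ of each other. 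I expect this to be the main obstacle: the elementary maximum principle above only controls $v$ where $|u|$ is already close to $1$, and turning ``$\{|u|$ bounded away from $1\}$ is an $O(\epsilon)$-tube around $Z$'' into a quantitative statement genuinely uses the regularity and monotonicity machinery rather than a soft argument.

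Granting the containment, the conclusion follows by comparison. If $r(p)\le C(n)\epsilon$, the desired bound is immediate from $e_\epsilon(u,\nabla)\le\tfrac32\epsilon^{-2}$ after enlarging the constant. If $r(p)>C(n)\epsilon$, put $\rho:=r(p)-C(n)\epsilon>0$; then $|u|^2>1-c_0$ on $B_\rho(p)$, so $v$ is a nonnegative subsolution of $-\Delta+\tfrac{1}{2\epsilon^2}$ on $B_\rho(p)$ with $v\le 1$ on $\partial B_\rho(p)$. Comparing with the radial solution $\bar v$ of $\Delta\bar v=\tfrac{1}{2\epsilon^2}\bar v$ on $B_\rho(p)$ with boundary value $1$ --- whose value at the center is at most $C(n)\bigl(1+(\rho/\epsilon)^{(n-1)/2}\bigr)e^{-\rho/(\sqrt2\,\epsilon)}$ by the asymptotics of the modified Bessel function $I_{n/2-1}$ --- the maximum principle for $-\Delta+\tfrac{1}{2\epsilon^2}$ gives $v(p)\le\bar v(p)\le C(n)e^{-\rho/(2\epsilon)}$, absorbing the polynomial prefactor. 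Since $\rho=r(p)-C(n)\epsilon$ and $e_\epsilon(u,\nabla)\le\tfrac32\epsilon^{-2}v$, this yields the proposition with suitable $K(n)>0$ and $C(n)>0$.
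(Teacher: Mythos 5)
Your overall skeleton --- reduce to exponential decay of $v:=1-|u|^2$ via the Modica bounds, establish a subsolution inequality for $v$, and conclude by comparison with the radial solution --- is the same as the paper's, and both your reduction $e_\epsilon\le\tfrac32\epsilon^{-2}v$ and your final Bessel-type comparison are fine. The genuine gap is exactly at the step you flag. The paper's proof rests on the computation of \cite[Corollary 5.2]{Pigati-1}, which gives $\Delta\tfrac{1-|u|^2}{2}\ge\tfrac{1-|u|^2}{4\epsilon^2}$ on \emph{all} of $\R^n\setminus Z=\{|u|>\tfrac34\}$; with the inequality valid up to the boundary of $Z$, the comparison argument runs directly against $\dist(\cdot,Z)$ and no control of a transition region is needed. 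Your version, obtained by crudely inserting $|\nabla u|^2\le\epsilon^{-2}v^2$ into \cref{modulus-pde}, only has a favorable sign where $v\le\tfrac16$, i.e.\ on $\{|u|^2>\tfrac56\}$, which is strictly smaller than $\R^n\setminus Z$, and this is what forces you to add the containment $\{|u|^2\le 1-c_0\}\subseteq\{\dist(\cdot,Z)\le C(n)\epsilon\}$.

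That containment is left unproved, and the justification you propose is not available at this level of generality. The proposition carries no small-energy or multiplicity-one hypothesis: it is a general consequence of \cref{u-le-1}--\cref{modica-bounds} for critical points with $O(R^{n-2})$ energy growth (with an arbitrary constant), and $C,K$ must depend only on $n$. Closeness to the standard degree-one planar vortex at scale $\epsilon$ --- the $\epsilon$-regularity you invoke --- requires precisely the density-close-to-$2\pi$/small-excess hypotheses of the later sections (\cref{soft-height-bound}, \cref{cone}, \cref{Lipschitz-approximation-of-zero-loci-lemma}), which are not assumed here; moreover those statements, and the compactness theory behind them, are established \emph{after} and partly \emph{by means of} this proposition (for instance item (iii) of \cref{Lipschitz-approximation-thm} invokes \cref{exponential-decay-proposition}), so your route is at best a reordering of the paper's logic and at worst circular. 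Nor does the containment follow softly from the relations you have used: the equation \cref{modulus-pde} together with both bounds in \cref{modica-bounds} does not, by itself, exclude $1-|u|^2$ sitting at a value like $\tfrac25$ (so $\tfrac34<|u|<\sqrt{5/6}$) throughout a ball of radius $M(n)\epsilon$, since a constant $v=\tfrac25$ with $|\nabla u|^2=\tfrac{3}{25}\epsilon^{-2}$ is compatible with all of them. In other words, some sharper input is genuinely needed on the annular region $\{\tfrac34<|u|\le\sqrt{1-c_0}\}$, and that input is exactly the refined computation behind \cite[Corollary 5.2]{Pigati-1}, which does not reduce to plugging the pointwise gradient bound into \cref{modulus-pde}. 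To repair your proof along the paper's lines, derive the differential inequality on the whole set $\{|u|>\tfrac34\}$ as in that reference; then your comparison argument goes through unchanged and the containment step disappears.
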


            \begin{proof}
                As in \cite[Corollary 5.2]{Pigati-1},
                we compute that on $\R^n\setminus Z$ we have
                $$\Delta\frac{1-|u|^2}{2}\ge\frac{1-|u|^2}{4\epsilon^2}.$$
                Exponential decay now follows as in \cite[Proposition 5.3]{Pigati-1}, using also the previous Modica-type bounds.
            \end{proof}
            
            \subsection{Inner variations and monotonicity}
            In this section we recall the inner variation formulas for critical points. With respect to any orthonormal basis $\{e_k\}_{k=1}^n$ for $TM$, we define the $(0,2)$-tensors $\nabla u^*\nabla u$ and $\omega^*\omega$ by
            \begin{align}
                (\nabla u^*\nabla u)(e_j,e_k) &:= \ang{\nabla_{e_j} u,\nabla_{e_k}u},\label{nabla-tensor-definition}\\
                \omega^*\omega(e_i,e_j) &:= \sum_{k=1}^{n} \omega(e_i,e_k)\omega(e_j,e_k).\label{curvature-tensor-definition}
            \end{align}
            We define the \emph{stress-energy tensor} to be
            \begin{align}\label{stress-energy-tensor-definition}
                T_\epsilon(u,\nabla) := e_\epsilon(u,\nabla) - 2\nabla u^*\nabla u - 2\epsilon^2\omega^*\omega.
            \end{align}
            Then, for any pair $(u,\nabla)$ satisfying \cref{ymh-pde-1}--\cref{ymh-pde-2}, the inner variation formula then reads
            \begin{align}\label{div-free-stress-energy-tensor}
                \operatorname{div}(T_\epsilon(u,\nabla))=0,
            \end{align}
            meaning that, for any compactly supported vector field $X$,
            \begin{align}\label{testing-stress-energy-tensor}
                \int_{M} \ang{T_\epsilon(u,\nabla),DX} = 0.
            \end{align}
            A core tool in the proof of \cref{varifold-limit-theorem} is the \emph{monotonicity formula} from \cite[Theorem 4.3]{Pigati-1}, which is cleaner in the case of the trivial line bundle $L= \C\times\R^n$ over the flat Euclidean space $M=\R^n$. We state this version of the theorem for convenience and give a short proof.
            \begin{proposition}[Monotonicity formula]\label{monotonicity-prop}
                Let $(u,\nabla)$ be a critical point for $E_\epsilon$ on the trivial line bundle $L=\C\times\R^n\rightarrow \R^n$. Then the normalized energy
                \begin{align*}
                    \tilde{E}_\epsilon(p,r) := r^{2-n}\int_{B_r(p)} e_\epsilon(u,\nabla)
                \end{align*}
                satisfies
                \begin{align}\label{energy-n-2-monotonicity}
                    \frac{d}{dr} \tilde{E}_\epsilon(p,r) = 2r^{1-n}\int_{B_r(p)}\left( \frac{(1-|u|^2)^2}{4\epsilon^2} - \epsilon^2|\omega|^2\right) + 2r^{2-n}\int_{\de B_r(p)} (|\nabla_\nu u|^2 + |\iota_\nu\omega|^2).
                \end{align}
                \begin{proof}
                    Without loss of generality, assume that $p=0$. By approximation we can take $X(x)= \textbf{1}_{B_r(0)} \sum_{k=1}^{n} x_k e_k$ in \cref{testing-stress-energy-tensor}, obtaining
                    \begin{align*}
                        r\int_{\de B_r} e_\epsilon(u,\nabla)
                        &=\int_{B_r}(n-2)e_\epsilon(u,\nabla) + 2\int_{B_r}\left(\frac{(1-|u|^2)^2}{4\epsilon^2} - \epsilon^2|\omega|^2\right) \\
                        &\quad+ 2r\int_{\de B_r}\left(|\nabla_{\nu}u|^2
                        + \epsilon^2|\iota_\nu \omega|^2\right).
                    \end{align*}
              Since 
                    \begin{align*}
                        \frac{d}{dr} \tilde{E}_\epsilon(x,r) 
                        &= (2-n)r^{1-n}\int_{B_r} e_\epsilon(u,\nabla) + r^{2-n}\int_{\de B_r} e_\epsilon(u,\nabla)\\
                        &= 2r^{1-n}\int_{B_r}\left( \frac{(1-|u|^2)^2}{4\epsilon^2} - \epsilon^2|\omega|^2\right) + 2r^{2-n}\int_{\de B_r} (|\nabla_\nu u|^2 + |\iota_\nu \omega|^2),
                    \end{align*}
                    we obtain  the desired conclusion.
                \end{proof}
            \end{proposition}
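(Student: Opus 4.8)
The plan is to obtain \cref{energy-n-2-monotonicity} as a direct consequence of the divergence-free identity \cref{div-free-stress-energy-tensor} for the stress-energy tensor $T_\epsilon$, tested against the truncated radial field, in the spirit of the classical monotonicity formula for stationary varifolds and harmonic maps. After translating so that $p=0$, fix $r>0$ and test \cref{testing-stress-energy-tensor} with $X(x):=\eta(|x|)\,x$, where $\eta$ is a smooth nonincreasing cutoff approximating $\mathbf{1}_{[0,r]}$ from above. Since $(u,\nabla)$ is smooth and the energy is locally finite, this is legitimate; writing $\nu=x/|x|$ one has $DX=\eta(|x|)\,\mathrm{Id}+\eta'(|x|)\,|x|\,\nu\otimes\nu$, so \cref{testing-stress-energy-tensor} reads
$$\int_{\R^n}\eta(|x|)\operatorname{tr}(T_\epsilon)+\int_{\R^n}\eta'(|x|)\,|x|\,T_\epsilon(\nu,\nu)=0.$$
Passing to the limit $\eta\uparrow\mathbf{1}_{[0,r]}$ and using (via the coarea formula) that $t\mapsto\int_{\de B_t}T_\epsilon(\nu,\nu)$ is continuous, since $T_\epsilon$ is smooth, one gets for every $r>0$
$$\int_{B_r}\operatorname{tr}(T_\epsilon)=r\int_{\de B_r}T_\epsilon(\nu,\nu).$$

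Next I would compute the two algebraic quantities appearing here. From \cref{nabla-tensor-definition}, \cref{curvature-tensor-definition}, \cref{stress-energy-tensor-definition} one reads off $\operatorname{tr}(\nabla u^*\nabla u)=|\nabla u|^2$ and $\operatorname{tr}(\omega^*\omega)=2|\omega|^2$, hence
$$\operatorname{tr}(T_\epsilon)=n\,e_\epsilon-2|\nabla u|^2-4\epsilon^2|\omega|^2=(n-2)e_\epsilon+2\Big(\tfrac{(1-|u|^2)^2}{4\epsilon^2}-\epsilon^2|\omega|^2\Big),$$
where the second equality just expands $e_\epsilon$. Likewise $\nabla u^*\nabla u(\nu,\nu)=|\nabla_\nu u|^2$ and $\omega^*\omega(\nu,\nu)=\sum_k\omega(\nu,e_k)^2=|\iota_\nu\omega|^2$, so $T_\epsilon(\nu,\nu)=e_\epsilon-2|\nabla_\nu u|^2-2\epsilon^2|\iota_\nu\omega|^2$. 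Substituting both expressions into the previous identity and rearranging terms yields the ``first variation'' identity
$$r\int_{\de B_r}e_\epsilon=(n-2)\int_{B_r}e_\epsilon+2\int_{B_r}\Big(\tfrac{(1-|u|^2)^2}{4\epsilon^2}-\epsilon^2|\omega|^2\Big)+2r\int_{\de B_r}\big(|\nabla_\nu u|^2+\epsilon^2|\iota_\nu\omega|^2\big).$$

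Finally, differentiating $\tilde{E}_\epsilon(0,r)=r^{2-n}\int_{B_r}e_\epsilon$ and using $\tfrac{d}{dr}\int_{B_r}e_\epsilon=\int_{\de B_r}e_\epsilon$,
$$\frac{d}{dr}\tilde{E}_\epsilon(0,r)=(2-n)r^{1-n}\int_{B_r}e_\epsilon+r^{2-n}\int_{\de B_r}e_\epsilon;$$
inserting the identity above for the last term makes the two $\int_{B_r}e_\epsilon$ contributions cancel and leaves exactly the right-hand side of \cref{energy-n-2-monotonicity} (with $\epsilon^2|\iota_\nu\omega|^2$ in the boundary integrand). The only step that is not pure bookkeeping is the very first one — justifying the insertion of a vector field with a jump across $\de B_r$ into the inner-variation identity — which is dispatched by the cutoff approximation described above; everything else reduces to linear algebra on $T_\epsilon$ and the fundamental theorem of calculus.
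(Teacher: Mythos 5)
Your proof is correct and follows essentially the same route as the paper: testing the divergence-free stress-energy tensor with the (truncated) radial field, computing $\operatorname{tr}(T_\epsilon)$ and $T_\epsilon(\nu,\nu)$, and differentiating the normalized energy; you merely spell out the cutoff approximation and the linear algebra that the paper leaves implicit. Your parenthetical remark is also right that the boundary integrand should carry $\epsilon^2|\iota_\nu\omega|^2$, as in the paper's intermediate display, rather than $|\iota_\nu\omega|^2$ as written in \cref{energy-n-2-monotonicity}.
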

            \subsection{Quantitative stability in two dimensions and the vortex equations}
            In this section we record some results regarding the existence, uniqueness, and quantitative stability of critical points for \cref{energy-definition} in $\R^2$. First of all note that for  $\epsilon=1$ the energy $E_1$ of  \emph{any} pair $(u,\nabla)$ can be written as follows:
            \begin{align}
                E(u,\nabla) &= \int_{\R^2} \left[|\nabla u|^2 + |F_\nabla|^2 + \frac{(1-|u|^2)^2}{4}\right] \nonumber\\
                &= 2\pi |N| + \int_{\R^2} |\nabla_{1} u \pm i \nabla_2 u |^2 + \left|\star \omega \mp \frac{1-|u|^2}{2}\right|^2,\label{Bogomolny-completion-of-squares}
            \end{align}
            where $N$ is the vortex number of $(u,\nabla)$, given by
            \begin{align*}
                N:=\frac{1}{2\pi}\int_{\R^2}\star \omega.
            \end{align*}
            Thus  $(u,\nabla)$ is a minimizer of the total energy among pairs with the same vortex number if and only if it satisfies the first-order system of \textit{vortex equations}:
            \begin{align}\label{vortex-equations}
                \nabla_1 u \pm i \nabla_2 u = 0\text{ and } \star \omega = \pm \frac{1-|u|^2}{2\epsilon}.
            \end{align}
            These are also called \emph{Bogomol'nyi equations} (after \cite{Bogomolny}) or \emph{self-dual equations}, and arise in many self-dual gauge theories. Taubes, in \cite{Taubes},  proved that we can prescribe the zero set $u^{-1}(0) = \{a_1,\dots,a_k\}$: given any finite collection of $k\ge0$ points, counted with multiplicity, there exists a solution $(u,\nabla)$ to the vortex equations (with either choice of signs, corresponding to vortex number $N=k$ and $N=-k$, respectively) with this prescribed zero set;
            moreover, the solution is unique up to change of gauge.
            
            In \cite{Halavati-stability} the second-named author improved the previous results by proving a (sharp) quantitative stability for  critical points of $E_1$. We record these results in the following theorem.
            
            \begin{thm}[Uniqueness and stability in two dimensions]\label{ymh-dim-2-thm}
                On the trivial line bundle over $\R^2$, any critical point  $(u,\nabla)$ of finite energy for $E_1$ is actually a minimizer with $E_1(u,\nabla) = 2\pi |N|$. Moreover, up to change of gauge, any minimizer is uniquely characterized by its zero set $u^{-1}(0) = \{a_1,\dots,a_k\}$ (counted with multiplicity, according to the local degree of $u$ around any zero) and orientation. Letting $\mathcal{F}$ be the moduli space of all minimizers, the following quantitative stability estimates hold:
                \begin{align}\label{ymh-dim-2-stability-estimate}
                    \inf_{(u_0,\nabla_0)\in\mathcal{F}}(\|u-u_0\|_{L^2(\R^2)}^2 + \|F_\nabla - F_{\nabla_0}\|^2_{L^2(\R^2)})
                    \leq C_{|N|} \left(E_1(u,\nabla) - 2\pi |N|\right),
                \end{align}
                for some constant $C_{|N|}>0$ and all pairs such that the discrepancy $E_1(u,\nabla)-2\pi |N|\le\delta_{|N|}$ is small enough.
                \begin{proof}
                    Existence and uniqueness were proved in \cite{Taubes,Taubes-1},
                    while quantitative stability was obtained in \cite{Halavati-stability}.
                \end{proof}
            \end{thm}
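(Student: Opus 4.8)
The first two assertions are due to Taubes \cite{Taubes,Taubes-1}, and I would only recall the mechanisms. That a finite-energy critical point $(u,\nabla)$ of $E_1$ on $\R^2$ is \emph{self-dual} — i.e.\ solves one of the two first-order systems \cref{vortex-equations}, the one corresponding to the sign of $N$ — follows from the Bogomolny identity \cref{Bogomolny-completion-of-squares}: differentiating \cref{ymh-pde-1}--\cref{ymh-pde-2}, one checks that the two combinations whose vanishing would give the \emph{other} self-duality system satisfy a linear elliptic system with a favorable zeroth-order sign, and, together with their decay at infinity (from finite energy and \cref{exponential-decay-proposition}), a maximum principle at infinity forces them to vanish; hence $E_1(u,\nabla)=2\pi|N|$ and $(u,\nabla)$ minimizes among pairs of the same vorticity. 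For the classification I would pass, away from the finitely many zeros $a_j$ of $u$, to the scalar unknown $v:=\log|u|^2$, which solves the singular Taubes equation $\Delta v=e^{v}-1+4\pi\sum_j m_j\delta_{a_j}$; this semilinear equation has a unique finite-energy solution for each prescribed divisor by a sub-/supersolution argument, and $\nabla$ is recovered from $|u|$ and the $a_j$ up to change of gauge. This leaves the quantitative estimate \cref{ymh-dim-2-stability-estimate}. Fixing $|N|=k$, set $D(u,\nabla):=E_1(u,\nabla)-2\pi k$, which by \cref{Bogomolny-completion-of-squares} equals the Bogomolny defect $\int_{\R^2}\big(|\nabla_1 u\pm i\nabla_2 u|^2+|\star\omega\mp\tfrac{1-|u|^2}{2}|^2\big)$ with signs matching the sign of $N$; the goal is then the linear bound $\dist_{L^2}^2\big((u,\nabla),\mathcal{F}\big)\le C_k\,D(u,\nabla)$ once $D$ is below a threshold, the distance meaning $\|u-u_0\|_{L^2}^2+\|F_\nabla-F_{\nabla_0}\|_{L^2}^2$. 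The plan has three ingredients: a qualitative stability statement by compactness, a linearization near a nearly-optimal vortex, and a \emph{uniform} spectral-gap estimate for the second-variation form.

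For the first ingredient I would argue by contradiction: suppose $(u_j,\nabla_j)$ have $D_j\to0$ but $\dist_{L^2}\big((u_j,\nabla_j),\mathcal{F}\big)\ge\eta_0>0$. Using $E_1\le2\pi k+o(1)$, elliptic estimates in a local Coulomb gauge, and standard concentration-compactness for the vorticity — the total vorticity $N$ is preserved and each quantum carries energy exactly $2\pi$, so no energy can disperse — one shows that after translating to follow a center of $\{|u_j|\le\tfrac12\}$ and passing to a subsequence, $(u_j,\nabla_j)$ converges locally to a finite-energy, zero-defect pair. When part of the vorticity escapes to infinity relative to the rest, one decomposes $\R^2$ into a bounded number of well-separated clusters; on each cluster the pair converges to a lower-vorticity self-dual solution, and — by Taubes' classification applied to the union of the limiting divisors — the superposition of these limits is a genuine element of $\mathcal{F}$ that is $L^2$-close to $(u_j,\nabla_j)$ for $j$ large, contradicting $\dist_{L^2}\ge\eta_0$. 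This yields qualitative stability: $\dist_{L^2}\big((u,\nabla),\mathcal{F}\big)\to0$ as $D\to0$.

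For the remaining two ingredients I would pick $(u_0,\nabla_0)\in\mathcal{F}$, with divisor $\{a_j\}$, nearly realizing the distance, change gauge so that $u$ is close to $u_0$, and write $(u,\nabla)=(u_0,\nabla_0)+(\varphi,a)$ with $a$ in Coulomb gauge; near-minimality of $(u_0,\nabla_0)$ forces a suitable orthogonality of $(\varphi,a)$ to the tangent space $T_{(u_0,\nabla_0)}\mathcal{F}$, which is the $2k$-dimensional space of infinitesimal motions of the zeros and coincides with the kernel of the linearized vortex operator (Taubes' transversality, i.e.\ surjectivity of the linearization). Expanding the Bogomolny functional gives $D(u,\nabla)=Q_{(u_0,\nabla_0)}(\varphi,a)+o\big(\|\varphi\|_{L^2}^2+\|da\|_{L^2}^2\big)$, with $Q_{(u_0,\nabla_0)}\ge0$ the associated quadratic form, whose null space is exactly $T_{(u_0,\nabla_0)}\mathcal{F}$. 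The decisive input — the new functional inequality of \cite{Halavati-inequality} — is the coercivity $Q_{(u_0,\nabla_0)}(\varphi,a)\ge c\big(\|\varphi\|_{L^2}^2+\|da\|_{L^2}^2\big)$ for $(\varphi,a)$ orthogonal to $T_{(u_0,\nabla_0)}\mathcal{F}$, with $c=c(k)>0$ \emph{uniform over all $(u_0,\nabla_0)\in\mathcal{F}$ of vorticity $k$}. Combined with qualitative stability, for $D$ small one would get $D\ge Q_{(u_0,\nabla_0)}(\varphi,a)-o\big(\|\varphi\|_{L^2}^2+\|da\|_{L^2}^2\big)\ge\tfrac{c}{2}\big(\|\varphi\|_{L^2}^2+\|da\|_{L^2}^2\big)\ge c'\dist_{L^2}^2\big((u,\nabla),\mathcal{F}\big)$ for $c'=c'(k)>0$, which is \cref{ymh-dim-2-stability-estimate}.

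The hard part — and the reason soft compactness alone does not suffice — will be the uniformity of the spectral gap $c>0$ over the \emph{noncompact} moduli space $\mathcal{F}$, where two degenerations must be controlled simultaneously. First, vortices separating to infinity: there the linearized operator decouples into a product of lower-vorticity operators, and one must rule out the gap collapsing along the ``separation'' directions; I expect this to be handled by a quantitative localization estimate, using that well-separated vortices interact only through exponentially small tails. Second, vortices colliding: the divisor degenerates to one of higher multiplicity while the solution itself stays smooth, so one needs $Q_{(u_0,\nabla_0)}$ and its kernel to vary continuously up to the boundary strata of $\mathcal{F}$, with a gap estimate uniform across these strata. Establishing these two facts — rather than the linearization itself, which is classical — is where the bulk of the work lies, and it is precisely the content imported from \cite{Halavati-inequality,Halavati-stability}.
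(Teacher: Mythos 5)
The paper does not actually prove this theorem: it is imported wholesale, with existence, self-duality and uniqueness of critical points cited from \cite{Taubes,Taubes-1} and the quantitative stability estimate cited from \cite{Halavati-stability} (resting on \cite{Halavati-inequality}). Your proposal ultimately rests on exactly the same citations — the sketches you add (Taubes' first/second-order equivalence, the scalar Taubes equation, and a compactness-plus-uniform-spectral-gap scheme for stability) are heuristic reconstructions whose decisive ingredient, the uniform coercivity over the noncompact moduli space, you explicitly defer to the cited works — so in substance your route coincides with the paper's.
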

            The proof of the above theorem uses weighted estimates developed in \cite{Halavati-inequality}. Vaguely, \cref{ymh-dim-2-thm} tells us that in the vanishing $\epsilon$ limit, two-dimensional slices perpendicular to the energy concentration set resemble minimizing vortex solutions in $\R^2$. In the special case of pairs $(u,\nabla)$ with vortex number $\pm1$, we also have the stability of the Jacobian and the energy density, given by the following proposition.
            \begin{proposition}\label{Jacobian-energy-stability-prop}
                Given a pair $(u,\nabla=d-i\alpha)$ on $\R^2$ with vortex number $\pm1$, obeying the bound
                $$\|d|u|\|_{L^\infty(\R^2)} + \|u\alpha\|_{L^\infty(\R^2)} \leq \Lambda,$$
                if the discrepancy $E_1(u,\nabla) - 2\pi$ is small enough then
                \begin{align}\label{Jacobian-energy-stability-estimate}
                    \begin{aligned}
                        &\inf_{(u_0,\nabla_0)\in\mathcal{F}} \left(\|J(u,\nabla) - J(u_0,\nabla_0)\|_{L^1(\R^2)} + \|e_1(u,\nabla) - e_1(u_0,\nabla_0)\|_{L^1(\R^2)}\right) \\
                        &\leq C(\Lambda)\sqrt{E_1(u,\nabla) - 2\pi}.
                    \end{aligned}
                \end{align}
                \normalsize
                \begin{proof}
                    See \cite[Appendix C]{Halavati-stability}.
                \end{proof}
            \end{proposition}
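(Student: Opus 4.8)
The plan is to combine the sharp two-dimensional stability estimate of \cref{ymh-dim-2-thm} with the Bogomol'nyi structure of $E_1$ recorded in \cref{Bogomolny-completion-of-squares} and a first-order elliptic bootstrap; throughout, $J$ and $e_1$ are gauge-invariant, so we are free to choose gauges at will. Set $\delta:=E_1(u,\nabla)-2\pi$, which we may take as small as needed, and assume without loss of generality that the vortex number is $+1$. By \cref{Bogomolny-completion-of-squares} we then have $\delta=\int_{\R^2}\bigl(|Du|^2+|\star\omega-\tfrac12(1-|u|^2)|^2\bigr)$, with $Du:=\nabla_1 u+i\nabla_2 u$, so $(u,\nabla)$ almost solves the vortex equations \cref{vortex-equations}. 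Completing the same squares pointwise — and writing $(\cdot)_{12}$ for the single component of a two-form on $\R^2$ — yields the algebraic identity
\[
e_1(u,\nabla)=\bigl(J(u,\nabla)\bigr)_{12}+|Du|^2+\bigl|\star\omega-\tfrac12(1-|u|^2)\bigr|^2 ,
\]
hence $\|e_1(u,\nabla)-(J(u,\nabla))_{12}\|_{L^1}=\delta$; since every $(u_0,\nabla_0)\in\mathcal F$ solves the vortex equations, the same identity gives $e_1(u_0,\nabla_0)=(J(u_0,\nabla_0))_{12}$. Thus the estimate for $e_1$ will follow from the one for $J$, at the cost of an additive $\delta\le\sqrt\delta$, and it suffices to bound $\inf_{\mathcal F}\|J(u,\nabla)-J(u_0,\nabla_0)\|_{L^1}$.

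First, since $\delta$ is small, \cref{ymh-dim-2-thm} provides $(u_0,\nabla_0)\in\mathcal F$ with $\|u-u_0\|_{L^2}^2+\|F_\nabla-F_{\nabla_0}\|_{L^2}^2\le C\delta$ in a suitable gauge. Because $\|d|u|\|_{L^\infty}\le\Lambda$ and $\|1-|u|^2\|_{L^2}^2\le 4E_1\le C$, a mean-value argument forces $\|u\|_{L^\infty}\le C(\Lambda)$; we also record $\|\nabla u_0\|_{L^2}^2\le E_1(u_0)=2\pi$ and, by the vortex equations, $\|\star\omega_0\|_{L^2}=\tfrac12\|1-|u_0|^2\|_{L^2}\le\sqrt{2\pi}$.

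The heart of the proof is to upgrade the $L^2$ closeness to the scale-invariant bound $\|\nabla u-\nabla_0 u_0\|_{L^2}\le C(\Lambda)\sqrt\delta$ (again in a suitable gauge). The mechanism is that $(u_0,\nabla_0)$ solves, and $(u,\nabla)$ solves up to an $L^2$-error $\le\sqrt\delta$, the first-order vortex system, which forms an elliptic complex; writing $w:=u-u_0$, $a:=\alpha-\alpha_0$, the pair $(w,a)$ satisfies the linearization of the vortex operator at $(u_0,\nabla_0)$ with a right-hand side bounded in $L^2$ by $\sqrt\delta$ plus terms quadratic in $(w,a)$ and their derivatives. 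This linearized operator is coercive on the orthogonal complement of its kernel — the finite-dimensional space of infinitesimal translations of the zero of $u_0$ — and picking $(u_0,\nabla_0)$ so as to minimize $\|u-u_0\|_{L^2}$ puts $w$ in that complement to leading order; absorbing the quadratic errors via the two-dimensional embedding $W^{1,2}\hookrightarrow L^p$ (all $p<\infty$) and the a priori $L^\infty$ bounds then closes the estimate. This step — the quantitative first-order elliptic stability of the vortex moduli space, with the delicate gauge fixing it entails (this is where the gauge-dependent $L^\infty$ hypotheses enter) — will be the main difficulty; it is exactly the mechanism behind \cref{ymh-dim-2-thm}, so the bound can also be extracted directly from the proof in \cite{Halavati-stability}.

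Granting this, the conclusion is a routine bilinear expansion. Put $\eta:=\nabla u-\nabla_0 u_0$ and $\zeta:=\star\omega-\star\omega_0=F_\nabla-F_{\nabla_0}$, so $\|\eta\|_{L^2}\le C(\Lambda)\sqrt\delta$ by the previous step and $\|\zeta\|_{L^2}\le C\sqrt\delta$ directly. Writing $\nabla_k u=\nabla_{0,k}u_0+\eta_k$ gives
\[
\psi(u)_{12}-\psi(u_0)_{12}=2\langle i\eta_1,\nabla_{0,2}u_0\rangle+2\langle i\nabla_{0,1}u_0,\eta_2\rangle+2\langle i\eta_1,\eta_2\rangle ,
\]
while $(1-|u|^2)\star\omega-(1-|u_0|^2)\star\omega_0=(1-|u|^2)\zeta+(|u_0|^2-|u|^2)\star\omega_0$. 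Integrating and applying Cauchy--Schwarz with $\|\nabla_0 u_0\|_{L^2},\|\star\omega_0\|_{L^2},\|1-|u|^2\|_{L^2}\le C$ and $\||u|^2-|u_0|^2\|_{L^2}\le\bigl\|\,|u|-|u_0|\,\bigr\|_{L^2}\bigl\|\,|u|+|u_0|\,\bigr\|_{L^\infty}\le C(\Lambda)\|u-u_0\|_{L^2}\le C(\Lambda)\sqrt\delta$ yields $\|J(u,\nabla)-J(u_0,\nabla_0)\|_{L^1}\le C(\Lambda)\sqrt\delta$. With the first paragraph this also gives $\|e_1(u,\nabla)-e_1(u_0,\nabla_0)\|_{L^1}\le\delta+C(\Lambda)\sqrt\delta\le C(\Lambda)\sqrt\delta$, as claimed.
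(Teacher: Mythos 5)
Your reduction is fine as far as it goes: the pointwise Bogomol'nyi identity $e_1-J_{12}=|\nabla_1u+i\nabla_2u|^2+|\star\omega-\tfrac12(1-|u|^2)|^2$ (nonnegative, with integral exactly $\delta$) correctly reduces the $e_1$-estimate to the $J$-estimate, the $L^\infty$ bound on $|u|$ from $\|d|u|\|_{L^\infty}\le\Lambda$ is sound, and the final bilinear expansion is routine \emph{granted} the bound $\|\nabla u-\nabla_0u_0\|_{L^2}\le C(\Lambda)\sqrt\delta$. Note also that the paper itself offers no argument: its proof is a citation of \cite[Appendix C]{Halavati-stability}, so your attempt is a partial reconstruction of what that appendix must contain.

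The genuine gap is precisely the step you flag as "the heart of the proof" and then do not prove: upgrading the $L^2$ stability of \cref{ymh-dim-2-thm} (which controls only $u-u_0$ and $F_\nabla-F_{\nabla_0}$) to a gauge-fixed derivative-level bound $\|\nabla u-\nabla_0u_0\|_{L^2}\le C(\Lambda)\sqrt\delta$. Asserting that this "can be extracted directly from the proof in \cite{Halavati-stability}" is an unverified claim about the reference, and it is doubtful as stated: the quantitative estimates actually available there (see the form quoted in \cref{curvature-estimate-temp-2}) control the connection difference only in norms weighted by powers of $|u_0|$, degenerating at the vortex, together with $\||u|-|u_0|\|_{L^2}$ and $\|d\alpha-d\alpha_0\|_{L^2}$ — not a plain unweighted $L^2$ bound on $\nabla u-\nabla_0u_0$. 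Your sketched mechanism for proving it from scratch also skips the real difficulties: the kernel of the linearized Bogomol'nyi operator contains gauge modes in addition to translations, so "choosing $u_0$ to minimize $\|u-u_0\|_{L^2}$" does not by itself place $(w,a)$ in the coercivity complement; coercivity holds only after a gauge fixing and in suitable (weighted) spaces; and in two dimensions $W^{1,2}\not\hookrightarrow L^\infty$, so absorbing the quadratic terms $a\cdot w$, $a\cdot\nabla w$ needs exactly the $L^\infty$/weighted hypotheses of the proposition and is not a soft Sobolev argument. Until that intermediate estimate is either proved or located verbatim in the reference, the bilinear expansion in your last paragraph has nothing to feed on, and the argument is incomplete; an alternative closing (using the near-vortex equations to replace $\psi(u)_{12}$ by $2|\nabla_1u|^2$, etc.) runs into the same need for derivative-level comparison, so the gap is not cosmetic.
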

            
            \section{Quantifying flatness and the excess}
            We assume that $n\ge 3$ throughout the rest of the paper,
            unless otherwise stated.
            \subsection{Excess definitions}
            In this section we introduce a way to measure \textit{flatness} of a pair $(u,\nabla)$. Inspired by the definition of tilt-excess by De Giorgi \cite{Degriogi-1}, we define the \emph{Yang--Mills--Higgs excess} as
            \begin{align}\label{excess-definition}
            \begin{aligned}
                \E(u,\nabla,B_r(x),S) &:= \frac{r^{2-n}}{2\pi}\int_{B_r(x)} [e_\epsilon(u,\nabla) - J(u,\nabla)\wedge e_S^*] \\
                &\phantom{:}= \mu_{\epsilon}(B_r(x)) - \ang{\Gamma_\epsilon,\textbf{1}_{B_r(x)}e_S^*}
                \end{aligned}
            \end{align}
            for any \emph{oriented} $(n-2)$-plane $S$ in $\R^n$ with the associated $(n-2)$-vector $e_S$ and $(n-2)$-covector $e_S^*$. Take an oriented orthonormal basis of $S = \operatorname{span}\{e_3,\dots,e_n\}$ and extend it to an orthonormal basis $\{e_1,\dots,e_n\}$ of $\R^n$. Then by a completion of squares we see that the excess splits into two terms:
            \begin{align}
                \E &= \E_1 + \E_2\nonumber,\\
                 \E_1 (u,\nabla,B_r(x),S) &:= \frac{r^{2-n}}{2\pi} \int_{B_r(x)} \left[\sum_{k=3}^n |\nabla_{e_k}u|^2 + \epsilon^2\sum_{(j,k)\neq(1,2)} \omega(e_j,e_k)^2\right]\label{excess-1-definition},\\
                 \E_2(u,\nabla,B_r(x),S) &:= \frac{r^{2-n}}{2\pi}\int_{B_r(x)} \left[|\nabla_{e_1}u + i\nabla_{e_2}u|^2 + \left|\epsilon\omega(e_1,e_2) - \frac{1-|u|^2}{2\epsilon}\right|^2\right]\label{excess-2-definition}.
            \end{align}
            Note that $\E_1$ quantifies how flat the solution is in the directions tangent to $S$, while $\E_2$ quantifies the error in the vortex equations on perpendicular slices.
            Moreover, $\E_1$ does \emph{not} depend on the orientation of $S$ (while $\E$ and $\E_2$ do).
            
            The Yang--Mills--Higgs excess is a key tool in our analysis. 
            For $S:=\{0\}\times\R^{n-2}$, with a slight abuse of notation, we define
            \begin{align*}
                \E_z = \frac{1}{2\pi}\int_{B^2_1\times \{z\}} [e_\epsilon(u,\nabla) - J(u,\nabla)(e_1,e_2)]
            \end{align*}
            for $z\in\R^{n-2}$, and similarly
            \begin{align*}
                (\E_1)_z &:= \frac{1}{2\pi}\int_{B_1^2\times \{z\}} \left[\sum_{k=3}^n |\nabla_{e_k}u|^2 + \epsilon^2\sum_{(j,k)\neq(1,2)} \omega(e_j,e_k)^2\right],\\
                 (\E_2)_z &:= \frac{1}{2\pi}\int_{B_1^2 \times \{z\}} \left[|\nabla_{e_1}u + i\nabla_{e_2}u|^2 + \left|\epsilon\omega(e_1,e_2) - \frac{1-|u|^2}{2\epsilon}\right|^2\right].
            \end{align*}
            
            \subsection{The tilt-excess decay statement}
            Parallel to De Giorgi's \cite{Degriogi-1} and Allard's \cite{Allard-1} regularity theorems, we aim to prove a  \emph{decay of the excess } up to scale $\epsilon$, compare with \cite[Theorem 3.3]{kelei-wang}. More precisely, our goal is to show \cref{tilt-excess-decay-statement},
            which is one of the main results of the present work. For convenience, we recall its statement here.
            
            \begin{thm}\label{thm:local}
                For any $n \geq 3$ and small enough $0<\rho\le\rho_0(n)$ there exist constants $C(n)>0$ and $\epsilon_0(n,\rho),\tau_0(n,\rho)$ such that the following holds.
                Let $(u,\nabla)$ be a critical point for the energy $E_\epsilon$, given by \cref{energy-definition}, with $\epsilon \leq \epsilon_0$. Assume that \(u\) satisfies the bounds \cref{u-le-1} and \cref{modica-bounds}, that  $u(0)=0$, and the energy bound
                \begin{align*}
                    \frac{1}{|B^{n-2}_1|}\int_{B^n_1} e_\epsilon(u,\nabla) \leq 2\pi+\tau_0.
                \end{align*}
                Then at least one of the following statements is true:
                {either}
                \begin{align}\E_1(u,\nabla,B_\rho^n,\bar{S}) \leq C(n)\rho^2\E_1(u,\nabla,B_1^n,S),
                \end{align}
                for some $(n-2)$-plane $\bar{S}$ with $\|P_{\bar{S}} - P_{S}\| \leq C(n)\sqrt{\E_1(u,\nabla,B_1^n,S)}$, where $P_S$ is the orthogonal projection onto $S$, the plane minimizing $\E(u,\nabla,B_\rho^n,\cdot)$, and $\|\cdot\|$ is the Hilbert--Schmidt norm, {or}
                \begin{align}
                    \E_1(u,\nabla,B_1^n,S) \leq \max\{C(n)\epsilon^2 |\log\E|^2\sqrt{\E},e^{-{K(n)}/{\epsilon}}\},
                \end{align}
                where $\E=\E(u,\nabla,B_1^n,S)$.
            \end{thm}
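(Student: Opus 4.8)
The plan is to establish the dichotomy by a blow-up (compactness--contradiction) argument, in the spirit of De Giorgi's and Allard's $\epsilon$-regularity theorems and of \cite{kelei-wang}, reducing the decay of $\E_1$ to the (elementary) decay of harmonic maps $B_1^{n-2}\to\R^2$.

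\smallskip
\emph{Step 1: reduction to a flat blow-up.} Fix $\rho$ small and suppose, towards a contradiction, that for every constant $C$ there are $\epsilon_k\to0$, $\tau_k\to0$ and critical points $(u_k,\nabla_k)$ of $E_{\epsilon_k}$ on $B_1^n$ with $u_k(0)=0$, satisfying \cref{u-le-1}, \cref{modica-bounds} and the energy bound with $\tau_k$ in place of $\tau_0$, for which both alternatives fail. By the local version of \cref{varifold-limit-theorem} the energy measures converge subsequentially to the weight of a stationary integral $(n-2)$-varifold $V$ with $|V|(B_1)\le|B_1^{n-2}|$ and $0\in\spt V$ (since $u_k(0)=0$); since a stationary integral varifold has density at least $1$ (see \cite{Allard-1}), the monotonicity formula \cref{energy-n-2-monotonicity}, passed to the limit, forces $\Theta(V,0)=1$ and the density ratio to be constant on $(0,1)$, so $V$ is a multiplicity-one plane through the origin and the limit Jacobian current is the corresponding flat disk. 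Consequently $\E_k:=\E(u_k,\nabla_k,B_1^n,S_k)\to0$, where $S_k$ is the $\E$-minimizing oriented plane; up to a rotation and a subsequence we may assume $S_k\to\{0\}\times\R^{n-2}$. Failure of the second alternative gives $e^{-K/\epsilon_k}<\E_{1,k}$ and $C\,\epsilon_k^2|\log\E_k|^2\sqrt{\E_k}<\E_{1,k}$, so $\epsilon_k\to0$; hence, once $C=C(n)$ is chosen large and $K=K(n)$ appropriately, all $\epsilon$-dependent error terms below are negligible compared with $\E_{1,k}$.

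\smallskip
\emph{Step 2: the center-of-mass Lipschitz approximation.} Because the blow-up is a multiplicity-one plane, for $z$ in a large portion of $B_{1-\delta}^{n-2}$ the slice $\{(y,z)\in B_1^n\}$ carries vorticity $\pm1$, and after fixing a local Coulomb gauge and discarding the exponentially small region $\{|u_k|>\tfrac34\}$, the pair is $L^2$- and energy-close to the standard degree-$\pm1$ vortex translated by the vorticity center of mass
\[
  h_k(z):=\frac{1}{2\pi}\int_{\{(y,z)\in B_1^n\}} y\,J(u_k,\nabla_k)(e_1,e_2)\,dy\in\R^2,
\]
where $e_1,e_2$ span $S_k^\perp$; here \cref{ymh-dim-2-thm} and \cref{Jacobian-energy-stability-prop} enter. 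Estimating the measure of the bad slices (slice energy not close to $2\pi$, or vorticity approaching $\partial B_1^2$) by $\E_k$ together with the error terms, one upgrades this to a genuine Lipschitz bound for $h_k$ on a large subset of $B_{1-\delta}^{n-2}$, with $h_k(0)$ small and $\E_{1,k}$ comparable, up to negligible errors, to $\int_{B_1^{n-2}}|\nabla_z h_k|^2$. We then set $v_k:=h_k/\sqrt{\E_{1,k}}$, so that $\|\nabla_z v_k\|_{L^2}\lesssim1$.

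\smallskip
\emph{Step 3: Caccioppoli, harmonic limit, linear decay.} The analytic heart is a Caccioppoli-type inequality obtained by testing the inner-variation identity \cref{testing-stress-energy-tensor} (the divergence-free property of the stress-energy tensor) against vector fields adapted to the slicing that probe the $z$-dependence of the vortex location:
\[
  \int_{B_{1/2}^{n-2}}|\nabla_z h_k|^2 \le C\int_{B_1^{n-2}}\Bigl|h_k-\frac{1}{|B_1^{n-2}|}\int_{B_1^{n-2}}h_k\Bigr|^2 + (\text{errors}),
\]
with errors $\lesssim \epsilon_k^2|\log\E_k|^2\sqrt{\E_k}+e^{-K/\epsilon_k}=o(\E_{1,k})$. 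This yields $H^1_{\mathrm{loc}}$-compactness of $\{v_k\}$; letting $v_\infty$ be a subsequential limit and passing to the limit in \cref{testing-stress-energy-tensor} tested against $X=\phi(z)e_j$, $3\le j\le n$, only the term $\int\nabla_z v_\infty\cdot\nabla_z\phi$ survives, so $v_\infty:B_1^{n-2}\to\R^2$ is harmonic with $\|\nabla_z v_\infty\|_{L^2(B_1^{n-2})}\le1$. For its affine Taylor polynomial $\ell(z):=v_\infty(0)+\nabla v_\infty(0)\cdot z$ one has $|\nabla v_\infty(0)|\le C_0(n)$ and $\rho^{4-n}\int_{B_\rho^{n-2}}|\nabla_z(v_\infty-\ell)|^2\le C_0(n)\rho^2$. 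Un-rescaling, the oriented $(n-2)$-plane $\bar S_k$ obtained by tilting $S_k$ with slope $\sqrt{\E_{1,k}}\,\nabla v_\infty(0)$ satisfies $\|P_{\bar S_k}-P_{S_k}\|\le C_0(n)\sqrt{\E_{1,k}}$; since $\E_1$ depends only on the direction of the plane and sees $h_k$ solely through $\nabla_z h_k$, strong convergence and the slice decomposition give $\E_1(u_k,\nabla_k,B_\rho^n,\bar S_k)\le C_0(n)\rho^2\,\E_{1,k}+o(\E_{1,k})\le 2C_0(n)\rho^2\,\E_{1,k}$ for $k$ large. Choosing $C(n):=2C_0(n)$ (and, if necessary, shrinking $\rho\le\rho_0(n)$) contradicts the failure of the first alternative.

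\smallskip
\emph{Main obstacle.} The crux lies in Steps 2--3: constructing and controlling the center-of-mass approximation $h_k$ and proving the Caccioppoli inequality with errors that are genuinely $o(\E_{1,k})$ rather than merely $o(1)$. Unlike in Allen--Cahn, level sets of $u$ are gauge-dependent and may be arbitrarily irregular, so one cannot work with a generic level set; one must instead convert the $L^2$-stability of \cref{ymh-dim-2-thm} into Lipschitz control of $h_k$, estimate the measure of bad slices uniformly, carry out the local gauge fixing that makes ``closeness to a translated standard vortex on the slice'' meaningful, control via elliptic estimates on slices and the exponential decay \cref{exponential-decay-estimate} the contribution of the discrepancy between $u_k$ and the best-fitting vortex (this is where the defect $\E_2$ and the $\epsilon$-scale corrections are absorbed), and tune the test fields in the stress-energy identity so that the resulting errors have exactly the size $\epsilon^2|\log\E|^2\sqrt{\E}+e^{-K/\epsilon}$ appearing in the second alternative.
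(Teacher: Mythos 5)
Your overall architecture (barycenter-of-slice approximation, almost-harmonicity via the stress-energy tensor, a Caccioppoli-type inequality, tilting by $dw(0)$) is the same as the paper's, but two steps that you treat as routine are precisely where the proof lives, and as written they do not close. The pivotal unjustified assertion is at the end of Step 3: ``since $\E_1$ \dots sees $h_k$ solely through $\nabla_z h_k$, strong convergence and the slice decomposition give $\E_1(u_k,\nabla_k,B_\rho^n,\bar S_k)\le C_0\rho^2\E_{1,k}+o(\E_{1,k})$.'' This is false as stated: $\E_1$ at scale $\rho$ contains $\sum_{k\ge3}|\nabla_{e_k}u|^2$ and all curvature components, which record how the whole vortex profile varies in $z$, not merely its center of mass, and your Caccioppoli inequality only bounds $\int|\nabla_z h_k|^2$ by the oscillation of $h_k$ — the wrong quantity and the wrong direction for the conclusion. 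What is needed is an upper bound for the \emph{slice excess} $(\E_1)_z$ itself, as in \cref{Caccioppoli-prop-varifold}: testing with the fields $\phi^2(x_1e_1+x_2e_2)$ and $\sum_k\partial_k\phi^2\,\frac{x_1^2+x_2^2}{2}e_k$ one bounds $\int\phi^2(\E_1)_z$ by the second moment $\int(x-c)^2e_\epsilon\,\Delta\phi^2$, and then one must convert that second moment into $|h(z)-c|^2$ plus the universal constant $\epsilon^2v_0$ via the quantitative stability of the Jacobian and energy density on good slices (\cref{variance-of-good-slices}, \cref{zero-set-distance-barycenter-slice}, relying on \cref{Jacobian-energy-stability-prop}). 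The cancellation of the $\epsilon^2v_0$ term against $\int\Delta\phi^2=0$ is essential and absent from your argument: under the second alternative's threshold one only knows $\E_1\gtrsim\epsilon^2|\log\E|^2\sqrt\E$, so $\epsilon^2$ is \emph{not} in general $o(\E_{1,k})$ (e.g.\ $\E_1\sim\epsilon^3$ is allowed), and an uncancelled $\epsilon^2$ error destroys the decay. Without the excess-versus-second-moment Caccioppoli estimate and this cancellation, weak (or even strong interior $H^1$) convergence of $v_k$ to a harmonic limit yields decay only of $\int|\nabla_z h_k|^2$, not of $\E_1$.

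The second gap is the tilting step. You produce $\bar S_k$ from the slope $\sqrt{\E_{1,k}}\,\nabla v_\infty(0)$, but to estimate $\E_1(u_k,\nabla_k,B_\rho^n,\bar S_k)$ you must run the Caccioppoli/variance argument for the \emph{rotated} pair, and hence compare its barycenter approximation $\tilde h_k$ with $h_k-\sqrt{\E_{1,k}}\,dw(0)[z]$. The barycenter is not known to transform well under rotations; the paper gets around this by passing through the Lipschitz approximation $h_0$ of the zero set (\cref{Lipschitz-approximation-of-zero-loci-lemma}), which does rotate correctly, together with the $L^2$ bound $\int|h-h_0|^2\lesssim\sigma^2\E_1/\eta^2+\epsilon^2|\log\E_2|^2\E_2$, and then shows the tilted excesses satisfy $\tilde\E_1\le C\E_1$, $\tilde\E_2\le C\E$ before concluding $|d\tilde w(0)|$ is small. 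None of this is in your outline, and it cannot be bypassed by the compactness shell: for fixed $\rho$ the contradiction argument still requires, for each $k$, a quantitative bound of the tilted excess at scale $\rho$ in terms of data at scale $1$. In short, your proposal reproduces the paper's scaffolding but omits the two quantitative inputs (the excess-level Caccioppoli estimate with the $\epsilon^2v_0$ cancellation, and the rotation-compatibility of the Lipschitz approximations) that make the transfer from harmonic-function decay to decay of $\E_1$ legitimate.
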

            Note that thanks to \cref{p:le1} and \cref{Modica-bound-proposition}, if \(u\) is an entire solution such that $\int_{B_R^n}e_\epsilon(u,\nabla)=O(R^{n-2})$, then \cref{u-le-1} and \cref{modica-bounds} are statisfied. In particular by scaling we deduce the following.

            \begin{thm}\label{global-tilt-excess-decay-thm}
                For any small enough $0<\rho\le\rho_0(n)$, there exist constants $C(n),R_0(n,\rho)>0$ and $\tau_0(n,\rho)$ with the following property. Let $(u,\nabla)$ be an entire critical point for $E_1$, with the energy bound
                \begin{align*}
                    \lim_{R\rightarrow \infty} \frac{1}{|B^{n-2}_R|}\int_{B_R^{n}} e_1(u,\nabla) \leq 2\pi+\tau_0.
                \end{align*}
                Then for all $R \ge R_0$ at least one of the following statements is true:
                {either}
                \begin{align}\E_1(u,\nabla,B_{\rho R}^n,\bar{S}) \leq C(n)\rho^2\E_1(u,\nabla,B_R^n,S),
                \end{align}
                for some $(n-2)$-plane $\bar{S}$ with $\|P_{\bar{S}} - P_{S}\| \leq C(n)\sqrt{\E_1(u,\nabla,B_R^n,S)}$ and $S$ minimizing $\E(u,\nabla,B_R^n,\cdot)$, {or}
                \begin{align}
                    \E_1(u,\nabla,B_R^n,S) \leq \max\{C(n)R^{-2} |\log\E|^2\sqrt{\E},e^{-{K(n)R}}\},
                \end{align}
                where $\E=\E(u,\nabla,B_R^n,S)$.
            \end{thm}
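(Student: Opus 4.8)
The plan is to deduce \cref{global-tilt-excess-decay-thm} from the local statement \cref{thm:local} by a direct rescaling argument; no genuinely new ingredient is needed, and the only work is to transport hypotheses and conclusions correctly under the dilation $x\mapsto Rx$, which turns a solution for $E_1$ into a solution for $E_\epsilon$ with $\epsilon=1/R$. To begin with, since \cref{thm:local} requires $u(0)=0$, I would translate so that the origin is a zero of $u$ (for the intended applications one centers at a point of the energy concentration set; if $u$ is constant the statement is vacuous).

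Fix $R>0$ and set $u_R(x):=u(Rx)$, with $\nabla_R$ the pulled-back connection. Because $E_\epsilon$ is merely a rescaling of $E_1$, the pair $(u_R,\nabla_R)$ is a critical point of $E_\epsilon$ on $B_1^n$ with $\epsilon=1/R$, and $u_R(0)=0$. A change of variables gives $e_{1/R}(u_R,\nabla_R)(x)=R^2\,e_1(u,\nabla)(Rx)$, and, since the factor $r^{2-n}$ in \cref{excess-definition} exactly absorbs the Jacobian of the dilation, both excesses are scale invariant:
\begin{align*}
    \E(u_R,\nabla_R,B_r^n,S)=\E(u,\nabla,B_{rR}^n,S),\qquad
    \E_1(u_R,\nabla_R,B_r^n,S)=\E_1(u,\nabla,B_{rR}^n,S),
\end{align*}
for every $r>0$ and every oriented $(n-2)$-plane $S$. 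Moreover the monotonicity hypothesis forces $\int_{B_R^n}e_1(u,\nabla)=O(R^{n-2})$, so by \cref{p:le1} and \cref{Modica-bound-proposition} the pair $(u,\nabla)$ satisfies the pointwise bounds \cref{u-le-1}--\cref{modica-bounds}; these being scale invariant, $(u_R,\nabla_R)$ satisfies them too.

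Next I would propagate the energy bound to every scale. By \cref{monotonicity-prop} with $\epsilon=1$, the right-hand side of \cref{energy-n-2-monotonicity} is non-negative: its boundary term manifestly, and its bulk term because the Modica bound \cref{modica-bounds} gives $|\omega|^2\le\tfrac14(1-|u|^2)^2$. Hence $R\mapsto|B_R^{n-2}|^{-1}\int_{B_R^n}e_1(u,\nabla)$ is non-decreasing and therefore bounded above by its limit, which is $\le2\pi+\tau_0$; consequently $|B_1^{n-2}|^{-1}\int_{B_1^n}e_{1/R}(u_R,\nabla_R)=|B_R^{n-2}|^{-1}\int_{B_R^n}e_1(u,\nabla)\le2\pi+\tau_0$ for all $R$.

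Finally I would apply \cref{thm:local} to $(u_R,\nabla_R)$ with $\epsilon=1/R$. This is legitimate as soon as $1/R\le\epsilon_0(n,\rho)$, i.e.\ $R\ge R_0(n,\rho):=\epsilon_0(n,\rho)^{-1}$, and provided $\tau_0\le\tau_0(n,\rho)$, which is the hypothesis. Using the scale invariance above at $r=1$ and $r=\rho$, together with $\epsilon^2=R^{-2}$ and $e^{-K/\epsilon}=e^{-KR}$, the dichotomy furnished by \cref{thm:local} for $(u_R,\nabla_R)$ becomes verbatim the two alternatives of \cref{global-tilt-excess-decay-thm} for $(u,\nabla)$ at scale $R$, with the same dimensional constants $C(n)$ and $K(n)$ (in particular independent of $\rho$). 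I do not expect any real obstacle; the only point requiring attention is bookkeeping — one must take the reference plane $S$ to minimize the excess on the ball that corresponds, under the rescaling, to the one used in \cref{thm:local}, so that the estimate $\|P_{\bar S}-P_S\|\le C(n)\sqrt{\E_1(u,\nabla,B_R^n,S)}$ on the tilt is inherited directly. All the analysis sits in \cref{thm:local}.
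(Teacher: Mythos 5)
Your argument is exactly the paper's intended proof: the paper deduces \cref{global-tilt-excess-decay-thm} from \cref{thm:local} by the same rescaling $\epsilon=1/R$, using the scale invariance of $\E$ and $\E_1$, energy monotonicity to transfer the bound to every scale, and \cref{p:le1}--\cref{Modica-bound-proposition} to guarantee \cref{u-le-1}--\cref{modica-bounds} for entire solutions with $O(R^{n-2})$ energy growth. Your handling of the base point (centering at a zero of $u$, as implicitly assumed in the paper's applications) is consistent with how the result is used, so the proposal is correct and essentially identical to the paper's route.
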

            \subsection{Blow-up at multiplicity one points}
            Allard's regularity theorem \cite{Allard-1} asserts that the energy concentration set in \cref{varifold-limit-theorem} is locally a $C^{1,\alpha}$ submanifold around points of multiplicity one. We use this to show that, 
            for any blow-down, the energy concentration set is a flat $(n-2)$-plane.
            \begin{proposition}[Multiplicity one and vanishing of excess]\label{excess-vanishes}
                For any $\delta>0$ there exist $\tau_0(n,\delta)>0$ and $\epsilon_0(n,\delta)>0$ small enough with the following property. Let $(u,\nabla)$ be a critical point for $E_\epsilon$ on the unit ball $B_1^n$, with $u(0) = 0$ and $\epsilon \leq \epsilon_0$, as well as the energy bound
                \begin{align*}
                    \frac{1}{|B_1^{n-2}|}\int_{B_1^n} e_\epsilon(u,\nabla) \leq 2\pi+\tau_0
                \end{align*}
                and \cref{u-le-1}--\cref{modica-bounds}.
                Then, after a suitable rotation and, possibly, a conjugation of $(u,\nabla)$,
                \begin{align*}
                    \E(u,\nabla,B_{1/2}^n,\R^{n-2}) \leq \delta,
                \end{align*}
                where we write $\R^{n-2}$ to mean $\{0\}\times\R^{n-2}$.
                As a consequence, given an entire critical point $(\tilde u,\tilde\nabla)$ for $E_1$, with $u(0) = 0$ and the energy bound
                \begin{align*}
                    \lim_{R\rightarrow \infty }\frac{1}{|B_R^{n-2}|}\int_{B_R^n} e_1(\tilde{u},\tilde\nabla) \leq 2\pi+\tau_0(n),
                \end{align*}
                then the previous limit is $2\pi$ and
                we can find oriented $(n-2)$-planes $S(R)$ such that
                \begin{align*}
                    \lim_{R\rightarrow \infty} \E(\tilde{u},\tilde\nabla,B_R^n,S(R)) = 0.
                \end{align*}
            \end{proposition}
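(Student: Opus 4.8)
The plan is to prove the first assertion by a compactness–contradiction argument built on \cref{varifold-limit-theorem}, and then to deduce the statement for entire solutions by applying it to the rescalings $\tilde u(R\,\cdot)$ as $R\to\infty$. For the first part, fix $\delta>0$ and suppose it fails: there exist $\epsilon_k\to0$, $\tau_{0,k}\to0$ and critical points $(u_k,\nabla_k)$ on $B_1^n$ obeying \cref{u-le-1}--\cref{modica-bounds}, with $u_k(0)=0$ and $|B_1^{n-2}|^{-1}\int_{B_1^n}e_{\epsilon_k}(u_k,\nabla_k)\le2\pi+\tau_{0,k}$, but with $\E(u_k,\nabla_k,B_{1/2}^n,S)>\delta$ for \emph{every} oriented $(n-2)$-plane $S$ through the origin (this captures the rotation and, since conjugating the pair replaces $S$ by $-S$ in \cref{excess-definition}, the possible conjugation). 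By the local version of \cref{varifold-limit-theorem}, after passing to a subsequence $\mu_{\epsilon_k}:=\tfrac1{2\pi}e_{\epsilon_k}(u_k,\nabla_k)\,\vol$ converges to the weight $\mu$ of a stationary integral $(n-2)$-varifold $V$ on $B_1^n$, the sets $\{|u_k|\le\tfrac34\}$ converge to $\spt V$ in the Hausdorff topology, and the currents dual to $\tfrac1{2\pi}\omega_{\epsilon_k}$ and to $\tfrac1{2\pi}J(u_k,\nabla_k)$ converge to a common integral cycle $\Gamma$ with $|\Gamma|\le\mu$.

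The first step is to identify $V$. Testing the convergence against cutoffs gives $\mu(B_1)\le|B_1^{n-2}|$, and since $u_k(0)=0$ we have $0\in\spt V$, hence $\Theta(V,0)\ge1$ by the monotonicity formula for stationary integral varifolds; combined with monotonicity of $r\mapsto\mu(B_r)/(|B_1^{n-2}|r^{n-2})$ and the fact that its limit as $r\to1^-$ is $\le1$, this forces $\Theta(V,0)=1$ and $\mu(B_r)=|B_1^{n-2}|r^{n-2}$ for all $r<1$. The equality case of the monotonicity formula makes $V$ a cone with vertex $0$; having constant density ratio $1$, Allard's theorem \cite{Allard-1} shows $\spt V$ is a $C^{1,\alpha}$, multiplicity-one submanifold, hence (a smooth minimal cone) an $(n-2)$-plane through the origin. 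After a rotation, $V=\{0\}\times\R^{n-2}$ with multiplicity one, so $\spt\Gamma\subseteq\{0\}\times\R^{n-2}$, and as $\Gamma$ is an integral cycle with $|\Gamma|\le\mu$ over a connected plane, $\Gamma=\theta\,\llbracket\{0\}\times\R^{n-2}\rrbracket$ for a constant $\theta\in\{-1,0,1\}$.

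I expect the remaining step — showing $\theta\neq0$ — to be the main obstacle. Writing $x=(y,z)\in\R^2\times\R^{n-2}$, the pushforward of $2\pi\mu=2\pi\,\hau^{n-2}\mrestr(\{0\}\times\R^{n-2})$ under $x\mapsto z$ is $2\pi$ times Lebesgue measure, so for a.e.\ small $z$ the two-dimensional energy of $(u_k,\nabla_k)$ on the slice $\{|y|<r(z)\}\times\{z\}$ (with $r(z)$ chosen so the slice lies in $B_{1/2}^n$) tends to $2\pi$, while by \cref{exponential-decay-proposition} — localized using \cref{modica-bounds} — this energy is concentrated within $o(1)$ of $(0,z)$, away from the slice's boundary. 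Passing to a local Coulomb gauge on such slices, they converge, up to a further subsequence, to a finite-energy critical pair on $\R^2$; since no energy is lost (interior concentration), the limit has energy $2\pi$, so by \cref{ymh-dim-2-thm} it is a degree $\pm1$ vortex, with a sign independent of $z$ after possibly conjugating; as the curvature term integrates on each slice to $2\pi$ times this vortex number, $\theta=\pm1$, and after the conjugation $\theta=1$. Then, using $\mu(B_{1/2})=|B_1^{n-2}|\,2^{2-n}$ and $\langle\Gamma,\textbf{1}_{B_{1/2}}e^*_{\{0\}\times\R^{n-2}}\rangle=\theta\,\mu(B_{1/2})$, together with convergence of the excess (legitimate since $|\Gamma|\le\mu$ and $\mu(\de B_{1/2})=0$), a direct computation gives $\E(u_k,\nabla_k,B_{1/2}^n,\{0\}\times\R^{n-2})\to(1-\theta)\,|B_1^{n-2}|=0$, contradicting the assumption. (This last step, where the sharp two-dimensional rigidity of \cref{ymh-dim-2-thm} rules out a degenerate limiting current, could alternatively be extracted from the analysis in \cite{Pigati-1}.)

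Finally, for an entire critical pair $(\tilde u,\tilde\nabla)$ with $\tilde u(0)=0$, the bounds \cref{u-le-1}--\cref{modica-bounds} hold by \cref{p:le1} and \cref{Modica-bound-proposition}, and the rescalings $u_R(x):=\tilde u(Rx)$ (with pulled-back connection) are critical for $E_{1/R}$ with $|B_1^{n-2}|^{-1}\int_{B_1^n}e_{1/R}(u_R,\nabla_R)$ increasing, by \cref{monotonicity-prop}, to a limit $\Lambda\le2\pi+\tau_0(n)$. Running the compactness argument above along $R\to\infty$ in place of $k\to\infty$, the blow-down is a stationary integral cone through $0$ of density $\Lambda/(2\pi)\le1+\tau_0(n)/(2\pi)$; choosing $\tau_0(n)$ small enough for Allard's theorem to apply, it is a multiplicity-one plane, forcing $\Lambda=2\pi$. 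Then, given any $\delta>0$, for all $R$ large enough that $1/R\le\epsilon_0(n,\delta)$ (and noting $|B_1^{n-2}|^{-1}\int_{B_1^n}e_{1/R}(u_R,\nabla_R)\le\Lambda=2\pi\le2\pi+\tau_0(n,\delta)$), the first assertion applied to $(u_R,\nabla_R)$ furnishes an oriented plane with $\E(u_R,\nabla_R,B_{1/2}^n,\cdot)\le\delta$; by scale invariance of the excess this gives oriented planes $S(R)$ with $\E(\tilde u,\tilde\nabla,B_{R/2}^n,S(R))\le\delta$ for all large $R$, i.e.\ $\E(\tilde u,\tilde\nabla,B_R^n,S(R))\to0$.
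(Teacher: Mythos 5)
Your overall architecture is the same as the paper's: compactness via \cref{varifold-limit-theorem}, identification of the limit varifold as a multiplicity-one plane from the saturation of monotonicity, the constancy theorem for the limiting cycle, and a contradiction with the definition of the excess, with the entire case handled by rescaling. The genuine gap is in the one step you flag as the main obstacle, namely your self-contained argument that $\theta\neq0$. First, the restriction of $(u_k,\nabla_k)$ to a slice $B^2_{1/2}\times\{z\}$ is \emph{not} a critical pair for the two-dimensional functional, and its blow-up at scale $\epsilon_k$ converges (in Coulomb gauge) to the restriction of an entire $n$-dimensional critical pair, which at this stage is not known to depend only on two variables --- that is essentially what the whole paper is about --- so \cref{ymh-dim-2-thm} cannot be applied to the slice limits without a circular appeal to the result being proved. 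Second, the claim that the slice energies tend to $2\pi$ for a.e.\ $z$ does not follow from weak-$*$ convergence of the projected energy measures to $2\pi\,\mathcal{L}^{n-2}$: without a per-slice lower bound the slice energies may oscillate (say between values near $0$ and near $4\pi$ on a fine pattern of slices), and the natural per-slice lower bound comes from knowing the degree of $u_k/|u_k|$ on $\de B^2_{1/2}\times\{z\}$ is $\pm1$, i.e.\ from the very information you are trying to extract. The correct repair is exactly the fallback you mention in parentheses, and it is what the paper does: the integrality analysis of \cite{Pigati-1} (Section 6.2, Propositions 6.6--6.7 and Lemma 6.11) shows that the limiting density equals the sum of the absolute values of the degrees around the vortex disks on typical slices; density one, together with the eventual nonvanishing of $u_k$ on $\de B^2_{1/2}\times B^{n-2}_{1/2}$, then forces the total degree to be $\pm1$ and $\Gamma=\pm$ the plane, so $\theta\neq0$. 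If you invoke that, the local part is complete; as written, your slice argument does not stand on its own.

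For the entire case your route (blow-down plus Allard to get the limit $2\pi$, then the local statement applied to the rescaled pairs $(u_R,\nabla_R)$ and a diagonal argument in $\delta$) is correct and in fact shorter than the paper's: since conjugating the pair amounts to reversing the orientation of $S$ in \cref{excess-definition}, the possibly $R$-dependent conjugation can be absorbed into the orientation of $S(R)$, which the statement allows to vary with $R$. Note, however, that the paper's proof establishes more --- through the plane-alignment and continuity argument it shows the conjugation can be fixed once and for all for large $R$ --- and it is this stronger form that is used downstream (e.g.\ in \cref{iteration.pf}); for the statement as written, your argument suffices.
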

                
                \begin{proof}
                    The proof is a standard argument by compactness and contradiction.

                    \fbox{\textit{Local case.}}
                    Assume that there are sequences $(u_\epsilon,\nabla_\epsilon)$ and $\tau_\epsilon\to0$ (as $\epsilon\rightarrow0$) such that
                    \begin{align*}
                        \int_{B_1^n} e_\epsilon(u_\epsilon,\nabla_\epsilon) \leq (2\pi+\tau_\epsilon)|B_1^{n-2}|
                    \end{align*}
                    and, on the other hand,
                    \begin{align}
                    \liminf_{\epsilon\rightarrow0}\E(u_\epsilon,\nabla_\epsilon,B_{1/2}^n,S(\epsilon)) > 0,
                    \end{align}
                    for any choice of oriented $(n-2)$-planes $S(\epsilon)$
                    (where, with abuse of notation, we write $\epsilon$ to mean a sequence $\epsilon_k\to0$). We apply \cref{varifold-limit-theorem}: up to extracting a subsequence, we have
                    $$e_{\epsilon}(u_{\epsilon},\nabla_{\epsilon})\,dx\weakstarto 2\pi\, d\mu_V$$
                    in duality with $C^0_c$, where $V$ is a stationary integral $(n-2)$-varifold whose weight $\mu_V$ obeys the bound
                    \begin{align}\label{liminfbound-1}
                        \mu_V(B_1^n) \leq \liminf_{\epsilon\to0} \frac{1}{2\pi} \int_{B_1^n}e_\epsilon(u_{\epsilon},\nabla_{\epsilon}) \leq |B_1^{n-2}|.
                    \end{align}
                    Moreover there exists an integral $(n-2)$-cycle $\Gamma$ such that $J(u_{\epsilon},\nabla_{\epsilon}) \weakto 2\pi\Gamma $ as currents and $|\Gamma|\leq \mu_V$.
                    
                    Since $u_{\epsilon}(0)=0$, by the clearing-out lemma \cite[Corollary 4.4]{Pigati-1} we get that $0 \in \spt(\mu_V)$, so that $\Theta^{n-2}(\mu_V,0) \geq 1$
                    since $V$ is an integral stationary varifold. Because of \cref{liminfbound-1}, the monotonicity formula for stationary varifolds is saturated, showing that $V$ must be a cone with respect to the origin; we extend it to a stationary cone $\tilde V$ on $\R^n$.
                    Since $\Theta^{n-2}(\mu_{\tilde V},x)\ge 1=\Theta^{n-2}(\mu_{\tilde V},0)$
                    for all $x\in\spt(\mu_{\tilde V})$,
                    we see that $\tilde V$ is a cone with respect to any $x\in\spt(\mu_{\tilde V})$, and hence a plane (since the tangent plane exists for a.e.\ point $x$).
                    Thus, up to a rotation, $V$ is the multiplicity-one varifold associated to $\{0\}\times\R^{n-2}$.

                    Moreover, the argument used in \cite[Section 6.2]{Pigati-1} to show integrality of $V$
                    actually reveals that the limiting density is the sum of the absolute values of the degrees of
                    $$\frac{u_\epsilon}{|u_\epsilon|}\Big|_{\de D_i\times\{z\}}$$
                    along typical slices $B_1^2\times\{z\}$ with $|z|<\frac12$,
                    where $D_1,\dots,D_N\subset B_{1/2}^2$
                    are suitable disjoint disks (depending on $z$)
                    such that $u_\epsilon(\cdot,z)\neq0$ on $B_{1/2}^2\setminus\bigcup_i D_i$ (see in particular the proof of \cite[Proposition 6.6]{Pigati-1} and the conclusion of \cite[Proposition 6.7]{Pigati-1}).
                    Since the limiting density is $1$ and, eventually, $u_\epsilon(y,z)\neq0$
                    for $y\in\de B_{1/2}^2$ and $z\in B_{1/2}^{n-2}$,
                    we see that
                    $$\operatorname{deg}\frac{u_\epsilon}{|u_\epsilon|}(\cdot,z)=1\quad\text{from $\de B_{1/2}^2$ to $S^1$}$$
                    eventually. As in \cite[Lemma 6.11]{Pigati-1},
                    we deduce that $\Gamma=\pm\llbracket \{0\}\times B^{n-2}_1 \rrbracket$.
                    We conclude that, after possibly replacing $(u,\nabla)$
                    with the conjugate pair, we have
                    \begin{align*}
                    0 &< \lim_{\epsilon\to0} \E(u_{\epsilon},\nabla_\epsilon,B_{1/2}^n,\R^{n-2}) \\
                    &= \lim_{\epsilon\to0} \frac{1}{2\pi}\int_{B_1^n} [e_{\epsilon}(u_{\epsilon},\nabla_{\epsilon}) - J(u_{\epsilon},\nabla_{\epsilon}) \wedge e_3^*\wedge\dots\wedge e_n^*] \\
                    &= \mu_V(B_{1/2}^n) - \ang{\Gamma,\uno_{B_{1/2}^n}e_3^*\wedge\dots\wedge e_n^*}\\
                    &= 0,
                    \end{align*}
                    which is the desired contradiction.

                    \fbox{\textit{Entire case.}}
                    For the case of an entire solution $(u,\nabla)$, we perform a rescaling: writing $\nabla=d-i\tilde\alpha$, let
                    \begin{align*}
                        u_\epsilon(x) := u(\epsilon^{-1}x),
                        \quad \nabla_\epsilon:=d-i\alpha_\epsilon\text{ with }\alpha_\epsilon(x):=\epsilon^{-1}\alpha(\epsilon^{-1}x).
                    \end{align*}
                    Again, by applying \cref{varifold-limit-theorem}, up to extracting a subsequence we have $e_{\epsilon}(u_{\epsilon},\nabla_{\epsilon})\,dx \weakstarto d\mu_V$, for a stationary integral $(n-2)$-varifold $V$, and the Jacobians $J(u_{\epsilon},\nabla_{\epsilon}) \weakto \Gamma$ for an integral $(n-2)$-cycle $\Gamma$ with the pointwise bound $|\Gamma|\leq\mu_V$. Using the monotonicity formula for $E_\epsilon$, we see that
                    $$\mu_V(B_R^n)=\lim_{\epsilon\to0}\frac{\epsilon^{n-2}}{2\pi}\int_{B_{R/\epsilon}^n}e_1(u,\nabla)$$
                    is a constant multiple of $R^{n-2}$, and hence
                    $V$ is a cone around the origin with $\mu_V(B^n_1) \leq 1+\frac{\tau_0}{2\pi}$. Then, by Allard's regularity theorem \cite{Allard-1}, we see that for $\tau_0(n)$ small enough, after a suitable rotation, $V$ is the varifold associated to $\{0\}\times\R^{n-2}$.
                    In particular, this shows the conclusion on the energy limit. 
                    
                    As before, we also have $\Gamma=\pm\llbracket\{0\}\times\R^{n-2}\rrbracket$, concluding that for $R:=\epsilon_k^{-1}$ (where $\epsilon_k\to0$ is our subsequence)
                    the statement holds for the plane $S(R):=\{0\}\times\R^{n-2}$, either for $(u,\nabla)$ or the conjugate pair $(\bar u,\bar\nabla=d+i\alpha)$ (depending on $R$). Since the initial sequence $\epsilon_k\to0$ was arbitrary, we deduce that
                    \begin{equation}\label{excess-R}
                        \min_{S\in\Gr(n,n-2)}\min\{\E(u,\nabla,B_R^n,S),\E(\bar u,\bar\nabla,B_R^n,S)\}\to0
                    \end{equation}
                    as $R\to\infty$. Finally, letting $S(R)$ realize the minimum over $S\in\Gr(n,n-2)$, since $\E_1\le\E$ does not distinguish between $(u,\nabla)$ and $(\bar u,\bar\nabla)$ we have
                    $$\E_1(u,\nabla,B_R^n,S(R))\to0.$$
                    As a consequence, we must have
                    \begin{equation}\label{planes-align}
                        \sup_{R'\in[R,2R]}\|P_{S(R')}-P_{S(R)}\|\to0,
                    \end{equation}
                    since otherwise we would find sequences $S(R_k)\to\hat S$ and $S(R_k')\to \hat S'\neq \hat S$ (with $R_k\le R_k'\le2R_k$) for which
                    \begin{equation*}
                        \E_1(u,\nabla,B_R^n,\hat S)+\E_1(u,\nabla,B_R^n,\hat S')\to0\quad\text{as $R=R_k\to\infty$},
                    \end{equation*}
                    thanks to the assumption $\int_{B_R^n}e_1(u,\nabla)=O(R^{n-2})$. If $\hat S\cup\hat S'$ spans $\R^n$, this immediately gives $\int_{B_R^n}e_1(u,\nabla)=o(R^{n-2})$, contradicting the assumption $u(0)=0$ and the clearing-out lemma. Otherwise,
                    their span is $(n-1)$-dimensional; letting $e_1$ be a unit vector orthogonal to it and completing to an orthonormal basis $\{e_1,\dots,e_n\}$ such that $e_2\perp \hat S$, we deduce that
                    $$\int_{B_R^n} \left[\sum_{j=2}^n|\nabla_{e_j}u|^2+|\omega|^2\right]=o(R^{n-2}).$$
                    Because of \cref{excess-R}, we also have
                    $$\min\{\E_2(u,\nabla,B_R^n,\hat S),\E_2(\bar u,\bar\nabla,B_R^n,\hat S)\}\to0,$$
                    giving
                    $$\int_{B_R^n}\left[||\nabla_{e_1}u|-|\nabla_{e_2}u||^2+\left||\omega(e_1,e_2)|-\frac{1-|u|^2}{2}\right|^2\right]\to0,$$
                    giving again the contradiction $\int_{B_R^n}e_1(u,\nabla)=o(R^{n-2})$.

                    Having established \cref{planes-align}, the claim follows by a straightforward continuity argument: for $R$ large enough we cannot have that
                    $$\E(u,\nabla,B_R^n,S(R)),\quad\E(\bar u,\bar\nabla,B_{R'}^n,S(R'))$$
                    are both small, for some $R'\in[R,2R]$, since this would imply that
                    $$\E_2(u,\nabla,B_R^n,S(R))+\E_2(\bar u,\bar\nabla,B_{R}^n,S(R))$$
                    is also small, which would give again small normalized energy on $B_R^n$; the same holds interchanging the roles of $R$ and $R'$, completing the proof.
                \end{proof}

            We also record the following consequence of the Hausdorff convergence of the vorticity set $Z = \{|u| \leq \frac34\}$.
            \begin{lemma}[Soft height bound]\label{soft-height-bound}
                For any $\sigma>0$ there exist $\tau_0(n,\sigma)>0$ and $\epsilon_0(n,\sigma)>0$ with the following property. Let $(u,\nabla)$ be a critical point for $E_\epsilon$ on $B_1^n$, with $\epsilon \leq \epsilon_0$ and $u(0)=0$, as well as the energy bound
                \begin{align*}
                    \frac{1}{|B_1^{n-2}|}\int_{B_1^n} e_\epsilon(u,\nabla) \leq 2\pi+\tau_0
                \end{align*}
                and \cref{u-le-1}--\cref{modica-bounds}.
                Then, after a suitable rotation, the zero set is contained in a small neighborhood of $\R^{n-2}$; more precisely,
                \begin{align*}
                    \{|u_\epsilon|\leq3/4\}\cap B_{1-\sigma}^n \subset B^2_{\sigma}\times B^{n-2}_1.
                \end{align*}
            \end{lemma}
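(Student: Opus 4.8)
The argument is a routine compactness-and-contradiction, parallel to (and simpler than) the proof of \cref{excess-vanishes}, built on the local version of \cref{varifold-limit-theorem}. We may assume $\sigma<1$, since otherwise $B_{1-\sigma}^n$ is empty or a point. Suppose the statement fails for some such $\sigma$. Taking $\tau_0=\epsilon_0=\tfrac1k$, we obtain critical points $(u_k,\nabla_k)$ for $E_{\epsilon_k}$ on $B_1^n$ with $\epsilon_k\to0$, $u_k(0)=0$, satisfying \cref{u-le-1}--\cref{modica-bounds} and $\frac1{|B_1^{n-2}|}\int_{B_1^n}e_{\epsilon_k}(u_k,\nabla_k)\le 2\pi+\tau_k$ with $\tau_k\to0$, but such that for \emph{every} rotation $R$ of $\R^n$ some point of $\{|u_k|\le\tfrac34\}\cap B_{1-\sigma}^n$ lies outside $R(B_\sigma^2\times B_1^{n-2})$.

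First I would apply the local form of \cref{varifold-limit-theorem}: along a subsequence, $e_{\epsilon_k}(u_k,\nabla_k)\,dx\weakstarto 2\pi\,\mu_V$ in duality with $C^0_c(B_1^n)$ for a stationary integral $(n-2)$-varifold $V$, with $\mu_V(B_1^n)\le|B_1^{n-2}|$ by the energy bound, and, choosing $\delta=\tfrac34$ in that theorem, the vorticity sets $\{|u_k|\le\tfrac34\}$ converge to $\spt V$ in the Hausdorff topology on compact subsets of $B_1^n$. Since $u_k(0)=0$ gives $0\in\spt V$, integrality of $V$ forces $\Theta^{n-2}(\mu_V,0)\ge1$, so the monotonicity formula for stationary varifolds is saturated; exactly as in the proof of \cref{excess-vanishes} this makes $V$ a cone about $0$, and, extending to $\R^n$ and using $\Theta^{n-2}(\mu_V,\cdot)\ge1$ on the support, $V$ must be a multiplicity-one $(n-2)$-plane $P$ through the origin.

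Next I would convert this into the claimed pointwise containment. Pick a rotation $R$ with $R(\{0\}\times\R^{n-2})=P$. Applying the Hausdorff convergence on the compact set $\bar B_{1-\sigma/2}^n$, for $k$ large every point of $\{|u_k|\le\tfrac34\}\cap\bar B_{1-\sigma/2}^n$ lies within distance $\sigma/2$ of $P$. Now if $x\in B_{1-\sigma}^n$ has $\dist(x,P)<\sigma$, then its orthogonal projection onto $P$ has norm at most $|x|<1-\sigma<1$, so $x\in R(B_\sigma^2\times B_1^{n-2})$. Since $B_{1-\sigma}^n\subset\bar B_{1-\sigma/2}^n$, we conclude $\{|u_k|\le\tfrac34\}\cap B_{1-\sigma}^n\subset R(B_\sigma^2\times B_1^{n-2})$ for $k$ large, contradicting the choice of the sequence; this proves the lemma.

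As the lemma is ``soft'', no real obstacle is expected. The only points needing care are that the Hausdorff convergence in \cref{varifold-limit-theorem} holds only on compact subsets (hence the passage from $B_{1-\sigma}^n$ to $B_{1-\sigma/2}^n$), and that the cone-to-plane conclusion for $V$ is imported verbatim from the proof of \cref{excess-vanishes} rather than re-proven.
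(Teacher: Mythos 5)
Your argument is correct and is exactly what the paper does: the paper's proof is a one-line reduction ("follow the strategy of \cref{excess-vanishes} and use the Hausdorff convergence of the vorticity set in \cref{varifold-limit-theorem}"), and your write-up just fills in those details — compactness/contradiction, the limit varifold being a multiplicity-one plane through $0$ (via clearing-out and saturated monotonicity, as in \cref{excess-vanishes}), and then local Hausdorff convergence of $\{|u_k|\le\frac34\}$ to conclude the containment after rotating the plane to $\{0\}\times\R^{n-2}$.
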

            
                \begin{proof}
                    Following the same strategy as in the proof of \cref{excess-vanishes}, the statement follows from the Hausdorff convergence of the vorticity set in \cref{varifold-limit-theorem}.
                \end{proof}

            \begin{rmk}
                We will often use the following observation:
                if the excess $\E_1$ is suitably small on $B_1^n$, then the same conclusion holds without any rotation.
                The same holds under other assumptions forcing the vorticity set to concentrate on the plane $\R^{n-2}$ in the limit $\epsilon\to0$, such as energy close to $|B_1^{n-2}|\cdot2\pi$ on the cylinder $B_1^2\times B_1^{n-2}$, for a critical pair defined there (with $u(0)=0$).
            \end{rmk}
            
            In the following lemma we essentially show that if $\E_1$ is small in a ball of radius larger than $\epsilon$, then $\E$ is small as well.
            \begin{lemma}[$\E_1$ vanishing implies $\E$ vanishing]\label{excess-vanishes-when-excess-1-vanishes}
                For any $\delta,\Lambda>0$ there exist $\tau_0(n,\delta,\Lambda)>0$ and $\epsilon_0(n,\delta,\Lambda)>0$ small enough with the following property. Let $(u,\nabla)$ be a critical pair for $E_\epsilon$ on the unit ball $B_{1}^n$,
                with $u(0)=0$,
                $$E_\epsilon(u,\nabla)\le2\pi+\tau_0,$$
                and \cref{u-le-1}--\cref{modica-bounds}, as well as $\epsilon\le\epsilon_0$.
                Let $x\in B_{1-\delta}^n$ be a point such that
                \begin{align*}
                    \sup_{\epsilon\le s\leq1-|x|} \E_1(u,\nabla,B_s^n(x),\R^{n-2}) \leq \tau_0.
                \end{align*}
                Then, up to conjugating the pair,
                \begin{align*}
                    \sup_{\epsilon\le s\leq1-|x|} \E(u,\nabla,B_{s/2}^n(x),\R^{n-2}) \leq \delta.
                \end{align*}
                 \end{lemma}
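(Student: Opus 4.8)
The plan is to run the compactness-and-contradiction scheme used for \cref{excess-vanishes} and \cref{soft-height-bound}. Fix $\delta$; if the statement fails one gets $\epsilon_k\to0$, $\tau_k\to0$, critical pairs $(u_k,\nabla_k)$ for $E_{\epsilon_k}$ on $B_1^n$ obeying \cref{u-le-1}--\cref{modica-bounds} with $u_k(0)=0$ and $E_{\epsilon_k}(u_k,\nabla_k)\le(2\pi+\tau_k)|B_1^{n-2}|$, points $x_k\in B_{1-\delta}^n$ with $\sup_{\epsilon_k\le s\le1-|x_k|}\E_1(u_k,\nabla_k,B_s^n(x_k),\R^{n-2})\le\tau_k$, and, for both $(u_k,\nabla_k)$ and its conjugate, scales $s_k\in[\epsilon_k,1-|x_k|]$ with $\E(\cdot,B_{s_k/2}^n(x_k),\R^{n-2})>\delta$. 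Passing to subsequences, assume $x_k\to x_\infty$, $1-|x_k|\to\ell\ge\delta$, $s_k\to s_\infty\in[0,\ell]$, and, by \cref{varifold-limit-theorem} in its local version, $\tfrac1{2\pi}e_{\epsilon_k}(u_k,\nabla_k)\,dx\weakstarto\mu_V$ and $J(u_k,\nabla_k)\weakto2\pi\Gamma$ with $V$ a stationary integral $(n-2)$-varifold, $|\Gamma|\le\mu_V$, $\mu_V(B_1^n)\le|B_1^{n-2}|$. As in \cref{excess-vanishes}, the clearing-out lemma at $u_k(0)=0$ gives $\Theta^{n-2}(\mu_V,0)\ge1$, the monotonicity formula is then saturated, and $V$ is the multiplicity-one plane $P$ through the origin, with $\Gamma=\pm\llbracket P\cap B_1^n\rrbracket$.

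Next I would pin down $P$. If $\dist(x_\infty,P)\ge\ell$, then every ball $B_s^n(x_\infty)$ with $s\le\ell$ misses $P$, so $\mu_V$ vanishes on all the $B_{s_k/2}^n(x_\infty)$; since also $\dist(x_k,Z_k)$ stays bounded below (where $Z_k:=\{|u_k|\le\tfrac34\}$, using $Z_k\to P$ and $x_\infty\notin P$), the convergence of $\tfrac1{2\pi}e_{\epsilon_k}dx$ at fixed scales and \cref{exponential-decay-proposition} at scales tending to $0$ give $\E(\cdot,B_{s_k/2}^n(x_k),\R^{n-2})\le2\cdot(\text{normalized energy on }B_{s_k/2}^n(x_k))\to0$, contradicting $\E>\delta$. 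So $\dist(x_\infty,P)<\ell$; then for a fixed $s_0<\ell$ the ball $B_{s_0}^n(x_\infty)$ meets $P$ and $\E_1(u_k,\nabla_k,B_{s_0}^n(x_k),\R^{n-2})\to0$, so the slicing/degree analysis of \cite[Section~6]{Pigati-1} (as in the Remark following \cref{soft-height-bound}) shows the limiting energy near $x_\infty$ lies on planes parallel to $S:=\{0\}\times\R^{n-2}$, and since $P$ is a single plane containing $0$ this forces $P=S$. After conjugating $(u_k,\nabla_k)$ we get $\Gamma=\llbracket S\cap B_1^n\rrbracket$ with the standard orientation, hence $\mu_V=\hau^{n-2}\mrestr(S\cap B_1^n)=|\Gamma|$ on $B_1^n$. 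If $s_\infty>0$ we are done, since $\E(u_k,\nabla_k,B_{s_k/2}^n(x_k),\R^{n-2})\to(s_\infty/2)^{2-n}\bigl(\mu_V(B_{s_\infty/2}^n(x_\infty))-\ang{\Gamma,\uno_{B_{s_\infty/2}^n(x_\infty)}e_S^*}\bigr)=0$ because $\vec\Gamma=\vec S$ on $\spt\Gamma$. So $s_k\to0$; and if $\dist(x_k,Z_k)/s_k\to\infty$, then \cref{exponential-decay-proposition} bounds the normalized energy on $B_{s_k/2}^n(x_k)$ by $C(s_k/\epsilon_k)^2e^{-K\dist(x_k,Z_k)/(2\epsilon_k)}\to0$ (using $\dist(x_k,Z_k)/\epsilon_k\ge\dist(x_k,Z_k)/s_k$), again a contradiction — so $\dist(x_k,Z_k)/s_k$ stays bounded.

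It remains to blow up at $x_k$ in the regime $s_k\to0$ with $\dist(x_k,Z_k)/s_k$ bounded. If $s_k/\epsilon_k\to\infty$, rescale by $s_k$: the rescaled pair has vanishing $\epsilon$-parameter, $\E_1\to0$ at every fixed scale, and — using the original monotonicity formula at a fixed radius $\rho_0\in(\dist(x_\infty,S),\ell)$ together with the already-known $\mu_V=\hau^{n-2}\mrestr(S\cap B_1^n)$ — all density ratios of the rescaled energy are $\le1+o(1)$; combined with integrality and with nontriviality of the limit (a point of $Z_k$ stays at bounded distance), this forces the blow-up limit to be a single multiplicity-one plane parallel to $S$ whose limiting cycle equals its own varifold, whence $\E(\cdot,B_{1/2}^n,\R^{n-2})\to0$, a contradiction. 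If instead $s_k/\epsilon_k\to\kappa<\infty$, rescale by $\epsilon_k$: by \cref{u-le-1}--\cref{modica-bounds}, a local Coulomb gauge change, and elliptic bootstrapping, the rescaled pairs converge in $C^\infty_{\mathrm{loc}}$ to a nontrivial entire critical pair $(\hat u,\hat\nabla)$ for $E_1$ with energy growth $\le2\pi\omega_{n-2}R^{n-2}$ on $B_R^n$ (again by monotonicity at a fixed radius) and with $\E_1(\hat u,\hat\nabla,B_R^n,\R^{n-2})=0$ for all $R$; hence $(\hat u,\hat\nabla)$ is the pullback through $\R^n\to\R^2$ of a nonconstant finite-energy planar critical pair of energy $\le2\pi$, which by \cref{ymh-dim-2-thm} has vortex number $\pm1$ and therefore solves the vortex equations \cref{vortex-equations}. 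The conjugation chosen above — being dictated, like the sign in the vortex equations, by the vortex number on transverse slices — makes it solve them with the sign for which $\E_2(\hat u,\hat\nabla,\cdot,\R^{n-2})=0$, so $\E(\hat u,\hat\nabla,B_{\kappa/2}^n,\R^{n-2})=\E_1+\E_2=0$, and $C^\infty_{\mathrm{loc}}$ convergence contradicts $\E(u_k,\nabla_k,B_{s_k/2}^n(x_k),\R^{n-2})>\delta$.

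I expect the main obstacle to be this last, small-scale blow-up — specifically ruling out that the energy concentrates with multiplicity greater than one, or along several parallel sheets, near $x$ at scales comparable to $\epsilon$. The argument above works only because the \emph{global} bound $E_\epsilon\le(2\pi+\tau_0)|B_1^{n-2}|$, fed through the monotonicity formula together with the identification $P=S$, forces the relevant local density ratio down to $1$; making this chain of estimates quantitative, and keeping consistent the bookkeeping that links the global conjugation, the $\epsilon$-scale blow-up, and the sign in the vortex equations, is the delicate part. The other point requiring care is the identification $P=S$ from $\E_1\to0$, which rests on the two-dimensional slicing analysis of \cite{Pigati-1}.
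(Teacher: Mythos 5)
Your proposal is correct and takes essentially the same route as the paper's proof: a compactness/contradiction argument with a case analysis governed by the ratios $\epsilon_k/s_k$ and $\dist(x_k,Z_k)/s_k$, using the local varifold/cycle convergence theorem, energy monotonicity (via the Modica bound) to saturate the density ratio and force a multiplicity-one plane, the limit of the stress-energy tensor (equivalently the slicing analysis) to identify that plane with $\R^{n-2}$, and the two-dimensional Taubes/stability classification in the regime $s_k\sim\epsilon_k$ — your handling of $s_\infty>0$ and $\dist(x_k,Z_k)/s_k\to\infty$ directly at the original scale merely reorganizes the paper's Cases 1--3. The conjugation/orientation bookkeeping you flag is treated at the same level of detail as in the paper itself (where it is resolved by the fixed degree of $u/|u|$ on transverse circles), so I see no genuine gap.
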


                \begin{proof}
                    The proof of this lemma is basically the equivalence of the (second-order) Euler--Lagrange equations and the (first-order) vortex equations in two dimensions.
                    
                    By contradiction, assume we have a sequence $(u_k,\nabla_k)$ of critical points for $E_\epsilon$, with $\epsilon=\epsilon_k\to0$,
                    and a sequence of points $x_k\in B_{1-\delta}^n$ and radii $s_k\in[\epsilon_k,1-|x_k|]$ such that
                    \begin{align*}
                        \E_1(u_k,\nabla_k,B^{n}_{s_k}(x_k),\R^{n-2})\rightarrow 0,\quad\liminf_{k\to\infty}\E(u_k,\nabla_k, B^n_{s_k/2}(x_k),\R^{n-2}) \geq \delta.
                    \end{align*} 

We now distinguish a few cases depending on the behavior of the limit \(\epsilon_k/s_k\), which we can assume to exists and to belong to \([0,1]\), and on the distance of \(x_k\) from the vorticity set \(Z_k=\{x\in B_{s_k}^n(x_k): |u_k|\le 3/4\}\).
 
 \fbox{\textit{Case 1: \(\epsilon_k/s_k\to 0\) and \(\dist(x_k, Z_k)/s_k \to 0\).}} 
                    Since the energy concentration varifold is a plane with multiplicity 1 (as in the previous proof), recalling that $1-|x_k|\ge\delta$ and $x_k$ has vanishing distance from the vorticity set, we immediately see that
                    $$\frac{1}{|B_{1-|x_k|}^{n-2}|}\int_{B_{1-|x_k|}(x_k)}e_{\epsilon_k}(u_k,\nabla_k)\to2\pi.$$
                    Defining the map $\phi_k(x):=x_k+s_kx$, we consider the pullback pair
                    $$(\tilde{u}_k,\tilde\nabla_k) := \phi_k^*(u_k,\nabla_k),$$
                    which is critical for $E_{\tilde\epsilon_k}$,
                    where $\tilde\epsilon_k:=\epsilon_k/s_k$. Moreover,
                    we have
                    $$\limsup_{k\to\infty}\frac{1}{|B_{1}^{n-2}|}\int_{B_{1}^n}e_{\tilde\epsilon_k}(\tilde u_k,\tilde\nabla_k)\le2\pi$$
                    by monotonicity of the energy.
                    
                    Since $\tilde\epsilon_k\to0$ and $0$ has vanishing distance from $\{|\tilde u_k|\le\frac34\}\cap B_1^n$, as in the previous proof,
                    the energy concentration varifold $V$ is a plane $S$ passing through the origin, with multiplicity 1, while
                    the limiting cycle $\Gamma=\pm\llbracket S\rrbracket$. By possibly replacing \((\tilde u, \tilde{\nabla})\) with their conjugate, we can assume that \(\Gamma=\llbracket S\rrbracket\).
                    Also, the stress-energy tensors
                    $$T_{\tilde{\epsilon}_k}(\tilde u_k,\tilde\nabla_k),$$
                    viewed as matrix-valued measures,
                    converge (up to subsequences) to a limit $T$ such that $dT(x)=P_{T_xV}\,d\mu_V(x)$, where $P_{T_xV}$ is the orthogonal projection onto the tangent space $T_xV$
                    (cf.\ \cite[Section 6.1]{Pigati-1}).
                    Hence, the fact that $\E_1(\tilde u_k,\tilde\nabla_k,B_1^n,\R^{n-2})\to0$
                    implies $T_xV=\R^{n-2}$ a.e., giving $S=\R^{n-2}$. Since
                    $$\lim_{k\to\infty}\int_{B_{1/2}^n}[e_{\tilde\epsilon_k}(\tilde u_k,\tilde\nabla_k)-J(\tilde u_k,\tilde\nabla_k)\wedge e_S^*]=2\pi[\mu_V(B_{1/2}^{n})-\ang{\Gamma,\uno_{B_{1/2}^n}e_S^*}]=0$$
                    , we get the desired contradiction in this case.

 \fbox{\textit{Case 2: \(\epsilon_k/s_k\to 0\) and \(\dist(x_k, Z_k)/s_k \to 2d>0\).}} By performing the same scaling as in the previous step we get that \(|\tilde u_k|\) converges uniformly to \(1\) in \(B_d^n(0)\), which immediately implies that both excesses converges to \(0\) in \(B_{ds_k}^n(x_k)\) and thus the statement of the theorem with \(s/2\) replaced by \(ds\). A covering argument then allows to pass to \(s/2\).

 \fbox{\textit{Case 3: \(\epsilon_k/s_k\to \bar\epsilon >0\).}} Note  that this implies that \(s_k\to 0\). 
        
                    After passing to a local Coulomb gauge, for any $\ell\in\N$ we get local uniform $C^\ell$ bounds on $B_{R_k}^n$,
                    with $R_k:=\delta/s_k$, since by monotonicity we have local uniform bounds on the energy here, see \cite[Appendix]{Pigati-1}. By Arzelà--Ascoli we obtain a subsequential limit $(\tilde{u}_\infty,\tilde\nabla_\infty)$ in $C^\infty(\R^n)$.
                    By the definition of $\E_1$ (cf.\ \cref{excess-1-definition}), we see that $(\tilde\nabla_\infty)_{\de_k} \tilde{u}_\infty =0$ for all $3\leq k \leq n$ and $\tilde\omega_\infty(e_j,e_k) = 0$ for all $(j,k)\neq(1,2)$.
                    As in \cite[Proposition 6.7]{Pigati-1} (after \cite[eq. (6.30)]{Pigati-1}), up to a further change of gauge,
                    the limiting pair depends only on the first two coordinates.
                    By the equivalence of first-order and second-order vortex equations in $\R^2$ \cite{Taubes-1} (cf.\ also the end of the proof of \cite[Proposition 6.7]{Pigati-1}), we see that $(\tilde{u}_\infty,\tilde\nabla_\infty)$ solves the first-order vortex equations up to conjugation;
                    this yields a contradiction for $k$ large enough.
                \end{proof}

                \begin{lemma}\label{cone}
                For every  $\sigma>0$ there exist constants  $\eta (n,\sigma), C(n,\eta)>0$ such that if  $r\ge C\epsilon$ and  $(u,\nabla)$ is a critical pair on \(B_r(p)\) satisfying \cref{u-le-1}--\cref{modica-bounds} 
                and $\E_1(u,\nabla,B_r^n(p),\R^{n-2})\le \eta$ then
                $$\{|u|\le3/4\}\cap B_{(1-\sigma)r}^n(p)\subseteq B_{\sigma r}^2(y)\times\R^{n-2},$$
                provided that $|u(p)|\le\frac34$ at $p=(y,z)$ and the normalized energy is at most $2\pi+\eta$.
                
                Moreover, given $\sigma, \Lambda>0$ there are $\eta (n,\sigma,\Lambda), C(n,\eta,\Lambda)>0$ such that if 
                $$\E_1(u,\nabla,B_{C\epsilon}^n(p),\R^{n-2})\le \eta$$
                then $G:=\{u=0\}\cap B_{\Lambda\epsilon}^n(p)$ is a $\sigma$-Lipschitz graph,  we have the inclusion
                $$\{|u|\le3/4\}\cap B_{\Lambda\epsilon}^n(p)\subseteq B_{C(n)\epsilon}(G),$$
                and
                $\epsilon|u|$ is comparable with the distance from $S$ in this neighborhood $B_{C(n)\epsilon}(G)$.
            \end{lemma}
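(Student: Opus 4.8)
The plan is to prove both assertions by compactness and contradiction, in the spirit of \cref{excess-vanishes}, \cref{soft-height-bound}, and Case~3 of the proof of \cref{excess-vanishes-when-excess-1-vanishes}: a putative sequence of counterexamples, suitably rescaled, will converge to a limiting object whose structure is incompatible with the failure of the conclusion. The two assertions differ only in the scale at which one works. In the first, $r\gg\epsilon$, so one rescales $B_r(p)$ to $B_1$, the parameter $\epsilon/r$ tends to $0$, and the limit is a stationary integral varifold, to which \cref{varifold-limit-theorem} applies. In the second, $r=C\epsilon$ with $\epsilon,r$ comparable, so after rescaling to $B_C$ and passing to a local Coulomb gauge the limit is a smooth entire critical pair for $E_1$, to be analysed via elliptic regularity and the two-dimensional classification \cref{ymh-dim-2-thm}.

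For the first assertion I would argue by contradiction. If it failed, there would be $\sigma>0$ and a sequence $(u_k,\nabla_k)$ of critical pairs on $B_{r_k}(p_k)$ obeying \cref{u-le-1}--\cref{modica-bounds}, with $r_k\ge C_k\epsilon_k$, $C_k\to\infty$, normalized energy $\le2\pi+\eta_k$, $\E_1(u_k,\nabla_k,B^n_{r_k}(p_k),\R^{n-2})\le\eta_k$ with $\eta_k\to0$, and $|u_k(p_k)|\le\tfrac34$ at $p_k=(y_k,z_k)$, but with some point of $\{|u_k|\le\tfrac34\}\cap B^n_{(1-\sigma)r_k}(p_k)$ outside $B^2_{\sigma r_k}(y_k)\times\R^{n-2}$. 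Rescaling $B_{r_k}(p_k)$ to $B_1$ and translating so the base point is $0$ (then $y_k\mapsto0$ and $\epsilon_k/r_k\le C_k^{-1}\to0$), the local form of \cref{varifold-limit-theorem} gives, along a subsequence, $e_{\epsilon_k}(u_k,\nabla_k)\,dx\weakstarto 2\pi\,d\mu_V$ for a stationary integral $(n-2)$-varifold $V$ with $\mu_V(B^n_1)\le|B^{n-2}_1|$, and convergence of the stress-energy tensors to $P_{T_xV}\,d\mu_V$. The clearing-out lemma and $|u_k(0)|\le\tfrac34$ force $0\in\spt\mu_V$, hence $\Theta^{n-2}(\mu_V,0)\ge1$; with $\mu_V(B^n_1)\le|B^{n-2}_1|$ this saturates the monotonicity formula, so $V$ is a cone and (its density being $\ge1$ on its support) a multiplicity-one plane, as in the proof of \cref{excess-vanishes}. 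Since $\E_1\to0$, the limiting stress-energy forces $T_xV=\R^{n-2}$ $\mu_V$-a.e., so $V$ is the multiplicity-one plane $\R^{n-2}$, and the Hausdorff convergence of the vorticity set in \cref{varifold-limit-theorem} then yields $\{|u_k|\le\tfrac34\}\cap B^n_{1-\sigma}\subseteq B^2_{\sigma/2}\times\R^{n-2}\subseteq B^2_{\sigma}(y_k)\times\R^{n-2}$ for $k$ large, the desired contradiction.

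For the second assertion I would run a parallel contradiction at the fine scale, after fixing a constant $C_1=C_1(n)$ (it will be $2r_0$, with $r_0$ the radius at which the one-vortex profile reaches $\tfrac34$). Negating the statement and rescaling by $\epsilon_k$ with $p_k\mapsto0$ yields critical pairs $(v_k,\nabla_k')$ for $E_1$ on $B^n_{C_k}$, $C_k\to\infty$, with $\E_1(v_k,\nabla_k',B^n_{C_k},\R^{n-2})\le\eta_k\to0$, for which the conclusion at scale $\Lambda$ fails. By \cref{monotonicity-prop} and the standing energy bound, the energy is locally uniformly bounded on $\R^n$; a local Coulomb gauge gives locally uniform $C^\ell$ bounds for all $\ell$ (cf.\ \cite[Appendix]{Pigati-1}), and Arzel\`a--Ascoli produces a $C^\infty_{\mathrm{loc}}$ limit $(v_\infty,\nabla_\infty)$, critical for $E_1$ on $\R^n$. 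From $\E_1\to0$ one gets $(\nabla_\infty)_{e_j}v_\infty=0$ for $j\ge3$ and $\omega_\infty(e_j,e_k)=0$ for $(j,k)\ne(1,2)$, so, as in \cite[Proposition~6.7]{Pigati-1}, after a further change of gauge $(v_\infty,\nabla_\infty)$ depends only on $y\in\R^2$; by the equivalence of the second- and first-order equations in the plane \cite{Taubes-1} it is, up to conjugation, a vortex solution of finite two-dimensional energy (otherwise the normalized energy of the $(n-2)$-translation-invariant $v_\infty$ on large balls would diverge, against monotonicity). By \cref{ymh-dim-2-thm} this energy is $2\pi|N|$ with $N$ the vortex number, and the normalized-energy hypothesis --- which at scale $C\epsilon$ around $p$ is $\le2\pi+\eta<4\pi$ in the regime where this lemma is used, the vorticity set being a single multiplicity-one sheet at coarser scales by the first assertion --- forces $|N|\le1$. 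If $N=0$ then $|v_\infty|\equiv1$, so $|v_k|>\tfrac34$ on any fixed ball for $k$ large, hence $G=\emptyset$ and all three conclusions hold trivially, a contradiction. If $|N|=1$, then by the uniqueness part of \cref{ymh-dim-2-thm} and rotation-covariance, $v_\infty=f(|y-a|)e^{\pm i\theta}$ with $f$ smooth, $f(0)=0$, $f'(0)>0$, $0<f<1$ increasing, $f\to1$ exponentially (by \cref{exponential-decay-proposition} rescaled); thus $\{v_\infty=0\}=\{a\}\times\R^{n-2}$ is a flat $(n-2)$-plane, $\{|v_\infty|\le\tfrac34\}=\bar B^2_{r_0}(a)\times\R^{n-2}$, and $|v_\infty(y,z)|=f(|y-a|)$ is comparable to $|y-a|=\dist((y,z),\{a\}\times\R^{n-2})$ on every bounded tube, since $f(s)/s$ is bounded above and below on compact subsets of $(0,\infty)$. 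Taking $C_1:=2r_0$ and using that the zeros of $v_\infty$ are non-degenerate, the $C^1_{\mathrm{loc}}$ convergence and the implicit function theorem give, for $k$ large: $\{v_k=0\}\cap B^n_\Lambda$ is a smooth graph over $\R^{n-2}$ with slope $\to0$, hence $\sigma$-Lipschitz; $\{|v_k|\le\tfrac34\}\cap B^n_\Lambda\subseteq B_{C_1}(\{v_k=0\})$; and $|v_k|\asymp\dist(\cdot,\{v_k=0\})$ on $B_{C_1}(\{v_k=0\})$. Undoing the rescaling contradicts the failure of the $\epsilon$-scale statement.

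The hard part will be the quantitative control of the local vortex number: excluding limits with $|N|\ge2$ --- for which $\{v_\infty=0\}$ is either several nearly parallel sheets (breaking the graph property) or a degenerate zero (breaking $|v_\infty|\asymp\dist$) --- requires the normalized energy at scale $C\epsilon$ around $p$ to stay below $4\pi$, which holds only because at coarser scales the vorticity concentrates on a single multiplicity-one sheet, i.e.\ the first assertion together with the global analysis of this section. A minor point to watch is the matching of radii: a point of $\{|u|\le\tfrac34\}\cap B^n_{\Lambda\epsilon}(p)$ has its nearest zero only within $O(\epsilon)$, which may fall just outside $B^n_{\Lambda\epsilon}(p)$, so $G$ should be read as the zero set in a slightly larger ball (equivalently, one uses the lemma with $p$ close to the zero set), and ``distance from $S$'' is to be read as distance from $G$, which up to a factor controlled by the Lipschitz slope equals distance from the $(n-2)$-plane over which $G$ is graphed. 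All remaining steps are routine given \cref{varifold-limit-theorem}, the gauge fixing and elliptic estimates of \cite{Pigati-1}, and Taubes' classification \cite{Taubes,Taubes-1}.
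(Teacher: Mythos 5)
Your proposal is correct and follows essentially the same route as the paper: the first assertion by the compactness/contradiction argument of \cref{excess-vanishes-when-excess-1-vanishes} (rescaled varifold limit, multiplicity-one plane forced by $\E_1\to0$, Hausdorff convergence of the vorticity set), and the second by blowing up at scale $\epsilon$ in the Coulomb gauge, identifying the limit with the degree-one Taubes solution, and using the nonvanishing of its Jacobian at the zero together with the implicit function theorem. The vortex-number exclusion you single out as ``the hard part'' is in fact already settled by the lemma's standing hypotheses: under \cref{modica-bounds} the normalized energy is monotone in the radius by \cref{monotonicity-prop}, so the bound $2\pi+\eta$ at scale $r$ passes down to scale $C\epsilon$ and, in the limit, forces planar energy at most $2\pi$, i.e.\ $|N|\le1$.
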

            
            \begin{proof}The first part follows by the very same arguments of \cref{excess-vanishes-when-excess-1-vanishes}. The second one is again showed by contradiction  after scaling by \(\epsilon\), noticing that in the Coulomb gauge the contradicting sequence $(u_k,\nabla_k)$ converges smoothly to a solution depending only on the two variables \((y_1,y_2)\). To infer the smooth convergence of the zero set (which is gauge invariant) one  notices that, by the explicit form of the Taubes solution, the Jacobian \(J u_k(e_1,e_2)\) is bounded away from zero. Convergence of the zero set then follows from the implicit function theorem. Compare also with the proof of \cref{Lipschitz-approximation-of-zero-loci-lemma}. 
            \end{proof}

            In the next lemma we show that
            the energy on each slice is approximately the excess on the slice plus the degree of $u$ on the boundary.
            \begin{lemma}\label{energy-identity-on-slice}
                Let $(u,\nabla)$ be an arbitrary smooth pair defined on $\bar B^2_1\times \bar B^{n-2}_1$ (not necessarily a critical point) with
                \begin{align*}
                    e_\epsilon(u,\nabla)(x) \leq e^{-K/\epsilon}\quad\text{for all }x\in\de B^2_1\times B^{n-2}_1
                \end{align*}
                and $|u(x)|\ge\mz$ for all $x$ in the same set.
                Then we have
                \begin{align*}
                    \left|\deg(u/|u|,\de B^2_1\times\{z\}) + \E_z - \frac{1}{2\pi}\int_{B^2_1\times\{z\}} e_\epsilon(u,\nabla) \right| \leq 4\epsilon e^{-K/\epsilon},
                \end{align*}
                for all $z\in B_1^{n-2}$, up to conjugating the pair.
            \end{lemma}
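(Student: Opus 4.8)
The plan is to use that the gauge-invariant Jacobian is an \emph{exact} $2$-form on the trivial bundle, to convert the interior integral of $J(u,\nabla)$ over the slice into a boundary integral by Stokes' theorem, and then to observe that on the lateral boundary the resulting $1$-form is an exponentially small perturbation of the angular differential of $u/|u|$.

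First I would fix a slice $B^2_1\times\{z\}$ with coordinates $(x_1,x_2)$, regard $u$ and $\alpha$ as restricted to it (so $\nabla_k u=\de_k u-i\alpha_k u$ for $k=1,2$ and $\omega(e_1,e_2)=\de_1\alpha_2-\de_2\alpha_1$), and introduce the gauge-invariant current $1$-form $j:=\ang{\nabla u,iu}$, i.e.\ $j_k=\ang{\nabla_{e_k}u,iu}$. Expanding the covariant derivatives and using the metric compatibility of $\nabla$ together with $\ang{\phi,i\phi}=0$ for unit $\phi$, a direct computation gives the pointwise identity
$$J(u,\nabla)(e_1,e_2)=\de_1(j_2+\alpha_2)-\de_2(j_1+\alpha_1),$$
valid for \emph{any} smooth pair, not only critical ones; equivalently, the restriction of $J(u,\nabla)$ to the disk equals $d(j+\alpha)$. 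By Green's theorem,
$$\frac1{2\pi}\int_{B^2_1\times\{z\}}J(u,\nabla)(e_1,e_2)\,dx=\frac1{2\pi}\oint_{\de B^2_1\times\{z\}}(j+\alpha)\cdot\tau\,ds,$$
with $\tau$ the positively oriented unit tangent. Since $\E_z-\frac1{2\pi}\int_{B^2_1\times\{z\}}e_\epsilon(u,\nabla)=-\frac1{2\pi}\int_{B^2_1\times\{z\}}J(u,\nabla)(e_1,e_2)\,dx$ straight from the definition of $\E_z$, the lemma reduces to bounding $\big|\tfrac1{2\pi}\oint_{\de B^2_1\times\{z\}}(j+\alpha)\cdot\tau\,ds-\deg(u/|u|,\de B^2_1\times\{z\})\big|$.

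Then I would analyze the boundary integral. On $\de B^2_1\times\{z\}$ we have $|u|\ge\mz$, so $\rho:=|u|$ and $\phi:=u/|u|\colon\de B^2_1\to S^1$ are smooth, and we may lift $\phi=e^{i\Theta}$ with $\Theta$ smooth and increasing by $2\pi\deg\phi$ around the loop. Writing $\nabla_\tau u=(\de_\tau\rho)\phi+\rho\,\nabla_\tau\phi$ and using $\ang{\phi,i\phi}=0$ one gets $j_\tau=\rho^2\ang{\nabla_\tau\phi,i\phi}$; since $\nabla_\tau\phi=\de_\tau\phi-i\alpha_\tau\phi$ and $\ang{\de_\tau\phi,i\phi}=\de_\tau\Theta$, this is $\ang{\nabla_\tau\phi,i\phi}=\de_\tau\Theta-\alpha_\tau$, whence
$$(j+\alpha)\cdot\tau=\de_\tau\Theta+(\rho^2-1)\ang{\nabla_\tau\phi,i\phi}.$$
Integrating, $\tfrac1{2\pi}\oint\de_\tau\Theta\,ds=\deg\phi$, so only the correction term remains. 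From $e_\epsilon(u,\nabla)\le e^{-K/\epsilon}$ on the lateral boundary the potential term gives $|1-|u|^2|\le 2\epsilon e^{-K/(2\epsilon)}$ and the Dirichlet term gives $|\nabla_\tau u|\le|\nabla u|\le e^{-K/(2\epsilon)}$ there; moreover $\nabla_\tau\phi=\rho^{-1}(\nabla_\tau u-\ang{\nabla_\tau u,\phi}\phi)$ has norm at most $\rho^{-1}|\nabla_\tau u|$, so $|\ang{\nabla_\tau\phi,i\phi}|\le|\nabla_\tau\phi|\le\rho^{-1}|\nabla_\tau u|\le 2e^{-K/(2\epsilon)}$. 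Hence the correction is pointwise at most $4\epsilon e^{-K/\epsilon}$, and integrating over the circle of length $2\pi$ and dividing by $2\pi$ yields exactly $4\epsilon e^{-K/\epsilon}$. The degree is constant in $z$ by connectedness of $B^{n-2}_1$, and conjugating the pair only flips its sign; this accounts for the clause ``up to conjugating the pair'', the estimate itself holding with the natural signed degree once the orientation of $\de B^2_1$ is matched to the sign convention $\psi(u)(e_1,e_2)=2\ang{i\nabla_{e_1}u,\nabla_{e_2}u}$.

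The one point requiring care is that $\alpha$ itself is \emph{not} controlled by the hypotheses, which bound only the gauge-invariant density $e_\epsilon$: in a bad gauge $\alpha_\tau$ can be arbitrarily large on $\de B^2_1\times\{z\}$, and likewise the magnetic flux $\int_{B^2_1\times\{z\}}\omega=\oint_{\de B^2_1\times\{z\}}\alpha\cdot\tau\,ds$ is uncontrolled. The argument is therefore arranged so that every quantity entering the final estimate is gauge-invariant --- both $j_\tau$ and the combination $(j+\alpha)\cdot\tau-\de_\tau\Theta=(\rho^2-1)\ang{\nabla_\tau\phi,i\phi}$ are manifestly invariant (under $u\mapsto e^{i\lambda}u$ both $\oint(j+\alpha)\cdot\tau\,ds$ and $2\pi\deg(u/|u|)$ change by $2\pi$ times the winding number of $e^{i\lambda}$) --- so no gauge needs to be fixed. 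The pointwise identity for $J$ in the first step is the only computation demanding attention, but it is routine.
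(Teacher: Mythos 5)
Your proof is correct and follows essentially the same route as the paper's: identify $\E_z-\frac{1}{2\pi}\int e_\epsilon$ with $-\frac{1}{2\pi}\int J$, use the identity $J(u,\nabla)=d(\alpha+\ang{\nabla u,iu})$ and Stokes to pass to the boundary circle, and bound the gauge-invariant correction $(\rho^2-1)(\de_\tau\Theta-\alpha_\tau)$ by $4\epsilon e^{-K/\epsilon}$ using the exponentially small boundary energy and $|u|\ge\tmz$. The only cosmetic difference is that you split the correction into $|1-\rho^2|$ and $|\nabla_\tau\phi|$ separately, whereas the paper bounds $(|u|^{-2}-1)|\ang{\nabla u,iu}|\le 4(1-|u|^2)|\nabla u|\le 4\epsilon e_\epsilon(u,\nabla)$ directly; both yield the same constant.
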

            
                \begin{proof}
                    First of all, by a completion of squares, since $$J(u,\nabla)(e_1,e_2) = 2\ang{i\nabla_1u,\nabla_2u} + (1-|u|^2)\omega(e_1,e_2),$$ we see that
                \begin{align}\label{display-1}
                \begin{aligned}
                &\frac{1}{2\pi}\int_{B^2_1\times\{z\}} e_\epsilon(u,\nabla)\\
                &=(\E_1)_z + \frac{1}{2\pi}\int_{B^2_1\times\{z\}} \left[|\nabla_1u|^2 + |\nabla_2u|^2 + \epsilon^2\omega(e_1,e_2)^2 + \frac{(1-|u|^2)^2}{4\epsilon^2}\right] \\
                &= (\E_1)_z + \frac{1}{2\pi}\int_{B^2_1\times x} \left[|i\nabla_1 u - \nabla_2 u|^2 + \left|\epsilon\omega(e_1,e_2) - \frac{1-|u|^2}{2\epsilon}\right|^2 + J(u,\nabla)(e_1,e_2)\right] \\
                &= \E_z + \frac{1}{2\pi}\int_{B^2_1\times\{z\}} J(u,\nabla)(e_1,e_2).
                \end{aligned}
                \end{align}
                We then define the modulus $r:B_1^2\rightarrow [0,\infty)$ and the phase $\theta:B^2_1\setminus\{r=0\}\rightarrow S^1$ by
                \begin{align*}
                    r(y) := |u(y,z)|,\quad\theta(y) := \frac{u}{|u|}(y,z).
                \end{align*}
                Writing $\nabla=d-i\alpha$, we also have
                $$r^2(d\theta-\alpha)(y) = \ang{\nabla u,iu}(y,z)$$
                (note that $\theta$ and $\alpha$ are not gauge-invariant). Recalling that $J(u,\nabla)=d\alpha+d\ang{\nabla u,iu}$, we compute
                \begin{align*}
                    \int_{B_1^2\times\{z\}} J(u,\nabla)
                    =\int_{\de B_1^2\times\{z\}} [(1-r^2)\alpha(\tau)+r^2\de_\tau\theta],
                \end{align*}
                where $\tau$ is the tangent vector to $\de B^2_1$.
                Hence, we have
                \begin{align*}
                    \int_{B_1^2\times\{z\}} J(u,\nabla)
                    =2\pi\operatorname{deg}(u/|u|,\de B_1^2\times\{z\})+\int_{\de B_1^2\times\{z\}} (1-r^2)[\alpha(\tau)-\de_\tau\theta],
                \end{align*}
                and the last integrand is bounded by
                $$(|u|^{-2}-1)|\ang{\nabla u,iu}|
                \le 4(1-|u|^2)|\nabla u|\le4\epsilon e_\epsilon(u,\nabla)$$
                in absolute value. Combining these bounds, the claim follows.
                \end{proof}
            
            \section{Slicing the current and Lipschitz approximation}
            In this section, inspired by  \cite{AK, Jerrard-Soner, Camillo-1}, we slice the currents $\Gamma_\epsilon$ dual to the Jacobians $J(u,\nabla)$. We get metric-space-valued functions of bounded variation (MBV) in the sense of Ambrosio \cite{Ambrosio-1}, with values in $0$-currents in $\R^2$. Then, by placing a threshold on the maximal function of $\E_1$, we construct a Lipschitz approximation of the barycenter of each slice with a uniform $W^{1,2}$ bound.
            
            \subsection{Slicing identities and BV estimates}
We start by defining vertical slices.
            \begin{definition}\label{slicing-definition}
                We define the \emph{vertical slices} of the current $\Gamma_\epsilon$,  $(\Gamma_\epsilon)_z=\ang{\Gamma_\epsilon,P,z}$, by the following identity:
                \begin{align*}
                        \int_{B^{n-2}_1(0)} \langle (\Gamma_\epsilon)_{z} , \psi \rangle \phi(z)\, dz = \langle \Gamma_\epsilon , \psi(y)\phi(z)dz \rangle,
                \end{align*}
                for any two functions $\psi \in C^{\infty}_c(B^2_1)$ and $\phi\in C^{\infty}_c(B^{n-2}_1)$, where $P:\R^2\times \R^{n-2} \rightarrow \R^{n-2}$ is the projection on the last $n-2$ coordinates.
            \end{definition}
            In the next lemma we derive BV estimates for the slices, given a smooth pair $(u,\nabla)$ defined on $B_1^2\times B_1^{n-2}$.
            \begin{lemma}[BV-type estimate]\label{BV-type-estimate}
                    Define the function $\Phi_{\psi}:B^{n-2}_1\rightarrow \R$ by 
                    \begin{align*}
                    	\Phi_{\psi}(z):=\ang{(\Gamma_\epsilon)_z,\psi}.
                    \end{align*}
                    Then, assuming $\int_{B_1^2\times B_1^{n-2}}e_\epsilon(u,\nabla)\le2\pi\Lambda$, the total variation of $\Phi_\psi(x)$ is bounded by $\E_1$ and $\E$ as follows:
                    \begin{align*}
                            \frac12|D\Phi_\psi|(B^{n-2}_1)^2 \leq \|d\psi\|_{L^{\infty}}^2 \Lambda \min\{C(n)\E_1, \E\},
                    \end{align*}
                    where $|D\Phi_\psi|$ denotes the total variation measure, and $\E$ and $\E_1$ are measured on $B_1^2\times B_1^{n-2}$
                    (without normalization).
                    \begin{proof}
                        The notation and line of argument is inspired from \cite[Lemma A.1]{Camillo-1}. For any $\phi \in C^{\infty}_c(B^{n-2}_1,\R^{n-2})$ we define the $(n-3)$-form $\alpha$ by
                        \begin{align*}
                            \alpha := \sum_{k=3}^{n}(-1)^{k-1}\phi_k(x_3,\dots,x_n)  dx_3\wedge\dots\wedge dx_{k-1}\wedge dx_{k+1}\wedge \dots \wedge dx_{n},
                        \end{align*}
                        so that
                        \begin{align*}
                            d\alpha = (\operatorname{div} \phi)(z)dz.
                        \end{align*}
                        Now, writing $x=(y,z)$, we have
                        \begin{align*}
                            \int_{B^{n-2}_1} \Phi_{\psi}(z) \div \phi(z)\, dz &=  \int_{B^{n-2}_1} \langle (\Gamma_{\epsilon})_z , \psi \rangle (\div \phi)(z)\,dz\\
                            & = \langle \Gamma_\epsilon , \psi(y) (\div \phi)(z) dz \rangle\\
                            & = \langle \Gamma_\epsilon , d(\psi\alpha) \rangle - \langle \Gamma_\epsilon , d\psi\wedge \alpha \rangle  \\
                            & = -\langle \Gamma_\epsilon , d\psi\wedge \alpha \rangle,
                        \end{align*}
                        where the last equality follows from the fact that $\partial \Gamma_{\epsilon} = 0$. Now notice that $d\psi\wedge\alpha$ is a linear combination of $(n-2)$-covectors of the form
                        \begin{align*}
                            &dx_j\wedge dx_3\wedge\dots\wedge dx_{k-1}\wedge dx_{k+1}\wedge \dots \wedge dx_{n}\quad\text{with }j=1,2,\ k=3,\dots,n.
                        \end{align*}
                        As a consequence,
                        \begin{align*}
                            | \langle \Gamma_\epsilon , d\psi\wedge \alpha \rangle | \leq \|d\psi\|_{L^\infty} \|\alpha\|_{L^\infty} \sum_{\substack{j=1,2\\k=3,\dots,n}}\int_{B_1^2\times B_1^{n-2}} [2|\ang{i\nabla_{e_j}u,\nabla_{e_k}u}|+ (1-|u|^2)|\omega(e_j,e_k)|],
                        \end{align*}
                        which, by Cauchy--Schwarz, is bounded by
                        \begin{align*}
                        \|d\psi\|_{L^\infty} \|\alpha\|_{L^\infty}\cdot C(n)\sqrt\Lambda\sqrt{\E_1}.
                        \end{align*}
                        Taking the supremum over the functions $\phi$ with $\|\phi\|_{L^\infty} \leq 1$, we get the BV bound
                        \begin{align*}
                            |D\Phi_\psi|(B^{n-2}_1) \leq C(n)\|d\psi\|_{L^{\infty}}\sqrt{\Lambda}\sqrt{\E_1}.
                        \end{align*}
                        
                        We can also estimate in the following way.  Set $B:=B_1^2\times B_1^{n-2}$ and
                        $$\vec e_{n-2}=e_3\wedge\dots\wedge e_{n},\quad e^*_{n-2} := dx_3\wedge\dots\wedge dx_n,$$
                        and let us write $d\Gamma_\epsilon=\overrightarrow{\Gamma}_\epsilon\,d|\Gamma_\epsilon|$ (viewing $\Gamma_\epsilon$ as a measure with values in $\Lambda_{n-2}\R^n$).
                        Since $d\psi\wedge \alpha$ does not have any $e^*_{n-2}$-component, if we write $\vec{\Gamma}_\epsilon = (\vec{\Gamma}_\epsilon\cdot \vec e_{n-2}) \vec e _{n-2}+ \vec{R}$
                        (where the dot denotes the scalar product in $\Lambda^{n-2}\R^n$), we get
                        \begin{align*}
                            \langle \Gamma_\epsilon , d\psi\wedge\alpha \rangle = \int_B \vec{R}\cdot(d\psi\wedge\alpha)\,d|\Gamma_\epsilon|,
                        \end{align*}
                        and moreover
                        \begin{align*}
                            \int_{B} |\vec{R}|^2\,d|\Gamma_\epsilon| &= \int_{B} (1-(\vec{\Gamma_\epsilon}\cdot\vec {e}_{n-2})^2)\, d|\Gamma_\epsilon| \\
                            &\leq 2 \int_{B} (1-\vec{\Gamma}_\epsilon\cdot\vec{e}_{n-2}) \,d|\Gamma_\epsilon| \\
                            &=2 e(\Gamma_\epsilon,B,\vec{e}_{n-2}),
                        \end{align*}
                         where $e(\Gamma_\epsilon,B,\vec{e}_{n-2})$ is the \emph{current excess} defined by
                    \begin{align*}
                        e(\Gamma_\epsilon,B,\vec{e}_{n-2}) := \mz\int_{B} |\overrightarrow{\Gamma}_\epsilon-\vec{e}_{n-2}|^2 \,d|\Gamma_\epsilon|.
                    \end{align*}
                        Hence,
                        \begin{align*}
                            | \langle \Gamma_\epsilon , d\psi\wedge \alpha \rangle | &= \left|\int_B \overrightarrow{R}\cdot(d\psi\wedge\alpha)\,d|\Gamma_\epsilon|\right|\\
                            & \leq |d\psi \wedge \alpha| \int_{B} |\vec{R}|\,d|\Gamma_\epsilon| \\
                            & \leq \|d\psi\|_{L^{\infty}} \|\alpha\|_{L^\infty} \sqrt{2e(\Gamma_\epsilon,B,\vec{e}_{n-2})}\sqrt{|\Gamma_\epsilon|(B)}\,.
                        \end{align*}
                        Again, taking the supremum over the functions $\phi$ with $\|\phi\|_{L^\infty} \leq 1$, we get
                        \begin{align*}
                            |D\Phi_\psi|(B^{n-2}_1) \leq \|d\psi\|_{L^{\infty}} \sqrt{2e(\Gamma_\epsilon,B,\overrightarrow{e}_{n-2})} \sqrt{|\Gamma_{\epsilon}|(B)}.
                        \end{align*}
                        From the pointwise bound of the Jacobian $|\Gamma_\epsilon| \leq \frac{1}{2\pi}e_\epsilon(u,\nabla)$ we see that
                        \begin{align*}
                        e(\Gamma_\epsilon,B,\overrightarrow{e}_{n-2})
                        =|\Gamma_\epsilon|(B)-\ang{\Gamma_\epsilon,\uno_{B}\overrightarrow{e}_{n-2}}
                        \leq \E.
                        \end{align*}
                        The previous bounds, together with $|\Gamma_\epsilon|(B) \leq \Lambda$, give the conclusion.
                    \end{proof}
                \end{lemma}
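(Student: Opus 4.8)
The plan is to control $|D\Phi_\psi|(B_1^{n-2})$ via its duality characterization, turn each test against a divergence into a pairing of the cycle $\Gamma_\epsilon$ with an explicit $(n-2)$-covector, integrate by parts using $\partial\Gamma_\epsilon=0$, and then extract the two bounds from the algebraic shape of that covector. Throughout write $B:=B^2_1\times B^{n-2}_1$. \emph{Reduction.} Recall $|D\Phi_\psi|(B_1^{n-2})=\sup\int_{B_1^{n-2}}\Phi_\psi\,\div\phi\,dz$, the supremum over $\phi\in C^\infty_c(B^{n-2}_1;\R^{n-2})$ with $\|\phi\|_{L^\infty}\le1$. Given such a $\phi$, set $\alpha:=\sum_{k=3}^n(-1)^{k-1}\phi_k\,dx_3\wedge\dots\wedge dx_{k-1}\wedge dx_{k+1}\wedge\dots\wedge dx_n$, so that $d\alpha=(\div\phi)(z)\,dz$ and $\|\alpha\|_{L^\infty}\le1$ (the relevant $(n-3)$-covectors being orthonormal). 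Applying \cref{slicing-definition} with the scalar function $\div\phi$ in place of its test function and with $\psi$, and then the Leibniz rule, $\int_{B_1^{n-2}}\Phi_\psi\,\div\phi\,dz=\ang{\Gamma_\epsilon,\psi\,d\alpha}=\ang{\Gamma_\epsilon,d(\psi\alpha)}-\ang{\Gamma_\epsilon,d\psi\wedge\alpha}$; since $J(u,\nabla)$ is closed, $\partial\Gamma_\epsilon=0$, the first term vanishes, and it remains to bound $|\ang{\Gamma_\epsilon,d\psi\wedge\alpha}|$.

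\emph{The $\E_1$ bound.} The key algebraic fact is that $d\psi$ only involves $dx_1,dx_2$ while $\alpha$ only involves $dx_3,\dots,dx_n$, so $d\psi\wedge\alpha$ is a linear combination of covectors $dx_j\wedge dx_3\wedge\dots\wedge dx_{k-1}\wedge dx_{k+1}\wedge\dots\wedge dx_n$ with $j\in\{1,2\}$, $k\in\{3,\dots,n\}$; pairing $\Gamma_\epsilon=\tfrac1{2\pi}J(u,\nabla)\wedge(\cdot)$ against such a covector only picks up the single component $J(u,\nabla)(e_{j'},e_k)$, where $\{j'\}=\{1,2\}\setminus\{j\}$. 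Expanding $J(u,\nabla)(e_{j'},e_k)=2\ang{i\nabla_{e_{j'}}u,\nabla_{e_k}u}+(1-|u|^2)\omega(e_{j'},e_k)$ and applying Cauchy--Schwarz on $B$, the first piece is $\le2\|\nabla_{e_{j'}}u\|_{L^2(B)}\|\nabla_{e_k}u\|_{L^2(B)}$ and the second $\le\|1-|u|^2\|_{L^2(B)}\|\omega(e_{j'},e_k)\|_{L^2(B)}$; since $k\ge3$, the factors $\int_B|\nabla_{e_k}u|^2$ and $\int_B\epsilon^2\omega(e_{j'},e_k)^2$ are among the terms defining $\E_1$ (cf.\ \cref{excess-1-definition}), while $\int_B|\nabla_{e_{j'}}u|^2$ and $\int_B\tfrac{(1-|u|^2)^2}{4\epsilon^2}$ are bounded by the total energy $2\pi\Lambda$. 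Hence each of the $2(n-2)$ pairs contributes $O(\sqrt{\Lambda\,\E_1})$, and taking the supremum over $\phi$ gives $|D\Phi_\psi|(B_1^{n-2})\le C(n)\|d\psi\|_{L^\infty}\sqrt{\Lambda\,\E_1}$.

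\emph{The $\E$ bound.} For the sharper inequality one exploits that $d\psi\wedge\alpha$ has \emph{no} component along $\vec e_{n-2}:=e_3\wedge\dots\wedge e_n$ (every summand contains $dx_1$ or $dx_2$). Writing $\Gamma_\epsilon$ as a $\Lambda_{n-2}\R^n$-valued measure with unit density $\vec\Gamma_\epsilon$ and splitting $\vec\Gamma_\epsilon=(\vec\Gamma_\epsilon\cdot\vec e_{n-2})\vec e_{n-2}+\vec R$ with $\vec R\perp\vec e_{n-2}$, only $\vec R$ pairs nontrivially, so $|\ang{\Gamma_\epsilon,d\psi\wedge\alpha}|\le\|d\psi\|_{L^\infty}\int_B|\vec R|\,d|\Gamma_\epsilon|\le\|d\psi\|_{L^\infty}\big(\int_B|\vec R|^2\,d|\Gamma_\epsilon|\big)^{1/2}\,|\Gamma_\epsilon|(B)^{1/2}$. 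Since $|\vec\Gamma_\epsilon|\equiv1$, one has $|\vec R|^2=1-(\vec\Gamma_\epsilon\cdot\vec e_{n-2})^2\le 2(1-\vec\Gamma_\epsilon\cdot\vec e_{n-2})=|\vec\Gamma_\epsilon-\vec e_{n-2}|^2$, whence $\int_B|\vec R|^2\,d|\Gamma_\epsilon|\le 2e(\Gamma_\epsilon,B,\vec e_{n-2})$ with the current excess $e(\Gamma_\epsilon,B,\vec e_{n-2}):=\tfrac12\int_B|\vec\Gamma_\epsilon-\vec e_{n-2}|^2\,d|\Gamma_\epsilon|$; and the pointwise bound $|\Gamma_\epsilon|\le\tfrac1{2\pi}e_\epsilon(u,\nabla)$ from \cref{Jacobian-energy-bound} gives both $|\Gamma_\epsilon|(B)\le\Lambda$ and $e(\Gamma_\epsilon,B,\vec e_{n-2})=|\Gamma_\epsilon|(B)-\ang{\Gamma_\epsilon,\uno_B\vec e_{n-2}}\le\E$. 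Therefore $|D\Phi_\psi|(B_1^{n-2})\le\|d\psi\|_{L^\infty}\sqrt{2\Lambda\,\E}$; squaring and combining with the previous paragraph yields $\tfrac12|D\Phi_\psi|(B_1^{n-2})^2\le\|d\psi\|_{L^\infty}^2\Lambda\min\{C(n)\E_1,\E\}$.

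The only genuinely substantive point is the observation underlying the $\E$ bound: because $d\psi\wedge\alpha$ never has a $\vec e_{n-2}$-component, one is left integrating the orthogonal part $\vec R$ of the orientation of $\Gamma_\epsilon$, whose squared $L^2(|\Gamma_\epsilon|)$-norm is controlled by the current excess, hence by $\E$. Everything else — the duality formula for the total variation, the integration by parts via $\partial\Gamma_\epsilon=0$, and the Cauchy--Schwarz bookkeeping for the $\E_1$ bound — is routine.
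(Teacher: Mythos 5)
Your proposal is correct and follows essentially the same route as the paper: the duality characterization of the total variation, the choice of the $(n-3)$-form $\alpha$ with $d\alpha=(\div\phi)\,dz$, integration by parts via $\de\Gamma_\epsilon=0$, Cauchy--Schwarz on the off-diagonal Jacobian components for the $\E_1$ bound, and the orthogonal splitting of $\vec\Gamma_\epsilon$ along $\vec e_{n-2}$ controlled by the current excess for the $\E$ bound. The only cosmetic difference is that you track which single component $J(u,\nabla)(e_{j'},e_k)$ each covector picks up, which is slightly more explicit bookkeeping than the paper's but changes nothing.
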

                \begin{rmk}\label{gradient-of-slice-identity}
                    The Jerrard--Soner-type computations in \cref{BV-type-estimate} are valid for any current without boundary (formally dual to a closed form). In the case of the Yang--Mills--Higgs Jacobian, we record the following identity for convenience (it will be used in \cref{Lipschitz-approximation-thm} and \cref{harmonic-approx-prop}):
                    \begin{align}
                        \ang{d\Phi_\psi,\phi} = \frac{1}{2\pi}\int_{B^2_1\times B_1^{n-2}} \sum_{j=1,2}\sum_{k=3}^{n} (-1)^{j}[\ang{2i\nabla_{e_j}u,\nabla_{e_k}u} + (1-|u|^2)\omega(e_j,e_k)]\de_{e_{3-j}}\psi\,\phi_k,
                    \end{align}
                    for any $\phi \in C^1_c(B^{n-2}_1,\R^2)$.
                \end{rmk}
                \subsection{Lipschitz approximation of the barycenter}
                Parallel to the regularity theory of minimal currents, we define a Lipschitz approximation of the barycenter of the slices of $\Gamma_\epsilon$ (see for instance \cite[Lemma A.2]{Camillo-1}). First we fix some notation which will be used frequently:
                \begin{itemize}
                    \item we use $\E_1$ as shorthand for $\E_1(u,\nabla,B^2_1\times B^{n-2}_1,\R^{n-2})$,
                    and similarly for $\E$;
                    \item as already mentioned, for any $z\in B^{n-2}_1$ we denote the excess on the slice $B^2_1\times\{z\}$ by $(\E_1)_z$,
                    and similarly for $\E_z$;
                    \item we write $M_{\E_1}(z)$ to denote the maximal function of $(\E_1)_z$;
                    \item we fix a cut-off function $\chi\in C^\infty_c(B_{3/4}^2)$ such that $0\le\chi\le1$ and $\chi=1$ on $B_{1/2}^2$.
                \end{itemize}
                \begin{proposition}[Lipschitz approximation]\label{Lipschitz-approximation-thm}
                    Given $0<\eta\le\eta_0(n)$ small enough, there exist
                    $\tau_0(n,\eta)>0$ and $\epsilon_0(n,\eta)>0$
                    such that the following holds.
                    Let $(u,\nabla)$ be a critical pair for $E_\epsilon$, defined on $B_1^2\times B_1^{n-2}$,
                    satisfying $u(0)=0$, \cref{u-le-1}--\cref{modica-bounds}, and the energy bound
                    $$\frac{1}{|B^{n-2}_1|}\int_{B_1^2\times B_1^{n-2}}e_\epsilon(u,\nabla)\le2\pi+\tau_0.$$
                    Then, up to conjugating $(u,\nabla)$, for $0<\eta\le\eta_0(n)$ small enough there exists a Lipschitz approximation $h:B^{n-2}_{3/4} \rightarrow \R^2$ with the following properties:
                    \begin{enumerate}[label=(\roman*)]
                        \item $\operatorname{Lip}(h)\leq C\eta$ and $\int_{B^{n-2}_{3/4}}|dh|^2 \leq C\E_1$;
                        \item $h|_{\G^\eta} = \Phi_{\chi(x_1,x_2)}$ for a set $\G^\eta \subseteq B_{3/4}^{n-2}$ such that $|B^{n-2}_{3/4}\setminus\G^\eta| \leq C \frac{\E_1}{\eta^2}$;
                        \item $\int_{B_{3/4}^2\times(B_{3/4}^{n-2}\setminus\G^\eta)} e_\epsilon(u,\nabla) \leq C\frac{\E_1}{\eta^2} + e^{-{K}/\epsilon}$;
                        \item $\int_{\G^\eta} \frac{|dh|^2}{2} \leq (1+\delta)\int_{\G^\eta}\E_{z}\,dz +e^{-K/\epsilon}$  with $\delta(n,\eta)>0$ such that $\lim_{\eta\rightarrow0}\delta(n,\eta) = 0$.
                    \end{enumerate}
                    Here  $C=C(n)>0$ and $K=K(n)>0$, provided that $\epsilon\le\epsilon_0$.
                    \begin{proof}
                        We define the \emph{good set} to be
                    \begin{align}\label{good-set-definition}
                        \G^\eta := \{z \in B^{n-2}_{3/4}\,:\,M_{\E_1}(z) \leq \eta^2\}.
                    \end{align}
                    By the weak $L^1$ bound and Vitali's covering lemma, we can bound the measure of the complement of the good set, namely the \textit{bad set}, by
                \begin{align}\label{bad-set-measure-bound}
                    \hau^{n-2}(B^{n-2}_{3/4} \setminus \G^\eta) \leq C(n)\frac{\E_1}{\eta^2}.
                \end{align}
                
                \fbox{\textit{Bounding energy on the bad set.}}
                To check that the third conclusion holds, we introduce another \emph{bad set} $\mathcal{B}^\eta$, defined on the $n$-dimensional space:
                it is the set of points $x=(y,z)\in B_{3/4}^2\times B_{3/4}^{n-2}$ such that, for some radius $r\in(0,\frac{1}{50})$, we have $\E_1(u,\nabla,B_r^n(x),\R^{n-2})>\eta^2$.
                
                By Vitali's covering lemma, we can cover $\mathcal{B}^\eta$ with balls $B_{5r_i}(x_i)$
                such that the balls $B_{r_i}(x_i)$ are disjoint and $\E_1(u,\nabla,B_{r_i}(x_i),\R^{n-2})>\eta^2$.
                By monotonicity of the energy, the energy on each dilated ball $B_{5r_i}(x_i)$ is at most $C(n)r_i^{n-2}$, giving
                $$\int_{\mathcal{B^\eta}}e_\epsilon(u,\nabla)\le\sum_i C(n)r_i^{n-2}
                \le\sum_i\frac{C(n)}{\eta^2}r_i^{n-2}\E_1(u,\nabla,B_{r_i}(x_i),\R^{n-2})\le\frac{C(n)}{\eta^2}\E_1$$
                (recall that the excess on a ball $B_r(x)$ is normalized by a factor $r^{2-n}$).
                Since the measure of $B_{3/4}^{n-2}\setminus\G^\eta$ obeys the same bound,
                it is enough to show that
                $$\int_{S}e_\epsilon(u,\nabla)\le C(n)$$
                for $\eta$ small enough, where $S:=(B_{3/4}^2\times\{z\})\setminus\mathcal{B}^\eta$.
                
                We denote by $d_Z$ the distance from the vorticity set $Z=\{|u|\le\frac34\}$.
                As we can see from the proof of \cref{soft-height-bound}, its conclusion holds without any rotation in the present situation (as necessarily energy concentrates along $\R^{n-2}$ as $\tau_0,\epsilon_0\to0$). Hence, we can assume that,
                for any $(y,z)\in S$ on this slice, we have $d_Z(y,z)\ge\frac{1}{200}$ unless $|y|<\frac{1}{100}$.
                                
                Given $s\ge\epsilon$, by \cref{cone} we know that if $$d_Z(y,z),d_Z(y',z)<s,$$
                for two points $(y,z),(y',z)\in S$ with $|y|,|y'|<\frac{1}{100}$,
                then
                $$|y-y'|\le Cs,$$
                provided that $\eta$, $\epsilon$ and $\epsilon/s$ are small enough.
                %
                With this observation in hand, we can apply \cref{exponential-decay-proposition} (giving $e_\epsilon(u,\nabla)(y,z)\le Ce^{-K\min\{d_Z(y,z),1/10\}/\epsilon}$ on $S$) and the coarea formula to write
                $$\int_{S}e_\epsilon(u,\nabla)\le \frac{C}{\epsilon^3}\int_0^2 e^{-Kt/\epsilon}|\{y\in B_{1/100}^2\,:\,d_Z(y,z)<t\}|\,dt+\frac{C}{\epsilon^3}e^{-K/(200\epsilon)}.$$
                The previous observation says that $\{y\in B_{1/100}^2\,:\,d_Z(y,z)<t\}$ is included in a ball of radius $C\max\{t,\epsilon\}$.
                We deduce that the last integral is bounded by
                $$\frac{C}{\epsilon^3}\int_0^2 e^{-Kt/\epsilon}C\max\{t^2,\epsilon^2\}\,dt\le C,$$
                giving the desired bound
                $$\int_{S}e_\epsilon(u,\nabla)\le C(n).$$                
                
                \fbox{\textit{Bounds in terms of $(\E_1)_z$.}}
                Now we establish Dirichlet energy bounds for $\Phi_\psi$ on the good set, for $\psi\in C^1_c(B_{3/4}^2)$. Given $z\in \G^{\eta}$, we can use \cref{gradient-of-slice-identity} to bound
                    \begin{align*}
                        |d\Phi_\psi|^2(z) &\leq C\sum_{j=1,2}\sum_{k=3}^{n} \left[\int_{B^2_1\times \{z\}}  (-1)^{j}\left(\ang{2i\nabla_{e_j}u,\nabla_{e_k}u} + (1-|u|^2)\omega(e_j,e_k)\right)\de_{e_{3-j}}\psi\right]^2\\
                        &\leq C \|d\psi\|_{L^\infty}^2\left[\int_{B^2_{3/4}\times\{z\}} |\nabla_{e_1} u|^2 + |\nabla_{e_2} u|^2 + \frac{(1-|u|^2)^2}{\epsilon^2}\right](\E_1)_z.
                    \end{align*}
                    
                    Since $z\in\G^\eta$, we have $S=B_{3/4}^2\times\{z\}$ in the previous argument. Thus, the last integral is bounded by $C(n)$.
                    As a consequence,                    
                    \begin{align*}
                        |d\Phi_\psi|^2(z) \leq C(n)\|d\psi\|_{L^\infty}^2(\E_1)_z\quad\text{for all }z\in \G^\eta.
                    \end{align*}
                    
                    \fbox{\textit{Bounds in terms of $\E_z$.}}
                    Also, we can use \cref{BV-type-estimate} (cf.\ \cite[Lemma A.2]{Camillo-1}) to conclude that
                    $$|d\Phi_{\chi(x_1,x_2)}|^2(z)
                    \le 2\E_z\lim_{r\to0}\frac{|\Gamma_\epsilon|(B_{1/2}^2\times B_r^{n-2}(z))}{|B_r^{n-2}|}+Ce^{-K/\epsilon}.$$
                    (indeed, this bound follows by applying \cref{BV-type-estimate} and its proof
                    with $\psi:=\chi(ax_1+bx_2)$ for an arbitrary $(a,b)\in S^1$ and using the fact that this $\psi$ is 1-Lipschitz on $B_{1/2}^2$, outside of which the energy density is exponentially small).
                    
                    To conclude, we have
                    $$|\Gamma_\epsilon|(B_{1/2}^2\times B_r^{n-2}(z))\le|B_r^{n-2}|+\int_{B_r^{n-2}(z)}\E_z+|B_r^{n-2}|e^{-K/\epsilon}$$
                    by \cref{energy-identity-on-slice}, giving
                    $$|d\Phi_{\chi(x_1,x_2)}|^2(z)\le 2\E_z(1+\E_z)+Ce^{-K/\epsilon},$$
                    where we can actually replace $\E_z$ with the excess on the smaller disk $B_{1/2}^2\times\{z\}$,
                    denoted by $\E_z(B_{1/2}^2)$.
                    Now, fixing $L>1$ large, by an obvious variant of \cref{excess-vanishes-when-excess-1-vanishes}
                    we have that $\E_z(B_{\epsilon}^2(y))$ is small for all $y\in B_{1/2}^2$ (see also the remark below). Since
                    we can cover the set $\{y\in B_{1/2}^2\,:\,d_Z(y,z)\le L\epsilon\}$
                    with $C(n,L)$ such disks, we infer that
                    $$\E_z(B_{1/2}^2)\le\delta(n,L,\eta)+\frac{C(n)}{\epsilon^3}\int_{L\epsilon}^2 e^{-Kt/\epsilon}|\{y\in B_{1/2}^2\,:\,d_Z(y,z)<t\}|\,dt+\frac{C(n)}{\epsilon^3}e^{-K/(4\epsilon)},$$
                    for some quantity $\delta(n,L,\eta)$ vanishing as $\eta\to0$. Choosing $L$ suitably large,
                    we deduce that
                    $$\E_z(B_{1/2}^{n-2})\le\delta(n,\eta)+e^{-K/\epsilon}$$
                    for some quantity $\delta(n,\eta)$ vanishing as $\eta\to0$. The statement follows by extending
                    $h:=\Phi_{\chi(x_1,x_2)}|_{\G_\eta}$ to a function with Lipschitz constant $C(n)\eta$.
                \end{proof}
                
                \begin{rmk}
                As a technical remark, a simple continuity argument as in \cref{excess-vanishes}
                shows that the possible need of conjugating the pair $(u,\nabla)$ in \cref{excess-vanishes-when-excess-1-vanishes}
                happens precisely when the degree of $u/|u|$ along the circle $\de B_{1/2}^2(0)\times\{0\}$ is $-1$ instead of $1$.
                \end{rmk}
                
                \end{proposition}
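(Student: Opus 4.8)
The plan is to take $h$ to be a Lipschitz extension, off a good set of heights $z$, of the slice functional $z\mapsto\Phi_{\chi(x_1,x_2)}(z)=\ang{(\Gamma_\epsilon)_z,\chi(x_1,x_2)}$, and to read off the four conclusions from the two BV estimates of \cref{BV-type-estimate}, the gradient identity of \cref{gradient-of-slice-identity}, the exponential decay of \cref{exponential-decay-proposition}, and the fine two–dimensional slice control of \cref{cone,energy-identity-on-slice,excess-vanishes-when-excess-1-vanishes}. First I fix the conjugation of $(u,\nabla)$ so that $\deg(u/|u|,\de B_{1/2}^2(0)\times\{0\})=+1$; by \cref{excess-vanishes-when-excess-1-vanishes} and a continuity argument as in \cref{excess-vanishes}, this is exactly the normalization making $\Gamma_\epsilon$ carry the orientation for which slice excesses are nonnegative. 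Then I set $\G^\eta:=\{z\in B^{n-2}_{3/4}:M_{\E_1}(z)\le\eta^2\}$; since $\int_{B_1^{n-2}}(\E_1)_z\,dz$ equals (the unnormalized) $\E_1$, the weak-$(1,1)$ bound for the maximal function together with Vitali covering gives $|B^{n-2}_{3/4}\setminus\G^\eta|\le C(n)\E_1/\eta^2$, which is the measure assertion in (ii); the identity $h|_{\G^\eta}=\Phi_{\chi(x_1,x_2)}$ will hold by construction.

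For (iii) I introduce the $n$-dimensional bad set $\B^\eta$ of points $x\in B^2_{3/4}\times B^{n-2}_{3/4}$ for which $\E_1(u,\nabla,B^n_r(x),\R^{n-2})>\eta^2$ for some $r\in(0,\tfrac1{50})$, cover it by balls $B_{5r_i}(x_i)$ with $B_{r_i}(x_i)$ disjoint and $\E_1(u,\nabla,B_{r_i}(x_i),\R^{n-2})>\eta^2$, and use \cref{monotonicity-prop} to bound $\int_{\B^\eta}e_\epsilon(u,\nabla)\le\sum_iC(n)r_i^{n-2}\le C(n)\eta^{-2}\E_1$. On a slice $B^2_{3/4}\times\{z\}$ away from $\B^\eta$, \cref{soft-height-bound} (which here needs no rotation, as the energy must concentrate along $\R^{n-2}$) confines the vorticity set to a tiny disk around $0$; \cref{cone} then forces $\{y:d_Z(y,z)<t\}$ to lie in a disk of radius $C\max\{t,\epsilon\}$, and combining $e_\epsilon(u,\nabla)(y,z)\le C\epsilon^{-2}e^{-Kd_Z(y,z)/\epsilon}$ from \cref{exponential-decay-proposition} with the coarea formula and integrating $\int_0^2 C\epsilon^{-3}e^{-Kt/\epsilon}\max\{t^2,\epsilon^2\}\,dt$ yields $\int_{(B^2_{3/4}\times\{z\})\setminus\B^\eta}e_\epsilon(u,\nabla)\le C(n)$. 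Together with the measure bound on $B^{n-2}_{3/4}\setminus\G^\eta$ and the exponentially small contribution where the density is tiny, this gives (iii).

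For (i), fix $z\in\G^\eta$. The identity of \cref{gradient-of-slice-identity} and Cauchy--Schwarz give $|d\Phi_\psi|^2(z)\le C\|d\psi\|^2_{L^\infty}(\E_1)_z\int_{B^2_{3/4}\times\{z\}}[|\nabla_{e_1}u|^2+|\nabla_{e_2}u|^2+\epsilon^{-2}(1-|u|^2)^2]$; since $z\in\G^\eta$ the slice avoids $\B^\eta$, so by the previous step the last integral is $\le C(n)$, whence $|d\Phi_{\chi(x_1,x_2)}|^2(z)\le C(n)(\E_1)_z$. Because $M_{\E_1}\le\eta^2$ on $\G^\eta$, the standard maximal-function argument shows $\Phi_{\chi(x_1,x_2)}$ is $C(n)\eta$-Lipschitz on $\G^\eta$; a componentwise McShane extension yields $h:B^{n-2}_{3/4}\to\R^2$ with $\operatorname{Lip}(h)\le C(n)\eta$ agreeing with $\Phi_{\chi(x_1,x_2)}$ on $\G^\eta$. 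Then $\int_{B^{n-2}_{3/4}}|dh|^2\le\int_{\G^\eta}C(n)(\E_1)_z\,dz+C\eta^2|B^{n-2}_{3/4}\setminus\G^\eta|\le C(n)\E_1$, which is (i) and completes (ii).

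Finally, (iv) is the sharp Caccioppoli-type bound, and this is where I expect the main difficulty. Here one must use not the crude $C(n)\E_1$ form of \cref{BV-type-estimate} but its second, \emph{current-excess} form: applying it (and its proof) with $\psi=\chi(ax_1+bx_2)$, $(a,b)\in S^1$, which is $1$-Lipschitz on $B^2_{1/2}$ where the energy is not exponentially small, gives $|d\Phi_{\chi(x_1,x_2)}|^2(z)\le 2\E_z\lim_{r\to0}|B^{n-2}_r|^{-1}|\Gamma_\epsilon|(B^2_{1/2}\times B^{n-2}_r(z))+Ce^{-K/\epsilon}$. By \cref{energy-identity-on-slice} the density factor is $\le 1+\E_z(B^2_{1/2})+e^{-K/\epsilon}$, so $|d\Phi_{\chi(x_1,x_2)}|^2(z)\le 2\E_z(B^2_{1/2})(1+\E_z(B^2_{1/2}))+Ce^{-K/\epsilon}$ on $\G^\eta$. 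The delicate point is to show $\E_z(B^2_{1/2})$ is \emph{uniformly} small, with $\delta(n,\eta):=\sup_{\G^\eta}\E_z(B^2_{1/2})\to0$ as $\eta\to0$: one covers $\{y\in B^2_{1/2}:d_Z(y,z)\le L\epsilon\}$ by $C(n,L)$ disks of radius $\epsilon$, on each of which a rescaled version of \cref{excess-vanishes-when-excess-1-vanishes} — ultimately the equivalence of the second-order Euler--Lagrange system with the first-order vortex equations in $\R^2$ from \cref{ymh-dim-2-thm} — makes the excess as small as desired, and outside this set exponential decay plus coarea as above handle the remainder once $L$ is large. This upgrades the pointwise bound to $|d\Phi_{\chi(x_1,x_2)}|^2(z)\le(1+\delta(n,\eta))\,2\E_z+e^{-K/\epsilon}$ on $\G^\eta$, and integrating over $\G^\eta$ (where $h=\Phi_{\chi(x_1,x_2)}$) gives (iv). The obstruction is precisely pinning the multiplicative constant to $1+\delta$ with $\delta\to0$: it forces passing through the current-excess estimate and controlling the vertical density of $\Gamma_\epsilon$ via the two-dimensional stability theory, rather than through soft varifold compactness, which alone would not deliver the sharp constant.
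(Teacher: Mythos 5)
Your proposal is correct and follows essentially the same route as the paper's proof: the same maximal-function definition of $\G^\eta$ with the weak-$L^1$/Vitali measure bound, the same $n$-dimensional bad set $\mathcal{B}^\eta$ with monotonicity plus exponential decay and coarea for (iii), the same use of \cref{gradient-of-slice-identity} and the current-excess form of \cref{BV-type-estimate} together with \cref{energy-identity-on-slice} and the $\epsilon$-scale smallness of $\E_z$ via \cref{excess-vanishes-when-excess-1-vanishes} for the sharp constant in (iv), and the same Lipschitz extension off $\G^\eta$. The only difference is that you spell out the standard maximal-function argument for the $C\eta$-Lipschitz bound on $\G^\eta$, which the paper leaves implicit.
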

                \subsection{Lipschitz approximation of the zero set} In this part we collect information about the Lipschitz approximation of the zero set. We use compactness arguments similar to \cite[Sectoin 5]{kelei-wang}.
                \begin{proposition}[Zero set is Lipschitz on the good set]\label{Lipschitz-approximation-of-zero-loci-lemma} For any $\sigma,\delta>0$, there exists $\eta_0(n,\sigma,\delta)$ small enough with the following property. For $(u,\nabla)$ as in the previous statement, for any $\eta \le \eta_0(\delta,\sigma)$, the set $u^{-1}(0) \cap (B_{3/4}^2\times\G^{\eta})$ is included in a $\delta$-Lipschitz graph $h_0:B^{n-2}_{3/4} \rightarrow B_\sigma^2$ with the following estimate:
                \begin{align*}
                    \int_{B^{n-2}_{3/4}} |h_0-h|^2 \leq C\sigma^2\frac{\E_1}{\eta^2} + C\epsilon^2|\log(\E_2)|^2\E_2+Ce^{-K/\epsilon},
                \end{align*}
                for $C=C(n)$ (provided that $\epsilon\le\epsilon_0(n,\sigma,\delta)$ and the energy is $\le2\pi+\tau_0(n,\sigma,\delta)$).
                    \begin{proof}
                        The proof is similar to \cite[Lemma 5.3]{kelei-wang}.
        
                        \fbox{\textit{Lipschitz approximation at scale $\epsilon$.}}
                        This is essentially the second part of \cref{cone}, but we present a detailed argument here.
                        Notice that, locally at scale $\epsilon$, critical points enjoy uniform $C^k$ estimates in the Coulomb gauge (and thus $C^k$ bounds for gauge-invariant quantities): see \cite[Appendix]{Pigati-1}. Then around any $x_0=(y_0,z_0)\in B^{2}_{3/4}\times \G^\eta$ with $u(x_0)=0$ we rescale as follows:
                        \begin{align*}
                            \tilde{u}(x) := u(x_0 + \epsilon x ),\quad \tilde{\nabla} := \phi_{x_0,\epsilon}^*(\nabla),
                        \end{align*}
                        where $\phi_{x_0,\epsilon}$ is the map $x\mapsto x_0+ \epsilon x$. The resulting pair $(\tilde{u},\tilde\nabla)$ satisfies
                        \begin{align*}
                            \sup_{r\leq 1/(4\epsilon)} \E_1(\tilde{u},\tilde\nabla,B^2_{1/(4\epsilon)}\times B^{n-2}_r,\R^{n-2}) \leq \eta^2
                        \end{align*}
                        (where the excess is normalized by a factor $r^{2-n}$).
                        By Arzelà--Ascoli we conclude that, for small enough $\eta$,
                        $(\tilde{u},\tilde\nabla)$ is $C^1$-close to a pair $(u_0,\nabla_0)$ that satisfies the Yang--Mills--Higgs equations \cref{ymh-pde-1}--\cref{ymh-pde-2} and depends only on the variables $x_1,x_2$
                        (as in the proof of \cref{excess-vanishes-when-excess-1-vanishes}).
                        As noted in the proof of \cref{excess-vanishes}, $u_0/|u_0|$ has degree $\pm1$ on large circles,
                        and $u(\cdot,z_0)|_{B_{3/4}^2}$ 
                        
                        By the main result of \cite{Taubes,Taubes-1}, we deduce that $(u_0,\nabla_0)$ is the standard entire solution of degree $\pm1$, centered to vanish just at the origin. For this solution, we have
                        \begin{align}
                            |Ju_0|(0)>0,
                        \end{align}
                        where $Ju_0$ is the Jacobian of $u_0$ in the local Coulomb gauge in $B_1^n$. It then follows that, for small enough $\eta>0$, we have $|J\tilde{u}(e_1,e_2)| \geq c>0$. Then, by an application of the implicit function theorem and the fact that $\{\tilde{u}=0\}$ is a gauge-invariant set, we see that $\{\tilde{u}=0\}$ is locally a Lipschitz graph with a (qualitatively) small Lipschitz constant. The fact that the zero set
                        intersects the slice only at $x_0$ follows from \cref{cone}, which says that there is no zero outside a $C\epsilon$-neighborhood of $x_0$,
                        while in this neighborhood uniqueness follows from the fact that it holds for $u_0$ (see also a similar argument in the proof of \cite[Theorem 4.1]{Halavati-stability}). Hence, for small enough $\eta$, we can define a function $h_0:\G^{\eta} \rightarrow \R^2$ such that
                        \begin{align*}
                            \{u=0\} \cap (B_{3/4}^2\times \G^\eta) = \operatorname{graph}(h_0).
                        \end{align*}
                        
                        \fbox{\textit{Lipschitz approximation at larger scales.}}
                        Using the first part of \cref{cone}, we see that given two points
                        $(y,z),(y',z')\in\{u=0\} \cap (B_{3/4}^2\times \G^\eta)$ we have
                        $$|z-z'|\le\delta|y-y'|\quad\text{if }|y-y'|\ge C(n,\delta)\epsilon,$$
                        for a constant $\delta=\delta(\eta)>0$ such that
                        $$\delta(\eta)\to0\quad\text{as }\eta\to0.$$
                        Together with the previous control at scales comparable with $\epsilon$, this tells us that $h_0$ is indeed Lipschitz, with $\operatorname{Lip}(h_0)$ vanishing as $\eta\to0$. We apply the classical extension theorem to build a Lipschitz extension of $h_0$ defined on $B_s^{n-2}$.
                        
                        \fbox{\textit{$L^2$ estimates.}}
                        Using the soft height bound of \cref{soft-height-bound} (note that no rotation is needed in the present situation, as necessarily the energy concentrates along $\R^{n-2}$), we have
                        $$|h|+|h_0|\le\sigma$$
                        for $\eta$ (and hence $\epsilon$) small enough.
                        Using the estimates of \cref{zero-set-distance-barycenter-slice} on the good set $\G^\eta$ (see also \cref{concave}) and the measure bound for the bad set $B^{n-2}_{3/4}\setminus\G^\eta$ we see that
                        \begin{align*}
                            \int_{B^{n-2}_{3/4}} |h_0 - h|^2 &\leq \int_{\G^{\eta}} |h_0-h|^2 + \int_{B^{n-2}_{3/4}\setminus\G^\eta} |h_0-h|^2\\
                            &\leq C\epsilon^2|\log\E_2|^2 \E_2 + C\sigma^{2} \frac{\E_1}{\eta^2}+Ce^{-K/\epsilon}.
                        \end{align*}
                        We thus get the desired conclusion.
                    \end{proof}
                \end{proposition}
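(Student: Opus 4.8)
The plan is to mirror the three-scale strategy of \cite[Lemma 5.3]{kelei-wang}, now in the gauge-invariant setting, using the planar stability theory of \cref{ymh-dim-2-thm} in place of the exponentially localized Allen--Cahn profiles. First I would establish the graph structure at the microscopic scale $\epsilon$: given a zero $x_0=(y_0,z_0)$ of $u$ with $z_0\in\G^\eta$, rescale by $\epsilon$ around $x_0$, pass to a local Coulomb gauge, and apply the interior $C^k$ estimates of \cite[Appendix]{Pigati-1}. Since $z_0\in\G^\eta$ forces $\E_1\le\eta^2$ on every ball centered at $x_0$ up to a fixed scale, Arzel\`a--Ascoli yields (as in \cref{excess-vanishes-when-excess-1-vanishes}) a $C^1$-limit $(u_0,\nabla_0)$ that solves \cref{ymh-pde-1}--\cref{ymh-pde-2}, depends only on $(x_1,x_2)$, and---by the degree computation in the proof of \cref{excess-vanishes} and Taubes' classification \cite{Taubes,Taubes-1}---is the standard degree-$\pm1$ vortex vanishing exactly at the origin. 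The essential rigidity is that $J u_0(e_1,e_2)\ne0$ at that zero, so that for $\eta$ small $|J\tilde u(e_1,e_2)|\ge c(n)>0$ near $x_0$; the implicit function theorem then exhibits the gauge-invariant set $\{u=0\}$ as a graph with qualitatively small Lipschitz constant near $x_0$, while the first part of \cref{cone} confines all zeros on the slice to a $C\epsilon$-neighborhood of $x_0$, uniqueness inside which is inherited from $u_0$. This defines $h_0$ on $\G^\eta$ with $\{u=0\}\cap(B_{3/4}^2\times\G^\eta)=\operatorname{graph}(h_0)$, and \cref{soft-height-bound} (no rotation being needed, as the energy automatically concentrates along $\R^{n-2}$) gives $|h_0|\le\sigma$.

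Next I would patch these local pieces across macroscopic scales. The first part of \cref{cone}, applied at scales $s\ge C(n,\delta)\epsilon$, shows that for any two zeros $(y,z),(y',z')$ of $u$ with $z,z'\in\G^\eta$ one has $|z-z'|\le\delta(\eta)\,|y-y'|$ whenever $|y-y'|\ge C(n,\delta)\epsilon$, with $\delta(\eta)\to0$ as $\eta\to0$; together with the scale-$\epsilon$ control from the first step (which handles $|y-y'|\lesssim\epsilon$) this makes $h_0$ globally Lipschitz on $\G^\eta$ with $\operatorname{Lip}(h_0)\to0$, and a classical Lipschitz extension gives $h_0$ on all of $B_{3/4}^{n-2}$, which is $\delta$-Lipschitz once $\eta\le\eta_0(n,\delta)$. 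For the $L^2$ bound I would split $\int_{B_{3/4}^{n-2}}|h_0-h|^2$ over $\G^\eta$ and its complement; on the bad set, the bounds $|h|,|h_0|\lesssim\sigma$ (from \cref{soft-height-bound} and \cref{Lipschitz-approximation-thm}) and the measure estimate $|B_{3/4}^{n-2}\setminus\G^\eta|\le C(n)\E_1/\eta^2$ of \cref{Lipschitz-approximation-thm} contribute $\le C\sigma^2\E_1/\eta^2$.

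It remains to estimate $\int_{\G^\eta}|h_0-h|^2$. Here $h=\Phi_{\chi(x_1,x_2)}$ is by construction the slicewise barycenter of $\Gamma_\epsilon$, so the task reduces to a \emph{slicewise} comparison between the unique zero of $u(\cdot,z)$ and that barycenter. I would obtain this from the quantitative stability of planar vortices in \cref{ymh-dim-2-thm} and \cref{Jacobian-energy-stability-prop}: on a good slice $u(\cdot,z)$ is $L^2$-close to a unique (rescaled) degree-$\pm1$ vortex whose zero is the center of mass of its Jacobian, and carefully unwinding these estimates---together with the exponential smallness of the energy away from the vorticity set---yields a slicewise bound of order $\epsilon^2|\log(\E_2)_z|^2(\E_2)_z+e^{-K/\epsilon}$; integrating in $z$ and using concavity of $t\mapsto t|\log t|^2$ near $0$ gives $C\epsilon^2|\log\E_2|^2\E_2+Ce^{-K/\epsilon}$, completing the estimate.

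I expect the slicewise barycenter estimate of the last step to be the main obstacle: the weighted inequalities underlying \cref{ymh-dim-2-thm} have to be pushed to control the \emph{location} of the zero of $u(\cdot,z)$---not merely its $L^2$ distance to the moduli space---uniformly along typical slices, and one must absorb the logarithmic loss produced by the merely polynomial (rather than exponential) decay of planar vortices, which is precisely the feature that makes the co-dimension-two problem harder than the Allen--Cahn one in \cite{kelei-wang}. The remaining ingredients---compactness at scale $\epsilon$, the Taubes rigidity, and the cone-type patching---are direct adaptations of the arguments already carried out in \cref{excess-vanishes-when-excess-1-vanishes} and \cref{cone}.
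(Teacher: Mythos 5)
Your proposal follows essentially the same route as the paper: scale-$\epsilon$ rigidity via Coulomb-gauge compactness, Taubes' classification and the implicit function theorem, the cone lemma for the larger-scale Lipschitz bound and extension, and a good/bad-set splitting of the $L^2$ estimate, with the good-set term handled exactly as in the paper's barycenter lemma (\cref{zero-set-distance-barycenter-slice}) via Jacobian stability, symmetry of the planar vortex, exponential decay off the vorticity set, and the concavity remark \cref{concave}. The only small inaccuracy is attributing the logarithmic loss to polynomial decay of planar vortices (they decay exponentially); it actually comes from optimizing the cut radius between the $L^1$ Jacobian-stability bound and the exponential tail, but this does not affect the argument.
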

                \begin{rmk}
                    We remark that the function $h_0$ is well-behaved under small rotations, since the construction also rotates. However, the Lipschitz approximation of the slice barycenters, a priori, might not behave well under rotations.
                \end{rmk}
                
                \section{Harmonic approximation and a Caccioppoli-type estimate}
                \subsection{Harmonic approximation}
                In this section we show that the Lipschitz approximation of \cref{Lipschitz-approximation-thm} nearly satisfies the Laplace equation. We achieve this by relating the stress-energy tensor to the slices of $\Gamma_\epsilon$ using the self-dual discrepancy excess $\E_2$. Then we use this with uniform $W^{1,2}$ bounds to show that the Lipschitz approximation is well approximated in $L^2$ by a harmonic function. To begin with, we state a very well-known lemma.
                \begin{lemma}\label{harmonic-approximation-lemma-1}
                        For any $\nu>0$ small there exists $\tau(n,\nu)>0$ with the following property. Let $f$ be a function in $ W^{1,2}(B_1^n)$ such that
                        \begin{align*}
                            \int_{B_1^n}|\nabla f|^2 \leq 1,\quad\left|\int_{B_1^n} \ang{d f, d \phi}\right| \leq \tau\|d\phi\|_{L^\infty},
                        \end{align*}
                        for any $\phi\in C^1_c(B_1^n)$. Then there exists a harmonic function $w:B_1^n\to\R$ such that
                        \begin{align*}
                            \int_{B_1^n}|dw|^2\le1,\quad\int_{B_1^n} |w-f|^2 \leq \nu.
                        \end{align*}
                        Moreover, if $f$ has zero average, we can choose \(w\) so that  $w(0)=0$.
                              \end{lemma}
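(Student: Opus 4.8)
The plan is to prove this by the standard soft compactness-and-contradiction argument for De Giorgi's harmonic approximation lemma. Suppose the conclusion fails for some fixed $\nu>0$: then there is a sequence $f_k\in W^{1,2}(B_1^n)$ with $\int_{B_1^n}|\nabla f_k|^2\le1$ and $|\int_{B_1^n}\ang{df_k,d\phi}|\le\frac1k\|d\phi\|_{L^\infty}$ for all $\phi\in C^1_c(B_1^n)$, yet $\int_{B_1^n}|w-f_k|^2>\nu$ for every harmonic $w:B_1^n\to\R$ with $\int_{B_1^n}|dw|^2\le1$. The first step is to normalize: replace $f_k$ by $g_k:=f_k-\frac{1}{|B_1^n|}\int_{B_1^n}f_k$, which leaves $dg_k=df_k$ unchanged, so the two hypotheses persist, and now $\int_{B_1^n}g_k=0$. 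Since we control only $\|\nabla f_k\|_{L^2}$, this recentering is exactly what is needed so that the Poincar\'e inequality yields a uniform bound on $\|g_k\|_{W^{1,2}(B_1^n)}$.

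Next I would apply the Rellich--Kondrachov theorem on the bounded Lipschitz domain $B_1^n$ to extract a subsequence with $g_k\weakto g$ in $W^{1,2}$ and $g_k\to g$ strongly in $L^2(B_1^n)$, for some $g$ with $\int_{B_1^n}g=0$. Passing to the limit in the approximate-harmonicity inequality (using $dg_k\weakto dg$ in $L^2$ tested against the fixed field $d\phi$, while the right-hand side tends to $0$) gives $\int_{B_1^n}\ang{dg,d\phi}=0$ for all $\phi\in C^1_c(B_1^n)$, so $g$ is weakly harmonic, hence smooth and harmonic by Weyl's lemma; weak lower semicontinuity of the Dirichlet energy gives $\int_{B_1^n}|dg|^2\le\liminf_k\int_{B_1^n}|dg_k|^2\le1$. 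Then the harmonic competitor $w_k:=g+\frac{1}{|B_1^n|}\int_{B_1^n}f_k$ satisfies $\int_{B_1^n}|dw_k|^2=\int_{B_1^n}|dg|^2\le1$ and $\int_{B_1^n}|w_k-f_k|^2=\int_{B_1^n}|g-g_k|^2\to0$, which for $k$ large contradicts the choice of $f_k$.

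For the final assertion, when $f$ (hence each $f_k$ in the contradicting sequence) already has zero average, no recentering is needed: the limit $g=\lim_k f_k$ is harmonic with $\int_{B_1^n}g=0$ and $\int_{B_1^n}|dg|^2\le1$. The extra point is that the mean value property forces $g(0)=\frac{1}{|B_r|}\int_{B_r(0)}g$ for every $r<1$, and letting $r\uparrow1$ (dominated convergence, using $g\in L^1(B_1^n)$) gives $g(0)=\frac{1}{|B_1^n|}\int_{B_1^n}g=0$, so $w:=g$ already vanishes at the origin and works for all large $k$. I do not expect any real obstacle here: this is a textbook argument, and the only points requiring a little care are the normalization by the average (so that Poincar\'e and Rellich apply even though we only control $\|\nabla f_k\|_{L^2}$) and the observation just made that harmonicity together with zero average automatically produces $w(0)=0$. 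A PDE-based alternative — solving the Dirichlet problem on a slightly smaller ball with data $f$ and estimating the difference directly — is available, but the compactness proof is cleaner and yields the sharp qualitative statement at once.
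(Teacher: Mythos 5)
Your proposal is correct and follows essentially the same route as the paper: the first part is exactly the standard Rellich--Kondrachov compactness-and-contradiction argument (normalize by the average, extract a weak $W^{1,2}$/strong $L^2$ limit, pass to the limit in the approximate-harmonicity condition, use Weyl's lemma and lower semicontinuity), which is what the paper invokes via De Lellis's Allard lecture notes. The only divergence is in the ``moreover'' part: the paper takes the $w$ produced by the first part and subtracts $w(0)$, which it bounds by $C(n)\sqrt{\nu}$ using the mean value property together with $\int f=0$ (at the cost of a harmless constant in $\nu$), whereas you build the zero-average constraint into the compactness argument so that the harmonic limit $g$ satisfies $g(0)=\fint_{B_1^n}g=0$ exactly; both rest on the same mean value property and either version is perfectly adequate.
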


                        \begin{proof}
                            The claim follows easily from Rellich's compact embedding theorem: see for instance \cite[Lemma 6.1]{DeLellis2017ALLARDSIR}.
                            For the second part,  by the mean value property of harmonic functions and \(\int f=0\) one gets that
                            \[
                            |B_1^n|\lvert w(0)\rvert=\left\lvert\int_{B_1} w-\int_{B_1^n}f\right\rvert \le C(n)\|w-f\|_{L^2} \le \sqrt{\nu}.
                            \]
                            The function \(w-w(0)\) satisfies the conclusion of the lemma.
                        \end{proof}
                \begin{proposition}[Harmonic approximation]\label{harmonic-approx-prop}
                    Let $(u,\nabla)$ be a critical point of $E_\epsilon$ as in the previous section and let $h:B_{3/4}^{n-2}$ be the Lipschitz approximation built in \cref{Lipschitz-approximation-thm} for $\eta$. Then there exist constants $C(n),K(n)>0$ such that, for any test function $\phi \in C^\infty_c(B^{n-2}_{3/4},\R^2)$, we have
                    \begin{align*}
                        \left|\int_{B^{n-2}_{3/4}} \ang{d h, d \phi} \right| \leq C(\eta^{-1}\E_1+\sqrt{\E\E_1}+e^{-K/\epsilon})\|d\phi\|_{L^\infty}.
                    \end{align*}
                    Moreover, given any $\nu>0$, if $e^{-K/\epsilon}\le\E_1$ and $\E$ is small enough (depending on $n,\eta,\nu$),
                    there exists a harmonic function $w:B^{n-2}_{3/4}\rightarrow\R^2$ with $w(0)=0$ such that
                    \begin{align*}
                        \int_{B_{3/4}^{n-2}} |dw|^2 \leq C,\quad\int_{B_{3/4}^{n-2}} |(\E_1)^{-1/2}(h-c)-w|^2 \leq \nu,
                    \end{align*}
                    where $c$ is the average of $h$.
                    \begin{proof}
                        First, we define the vector field $X := \phi(x_3,\dots,x_n)e_1$ for any compactly supported test function $\phi \in C^\infty_c(B_{3/4}^{n-2})$, and we test \cref{div-free-stress-energy-tensor} with $\psi(x_1,x_2)X$, where $\psi$ is a smooth cut-off function such that $\psi=1$ on $B^2_{1/2}$ and $\psi=0$ outside of $B_{3/4}^2$. We obtain
                        \begin{align*}
                            \left|\int_{B_{1/2}^2 \times B_{3/4}^{n-2}} \ang{T_\epsilon(u,\nabla), DX}\right| \leq C e^{-K/\epsilon}\|d\phi\|_{L^\infty},
                        \end{align*}
                        thanks to the fact that $d\psi$ is supported in the annulus $B^2_{3/4}\setminus B^2_{1/2}$ and the exponential decay away from the vorticity set $Z$, which intersects $B_{3/4}^2\times B_{3/4}^{n-2}$
                        only inside $B_{1/4}^2\times B_{3/4}^{n-2}$. Then, since $DX$ is traceless, we compute
                        \begin{align*}
                            &\int_{B_{1/2}^2 \times B_{3/4}^{n-2}} \ang{T_\epsilon(u,\nabla), DX} \\
                            &= -2\int_{B_{1/2}^2 \times B_{3/4}^{n-2}} \sum_{k=3}^n\left[\ang{\nabla_{e_1} u , \nabla_{e_k} u} + \epsilon^2\sum_{j=1}^{n}\omega(e_1,e_j)\omega(e_k,e_j)\right]\de_{e_k}\phi.
                        \end{align*}
                        Except for $j=2$, the integral of the terms involving the curvature $\omega$ is bounded by $C(n)\E_1$, giving
                        \begin{align}\label{expr.from.t}
                        \left|\int_{B_{1/2}^2 \times B_{3/4}^{n-2}} \sum_{k=3}^n[\ang{\nabla_{e_1} u , \nabla_{e_k} u}
                        +\epsilon^2\omega(e_1,e_2)\omega(e_k,e_2)]\de_{e_k}\phi\right|
                        \le C(\E_1+e^{-K/\epsilon})\|d\phi\|_{L^\infty}.
                        \end{align}
                        
                        We now want to relate the expression in the left-hand side with the identity
                        for $d\Phi_{\chi x_1}$ obtained in \cref{gradient-of-slice-identity}, which in particular gives
                        \begin{align*}
                        \left|\int_{B_{3/4}^{n-2}}\ang{d\Phi_{\chi x_1},d\phi}\right|
                        &\le C\left|\int_{B^2_{1/2}\times B_{3/4}^{n-2}}\sum_{k=3}^n[\ang{2i\nabla_{e_2} u , \nabla_{e_k} u} +(1-|u|^2)\omega(e_2,e_k)]\de_{e_k}\phi\right|\\
                        &\quad+Ce^{-K/\epsilon}\|d\phi\|_{L^\infty},
                        \end{align*}
                        or equivalently
                        \begin{align}\label{expr.from.js}\begin{aligned}
                        \left|\int_{B_{3/4}^{n-2}}\ang{d\Phi_{\chi x_1},d\phi}\right|
                        &\le C\left|\int_{B^2_{1/2}\times B_{3/4}^{n-2}}\sum_{k=3}^n\left[-\ang{i\nabla_{e_2} u , \nabla_{e_k} u} +\frac{1-|u|^2}{2}\omega(e_k,e_2)\right]\de_{e_k}\phi\right|\\
                        &\quad+Ce^{-K/\epsilon}\|d\phi\|_{L^\infty}.\end{aligned}
                        \end{align}
                        We observe that the two integrals in \cref{expr.from.t} and \cref{expr.from.js}
                        differ by the integral of
                        $$\sum_{k=3}^n\left[\ang{\nabla_{e_1}u+i\nabla_{e_2} u , \nabla_{e_k} u} +\left(\omega(e_1,e_2)-\frac{1-|u|^2}{2}\right)\omega(e_k,e_2)\right]\de_{e_k}\phi.$$
                        Hence, recalling the definition of $\E_2$ and using Cauchy--Schwarz, we conclude that
                        $$\left|\int_{B_{3/4}^{n-2}}\ang{d\Phi_{\chi x_1},d\phi}\right|
                        \le C(n)(\E_1+\sqrt{\E_1\E}+e^{-K/\epsilon})\|d\phi\|_{L^\infty}.$$
                        
                        Repeating the same for $\Phi_{\chi x_2}$, we arrive at the same conclusion for $\Phi_{\chi(x_1,x_2)}$,
                        integrated against any $\phi\in C^\infty_c(B_{3/4}^{n-2},\R^2)$.
                        To conclude we note that thanks to  items (i) and (ii) of \cref{Lipschitz-approximation-thm},
                        $$\int_{B_{3/4}^{n-2}\setminus\G^\eta}|dh|\le C\eta|B_{3/4}^{n-2}\setminus\G^\eta|\le C\frac{\E_1}{\eta}$$
                        and, in view of \cref{gradient-of-slice-identity}, Cauchy--Schwarz, item (iii) of \cref{Lipschitz-approximation-thm} and the assumption \(e^{-K/\epsilon}\le \E_1\),
                        $$\int_{B_{3/4}^{n-2}\setminus\G^\eta}|d\Phi_{\chi(x_1,x_2)}|
                        \le C\sqrt{\E_1}\left(\int_{B_{3/4}^2\times B_{3/4}^{n-2}\setminus \G^\eta}e_\epsilon(u,\nabla)\right)^{1/2}
                        \le C\frac{\E_1}{\eta}.$$
                        
                        The second part follows from \cref{harmonic-approximation-lemma-1},
                        noting that the normalized function $\tilde h:=(\E_1)^{-1/2}h$
                        has Dirichlet energy bounded by $C(n)$ by item (i) of \cref{Lipschitz-approximation-thm}.
                \end{proof}
                \end{proposition}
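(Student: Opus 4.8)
The plan is to establish the two assertions in turn. For the first one (near-harmonicity of $h$), the idea is to test the conservation law $\operatorname{div}(T_\epsilon(u,\nabla))=0$ of \cref{div-free-stress-energy-tensor} against the vector field $X:=\psi(x_1,x_2)\,\phi(x_3,\dots,x_n)\,e_1$, where $\psi$ is a smooth cut-off with $\psi\equiv1$ on $B_{1/2}^2$ and $\spt\psi\subseteq B_{3/4}^2$, and $\phi$ is the given test function (one scalar component at a time). By the soft height bound \cref{soft-height-bound} (which needs no rotation here, since the energy concentrates along $\R^{n-2}$) the vorticity set meets $B_{3/4}^2\times B_{3/4}^{n-2}$ only inside $B_{1/4}^2\times B_{3/4}^{n-2}$, so the exponential decay of \cref{exponential-decay-proposition} makes every term carrying a factor of $d\psi$ at most $Ce^{-K/\epsilon}\|d\phi\|_{L^\infty}$. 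Since $DX$ is traceless, the $e_\epsilon(u,\nabla)$ part of $T_\epsilon$ drops out, and what remains is $-2\int\sum_{k\geq3}\bigl[\langle\nabla_{e_1}u,\nabla_{e_k}u\rangle+\epsilon^2\sum_{j}\omega(e_1,e_j)\omega(e_k,e_j)\bigr]\de_{e_k}\phi$; the curvature terms with $j\neq2$ are each $O(\E_1)$, so up to $C(\E_1+e^{-K/\epsilon})\|d\phi\|_{L^\infty}$ we are left with the integral of $\sum_{k\geq3}[\langle\nabla_{e_1}u,\nabla_{e_k}u\rangle+\epsilon^2\omega(e_1,e_2)\omega(e_k,e_2)]\de_{e_k}\phi$ over $B_{1/2}^2\times B_{3/4}^{n-2}$.

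The second ingredient is the Jerrard--Soner slice-gradient identity of \cref{gradient-of-slice-identity}. Applied with $\psi:=\chi x_1$ and using again the exponential smallness of the energy outside $B_{1/2}^2$ (where $\chi x_1$ has $\de_{e_1}=1$, $\de_{e_2}=0$), it rewrites $\int\langle d\Phi_{\chi x_1},d\phi\rangle$ as the integral of $\sum_{k\geq3}\bigl[-\langle i\nabla_{e_2}u,\nabla_{e_k}u\rangle+\tfrac{1-|u|^2}{2}\omega(e_k,e_2)\bigr]\de_{e_k}\phi$ over $B_{1/2}^2\times B_{3/4}^{n-2}$, up to $Ce^{-K/\epsilon}\|d\phi\|_{L^\infty}$. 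Subtracting this from the stress-energy expression above, the two integrands differ precisely by $\sum_{k\geq3}\bigl[\langle\nabla_{e_1}u+i\nabla_{e_2}u,\nabla_{e_k}u\rangle+(\epsilon^2\omega(e_1,e_2)-\tfrac{1-|u|^2}{2})\omega(e_k,e_2)\bigr]\de_{e_k}\phi$, and since $|\nabla_{e_1}u+i\nabla_{e_2}u|^2$ and $|\epsilon\omega(e_1,e_2)-\tfrac{1-|u|^2}{2\epsilon}|^2$ are exactly the two parts of $\E_2$ while $|\nabla_{e_k}u|^2$ and $\epsilon^2\omega(e_k,e_2)^2$ feed into $\E_1$, two applications of Cauchy--Schwarz bound this by $C\sqrt{\E_1\E_2}\,\|d\phi\|_{L^\infty}\leq C\sqrt{\E\E_1}\,\|d\phi\|_{L^\infty}$. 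This gives the bound for $\Phi_{\chi x_1}$, and the same argument for $\Phi_{\chi x_2}$ yields it for $\Phi_{\chi(x_1,x_2)}$. Finally, to replace $\Phi_{\chi(x_1,x_2)}$ by $h$ one uses that they agree on $\G^\eta$, while on the complement items (i)--(ii) of \cref{Lipschitz-approximation-thm} give $\int_{B_{3/4}^{n-2}\setminus\G^\eta}|dh|\leq C\eta\,|B_{3/4}^{n-2}\setminus\G^\eta|\leq C\E_1/\eta$ and, by \cref{gradient-of-slice-identity}, Cauchy--Schwarz, item (iii) of \cref{Lipschitz-approximation-thm} and $e^{-K/\epsilon}\leq\E_1$, $\int_{B_{3/4}^{n-2}\setminus\G^\eta}|d\Phi_{\chi(x_1,x_2)}|\leq C\sqrt{\E_1}\bigl(\int_{B_{3/4}^2\times(B_{3/4}^{n-2}\setminus\G^\eta)}e_\epsilon(u,\nabla)\bigr)^{1/2}\leq C\E_1/\eta$. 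Putting everything together produces the first displayed inequality.

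For the second assertion, I would normalize and invoke \cref{harmonic-approximation-lemma-1}. Set $\tilde h:=(\E_1)^{-1/2}(h-c)$, where $c$ is the average of $h$; then $\tilde h$ has zero average and, by item (i) of \cref{Lipschitz-approximation-thm}, $\int_{B_{3/4}^{n-2}}|d\tilde h|^2\leq C(n)$. The first part, combined with the hypothesis $e^{-K/\epsilon}\leq\E_1$ (so that $(\E_1)^{-1/2}e^{-K/\epsilon}\leq\sqrt{\E_1}\leq\sqrt{\E}$), gives $|\int_{B_{3/4}^{n-2}}\langle d\tilde h,d\phi\rangle|\leq C(\eta^{-1}\sqrt{\E_1}+\sqrt{\E})\|d\phi\|_{L^\infty}\leq C\eta^{-1}\sqrt{\E}\,\|d\phi\|_{L^\infty}$. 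Dividing $\tilde h$ by a fixed constant to normalize its Dirichlet energy below $1$, and choosing $\E$ small enough in terms of $n,\eta,\nu$ so that the right-hand side drops below the threshold $\tau(n,\nu)$ of \cref{harmonic-approximation-lemma-1} (used in its straightforward version on $B_{3/4}^{n-2}$), we obtain a harmonic function which, after undoing the rescaling, is the desired $w$: harmonic, with $w(0)=0$ (by the zero-average clause of the lemma), $\int_{B_{3/4}^{n-2}}|dw|^2\leq C$, and $\int_{B_{3/4}^{n-2}}|(\E_1)^{-1/2}(h-c)-w|^2\leq\nu$.

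The main obstacle is the bookkeeping in the first part: one must match the stress-energy identity against the slicing identity \emph{term by term}, discarding the right error terms ($d\psi$-supported terms via exponential decay, and curvature terms with $j\neq2$ via $\E_1$), and then recognize the leftover difference as a pairing of an $\E_2$-quantity against an $\E_1$-quantity. The delicate points are the algebraic factors — the $2i$ appearing in the Jacobian versus the bare $\nabla_{e_1}u$ coming from $T_\epsilon$, and the exact $\epsilon$-scaling in $\epsilon\omega(e_1,e_2)-\tfrac{1-|u|^2}{2\epsilon}$ — together with verifying that the vorticity set indeed avoids $\spt(d\psi)$ so that those error terms are genuinely exponentially small. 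Once the first part is in place, the harmonic-approximation step is a routine compactness argument.
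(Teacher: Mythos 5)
Your proposal is correct and follows essentially the same route as the paper: testing $\operatorname{div}(T_\epsilon)=0$ with $\psi\phi\,e_1$, discarding $d\psi$-terms by exponential decay and off-diagonal curvature terms via $\E_1$, matching against the slice-gradient identity for $\Phi_{\chi x_1}$ so that the leftover is an $\E_2$-against-$\E_1$ pairing bounded by $\sqrt{\E\E_1}$, handling the bad set with items (i)--(iii) of the Lipschitz approximation, and concluding with the harmonic approximation lemma after normalizing by $\sqrt{\E_1}$. The $\epsilon$-scaling you flag in the curvature difference is handled exactly as you describe, so no changes are needed.
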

                \subsection{Caccioppoli-type estimates}\label{Caccioppoli-decay-section}
                The starting point in the regularity theory of elliptic partial differential equations is the Caccioppoli--Leray bound, obtained by testing the equation with $\phi^2 u$,
                where $\phi$ is a cut-off function and $u$ is the solution. We aim to do something similar in spirit. Here the \textit{function} that we deal with is the barycenter of the energy measure at any slice. This suggests that testing the stress-energy tensor with $\phi^2 x_1 e_1$ and $\phi^2 x_2e_2$ is an appropriate choice.
                \begin{proposition}\label{Caccioppoli-prop-varifold}
                	Let $(u,\nabla)$ be a critical point of $E_\epsilon$ as in the previous section.
                    For any $\sigma>0$ there exist $\epsilon_0(n,\sigma)$ and $\tau_0(n,\sigma)$ small enough such that the following Caccioppoli-type estimate holds:
                    \begin{align*}
                        \int_{B^{n-2}_{3/4}} \phi^2(z) (\E_1)_z\,dz \leq C \int_{B_{1/2}^2\times B^{n-2}_{3/4}} (x_1^2 + x_2^2) e_\epsilon(u,\nabla)\Delta \phi^2 + C(\sigma^2 \E_1+e^{-{K}/{\epsilon}})\|D^2 \phi\|_{\infty},
                    \end{align*}
                    for any test function $\phi\in C^{\infty}_c(B^{n-2}_{3/4})$, where $C=C(n)$ and $K=K(n,\sigma)$.
                    \begin{proof}
                        First, we define the vector fields
                        \begin{align*}
                            X := \sum_{k=3}^n\de_k\phi^2(x_3,\dots,x_n)\frac{x_1^2 + x_2^2}{2} e_k,\quad Y := \phi^2(x_3,\dots,x_n) (x_1 e_1 +  x_2 e_2)
                        \end{align*}
                        and we calculate their derivatives:
                        \begin{align*}
                            DX &= \frac12\sum_{3\leq j,k\leq n} \de_{e_j,e_k}^2 \phi^2(x_1^2+x_2^2) e_j\otimes e_k^* + \sum_{k=3}^n \de_{e_k}\phi^2(x_1e_k\otimes e_1^*+ x_2e_k\otimes e_2^*)\\
                            DY &= \phi^2 (e_1 \otimes e_1^*+ e_2\otimes e_2^*) +\sum_{k=3}^n \de_{e_k}\phi^2(x_1e_1\otimes e_k^* + x_2e_2\otimes e_k^*).
                        \end{align*}
                        Then we test \cref{div-free-stress-energy-tensor} with $\chi X$ and $\chi Y$, where $\chi=\chi(x_1,x_2)$ is a smooth cut-off function such that $\chi=1$ on $B_{1/2}^2$ and $\chi=0 $ on $B^2_1\setminus B_{3/4}^2$. We note that the terms containing $d\chi$ are supported in $(B^2_{3/4}\setminus B^2_{1/2})\times B_{3/4}^{n-2}$, where $|T_\epsilon(u,\nabla)|\le C(n)e_\epsilon(u,\nabla)$ is very small by the exponential decay. Hence,
                        \begin{align*}
                            \left| \int_{B^2_{1/2}\times B^{n-2}_{3/4}} \ang{T_\epsilon(u,\nabla),DY} - \ang{T_\epsilon(u,\nabla),DX} \right| \leq C\|D^2\phi\|_{L^\infty} e^{-K/\epsilon}.
                        \end{align*}
                        Using the previous expansion of $DX$ and $DY$, together with the symmetry of $T_\epsilon(u,\nabla)$, we see that the above integrand equals
                        \begin{align*}
                        	&\frac12\sum_{3\leq j,k\leq n} T_\epsilon(u,\nabla)(e_j,e_k)\de_{e_j,e_k}^2 \phi^2(x_1^2+x_2^2)
                        	-\sum_{j=1,2}T_\epsilon(u,\nabla)(e_j,e_j)\phi^2 \\
                        	&=\frac{x_1^2+x_2^2}{2}\left[e_\epsilon(u,\nabla)\Delta\phi^2
                        	-2\sum_{3\le j,k\le n}(\nabla u^*\nabla u+\epsilon^2\omega^*\omega)(e_j,e_k)\de_{e_j,e_k}^2 \phi^2\right]\\
                        	&\quad-2\left[e_\epsilon(u,\nabla)-|\nabla_{e_1}u|^2-|\nabla_{e_2}u|^2-\sum_{j=1,2}\sum_{k=1}^n\epsilon^2\omega(e_j,e_k)^2\right]\phi^2.
                        \end{align*}
                        By the Modica-type inequality \cref{modica-bounds}, the last expression multiplying $-2\phi^2$ is bounded below by
                        $$\sum_{k=3}^n|\nabla_{e_k}u|^2+\frac{(1-|u|^2)^2}{4\epsilon^2}-\epsilon^2\omega(e_1,e_2)^2
                        \ge \sum_{k=3}^n|\nabla_{e_k}u|^2+\epsilon^2\sum_{(j,k)\neq(1,2)}\omega(e_j,e_k)^2$$
                        (where the last sum is over all pairs $(j,k)\neq(1,2)$ with $j<k$), which is the integrand in the definition of $\E_1$.
                        Hence, combining the previous bounds, we arrive at
                        \begin{align*}\int_{B^2_{1/2}\times B^{n-2}_{3/4}} \phi^2(z)(\E_1)_z\,dz
                        &\le\int_{B^2_{1/2}\times B^{n-2}_{3/4}}\frac{x_1^2+x_2^2}{2}e_\epsilon(u,\nabla)\Delta\phi^2 \\
                        &\quad+C\int_{B^2_{1/2}\times B^{n-2}_{3/4}} \frac{x_1^2+x_2^2}{2}\sum_{3\le j,k\le n}(\nabla u^*\nabla u+\epsilon^2\omega^*\omega)(e_j,e_k)\de_{e_j,e_k}^2 \phi^2 \\
                        &\quad+C\|D^2\phi\|_{L^\infty}e^{-K/\epsilon}.
                        \end{align*}
                        Now, by the soft height bound, we can assume that the vorticity set $Z$
                        intersects $B_{1/2}^2\times B_{3/4}^{n-2}$ in a small cylinder
                        $B_\sigma^2\times B_{3/4}^{n-2}$; the conclusion follows by exponential decay, up to replacing $K$ with another constant $K(n,\sigma)$.
                    \end{proof}
                \end{proposition}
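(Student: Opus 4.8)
The plan is to run a Caccioppoli-type argument based on the divergence-free stress--energy tensor $T_\epsilon(u,\nabla)$ from \cref{div-free-stress-energy-tensor}. Just as the classical Caccioppoli inequality comes from testing an elliptic equation with $\phi^2 u$, here the ``solution'' is morally the barycenter of $\mu_\epsilon$ on each slice $B_1^2\times\{z\}$, which suggests testing $\div(T_\epsilon)=0$ against $Y:=\phi^2(z)(x_1e_1+x_2e_2)$: the diagonal block $\phi^2(e_1\otimes e_1^*+e_2\otimes e_2^*)$ of $DY$ pairs with $T_\epsilon$ to produce exactly $-\sum_{j=1,2}T_\epsilon(e_j,e_j)\phi^2$, the term we want. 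To cancel the contributions of $T_\epsilon$ in the directions tangent to $S=\R^{n-2}$, which would otherwise spoil the estimate, I would simultaneously test against the compensating field $X:=\tfrac{x_1^2+x_2^2}{2}\sum_{k=3}^n\de_{e_k}\phi^2(z)\,e_k$. Both fields must first be localized by a cut-off $\chi(x_1,x_2)$ equal to $1$ on $B_{1/2}^2$ and supported in $B_{3/4}^2$; since by \cref{soft-height-bound} (no rotation needed, as the energy concentrates along $\R^{n-2}$ when $\tau_0,\epsilon_0\to0$) the vorticity set meets $B_{3/4}^2\times B_{3/4}^{n-2}$ only inside $B_\sigma^2\times B_{3/4}^{n-2}$, the $d\chi$-terms are exponentially small by \cref{exponential-decay-proposition}.

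Next I would compute $DX$ and $DY$ explicitly and, using the symmetry of $T_\epsilon$, evaluate the integrand $\ang{T_\epsilon,DY-DX}$ on $B_{1/2}^2\times B_{3/4}^{n-2}$. Writing $T_\epsilon=e_\epsilon\,\mathrm{Id}-2\nabla u^*\nabla u-2\epsilon^2\omega^*\omega$, the $z$-Hessian part of $DX$ produces $\tfrac{x_1^2+x_2^2}{2}\big[e_\epsilon\Delta\phi^2-2\sum_{3\le j,k\le n}(\nabla u^*\nabla u+\epsilon^2\omega^*\omega)(e_j,e_k)\,\de^2_{e_j,e_k}\phi^2\big]$, while the $(e_1,e_2)$-block of $DY$ produces $-2\big[e_\epsilon-|\nabla_{e_1}u|^2-|\nabla_{e_2}u|^2-\epsilon^2\sum_{j=1,2}\sum_k\omega(e_j,e_k)^2\big]\phi^2$. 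Expanding the last bracket and applying the Modica-type bound $\epsilon^2|\omega|^2\le\tfrac{(1-|u|^2)^2}{4\epsilon^2}$ from \cref{modica-bounds} shows it is bounded below by $\sum_{k=3}^n|\nabla_{e_k}u|^2+\epsilon^2\sum_{(j,k)\neq(1,2)}\omega(e_j,e_k)^2$, i.e.\ the integrand defining $(\E_1)_z$. Since $\ang{T_\epsilon,DY-DX}$ integrates to something of size $O(e^{-K/\epsilon}\|D^2\phi\|_\infty)$, integrating out the $(x_1,x_2)$-variables — and noting that replacing $B_1^2$ by $B_{1/2}^2$ in the definition of $(\E_1)_z$ costs only $O(e^{-K/\epsilon})$, once more by \cref{soft-height-bound} and \cref{exponential-decay-proposition} — leaves the estimate $\int_{B_{3/4}^{n-2}}\phi^2(z)(\E_1)_z\,dz\le\int_{B_{1/2}^2\times B_{3/4}^{n-2}}\tfrac{x_1^2+x_2^2}{2}\,e_\epsilon\,\Delta\phi^2+C\|D^2\phi\|_\infty\int_{B_{1/2}^2\times B_{3/4}^{n-2}}(x_1^2+x_2^2)\sum_{3\le j,k\le n}\big|(\nabla u^*\nabla u+\epsilon^2\omega^*\omega)(e_j,e_k)\big|+Ce^{-K/\epsilon}\|D^2\phi\|_\infty$.

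It then remains to absorb the middle error integral. For $j,k\ge3$, Cauchy--Schwarz bounds $\ang{\nabla_{e_j}u,\nabla_{e_k}u}$ by $\sum_{\ell\ge3}|\nabla_{e_\ell}u|^2$ and $\omega^*\omega(e_j,e_k)=\sum_\ell\omega(e_j,e_\ell)\omega(e_k,e_\ell)$ by $\sum_\ell\big(\omega(e_j,e_\ell)^2+\omega(e_k,e_\ell)^2\big)$ — every factor here carries an index pair $\neq(1,2)$ — so the whole sum is pointwise controlled by the $\E_1$-integrand. I would then invoke \cref{soft-height-bound} a last time: on $\{|y|\le\sigma\}$ one has $x_1^2+x_2^2\le\sigma^2$, so this part is $\le C\sigma^2\E_1$, while on $\{|y|>\sigma\}$ the energy density is $\le Ce^{-K\sigma/\epsilon}/\epsilon^2$ by \cref{exponential-decay-proposition}, so this part is $\le e^{-K/\epsilon}$ after enlarging the constant to some $K=K(n,\sigma)$. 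Combining the pieces yields the claimed inequality.

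The real crux is choosing the compensating field $X$ correctly and then verifying that the induced cross-terms $(\nabla u^*\nabla u+\epsilon^2\omega^*\omega)(e_j,e_k)$ with $j,k\ge3$ are genuinely controlled by the $\E_1$-integrand, with no stray $\omega(e_1,e_2)^2$ surviving. The companion delicate point is getting the signs in the $(e_1,e_2)$-block and the Modica cancellation exactly right, so that the potential term $\tfrac{(1-|u|^2)^2}{4\epsilon^2}$ is absorbed into $\E_1$ rather than lost; everything else is routine bookkeeping with the cut-offs and exponential decay.
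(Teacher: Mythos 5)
Your proposal is correct and follows essentially the same route as the paper: testing $\operatorname{div}T_\epsilon=0$ with the same fields $\chi X$ and $\chi Y$, using the symmetry of $T_\epsilon$ and the Modica bound to extract the $\E_1$-integrand from the $(e_1,e_2)$-block, and handling the $d\chi$-terms and the residual $(j,k\ge3)$ cross-terms via the soft height bound and exponential decay. Your explicit Cauchy--Schwarz absorption of the middle error integral is just a spelled-out version of the paper's concluding sentence, so nothing essentially new or missing.
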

                \begin{rmk}\label{rmk:cacciopoli}
                    In the statement of \cref{Caccioppoli-prop-varifold} we can replace the first term of the right-hand side as follows:
                    \begin{align*}
                        \int_{B^{n-2}_{3/4}} \phi^2(z) (\E_1)_z\,dz &\leq C \int_{B_{1/2}^2\times B^{n-2}_{3/4}} [(x_1-c_1)^2 + (x_2-c_2)^2] e_\epsilon(u,\nabla)\Delta \phi^2 \\ \quad + C(\sigma^2  \E_1+e^{-{K}/{\epsilon}})\|D^2 \phi\|_{\infty},
                    \end{align*}
                    provided that $|c|\leq C\sigma$ for $C=C(n)$. The proof is essentially the same.
                \end{rmk}
                    \section{Proof of decay of the tilt-excess}
                    In this section we prove \cref{tilt-excess-decay-statement}:
                    roughly speaking, we prove that $\E_1$, the first part of the excess, decays up to scales where it becomes comparable with $\epsilon^2$. We will deduce this from the excess decay property of harmonic functions, stated in the next elementary lemma.
                    \begin{lemma}\label{decay-lemma-harmonic-functions}
                        Given a harmonic function $w:B^n_1(0) \rightarrow \R$, we have the decay estimate
                        \begin{align}
                            \sup_{x\in B_\rho^n(0)}|w(x)-w(0)-d w(0)[x]| \leq C(n)\rho^2\|dw\|_{L^2},
                        \end{align}
                        for $\rho\in(0,\mz)$.
                        \begin{proof}
                            By a Taylor expansion, the left-hand side is bounded by $\frac{\rho^2}{2}\|D^2w\|_{L^{\infty}(B_{1/2}^n)}$,
                            which is bounded by $C(n)\rho^2\|dw\|_{L^2}$ by the mean-value property of harmonic functions.
                            \end{proof}
                    \end{lemma}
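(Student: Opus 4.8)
The plan is to combine a second-order Taylor expansion at the origin with the interior estimates for harmonic functions, routing the latter through $dw$ so that the final bound is in terms of $\|dw\|_{L^2}$ rather than $\|w\|_{L^2}$ (the former being the scale-invariant quantity relevant for the iteration of the excess decay).

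First I would record that, since $w$ is harmonic in $B_1^n(0)$, it is smooth in the interior and every partial derivative $\de_k w$ is again harmonic. For $x\in B_\rho^n(0)$ with $\rho<\tfrac12$, the segment from $0$ to $x$ lies in $B_{1/2}^n$, so Taylor's theorem with the integral form of the remainder gives
$$|w(x)-w(0)-dw(0)[x]|\le\frac{|x|^2}{2}\sup_{B_{1/2}^n}\|D^2w\|\le\frac{\rho^2}{2}\sup_{B_{1/2}^n}\|D^2w\|.$$

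Next I would bound $\sup_{B_{1/2}^n}\|D^2w\|$ by $C(n)\|dw\|_{L^2(B_1^n)}$. The standard interior gradient estimate for harmonic functions asserts that if $v$ is harmonic in a ball $B_r(y)$, then $|\nabla v(y)|\le \frac{C(n)}{r}\,\frac{1}{|B_r^n|}\int_{B_r(y)}|v|$ (a consequence of the mean value property applied to $v$ and to each $\de_i v$, combined with the divergence theorem). Applying this to each harmonic component $v=\de_k w$ on the ball $B_{1/2}(y)\subset B_1^n$, for arbitrary $y\in B_{1/2}^n$, yields $|D^2w(y)|\le C(n)\int_{B_1^n}|dw|$, and Cauchy--Schwarz then upgrades this to $|D^2w(y)|\le C(n)\|dw\|_{L^2(B_1^n)}$. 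Chaining this with the previous display gives the claimed estimate.

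This lemma is elementary and I do not anticipate any genuine obstacle; the only point requiring a little care is to apply the interior estimate to $dw$ (equivalently, to differentiate once before invoking the mean value property), so that the right-hand side involves the $L^2$ norm of the gradient rather than of $w$ itself.
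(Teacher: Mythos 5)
Your proposal is correct and follows essentially the same route as the paper: a Taylor expansion bounding the left-hand side by $\frac{\rho^2}{2}\|D^2w\|_{L^\infty(B_{1/2}^n)}$, followed by the mean-value property (applied to the harmonic derivatives $\de_k w$) and Cauchy--Schwarz to control the Hessian by $C(n)\|dw\|_{L^2}$. The extra detail you supply on the interior estimate is just an expansion of the paper's one-line argument.
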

                    \subsection{Proof of the excess decay in the case of small $|d w(0)|$}
                    First, we prove \cref{tilt-excess-decay-statement} when the harmonic approximation has $|d w(0)|\le\delta$, for a small $\delta>0$ to be chosen later.
                    We dilate the ball $B_1^n$ to $B_{\sqrt2}^n$ (and replace $\epsilon$ with $\epsilon/\sqrt2$),
                    in such a way that it includes $B_1^2\times B_1^{n-2}$; we also assume that $S=\R^{n-2}$ in the statement.
                    
                    Let $c$ be the average of $h$ on the ball $B_{3/4}^{n-2}$. The construction of $h$ shows that
                    $$|c|\le C\sigma+Ce^{-K/\epsilon}\le C\sigma$$
                    for $\epsilon$ small enough (depending on $n,\sigma$).
                    
                    We apply the Caccioppoli-type estimates in \cref{Caccioppoli-prop-varifold}, with $x_1-c_1$ and $x_2-c_2$ in place of $x_1$ and $x_2$, see \cref{rmk:cacciopoli}. Taking $\phi\in C^{\infty}_c(B_{2\rho}^{n-2})$ to be a cut-off function with $\phi=1$ on $B_\rho^{n-2}$ and $|D^2\phi| \leq C(n)\rho^{-2}$ we get 
                        \begin{align*}
                            &\int_{B^{n-2}_{3/4}} \phi^2(z)(\E_1)_z\,dz \\
                            &\leq C\int_{B^2_{1/2}\times B^{n-2}_{3/4}} e_\epsilon(u,\nabla)[(x_1-c_1)^2+(x_2-c_2)^2]\Delta\phi^2 + C\rho^{-2}(\sigma^2\E_1 + e^{-{K}/{\epsilon}}).
                        \end{align*}
                        The contribution of the bad set $B_{3/4}^{n-2}\setminus\G^\eta$ can be bounded using the soft height bound of \cref{soft-height-bound} and energy estimate on the bad set (item (iii) in \cref{Lipschitz-approximation-thm}), obtaining
                        \begin{align*}
                        \int_{B^2_{1/2}\times (B^{n-2}_{3/4}\setminus\G^\eta)} e_\epsilon(u,\nabla)[(x_1-c_1)^2+(x_2-c_2)^2]\Delta\phi^2
                       \le C\rho^{-2}(\sigma^2+e^{-K/\epsilon})\frac{\E_1}{\eta^2}.
                        \end{align*}
                        
                        On the good set $\G^\eta$,
                        we apply \cref{variance-of-good-slices} to estimate the \textit{second moment} of good slices as follows:
                        \begin{align}\label{good-set-temp-identity-1}\begin{aligned}
                            &\left|\int_{B^2_{1/2}\times \G^{\eta}}e_\epsilon(u,\nabla)[(x_1-c_1)^2+(x_2-c_2)^2]\Delta\phi^2 - \int_{\G^\eta} \epsilon^2v_0\,\Delta \phi^2\right|\\
                            &\le C\rho^{-2}(\epsilon^2|\log\E_2|^2 \E_2^{1/2}+ \sigma^2 \E_1 +\int_{\G^\eta} |h-c|^2 (\E_2)_z^{1/2} +e^{-K\sigma/\epsilon})
                        \end{aligned}\end{align}
                        (see also \cref{concave}),
                        where $h$ is the Lipschitz approximation obtained in \cref{Lipschitz-approximation-thm}.
                        Note that the term containing $v_0$ disappears once integrated on $B_{2\rho}^{n-2}$, as $v_0$ is a constant and $\Delta\phi^2$ has zero integral.
                        
                        Combining the previous bounds, we arrive at
                        \begin{align*}
                        &\left|\int_{B^{n-2}_{3/4}} \phi^2(z)(\E_1)_z\,dz\right|\\
                        &\le C\rho^{-2}\int_{B_{2\rho}^{n-2}}|h-c|^2
                        +C\rho^{-2}\left[(\sigma^2+\epsilon^2)\frac{\E_1}{\eta^2} + \left(1+\frac{\E_1}{\eta^2}\right)e^{-{K}/{\epsilon}}
                        +\epsilon^2|\log\E_2|^2 \E_2^{1/2}\right].\end{align*}
                        Assuming $e^{-K/\epsilon}\le\E_1$,
                        we now apply \cref{harmonic-approx-prop} and \cref{decay-lemma-harmonic-functions}.
                        Since $\|(h-c)-\sqrt{\E_1}w\|_{L^2}^2\le\nu\E_1$, we have 
                        $$\int_{B_{2\rho}^{n-2}}|h-c|^2\le 2\nu\E_1+2\E_1\int_{B_{2\rho}^{n-2}}|w|^2
                        \le 2\nu\E_1+C\E_1(\rho^{4+(n-2)}+\delta^2\rho^{2+(n-2)}).$$
                        Thus, for some $C=C(n)$ and $K=K(n,\sigma)$, we get
                        \begin{align*}&\rho^{2-n}\int_{B_\rho^{n-2}}\E_1(z)\\
                        &\le C\E_1(\rho^{-n}\nu+\rho^2+\delta^2)\\
                        &\quad+C\rho^{-n}\left[(\sigma^2+\epsilon^2)\frac{\E_1}{\eta^2} +\left(1+\frac{\E_1}{\eta^2}\right)e^{-{K}/{\epsilon}}+\epsilon^2|\log\E_2|^2 \E_2^{1/2}\right].\end{align*}
                        We now choose $\eta,\rho$ and, \emph{subsequently}, $\delta,\sigma,\nu$ to be small enough.
                        The claim follows (with the same plane $\bar S=\R^{n-2}$) once we assume that $\E_1$ is small enough.

                    \subsection{Tilting the picture}\label{Tilt-subsection-label} 
                    In the general case, before applying the Caccioppoli-type estimate, we need to tilt the picture slightly to ensure that $|dw|(0)$ is small enough.
                    We assume that $\R^{n-2}$ minimizes $\E_1(u,\nabla,B_1^n,\cdot)$.
                    
                    Consider a rotation $R\in SO(n)$ bringing $\R^{n-2}$ to the graph of the linear map $\sqrt{\E_1}dw(0)$.
                    Since $w$ is harmonic with the bound $\|dw\|_{L^2} \leq C$, we have $|\sqrt{\E_1}dw(0)|\le C\sqrt{\E_1}$. Hence, we can find $R$ such that
                    \begin{align}\label{rotation-estimate}
                        \|R-I\| \leq C\E_1^{1/2}, \quad \|(P_{\R^{n-2}}\circ R-I)\circ P_{\R^{n-2}}\| \leq C\E_1,
                    \end{align}
                    for a dimensional constant $C=C(n)$:
                    indeed, calling $S$ the graph of $\sqrt{\E_1}dw(0)$, using the spectral theorem we can find an orthonormal basis $\{v_3,\dots,v_{n}\}$ of $\R^{n-2}$
                    such that the vectors $P_S(v_i)$ form an orthogonal basis of $S$, so that $\ang{P_S(v_i),v_j}=\ang{P_S(v_i),P_S(v_j)}=0$
                    for $i\neq j$. Thus, $P_{\R^{n-2}}\circ P_S(v_i)$ is parallel to $v_i$ and
                    $$\frac{P_S(v_i)}{|P_S(v_i)|}=\frac{(\sqrt{\E_1}dw(0)[v_i],v_i)}{\sqrt{1+\E_1|dw(0)[v_i]|^2}}$$
                    (under the identification $\R^{n-2}=\{0\}\times\R^{n-2}$).

                    We extend $\{v_3,\dots,v_n\}$ to an orthonormal basis $\{v_1,\dots,v_n\}$ of $\R^n$.
                    The desired rotation is
                    obtained by sending $v_i$ to $\frac{P_S(v_i)}{|P_S(v_i)|}$ for $i\ge3$ and $v_1,v_2$ to suitable unit vectors $v_1+O(\sqrt{\E_1})$ and $v_2+O(\sqrt{\E_1})$, obtained for instance via the Gram--Schmidt algorithm on the collection $\{\frac{P_S(v_3)}{|P_S(v_3)|},\dots,\frac{P_S(v_n)}{|P_S(v_n)|},v_1,v_2\}$.
                    For $i\ge3$, since $|P_{S^\perp}v_i|\le C\sqrt{\E_1}$    we have $|P_{S}v_i|\ge1-C\E_1$, and hence the previous formula gives
                    \begin{align}\label{R.explicit}
                        R(v_i)=R(0,v_i)=(\sqrt{\E_1}dw(0)[v_i],v_i)+O(\E_1)\quad\text{for }i\ge3.
                    \end{align}
                    
                    Then we define the rotated pair $(\tilde{u},\tilde{\nabla})$ as follows:
                    \begin{align}\label{rotated-pair}
                        \tilde{u} := R^*u, \quad \tilde\nabla := R^*\nabla.
                    \end{align}
                    First we prove that the excess changes proportionally after this rotation.
                    \begin{lemma}[Tilted excess estimate]
                        There exists a dimensional constant $C(n)$ such that, for a pair $(u,\nabla)$ as in \cref{tilt-excess-decay-statement} with small enough $\tau_0,\epsilon_0>0$ and a rotation $R$ as above, the tilted excess is bounded by the initial excess; more precisely,
                        \begin{align}\label{tilted-excess-estimate}
                        \E_1(\tilde{u},\tilde\nabla,B_1^n,\R^{n-2}) \leq C \E_1,\quad\E_2(\tilde{u},\tilde\nabla,B_1^n,\R^{n-2}) \leq C\E.
                        \end{align}
                        \begin{proof}
                            Take an orthonormal basis ${e_1,e_2,\dots,e_n}$ for $\R^n$ such that $\{e_3,\dots,e_n\}$ is an orthonormal basis for $\R^{n-2}$. Then, recalling the definition of the excess $\E_1$, we have
                            \begin{align*}
                                \E_1(\tilde{u},\tilde{\nabla},B_1^n,\R^{n-2}) &= \int_{B_1^n} \left[\sum_{k=3}^{n}|\nabla_{Re_k}u|^2 + \sum_{(j,k)\neq(1,2)} \epsilon^2\omega(Re_j,Re_k)^2\right]\\
                                &\leq C\E_1 + C\|R-I\|^2E_\epsilon(u,\nabla)\\
                                &\leq C\E_1.
                            \end{align*}
                            The second line above follows from the elementary bounds
                            $$|\nabla_{Re_k} - \nabla_{e_k} u | \leq \|R-I\||\nabla u|$$
                            and
                            $$|\omega(Re_j,Re_k) - \omega(e_j,e_k)| \leq 2\|R-I\||\omega|.$$
                            We estimate $\E_2$ in a similar way:
                            \begin{align*}
                                \E_2(\tilde{u},\tilde\nabla,B_1^n,\R^{n-2}) &= \int_{B^n_1} \left[|\nabla_{Re_1}u + i\nabla_{Re_2} u|^2 + \left|\epsilon\omega(Re_1,Re_2) - \frac{1-|u|^2}{2\epsilon}\right|^2\right] \\
                                &\leq C\E_2 + C\|R-I\|^2 E_\epsilon(u,\nabla)\\
                                &\leq C(\E_1 + \E_2).
                            \end{align*}
                            This is indeed the desired conclusion.
                        \end{proof}
                    \end{lemma}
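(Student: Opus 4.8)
The plan is to express the tilted excesses as integrals of the \emph{original} pair against the \emph{rotated} frame, and then to absorb the error produced by the rotation into the main terms using the closeness estimate $\|R-I\|\le C\sqrt{\E_1}$ from \cref{rotation-estimate}.

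First I would observe that, since $(\tilde u,\tilde\nabla)=(R^{*}u,R^{*}\nabla)$ and $R$ is an isometry of $\R^n$ preserving $B_1^n$ and the Lebesgue measure, the change of variables $y=Rx$ gives, directly from \cref{excess-1-definition} and \cref{excess-2-definition} with $r=1$,
\[
\E_1(\tilde u,\tilde\nabla,B_1^n,\R^{n-2})=\frac{1}{2\pi}\int_{B_1^n}\Bigl[\sum_{k=3}^{n}|\nabla_{Re_k}u|^2+\epsilon^2\!\!\sum_{(j,k)\ne(1,2)}\!\!\omega(Re_j,Re_k)^2\Bigr]
\]
for any orthonormal basis $\{e_1,\dots,e_n\}$ with $\{e_3,\dots,e_n\}$ spanning $\R^{n-2}$, and similarly for $\E_2$ with $e_1,e_2$ replaced by $Re_1,Re_2$ inside the two squares in \cref{excess-2-definition}.

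Next I would use the elementary pointwise bounds $|\nabla_{Re_k}u-\nabla_{e_k}u|=|\nabla_{(R-I)e_k}u|\le\|R-I\|\,|\nabla u|$ (by linearity of $v\mapsto\nabla_v u$) and $|\omega(Re_j,Re_k)-\omega(e_j,e_k)|\le2\|R-I\|\,|\omega|$ (by bilinearity of $\omega$ together with $|Re_j|=1$). Expanding each square via $(a+b)^2\le 2a^2+2b^2$ and summing the $O(n)$ resulting terms yields
\[
\E_1(\tilde u,\tilde\nabla,B_1^n,\R^{n-2})\le C(n)\,\E_1+C(n)\,\|R-I\|^2\,E_\epsilon(u,\nabla;B_1^n).
\]
Since $\|R-I\|^2\le C(n)\,\E_1$ by \cref{rotation-estimate} and $E_\epsilon(u,\nabla;B_1^n)\le(2\pi+\tau_0)|B_1^{n-2}|$ is bounded, the first inequality in \cref{tilted-excess-estimate} follows. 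The same expansion applied to $\E_2$ gives $\E_2(\tilde u,\tilde\nabla,B_1^n,\R^{n-2})\le C(n)\,\E_2+C(n)\,\|R-I\|^2\,E_\epsilon(u,\nabla;B_1^n)\le C(n)(\E_1+\E_2)=C(n)\,\E$; here one notes that the cross term $\nabla_{(R-I)e_1}u+i\nabla_{(R-I)e_2}u$ is only controlled by $\|R-I\|\,|\nabla u|$ and not by the smaller $\E_2$-integrand, which is precisely why the bound for $\E_2$ degrades to $C\E$ rather than $C\E_2$.

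There is no genuine analytic obstacle here: the entire content is the closeness estimate $\|R-I\|\le C\sqrt{\E_1}$, already available, combined with the global energy bound. The only care needed is bookkeeping — tracking the dimensional constants coming from the $(n-2)$ terms in each sum, and checking that both the Lebesgue measure and the domain $B_1^n$ are left unchanged by the substitution $y=Rx$, so that the pulled-back excesses are genuinely the stated integrals of $u$ against the rotated frame.
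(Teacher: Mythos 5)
Your proposal is correct and follows essentially the same route as the paper's proof: rewrite the tilted excesses as integrals of the original pair against the rotated frame, apply the pointwise bounds $|\nabla_{Re_k}u-\nabla_{e_k}u|\le\|R-I\||\nabla u|$ and $|\omega(Re_j,Re_k)-\omega(e_j,e_k)|\le2\|R-I\||\omega|$, and absorb the error using $\|R-I\|\le C\sqrt{\E_1}$ together with the global energy bound. The only difference is that you spell out the change of variables $y=Rx$ and the measure/domain invariance explicitly, which the paper leaves implicit.
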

                    Then we claim that the Lipschitz approximations $h$ and $\tilde h$ are approximately a rotation of one another. To do this, we first notice that the Lipschitz approximation $h_0$ of the zero set in \cref{Lipschitz-approximation-of-zero-loci-lemma} (applied with $\delta=\sigma$) behaves well under rotations: take $\tilde{h}_0$ to be the function whose graph is obtained by rotating of the graph of $h_0$ by $R^{-1}$ (cf.\ \cite[Section 8.2]{kelei-wang}).
                    For $z$ in the domain of $\tilde h_0$, there exists $z'\in B_{3/4}^{n-2}$ such that
                    $$(\tilde h_0(z),z) = R^{-1} ({h}_0(z'),z').$$
                    Since $\|(P_{\R^{n-2}}\circ R - I)\circ P_{\R^{n-2}}\| \leq C\E_1$ and $|h_0|\le\sigma$, we have $|z'-z|\le C\E_1+C\sqrt{\E_1}\sigma$.
                    Moreover, we have $\operatorname{Lip}(h_0)\le\sigma$, giving $|h_0(z')-h_0(z)|\le C\sqrt{\E_1}\sigma$.
                    Thus, assuming $\sqrt{\E_1}\le\sigma$,
                    $$(\tilde h_0(z),z) = R^{-1} ({h}_0(z),z)+O(\sqrt{\E_1}\sigma);$$
                    recalling \cref{R.explicit}, we see that $R(0,z)=(\sqrt{\E_1}dw(0)[z],z)+O(\E_1|z|)$, so that
                    \begin{align}\label{Lipschitz-tilt-estimate}
                        \tilde{h}_0(z) = h_0(z) - \sqrt{\E_1}d w(0)[z] + O(\sqrt{\E_1}\sigma),
                    \end{align}
                    with an implicit constant $C(n)$. Note that $\tilde h_0$ can be taken as a Lipschitz approximation of the zero set of the tilted pair: in order to have the conclusion of \cref{Lipschitz-approximation-of-zero-loci-lemma}, the only property that we care about is that its graph covers the zeros of $\tilde u$, except some exceptional ones projecting on a set of measure at most $C(n)\frac{\E_1}{\eta^2}$, and this holds for the rotated graph.
                    \subsection{Proof of the excess decay in the general case}
                    Now we can use \cref{Lipschitz-tilt-estimate} and the $L^2$ bound from \cref{Lipschitz-approximation-of-zero-loci-lemma} to conclude the proof of the tilt-excess decay theorem in the general case.
                    \begin{proof}[Proof of \cref{tilt-excess-decay-statement}]
                        Recall that $\R^{n-2}$ minimizes $\E_1(u,\nabla,B_1^n,\cdot)$.
                        Let $\tilde{h}$ be the Lipschitz approximation of the barycenter (built in \cref{Lipschitz-approximation-thm}) for the tilted pair $(\tilde{u},\tilde\nabla)$. We have
                        \begin{align*}
                            |\tilde{h}(z) - (h(z)-\sqrt{\E_1}dw(0)[z])| \leq |\tilde{h} - \tilde{h}_0| + |h - h_0| + |\tilde{h}_0 - (h_0-\sqrt{\E_1}d w(0)[z])|.
                        \end{align*}
                        We combine the main estimate of \cref{Lipschitz-approximation-of-zero-loci-lemma} and \cref{tilted-excess-estimate}--\cref{Lipschitz-tilt-estimate} to see that
                        \begin{align}\label{compare.with.rotation}
                            \int_{B^{n-2}_{1/2}} |\tilde{h}(z) - (h(z)-\E_1^{1/2}dw(0)[z])|^2 \\
                            &\leq C\left(\frac{\sigma^2}{\eta^2} + \sigma^2\right)\E_1 + C\epsilon^2|\log\E|^2\E+Ce^{-K/\epsilon}.
                        \end{align}
                        We assume in the sequel that
                        \begin{align*}
                            \epsilon^2|\log\E|^2\E,e^{-K/\epsilon}\le\sigma^2\E_1,
                        \end{align*}
                        so that
                        \begin{align*}
                            \int_{B^{n-2}_{1/2}} |\tilde{h}(z) - (h(z)-\E_1^{1/2}dw(0)[z])|^2 \leq C\left(\frac{\sigma^2}{\eta^2} + \sigma^2\right)\E_1.
                        \end{align*}
                        Now, taking the harmonic approximation for the tilted pair to be $\tilde{w}$, we can see that
                        \begin{align*}
                            &\int_{B^{n-2}_{1/2}} | \tilde\E_1^{1/2}\tilde{w}(z) - \E_1^{1/2}(w(z)-dw(0)[z])|^2\,dz \\
                            &\leq C\int_{B^{n-2}_{1/2}} [| h - \E_1^{1/2}w|^2 + |\tilde{h}-\tilde\E_1^{1/2}\tilde{w}|^2 + |\tilde{h}(z) - (h(z)-\E_1^{1/2}dw(0)[z])|^2]\\
                            &\le C\left(\nu+\frac{\sigma^2}{\eta^2} + \sigma^2\right)\E_1
                        \end{align*}
                        (the last line follows from $\tilde\E_1 \leq C\E_1$, as we saw in \cref{tilted-excess-estimate}).
                        Since $$\tilde\E_1^{1/2}\tilde{w}(z) - \E_1^{1/2}(w(z)-dw(0)[z])$$ is harmonic,
                        its differential at the origin
                        \begin{align*}
                            |\tilde\E_1^{1/2}d\tilde{w}(0)|^2\le C\left(\nu+\frac{\sigma^2}{\eta^2} + \sigma^2\right)\E_1.
                        \end{align*}
                        Since $\tilde\E_1\ge\E_1$,
                        this tells us that $|d\tilde{w}(0)|$ can be made arbitrarily small, reducing to the previous situation.
                    \end{proof}

                    \begin{rmk}\label{Z.instead.of.0}
                    In all the results obtained so far we were assuming that the center of the ball (or cylinder) belongs to the zero set, but actually they also hold if it belongs to the vorticity set $Z=\{|u|\le\frac34\}$, since this is enough to guarantee that it belongs to the support of the energy concentration measure in compactness arguments.
                    \end{rmk}
                    
                    \section{Iteration arguments and Morrey-type bounds}
                    \subsection{Proof of \cref{main-result-global-n4}: the case of critical pairs for $2\leq n \leq 4$}
                    We prove the following theorem, which is the first part of \cref{main-result-global-n4}.
                    \begin{thm}\label{iteration.pf}
                    For $2\leq n\leq4$, there exists $\tau_0(n)>0$ such that the following holds. If $(u,\nabla)$ is an entire critical point for the energy $E_1$, given by \cref{energy-definition} for $\epsilon=1$, with $u(0)=0$ and the energy bound
                    \begin{align}
                        &\lim_{R\to \infty} \frac{1}{|B^{n-2}_R|}\int_{B_R^n} \left[|\nabla u|^2 + |F_\nabla|^2 + \frac14(1-|u|^2)^2\right] \leq 2\pi+\tau_0,
                    \end{align}
                    then $(u,\nabla)$ is two-dimensional. More precisely,
                    we have $(u,\nabla) = P^*(u_0,\nabla_0)$ up to a change of gauge, where $P$ is the orthogonal projection onto a two-dimensional subspace and $(u_0,\nabla_0)$ is the standard degree-one solution of Taubes \cite{Taubes} (or its conjugate), centered at the origin.
                \end{thm}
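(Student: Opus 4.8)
The plan is to first reduce to the case where $(u,\nabla)$ is \emph{cylindrical}, i.e.\ invariant under translations along some $(n-2)$-plane, and then to establish this symmetry by iterating the excess decay. The case $n=2$ is immediate from \cref{ymh-dim-2-thm}: an entire finite-energy critical pair on $\R^2$ is a Taubes solution of some vortex number $N$, and $2\pi|N|\le2\pi+\tau_0<4\pi$ together with $u(0)=0$ (which forces a zero to exist) gives $|N|=1$; hence assume $3\le n\le4$.

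\emph{Reduction.} Suppose we know $(u,\nabla)=P^*(v,\nabla_0)$ for the orthogonal projection $P$ onto an $\R^2$-factor and some pair $(v,\nabla_0)$ on $\R^2$. By Fubini,
\[
\int_{B_R^n}e_1(u,\nabla)=|B_1^{n-2}|\int_{B_R^2}e_1(v,\nabla_0)(y)\,(R^2-|y|^2)^{(n-2)/2}\,dy,
\]
and on $\{|y|\le R/2\}$ the right-hand side is at least $c_n R^{n-2}\int_{B_{R/2}^2}e_1(v,\nabla_0)$; since the left-hand side is $O(R^{n-2})$, we get $\int_{\R^2}e_1(v,\nabla_0)<\infty$, and then dominated convergence yields $\frac{1}{|B_1^{n-2}|R^{n-2}}\int_{B_R^n}e_1(u,\nabla)\to\int_{\R^2}e_1(v,\nabla_0)$. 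By \cref{ymh-dim-2-thm} this limit equals $2\pi|N|$, so the energy bound forces $|N|\le1$, while $v(0)=u(0)=0$ forces $v$ to be nonconstant with a zero, so $|N|=1$; hence $(v,\nabla_0)$ is the standard degree-one Taubes solution (or its conjugate), centered at the origin by the characterization through the zero set in \cref{ymh-dim-2-thm}.

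\emph{Blow-down and iterated decay.} By \cref{excess-vanishes} the energy limit is exactly $2\pi$ and there are oriented $(n-2)$-planes $S(R)$ with $\E(u,\nabla,B_R^n,S(R))\to0$; write $g_1(R):=\E_1(u,\nabla,B_R^n,S(R))\to0$, so for $R$ large \cref{global-tilt-excess-decay-thm} applies at every scale. Its first alternative propagates smallness of $\E_1$ from scale $R$ to scale $\rho R$ (losing a factor $\lesssim\rho^2$) while moving the plane by $\le C\sqrt{g_1(R)}$, and its second alternative gives $g_1(R)\le CR^{-2}|\log\E|^2\sqrt{\E(u,\nabla,B_R^n,S(R))}+e^{-KR}$, which — since $\E(u,\nabla,B_R^n,S(R))\to0$ and $x\mapsto|\log x|^2\sqrt x$ is bounded near $0$ — is $o(R^{-2})$. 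If the first alternative held at \emph{all} scales above some $R$ with $g_1(R)>0$, iterating upward would make $g_1$ grow geometrically, contradicting $g_1\to0$; hence the second alternative recurs at arbitrarily large scales, and after each occurrence \cref{excess-vanishes-when-excess-1-vanishes} (together with the two-dimensional equivalence of the first- and second-order equations, to control $\E_2$ by $\E_1$ at the intermediate scales) lets us re-enter the iteration. Chaining the two alternatives, the planes $S(R)$ converge to a fixed $(n-2)$-plane $S$ — the unique blow-down, in the spirit of \cref{coroll.intro} — and one obtains the Morrey-type bound
\[
\E_1(u,\nabla,B_R^n,S)=o(R^{-2})\qquad(R\to\infty),
\]
the fixed plane $S$ being admissible since $\|P_S-P_{S(R)}\|^2$ is itself $o(R^{-2})$ by a geometric sum of the plane tilts.

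\emph{Vanishing of the excess.} After a rotation take $S=\{0\}\times\R^{n-2}$. The nonnegative function $\zeta:=\sum_{k=3}^n|\nabla_{e_k}u|^2+\sum_{(j,k)\neq(1,2)}\omega(e_j,e_k)^2$ satisfies $\int_{B_R^n}\zeta=2\pi R^{n-2}\,\E_1(u,\nabla,B_R^n,S)=o(R^{n-4})$, which tends to $0$ precisely because $n\le4$. Since $\int_{B_R^n}\zeta$ is nondecreasing in $R$ and tends to $0$, it is identically zero; hence $\zeta\equiv0$, so $\nabla_{e_k}u\equiv0$ for $k\ge3$ and $\omega$ has only a $dx_1\wedge dx_2$-component, whose coefficient is independent of $x_3,\dots,x_n$ since $d\omega=0$. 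A standard gauge-fixing argument then puts $(u,\nabla)$ into a gauge depending only on $(x_1,x_2)$, i.e.\ $(u,\nabla)=P^*(v,\nabla_0)$; combined with the Reduction, this proves the theorem.

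\emph{Main obstacle.} The crux is the iteration: turning the dichotomy of \cref{global-tilt-excess-decay-thm} into a genuine $o(R^{-2})$ Morrey bound attached to a single plane. This requires a quantitative control of the companion excess $\E_2$ by $\E_1$ at intermediate scales (so that the terminal second alternative can always be followed by a fresh application of the decay, and so the $\E$-minimizing planes at consecutive scales can be compared), and a careful bookkeeping of the plane tilts. The hypothesis $n\le4$ enters exactly through $R^{n-4}\to0$, which is what makes the Morrey bound force the tangential energy to vanish.
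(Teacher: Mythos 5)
Your proposal is correct in outline and follows essentially the same route as the paper: reduce the theorem to the Morrey-type statement $R^2\min_S\E_1(u,\nabla,B_R^n,S)\to0$, obtain it by iterating the dichotomy of \cref{global-tilt-excess-decay-thm} (with the smallness of $\E$ in the second alternative supplied by \cref{excess-vanishes}), and conclude via $n\le4$ and Taubes' classification. The ``chaining'' you assert at the crux is precisely what the paper's $f$/$g$ iteration lemma formalizes: at any scale either the small alternative occurs at some larger scale and the first alternative is chained downward (each step only improves $T^2\E_1(T)$, since $C\rho^2\le\rho$), or the first alternative holds at all larger scales and your growth argument forces $\E_1(R)=0$; so your sketch is completable as stated. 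Two of your ingredients are, however, unnecessary and slightly off: re-applying the decay at every scale does not require controlling $\E_2$ by $\E_1$ via \cref{excess-vanishes-when-excess-1-vanishes} (which is only qualitative) — the hypotheses of \cref{tilt-excess-decay-statement} (energy $\le2\pi+\tau_0$ by monotonicity, $u(0)=0$, $\epsilon=1/R\le\epsilon_0$) persist automatically; and the quantitative convergence $\|P_S-P_{S(R)}\|=o(R^{-1})$ to a single plane, while provable by summing the tilt estimates, is not needed — the paper simply extracts a subsequential limit plane and uses Fatou, which for $n\le4$ already yields $\zeta\equiv0$.
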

                
                \begin{proof}
                    We can assume $n\in\{3,4\}$.
                	First, we claim that it is enough to show that
                	$$\lim_{R\rightarrow \infty}R^{2}\min_S\E_1(u,\nabla,B_R^n,S)=0.$$
                	Indeed, once this is done, we have
                	$$R^{4-n}\int_{B_R^n}\left[\sum_{a=3}^n |\nabla_{e_a^R}u|^2 + \sum_{(a,b)\neq(1,2)} \omega(e_a^R,e_b^R)^2\right]\to0$$
                	as $R\to\infty$, for a suitable choice of planes $S(R)$, where $\{e_1^R,\dots,e_n^R\}$
                	is an orthonormal basis such that $S(R)$ is spanned by $\{e_3^R,\dots,e_n^R\}$. Extracting a limit $S(R)\to S$ along a subsequence and assuming without loss of generality that $S=\R^{n-2}$, the fact that $n\le 4$ and Fatou's lemma give
                	$$\int_{\R^n}\left[\sum_{a=3}^n |\nabla_{e_a}u|^2 + \sum_{(a,b)\neq(1,2)} \omega(e_a,e_b)^2\right]=0.$$
                	As in the proof of \cref{excess-vanishes-when-excess-1-vanishes},
                	this implies that $(u,\nabla)$ depends only on the first two coordinates up to a change of gauge,
                	and the conclusion follows from the classification of planar solutions by Taubes \cite{Taubes}.
                	
                	We now turn to the previous claim.
                	By \cref{excess-vanishes}, we have $\frac{1}{|B_R^{n-2}|}\int_{B_R^n}e_\epsilon(u,\nabla)\to2\pi$
                	as $R\to\infty$, as well as
                	$$\E(u,\nabla,B_R^n,S(R))\to0$$
                	for suitable oriented planes $S(R)$, up to conjugating the pair. Arguing as in the proof of \cref{excess-vanishes},
                	we see that $S(R)$, viewed as an unoriented plane, has vanishing distance from any unoriented plane $S$ minimizing $\E_1(u,\nabla,B_R^n,S)$; hence, we can assume that $S(R)$ minimizes $\E_1$ on $B_R^n$.
                	
                	The proof now becomes an elementary iteration argument. In \cref{tilt-excess-decay-statement} we first fix \(\rho\in (0,1)\) such that \(C\rho^2 \le \rho\) and then \(\tau\) and \(\epsilon_0\) accordingly.
                    Let  $C'>\frac{1}{\epsilon_0}$.
                    Without loss of generality we can also assume that
                    $$\E_1(u,\nabla,B_R^n,S(R))>0,\quad \E(u,\nabla,B_R^n,S(R))\in(0,1)$$
                    are small enough to allow applying \cref{tilt-excess-decay-statement} on $B_R^n$
                    (by rescaling our pair),
                    for all $R\ge C'$. For every $k\in\N$ let us define
                    the minimum excess on each ball $B_{C'\rho^{-k}}$:
                    \begin{align*}
                        \E_1(k) := \E_1(u,\nabla,B_{C'\rho^{-k}},S(C'\rho^{-k})).
                    \end{align*}
                    Then \cref{tilt-excess-decay-statement} gives
                    \begin{align}
                        \begin{aligned}\label{decay-2}
                        \text{either }\E_1(k) &\leq {\rho} \bar\E_1(k+1)\\
                        \text{or }\E_1(k) &\leq \max\{\rho^{2k}|\log\E(k+1)|^2\sqrt{\E(k+1)},e^{-K\rho^{-2k}}\},
                        \end{aligned}
                    \end{align}
                    where $\E(k) := \E(u,\nabla,B_{C'\rho^{-k}},S(C'\rho^{-k}))$. By \cref{excess-vanishes}, we have
                    \begin{align}\label{decay-3}
                        \lim_{k\rightarrow\infty }\E_1(k) = 0,
                        \quad\lim_{k\rightarrow\infty}\E(k) = 0.
                    \end{align}
                    In order to iterate \cref{decay-2}, we define
                    \begin{align*}
                        f(k) := \log\E_1(k) + 2k \log\rho^{-1}
                    \end{align*}
                    and
                    \begin{align*}
                        \quad g(k) := \max\left\{2\log|\log\E(k+1)| + \frac12\log\E(k+1),-K\rho^{-2k} + 2k \log\rho^{-1}\right\}.
                    \end{align*}
                    Then \cref{decay-2} can be rewritten in terms of the functions $f,g:\mathbb{N}\rightarrow \R$ as
                    \begin{align}\label{decay-4}
                        f(k) \leq f(k+1) - \lambda\quad\text{or}\quad f(k) \leq g(k),
                    \end{align}
                    where $\lambda := 3\log\rho^{-1}$. Condition \cref{decay-3} also means that
                    \begin{align}\label{decay-5}
                        \lim_{k\rightarrow \infty} f(k) - 2k\log\rho^{-1} = -\infty,
                        \quad\lim_{k\rightarrow\infty} g(k) = -\infty.
                    \end{align}
                    We claim that if $f,g$ satisfy \cref{decay-4,decay-5} then
                    \begin{align*}
                        f(k) \leq \sup_{m\geq k} [g(m) - \lambda(m-k)].
                    \end{align*}
                    We prove this by contradiction: assume that there is some index $k_0$ such that
                    \begin{align}\label{decay-6}
                        f(k_0) + \lambda(m-k_0) > g(m)\quad\text{for all }m\geq k_0.
                    \end{align}
                    In particular we have $f(k_0) > g(k_0)$, so that \cref{decay-4} and \cref{decay-6} give
                    \begin{align*}
                        f(k_0+1) \geq f(k_0) + \lambda > g(k_0+1).
                    \end{align*}
                    By induction, we see that for all $m\geq k_0$
                    \begin{align*}
                        f(m) \geq f(k_0) +\lambda(m-k_0).
                    \end{align*}
                    Taking the limit $m\to\infty$ and noting that $\lambda > 2\log\rho^{-1}$, we obtain
                    \begin{align*}
                        f(k_0) \leq \lim_{m\rightarrow\infty} [f(m) - \lambda(m-k_0)] \leq \lim_{m\rightarrow\infty} [f(m) - 2m\log\rho^{-1}]
                        +2k_0\log\rho^{-1}=-\infty,
                    \end{align*}
                    where we used \cref{decay-5} in the last equality.
                    This is a contradiction, proving our claim.
                    
                    As a consequence, we have
                    \begin{align*}
                        f(k) \leq \sup_{m\geq k} [g(m) - \lambda (m-k)] \leq \sup_{m\geq k} g(m).
                    \end{align*}
                    Since $\lim_{k\rightarrow\infty} g(k) = -\infty$ by \cref{decay-5}, we deduce that
                    \begin{align*}
                        \lim_{k\rightarrow\infty} f(k) = -\infty.
                    \end{align*}
                    In other words, we have $\rho^{-2k}\E_1(k)\to0$, as desired.
                \end{proof}
                
            \subsection{Proof of \cref{coroll.intro} and \cref{Main-result-epsilon-neighbourhood-of-graph}}
            Given any $n\ge3$ and $(u,\nabla)$ as in \cref{Main-result-epsilon-neighbourhood-of-graph}, for any $\tau_0'>0$
            a standard compactness argument shows that
            $$\frac{1}{|B_r^{n-2}|}\int_{B_r(x)}e_\epsilon(u,\nabla)\le 2\pi+\tau_0'$$
            for all $x\in Z\cap B_{3/4}^n$ and $r=\frac18$, provided that $\tau_0$ and $\epsilon_0$ are taken small enough,
            and hence also for $r\le\frac17$ by energy monotonicity.
            
            This, together with \cref{excess-vanishes}, implies that,  for some oriented planes $S(x,r)$,
            $$\E(u,\nabla,B_r(x),S(x,r))\le\delta$$
            for some $\delta>0$ to be chosen momentarily and $C(n,\delta)\epsilon\le r\le\frac{1}{8}$.
            As in the previous proof, we can assume that $S(x,r)$ minimizes $\E_1$ on the ball $B_r(x)$.
            
            Given  $\alpha\in[0,1)$, we first fix \(\rho\) such that \(C\rho^2 \le \rho^{2\alpha}\) where \(C\) is the constant appearing in  \cref{tilt-excess-decay-statement}. We now choose   $\delta,\tau_0$ small such that \cref{tilt-excess-decay-statement} applies on each  ball $B_r(x)$ with $x\in Z\cap B_{3/4}^n$, compare with \cref{Z.instead.of.0}. We then consider 
            \begin{equation}\label{e:radii}
            \max\{M\epsilon, \epsilon^{1/(1+\alpha)}\}\le r\le\frac18
            \end{equation}
            where \(M\) chosen large enough to ensure that 
            \[
            e^{-Kr/\epsilon}\le\frac{\epsilon^2}{r^2}
            \]
            if \(\epsilon/r \le 1/M\). Applying the scaled version of  \cref{tilt-excess-decay-statement} (with \(\epsilon\) replaced by \(\epsilon/r\)), and noticing that  $\sup_{0<s\le\delta}|\log\delta|^2\delta^{1/2}\le1$, we finally obtain that either
            \[
            \E_1(u,\nabla,B_{\rho r}(x),S(x,\rho r))\le \rho^{2\alpha}\E_1(u,\nabla,B_{r}(x),S(x,r))
            \]
            or
            \[
            \E_1(x,r):=\E_1(u,\nabla,B_{r}(x),S(x,r))\le \frac{\epsilon^2}{r^2}\le r^{2\alpha}.
            \]
   This immediately implies
            \[
            \E_1(x,r)\le C(n,\alpha)r^{2\alpha}\quad\text{for all $x\in Z\cap B_{3/4}^n$ and any radii satisfying \cref{e:radii}.}
            \]           
            Moreover, if $S(x,r)$ is different from $S(x,r')$, for some $r'\in[r,2r]$,
            then we can find an orthonormal basis $\{e_1,\dots,e_n\}$
            such that $\{e_3,\dots,e_n\}$ spans $S(x,r)$ and $e_2$ belongs to the span of the two planes,
            with
            $$e_2=v+w,\quad v\in S(x,r)\text{ and }w\in S(x,r'),\quad |v|+|w|\le C(n)\|P_{S(x,r)}-P_{S(x,r')}\|^{-1},$$
            as the next simple lemma shows.
            
            \begin{lemma}
            Given two different planes $S,S'\in\Gr(n,k)$, there exists a unit vector $e\in (S+S')\cap S^\perp$
            such that $e=v+w$, with $v\in S$, $w\in S'$, and $|v|,|w|\le C(n)\|P_S-P_{S'}\|^{-1}$.
            \end{lemma}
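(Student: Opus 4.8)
The plan is to obtain $e$ from a top singular vector of the linear map $T:=P_{S^\perp}|_{S'}\colon S'\to S^\perp$. Since $S\neq S'$ and $\dim S=\dim S'=:k$, we have $S'\not\subseteq S$, hence $T\neq 0$; let $\lambda>0$ denote its largest singular value and $\xi\in S'$ a unit vector with $|T\xi|=\lambda$. Set $e:=\lambda^{-1}T\xi=\lambda^{-1}(\xi-P_S\xi)$. Then $|e|=1$, and $e\in S^\perp$ by construction; moreover $e=v+w$ with $w:=\lambda^{-1}\xi\in S'$ and $v:=-\lambda^{-1}P_S\xi\in S$, so in particular $e\in S+S'$. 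Since $|w|=\lambda^{-1}$ and $|v|=\lambda^{-1}|P_S\xi|\le\lambda^{-1}$, everything reduces to the lower bound $\lambda\ge c(n)\|P_S-P_{S'}\|$.

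For this bound I would argue via traces. Expanding the symmetric operator $(P_S-P_{S'})^2=P_S+P_{S'}-P_SP_{S'}-P_{S'}P_S$ and using $\operatorname{tr}P_S=\operatorname{tr}P_{S'}=k$ gives $\|P_S-P_{S'}\|^2=\operatorname{tr}\big((P_S-P_{S'})^2\big)=2k-2\operatorname{tr}(P_SP_{S'})$. On the other hand, for $\xi,\xi'\in S'$ one has $\langle T\xi,T\xi'\rangle=\langle\xi-P_S\xi,\xi'\rangle=\langle\xi,\xi'\rangle-\langle P_S\xi,P_S\xi'\rangle$, so $T^*T=\mathrm{Id}_{S'}-(P_S|_{S'})^*(P_S|_{S'})$; taking the trace over $S'$, and using $\operatorname{tr}_{S'}\big((P_S|_{S'})^*(P_S|_{S'})\big)=\operatorname{tr}(P_SP_{S'})$ (evaluate in an orthonormal basis of $\R^n$ adapted to the splitting $S'\oplus(S')^\perp$), we obtain $\sum_{j=1}^k\sigma_j^2=\operatorname{tr}_{S'}(T^*T)=k-\operatorname{tr}(P_SP_{S'})=\tfrac12\|P_S-P_{S'}\|^2$, where $\sigma_1,\dots,\sigma_k$ are the singular values of $T$. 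Since there are at most $k\le n$ of them, $\lambda^2=\max_j\sigma_j^2\ge\tfrac1k\sum_j\sigma_j^2=\tfrac1{2k}\|P_S-P_{S'}\|^2\ge\tfrac1{2n}\|P_S-P_{S'}\|^2$, hence $\lambda^{-1}\le\sqrt{2n}\,\|P_S-P_{S'}\|^{-1}$, and the conclusion holds with $C(n):=\sqrt{2n}$.

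The argument has essentially no hard step; the only points requiring care are the trace identity $\operatorname{tr}_{S'}\big((P_S|_{S'})^*(P_S|_{S'})\big)=\operatorname{tr}(P_SP_{S'})$ and keeping track of which norm is meant, namely the Hilbert--Schmidt norm fixed earlier in the paper (the same computation works for the operator norm, only changing the dimensional constant). Note that the existence of a nonzero $e\in(S+S')\cap S^\perp$ is \emph{automatic} from $T\neq 0$, so no separate dimension count is needed; an equivalent proof could be phrased via the principal angles between $S$ and $S'$, choosing the vector that realizes the largest one, but the singular-value formulation above keeps everything self-contained.
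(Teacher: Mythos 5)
Your proof is correct, and it takes a genuinely different route from the paper's. The paper first reduces to the transversal case $S\cap S'=\{0\}$, $S+S'=\R^n$, disposes of the regime $\|P_S-P_{S'}\|\ge c(n)$ by a soft (non-quantitative) compactness argument, and in the remaining regime picks an eigenvector $\tilde e$ of $P_S-P_{S'}$ with extremal eigenvalue $\lambda$, computes the angle between $P_S\tilde e$ and $P_{S'}\tilde e$ (showing $\sin^2\theta=\lambda^2$), invokes the already-settled one-dimensional case inside the two-plane $\operatorname{span}\{P_S\tilde e,P_{S'}\tilde e\}$, and then has to verify by hand that the resulting vector is orthogonal to all of $S$ rather than just to $P_S\tilde e$. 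Your construction takes the top singular vector of $T=P_{S^\perp}|_{S'}$, so $e=\lambda^{-1}P_{S^\perp}\xi$ lies in $S^\perp$ by construction and the decomposition $e=-\lambda^{-1}P_S\xi+\lambda^{-1}\xi$ is immediate; the only quantitative input is the trace identity $\sum_j\sigma_j^2=\operatorname{tr}_{S'}(T^*T)=k-\operatorname{tr}(P_SP_{S'})=\tfrac12\|P_S-P_{S'}\|^2$, which is correct (your intermediate steps check out: $S'\not\subseteq S$ because the planes are distinct of equal dimension, so $T\neq0$; $\langle T\xi,T\xi'\rangle=\langle\xi,\xi'\rangle-\langle P_S\xi,P_S\xi'\rangle$ because $P_{S^\perp}$ is a self-adjoint idempotent; and the trace identity follows by evaluating in an orthonormal basis adapted to $S'$). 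What your route buys: no case split, no compactness step, no reduction to $k=1$, and an explicit constant $C(n)=\sqrt{2n}$ for the Hilbert--Schmidt norm used in the paper (with the operator norm one even has $\lambda=\|P_S-P_{S'}\|_{op}$ exactly for equal-dimensional subspaces, via principal angles). What the paper's route buys is mainly geometric transparency — the principal angle appears explicitly — at the cost of extra bookkeeping and a non-explicit constant in the large-distance regime.
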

            
            \begin{proof}
            We can assume that $S+S'=\R^n$ and $S\cap S'=\{0\}$ (otherwise we work on $(S\cap S')^\perp$), so that $n=2k$.
            We can also assume without loss of generality that $\|P_S-P_{S'}\|_{op}<c(n)$ for a constant $c(n)>0$ to be determined momentarily, since otherwise the
            statement follows from an immediate compactness argument (using the fact that, if $S_j\to S_\infty$ and $S_j'\to S_\infty'$, then each unit vector in $S_\infty+S_\infty'$ has vanishing distance from $S_j+S_j'$, even when the former sum has smaller dimension).
            
            It is elementary to check that the statement holds when $k=1$:
            in this case, calling $\theta\in(0,\frac{\pi}{2}]$ the angle between the lines $S$ and $S'$,
            we have $\|P_S-P_{S'}\|=\sqrt{2}\sin\theta$, and we can find vectors as in the statement with $|v|,|w|\le\frac{1}{\sin\theta}$.            
            
            Let $\tilde e$ be an eigenvector of $P_S-P_{S'}$, corresponding to an eigenvalue $\lambda$ with $0<|\lambda|=\|P_S-P_{S'}\|_{op}<c(n)$.
            Then
            $$P_S\tilde e-P_{S'}\tilde e=\lambda \tilde e,$$
            so that in particular $P_S\tilde e,P_{S'}\tilde e\neq0$ and
            $$P_SP_{S'}\tilde e=(1-\lambda)P_S\tilde e.$$
            Similarly we have
            $$P_{S'}P_S\tilde e=(1+\lambda)P_{S'}\tilde e.$$
            From the equation
            $$\ang{P_{S'}P_{S}\tilde e,P_{S'}\tilde e}=\ang{P_S\tilde e,P_{S'}\tilde e}=\ang{P_{S}\tilde e,P_{S}P_{S'}\tilde e}$$
            we deduce that
            $$|P_{S'}\tilde e|^2=\frac{1-\lambda}{1+\lambda}|P_S\tilde e|^2.$$
            In particular, calling $\theta\in(0,\frac{\pi}{2}]$ the angle between the vectors $P_S\tilde e$ and $P_{S'}\tilde e$, these identities easily give
            $$\sin^2\theta=1-\frac{\ang{P_S\tilde e,P_{S'}\tilde e}^2}{|P_S\tilde e|^2|P_{S'}\tilde e|^2}=\lambda^2.$$
            We now take
            $$Z:=\operatorname{span}\{P_S\tilde e,P_{S'}\tilde e\},$$
            which is a two-dimensional plane. By the case $k=1$, we can find
            $$e\in Z,\quad v\in \operatorname{span}\{P_S\tilde e\},\quad w\in \operatorname{span}\{P_{S'}\tilde e\}$$
            such that $e\perp P_S\tilde e$ is a unit vector and $|v|,|w|\le \lambda^{-1}$.
            In order to conclude, it suffices to check that $e\perp S$. Writing
            $$e=\alpha P_S\tilde e+\beta P_{S'}\tilde e,$$
            we have
            $$P_Se=\alpha P_S\tilde e+\beta P_SP_{S'}\tilde e=[\alpha+\beta(1-\lambda)]P_S\tilde e.$$
            Since $e\perp P_S\tilde e$, we have
            $$0=\ang{\alpha P_S\tilde e+\beta P_{S'}\tilde e,P_S\tilde e}
            =\alpha|P_S\tilde e|^2+\beta\ang{P_{S}P_{S'}\tilde e,P_S\tilde e}
            =[\alpha+\beta(1-\lambda)]|P_S\tilde e|^2.$$
            Since $P_S\tilde e\neq0$, we have $\alpha+\beta(1-\lambda)=0$, proving the claim.
            \end{proof}
            
            Since $\E(x,r)\le\delta$, we have
            $$r^{2-n}\int_{B_r(x)}e_\epsilon(u,\nabla)\le C(n)\delta+C(n)r^{2-n}\int_{B_r(x)}\left[\sum_{k=2}^n|\nabla_{e_k}u|^2+\sum_{(j,k)}\epsilon^2\omega(e_j,e_k)^2\right].$$
            Since the left-hand side is close to $2\pi$, and in particular larger than $\pi$
            (for $r\ge C\epsilon$), using the previous fact from linear algebra for the term \(\nabla_{e_2}u\) we obtain
            $$1\le C(n)[\E_1(x,r)+\E_1(x,r')](\|P_{S(x,r)}-P_{S(x,r')}\|^{-2}+1),$$
            and thus, since  \(\|P_{S(x,r)}-P_{S(x,r')}\|\le C(n)\),
            \begin{equation}\label{e:eppoi}
            \|P_{S(x,r)}-P_{S(x,r')}\|\le C(n)\sqrt{\E_1(x,r)+\E_1(x,r')}\le C(n,\alpha)r^\alpha.
            \end{equation}
            As a consequence, summing over dyadic scales, we have
            $$\|P_{S(x,r)}-P_{S(x,s)}\|\le C(n,\alpha)\max\{r,s\}^\alpha$$
            for $\max\{C(n,\alpha)\epsilon,\epsilon^{1/(1+\alpha)}\}\le r,s\le \frac18$.
                        
            A similar argument works varying the center: for two different points $x,x'\in Z\cap B_{3/4}^n$, looking at the balls $B_r(x)\subset B_{2r}(x')$ with $r:=|x-x'|$, we also have
            \begin{align}\label{holder.planes}\|P_{S(x,r)}-P_{S(x',r)}\|\le C(n,\alpha)r^\alpha,\end{align}
            provided that $r=|x-x'|\in[\max\{C(n,\alpha)\epsilon,\epsilon^{1/(1+\alpha)}\},\frac{1}{16}]$.
            
            Actually, the previous proof gives some extra information, which will be crucial in the sequel.
            We record it in the next proposition.
            
            \begin{proposition}\label{small.oscill.planes}
            Up to a rotation, we have $\|P_{S(x,r)}-P_{\R^{n-2}}\|\le\gamma$
            for any $\gamma>0$ fixed in advance (up to decreasing $\epsilon_0,\tau_0$),
            for all $x\in Z\cap B_{3/4}^n$ and $r\in[C(n,\gamma)\epsilon,\frac18]$.
            \end{proposition}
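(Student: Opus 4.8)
The plan is to argue by compactness and contradiction. Suppose the statement fails for some $\gamma>0$: there are critical pairs $(u_k,\nabla_k)$ on $B_1^n$ with $\epsilon_k\to0$, $\tau_k\to0$, $u_k(0)=0$, satisfying the hypotheses, such that for every rotation $R$ one finds $x\in Z_k\cap B_{3/4}^n$ and $r\in[C_k\epsilon_k,\frac18]$ with $\|P_{S_k(x,r)}-P_{R\R^{n-2}}\|>\gamma$; here $C_k$ is the candidate constant, which we are free to enlarge. Applying \cref{excess-vanishes} on $B_1^n$ and replacing $(u_k,\nabla_k)$ by a rotated (and possibly conjugated) pair, we may assume $\frac1{2\pi}e_{\epsilon_k}(u_k,\nabla_k)\,dx\weakstarto\mathcal H^{n-2}\mrestr\R^{n-2}$ in $B_1^n$, with the stress-energy tensors converging to $P_{\R^{n-2}}\mathcal H^{n-2}\mrestr\R^{n-2}$, the Jacobians to $2\pi\llbracket\R^{n-2}\cap B_1^n\rrbracket$, and $Z_k\to\R^{n-2}$ in the local Hausdorff topology; by the remark after \cref{soft-height-bound}, no further rotations are needed when applying \cref{excess-vanishes} or \cref{cone} to sub-balls. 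Since the bad-pair property holds for every rotation, in particular for the one just used, we get $x_k\in Z_k\cap B_{3/4}^n$, $r_k\in[C_k\epsilon_k,\frac18]$ with $\|P_{S_k(x_k,r_k)}-P_{\R^{n-2}}\|>\gamma$, and it suffices to contradict this by showing $\sup\{\|P_{S_k(x,r)}-P_{\R^{n-2}}\|: x\in Z_k\cap B_{3/4}^n,\ C_k\epsilon_k\le r\le\frac18\}\to0$.

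First I would establish the bound at a single fixed scale: for each fixed $\rho\in(0,\frac18]$ one has $\sup\{\|P_{S_k(x,r)}-P_{\R^{n-2}}\|: x\in Z_k\cap B_{3/4}^n,\ \rho\le r\le\frac18\}\to0$. Indeed, if not, along a subsequence there are $x_k\to x_\infty\in\R^{n-2}\cap\bar B_{3/4}^n$ (using $Z_k\to\R^{n-2}$), $r_k\to r_\infty\ge\rho>0$, and $S_k(x_k,r_k)\to\hat S$ with $\|P_{\hat S}-P_{\R^{n-2}}\|\ge c>0$. Because the limiting varifold is the flat plane $\R^{n-2}$, $\E_1(u_k,\nabla_k,B_{r_k}(x_k),\R^{n-2})\to0$; since $S_k(x_k,r_k)$ minimizes $\E_1(u_k,\nabla_k,B_{r_k}(x_k),\cdot)$ and $\E_1$ depends on the plane in a Lipschitz way (using the uniform energy bound on $B_{r_k}(x_k)$ from monotonicity), also $\E_1(u_k,\nabla_k,B_{r_k}(x_k),\hat S)\to0$, which by the argument in \cref{excess-vanishes-when-excess-1-vanishes} forces the tangent plane of the limiting varifold to equal $\hat S$ a.e.\ on $B_{r_\infty}(x_\infty)$, hence $\hat S=\R^{n-2}$ (the varifold has positive mass there since $x_\infty\in\R^{n-2}$) — a contradiction. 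Next, fixing $\alpha\in(0,1)$ and recalling the Hölder estimate $\|P_{S(x,r)}-P_{S(x,s)}\|\le C(n,\alpha)\max\{r,s\}^\alpha$ for $x\in Z\cap B_{3/4}^n$ and $\epsilon^{1/(1+\alpha)}\le r,s\le\frac18$ (obtained just before the proposition by summing \cref{e:eppoi} over dyadic scales), I would choose $\rho=\rho(n,\alpha,\gamma)\le\frac18$ with $C(n,\alpha)\rho^\alpha\le\gamma/2$ and then take $\epsilon_0,\tau_0$ small enough that the fixed-scale claim gives $\|P_{S_k(x,r')}-P_{\R^{n-2}}\|\le\gamma/2$ for all $\rho\le r'\le\frac18$. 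This yields $\|P_{S_k(x,r)}-P_{\R^{n-2}}\|\le\gamma$ for all $x\in Z_k\cap B_{3/4}^n$ and $\epsilon_k^{1/(1+\alpha)}\le r\le\frac18$.

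It remains to reach the scales $C_k\epsilon_k\le r<\epsilon_k^{1/(1+\alpha)}$. Combining the previous step with the decay $\E_1(x,\epsilon^{1/(1+\alpha)})\le C(n,\alpha)\epsilon^{2\alpha/(1+\alpha)}$ valid in the range \cref{e:radii}, the excess $\E_1(u,\nabla,B_{\epsilon^{1/(1+\alpha)}}(x),\R^{n-2})$ can be made as small as desired; feeding this into \cref{cone} — first at scale $\epsilon^{1/(1+\alpha)}$ and then through its second part at scale $\epsilon$, with $C_k$ taken large — shows that the zero set near $x$ is a graph of slope $\le\gamma$ over $\R^{n-2}$ on the good set, that $\epsilon|u|$ is comparable to $\dist(\cdot,\R^{n-2})$ on a $C(n)\epsilon$-neighborhood of it, and hence that the $\E_1$-minimizing plane $S(x,r)$ on $B_r(x)$ stays within $\gamma$ of $\R^{n-2}$ for every $r\ge C_k\epsilon_k$. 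Together with the previous step this gives $\sup\{\|P_{S_k(x,r)}-P_{\R^{n-2}}\|: x\in Z_k\cap B_{3/4}^n,\ C_k\epsilon_k\le r\le\frac18\}\to0$, contradicting $\|P_{S_k(x_k,r_k)}-P_{\R^{n-2}}\|>\gamma$. I expect this last step to be the main obstacle: the varifold-type compactness behind \cref{excess-vanishes} is informative only at scales with $\epsilon/r\to0$, while the estimate is required down to $r\sim\epsilon$, so one must bridge the two regimes exactly where the explicit rigidity of the planar Taubes solution — as encoded in the second part of \cref{cone} and in \cref{Lipschitz-approximation-of-zero-loci-lemma} — is used to transport the (now small) tilt-excess from scale $\epsilon^{1/(1+\alpha)}$ down to scale $\epsilon$.
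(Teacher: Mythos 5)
There is a genuine gap, and it sits exactly where you flagged it: the passage from scale $\epsilon^{1/(1+\alpha)}$ down to scale $C\epsilon$. Your plan is to feed the smallness of $\E_1(u,\nabla,B_{\epsilon^{1/(1+\alpha)}}(x),\R^{n-2})$ into \cref{cone} and deduce that the $\E_1$-minimizing plane $S(x,r)$ stays $\gamma$-close to $\R^{n-2}$ for all $r\ge C\epsilon$. But the second part of \cref{cone} \emph{assumes} $\E_1(u,\nabla,B_{C\epsilon}(p),\R^{n-2})\le\eta$, i.e.\ smallness of the excess at scale $C\epsilon$, which is essentially what has to be proven: because of the $r^{2-n}$ normalization, smallness at scale $R=\epsilon^{1/(1+\alpha)}$ only gives $\E_1(u,\nabla,B_{C\epsilon}(x),\R^{n-2})\le (R/C\epsilon)^{n-2}\E_1(u,\nabla,B_R(x),\R^{n-2})\sim \epsilon^{-\alpha(n-2)/(1+\alpha)}\cdot\epsilon^{2\alpha/(1+\alpha)}$, which is unbounded for $n\ge5$ and merely bounded for $n=4$. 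Moreover, even granting the conclusions of \cref{cone} (zero set a flat Lipschitz graph, $\epsilon|u|$ comparable to the distance from it), these are statements about the geometry of $\{u=0\}$, whereas $\E_1$ measures the directional distribution of the energy on $B_r(x)$; the step ``hence $S(x,r)$ stays within $\gamma$ of $\R^{n-2}$ for every $r\ge C\epsilon$'' does not follow from them without a quantitative excess bound at those intermediate scales. So the range $C\epsilon\le r<\epsilon^{1/(1+\alpha)}$ is not covered by your argument.

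The paper closes this range by never leaving the quantitative regime: it iterates the rescaled dichotomy of \cref{tilt-excess-decay-statement} along all dyadic radii $r_k=\rho^k$ down to $\bar r=M\epsilon$, using the first inequality of \cref{e:eppoi}, namely $\|P_{S(x,r_{k+1})}-P_{S(x,r_k)}\|\le C\sqrt{\E_1(x,r_k)}$, together with the observation that whenever the decay alternative fails one already has $\sqrt{\E_1(x,r_k)}\le\epsilon/r_k\le 1/M$. Summing the geometric series between the ``restart'' radii yields $\|P_{S(x,r)}-P_{S(x,r_0)}\|\le C\bigl(\sqrt{\E_1(x,r_0)}+1/M\bigr)$ for all $r\in[M\epsilon,r_0]$, and a single soft compactness argument (only at the top scale) makes $P_{S(x,r_0)}$ close to $P_{\R^{n-2}}$; choosing $M=M(n,\gamma)$ large and then $\epsilon_0,\tau_0$ small gives the claim, with the persistent $1/M$ term explaining why the lower bound on $r$ must depend on $\gamma$. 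Your first two steps (fixed-scale compactness plus the H\"older estimate for the planes) correctly recover the range $r\ge\epsilon^{1/(1+\alpha)}$, but to reach $r\sim C\epsilon$ you need this dichotomy iteration (equivalently, \cref{ex.small.always}) rather than \cref{cone}; as written, the final step of your proposal is circular.
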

            
            \begin{proof}
            Let  $C$ be the constant in  the excess decay statement,  fix \(\rho\) such that \(C\rho^2\le \rho\) and fix \(\tau_0\) and \(\epsilon_0\) accordingly. Letting \(r_k:=\rho^k\), the first inequality of \cref{e:eppoi} gives that 
            $$\|P_{k+1}-P_k\|\le C\sqrt{\E_1(x,r_k)}.$$
            Iterating we get that 
            $$\|P_\ell-P_k\|\le C(n)\sum_{j=k}^{\ell-1}\sqrt{\E_1(x,r_j)}\le C(n)\sqrt{\E_1(x,r_k)}(1+\rho^{1/2}+\rho+\dots)
            \le C\sqrt{\E_1(x,r_k)}$$
            as long as $\E_1(x,r_j)>\frac{\epsilon^2}{r_j^2}$ for $j=0,\dots,\ell-1$ and  \(r_\ell\ge M\epsilon=:\bar r\) where  \(M\) is a large constant that we will fix at the end.
           Hence, if we  call  $r_{k_1}>\dots>r_{k_N}\ge\bar r$ the possible radii where $\E_1(x,r_{k_i})\le\frac{\epsilon^2}{r_{k_i}^2}$, we deduce that
           \begin{align*}
            \|P_{\ell}-P_0\|
            &\le C\max\{\sqrt{\E_1(x,r_0)},\sqrt{\E_1(x,r_{k_1})},\dots,\sqrt{\E_1(x,r_{k_N})}\}\\
           &\le C\left[\sqrt{\E_1(x,r_0)}+\frac{\epsilon}{\bar r}\right] \\
           &\le C\left[\sqrt{\E_1(x,r_0)}+\frac{1}{M}\right].
            \end{align*}
            Also, $P_0$ can be assumed arbitrarily close to $\R^{n-2}$ by a simple compactness argument
            (similar to the proof of \cref{excess-vanishes}). Since $\sqrt{\E_1(x,r_0)}$ and $1/M$
            can be taken arbitrarily small, the claim follows.
            \end{proof}
            
            The same proof gives the following.
            
            \begin{proposition}\label{ex.small.always}
            For any $x\in Z\cap B_{3/4}^n$ and $r\in[C(n)\epsilon,\frac{1}{8}]$, we have
            $$\E_1(u,\nabla,B_r(x),\R^{n-2})\le C(n)\E_1(u,\nabla,B_1(0),\R^{n-2})+C(n)\frac{\epsilon^2}{r^2}.$$
            \end{proposition}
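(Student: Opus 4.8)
The statement is proved by exactly the iteration used for \cref{small.oscill.planes}, now following the excess $\E_1$ itself rather than the oscillation of the optimal planes.

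\emph{Set-up.} Fix $\rho$ with $C(n)\rho^2\le\rho$, $C(n)$ being the constant of \cref{tilt-excess-decay-statement}, and choose $\tau_0,\epsilon_0$ accordingly; set $r_k:=\rho^k/8$, so that $B_{r_0}(x)=B_{1/8}(x)\subset B_1(0)$ for every $x\in Z\cap B^n_{3/4}$. For such $x$ write $S(x,r_k)$ for a plane minimizing $\E_1(u,\nabla,B_{r_k}(x),\cdot)$ and $\E_1(x,r_k):=\E_1(u,\nabla,B_{r_k}(x),S(x,r_k))$. By the Modica bounds \cref{modica-bounds} the correction term in the monotonicity formula is nonnegative, so $\tilde E_\epsilon(x,\cdot)$ is nondecreasing, and by the standard compactness argument recalled at the beginning of the proof of \cref{Main-result-epsilon-neighbourhood-of-graph} the normalized energy on $B_{1/8}(x)$ is $\le 2\pi+\tau_0$ once $\tau_0,\epsilon_0$ are small; hence, invoking \cref{Z.instead.of.0} to allow the center in $Z$ rather than only at a zero, the rescaled \cref{tilt-excess-decay-statement} applies on every ball $B_{r_k}(x)$ with $r_k\ge C(n)\epsilon$, and on any such ball $\tilde E_\epsilon(x,r_k)\le C(n)$.

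\emph{The excess iteration.} At a scale $r_k\ge C(n)\epsilon$ the dichotomy reads: either $\E_1(x,r_{k+1})\le\rho\,\E_1(x,r_k)$ (first alternative, after minimizing over planes and using $C\rho^2\le\rho$), or $\E_1(x,r_k)\le C(n)\epsilon^2/r_k^2$ (second alternative, using $\sup_{0<s\le\delta}|\log s|^2\sqrt s\le1$ and $e^{-Kr_k/\epsilon}\le\epsilon^2/r_k^2$). Since at least one alternative holds, and since the crude bound $\E_1(x,r_{k+1})\le\rho^{2-n}\E_1(x,r_k)$ always holds (from $B_{r_{k+1}}(x)\subset B_{r_k}(x)$ and minimality), a short case check shows the invariant
\[
\E_1(x,r_k)\le\max\bigl\{\E_1(x,r_0),\ C''(n)\,\epsilon^2/r_k^2\bigr\}
\]
propagates from $k$ to $k+1$, for a suitable dimensional $C''(n)\ge\max\{C(n),\rho^{4-n}C(n)\}$; hence it holds for all $k$ with $r_k\ge C(n)\epsilon$. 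Taking $k=K$ with $r_K$ the target dyadic scale, and bounding $\E_1(x,r_0)\le\E_1(u,\nabla,B_{r_0}(x),\R^{n-2})\le C(n)\,\E_1(u,\nabla,B_1(0),\R^{n-2})$ (minimality of $S(x,r_0)$, nonnegativity of the $\E_1$-integrand, $B_{r_0}(x)\subset B_1(0)$ and $r_0=\tfrac18$), we obtain the asserted estimate on dyadic balls with the minimizing plane $S(x,r_K)$ in place of $\R^{n-2}$; for a general $r\in[C(n)\epsilon,\tfrac18]$ one passes to the neighbouring dyadic radius $r_{K-1}\in(r,\rho^{-1}r]$, the ball growing by at most the factor $\rho^{-1}$.

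\emph{Passing to the reference plane.} It remains to replace $S(x,r)$ by $\R^{n-2}$, which we may assume is the optimal plane on $B_1(0)$. As $S\mapsto\E_1(u,\nabla,B_r(x),S)$ is $C^2$ on $\Gr(n,n-2)$ with Hessian bounded by $C(n)\,\tilde E_\epsilon(x,r)\le C(n)$ and is minimized at $S(x,r)$, Taylor's theorem at the minimum gives $\E_1(u,\nabla,B_r(x),\R^{n-2})\le\E_1(x,r)+C(n)\|P_{S(x,r)}-P_{\R^{n-2}}\|^2$. The oscillation is estimated exactly as in \cref{small.oscill.planes}: summing $\|P_{S(x,r_{k+1})}-P_{S(x,r_k)}\|\le C\sqrt{\E_1(x,r_k)}$ over the scales where the first alternative is forced, and using the geometric decay there, one gets $\|P_{S(x,r)}-P_{S(x,r_0)}\|\le C(n)\bigl(\sqrt{\E_1(x,r_0)}+\epsilon/r\bigr)$; and the linear‑algebra comparison of optimal planes used in the proof of \cref{coroll.intro} (cf.\ \cref{e:eppoi}), applied on $B_{r_0}(x)\subset B_1(0)$ where the normalized energy exceeds $\pi$, gives $\|P_{S(x,r_0)}-P_{\R^{n-2}}\|^2\le C(n)\,\E_1(u,\nabla,B_1(0),\R^{n-2})$. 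Squaring and combining with the previous estimates proves the proposition. The only step requiring care is precisely this last one — one must bound $\|P_{S(x,r)}-P_{\R^{n-2}}\|^2$ by $\E_1(u,\nabla,B_1(0),\R^{n-2})+\epsilon^2/r^2$, not merely by the fixed small constant $\gamma$ of \cref{small.oscill.planes}, which forces a quantitative re‑run of the plane‑oscillation iteration all the way down to scale $r$; the good/bad scale bookkeeping in the excess iteration itself is then entirely routine.
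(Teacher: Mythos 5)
Your proof is correct and follows essentially the same route as the paper, which simply states that ``the same proof'' as for \cref{small.oscill.planes} gives this proposition: iterate the dichotomy of \cref{tilt-excess-decay-statement} (with centers in $Z$ via \cref{Z.instead.of.0}), track the excess with a max-type invariant against $\epsilon^2/r^2$, control the oscillation of the optimal planes, and compare back to $\R^{n-2}$ using the linear-algebra/tilting estimate of \cref{e:eppoi}. Your write-up only makes explicit the bookkeeping the paper leaves implicit (the crude $\rho^{2-n}$ bound across bad scales and the final passage from $S(x,r)$ to the fixed plane), so no further changes are needed.
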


            We now prove  \cref{coroll.intro}.
            \begin{proof}[Proof of \cref{coroll.intro}]
            We have already seen in \cref{excess-vanishes} that the energy on $B_R$ is asymptotic to $2\pi R^{n-2}$.
            We can then apply \cref{small.oscill.planes}: for any $\gamma>0$ we have
            $$\|P_{S(0,R)}-P_{S(0,R')}\|\le\gamma$$
            for $R<R'$ large enough
            (we use \cref{small.oscill.planes} after scaling the picture down by a factor $(R')^{-1}$).
            We deduce that the limit
            $$\lim_{R\to\infty}S(0,R)$$
            exists.
            \end{proof}
            
            
            \begin{proposition}\label{graph}
            Up to a rotation,
            the vorticity set $\tilde Z:=Z\cap[B_{1/2}^2\times B_{1/2}^{n-2}]$ is included in a $C(n,\gamma)\epsilon$-neighborhood of the graph of a $C^1$ map
            $$f:B_{1/2}^{n-2}\to B_\gamma^2$$
            with $\operatorname{Lip}(f)\le\gamma$, if we assume that $\tau_0$ and $\epsilon_0$ are small enough (depending on $n,\gamma$).
            \end{proposition}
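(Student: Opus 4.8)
The plan is to prove the sharper statement that the zero set $\{u=0\}\cap[B^2_{1/2}\times B^{n-2}_{1/2}]$ is itself the graph of a smooth function $f\colon B^{n-2}_{1/2}\to B^2_\gamma$ with $\operatorname{Lip}(f)\le\gamma$, and that $\tilde Z$ is contained in a $C(n,\gamma)\epsilon$-neighborhood of this graph. All the smallness conditions invoked below are arranged by taking $\tau_0,\epsilon_0$ small depending on $n,\gamma$; \cref{cone} is used throughout with a single parameter $\sigma=\sigma(n,\gamma)\le\gamma$ fixed at the start.

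I would begin by setting the stage. By \cref{small.oscill.planes} one can rotate so that $\|P_{S(x,r)}-P_{\R^{n-2}}\|\le\sigma$ for all $x\in Z\cap B^n_{3/4}$ and $r\in[C(n,\sigma)\epsilon,\tfrac18]$; this, the minimality of the best plane, the elementary plane-comparison bound (cf.\ \cref{tilted-excess-estimate}), and \cref{ex.small.always} together make $\E_1(u,\nabla,B^n_r(x),\R^{n-2})$ smaller than the threshold $\eta(n,\sigma)$ of \cref{cone} at all those scales. By \cref{soft-height-bound} and the remark following it (no rotation needed, since the excess is now small), $\tilde Z$ — and a fortiori the zero set — is confined to $B^2_\sigma\times B^{n-2}_{1/2}$; and after possibly conjugating the pair, the degree of $u/|u|$ on $\partial B^2_{1/2}\times\{z\}$ is $+1$ and, being constant in $z$ since $u\neq0$ on $\partial B^2_{1/2}\times B^{n-2}_{1/2}$, it guarantees that $u(\cdot,z)$ has a zero in $B^2_\sigma$ for every $z\in B^{n-2}_{1/2}$.

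The crux is then a careful application of \cref{cone} with a radius adapted to the distances at hand. Given two zeros $p=(y,z)$ and $q=(y',z)$ on the same slice, I would apply the first part of \cref{cone} on $B^n_r(p)$ with $(1-\sigma)r:=2|y-y'|$: since all zeros lie in $B^2_\sigma$ this $r$ falls in $[C(n,\sigma)\epsilon,\tfrac18]$ once $|y-y'|\gtrsim\epsilon$, and the conclusion that $\{|u|\le\tfrac34\}\cap B^n_{(1-\sigma)r}(p)\subseteq B^2_{\sigma r}(y)\times\R^{n-2}$ then contradicts $q$ being a zero unless $|y-y'|\le C(n,\gamma)\epsilon$; thus all zeros on a slice cluster within $C(n,\gamma)\epsilon$. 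The second part of \cref{cone} on $B^n_{\Lambda\epsilon}(p)$, with $\Lambda=\Lambda(n,\gamma)$ large enough to contain this cluster, shows $\{u=0\}$ near $p$ is a smooth $\sigma$-Lipschitz graph over $\R^{n-2}$, which meets each vertical slice in a single point; combined with the clustering, this forces the zero on each slice to be unique, yielding the desired $f\colon B^{n-2}_{1/2}\to B^2_\sigma\subseteq B^2_\gamma$ with $\{u=0\}\cap[B^2_{1/2}\times B^{n-2}_{1/2}]=\operatorname{graph}(f)$. Near every point $f$ agrees with the local smooth $\sigma$-Lipschitz graph from \cref{cone} (again by slicewise uniqueness), so $f\in C^\infty$, $\|df\|\le\sigma$, and hence $\operatorname{Lip}(f)\le\sigma\le\gamma$ on the convex set $B^{n-2}_{1/2}$. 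Finally, for $q=(y',z')\in\tilde Z$, the height bound gives $|y'-f(z')|\le2\sigma$, and the same radius trick — applying the first part of \cref{cone} at $(f(z'),z')\in\{u=0\}$ with $(1-\sigma)r:=2|y'-f(z')|$ — shows $|y'-f(z')|\le C(n,\gamma)\epsilon$, i.e.\ $\tilde Z\subseteq B_{C(n,\gamma)\epsilon}(\operatorname{graph}(f))$.

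I expect the only real work to be bookkeeping: choosing the parameters in the correct order (first $\gamma$, then $\sigma$, then the thresholds $\eta,\Lambda$ of \cref{cone}, then the smallness of the excess needed in \cref{excess-vanishes} and \cref{small.oscill.planes}, and finally $\tau_0,\epsilon_0$), and checking that every radius used stays in the admissible window $[C(n,\gamma)\epsilon,\tfrac18]$ — for which it is essential to have first used the height bound to know that the $\R^2$-components of points of $\tilde Z$ are $O(\sigma)$, not merely $O(1)$. The existence of a zero over each $z$ (the degree argument) and the passage from the local graphs to one global graph are then immediate consequences of the above.
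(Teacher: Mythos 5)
Your argument is essentially correct, but it builds $f$ by a genuinely different route than the paper. The paper never shows that the zero set is a graph: it fixes a net $\{z_k\}$ of points spaced $\sim\epsilon$ apart in $B_{1/2}^{n-2}$, picks one vorticity point $x_k=(y_k,z_k)$ over each, proves the pairwise bound $|y_k-y_j|\le C\gamma|x_k-x_j|$ by applying the (rescaled) soft height bound at scale $r=2|x_k-x_j|$ together with \cref{small.oscill.planes} (this is the containment \cref{near.plane}), and then takes a Lipschitz extension of $z_k\mapsto y_k$, mollified to get a $C^1$ map; the $C\epsilon$-neighborhood property comes from the net spacing plus the same containment. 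You instead prove the sharper statement that $\{u=0\}$ meets every slice exactly once — clustering at scale $\epsilon$ from the first part of \cref{cone} with the radius $(1-\sigma)r=2|y-y'|$, uniqueness inside $B_{\Lambda\epsilon}(p)$ from the second part — and take $f$ to be the zero-set graph itself; in effect you globalize \cref{Lipschitz-approximation-of-zero-loci-lemma}, using that in this regime every slice behaves like a good slice. What your route buys is a stronger structural conclusion (a single smooth graph, no extension or mollification); what it costs is reliance on the scale-$\epsilon$ structure, i.e.\ the hypothesis of the second part of \cref{cone} at the specific scale $C\epsilon$, which \cref{ex.small.always} alone does not give (it yields smallness only for $r\ge\Lambda(\eta)\epsilon$); as in the remark inside the proof of \cref{Interpolation-gauge-proposition}, one closes this with the $C^1$-compactness control below scale $\Lambda\epsilon$. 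The paper's softer argument only ever uses the cone-type containment at scales above $C\epsilon$.

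Two small patches are needed in your write-up. First, "near every point $f$ agrees with the local $\sigma$-Lipschitz graph" does not follow from slicewise uniqueness alone: you must know that $(f(z),z)\in B_{\Lambda\epsilon}(p)$ for $z$ near $z_p$. This is a one-line fix: by uniqueness, closedness of $\{u=0\}$ and compactness of $\bar B^2_\sigma$, every subsequential limit of $(f(z_j),z_j)$ is the unique zero over $z$, so $f$ is continuous; then the identification with the local graphs, hence $\|df\|\le\sigma$ and $\operatorname{Lip}(f)\le\sigma$ on the convex ball, goes through. Second, each application of \cref{cone} also requires the normalized energy bound $\le2\pi+\eta$ on $B_r(p)$; this should be recorded explicitly, and follows from the standard compactness-plus-monotonicity argument already used at the beginning of the proof of \cref{Main-result-epsilon-neighbourhood-of-graph}.
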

            
            \begin{proof}
            Indeed, as seen in the proof of \cref{excess-vanishes}, for $\epsilon$ small enough
            we have $u(\cdot,z)\neq0$ on $\de B_{1/2}^2$, for all $z\in B_{1/2}^{n-2}$,
            and the degree of $(u/|u|)(\cdot,z)$ is $\pm1$ on this circle. Hence, each slice $B_{1/2}^2\times\{z\}$ intersects the zero set.
            
            Moreover, using \cref{soft-height-bound} on $B_1(0)$, we see that
            $\tilde Z\subseteq B_\gamma^2\times B_{1/2}^{n-2}$. Also, \cref{soft-height-bound}
            implies that  for all $x\in Z\cap B_{3/4}^n$ and $r\in[C(n,\gamma)\epsilon,\frac18]$
            we have
            \begin{align}\label{near.plane}
            Z\cap B_r(x)\subseteq B_{\gamma r}(x+S(x,r)),
            \end{align}
            where \(B_{\gamma r}(x+S(x,r))\) is the \(\gamma r\)-neighborhood of the affine plane \(x+S(x,r)\).
            We now take a collection of points $\{z_k\}\subset B_{1/2}^{n-2}$ with pairwise distance at least $C(n,\gamma)\epsilon$
            and $B_{1/2}^{n-2}\subseteq\bigcup_kB_{5C(n,\gamma)\epsilon}^{n-2}(z_k)$. For each $k$, we fix a point
            $x_k=(y_k,z_k)\in\tilde Z$. We then see that
            $$|y_k-y_j|\le C\gamma|x_k-x_j|,$$
            thanks to the previous observation applied with $r:=2|x_k-x_j|$ and the fact that $S(x,r)$ is $\gamma$-close to $\R^{n-2}$ (for $|x_k-x_j|>\frac{1}{16}$, this follows just from \cref{soft-height-bound}).
            Hence, the assignment $z_k\mapsto y_k$ defines a $C(n)\gamma$-Lipschitz function, which we can extend to a $C(n)\gamma$-Lipschitz function $f:B_{1/2}^{n-2}\to B_\gamma^2$. It is easy to check that (a regularization of) $f$ satisfies the desired conclusion, completing the proof.
            \end{proof}

We are now in position to prove \cref{Main-result-epsilon-neighbourhood-of-graph}.
\begin{proof}[Proof of \cref{Main-result-epsilon-neighbourhood-of-graph}]
            Let $\eta>0$ small such that \cref{Lipschitz-approximation-thm} applies.
            We first remark that the previous points $x_k$ can be taken such that $u(x_k)=0$
            and $z_k\in\mathcal{G}^\eta$.
            Indeed, by \cref{ex.small.always},
            we have
            $$\E_1(u,\nabla,B_r(x),\R^{n-2})\le c(n)\eta^2$$
            for all points $x\in Z\cap B_{3/4}^n$
            and radii $r\in[C(n)\epsilon,\frac18]$
            (by taking $\epsilon_0,\tau_0$ suitably small).
            We can apply this with $r:=M\epsilon$;
            by \cref{graph} and exponential decay of energy away from $Z$, we have
            $$r^{2-n}\int_{B_{r/2}^{n-2}(z)}(\E_1)_z
            \le c(n)\eta^2+e^{-K/M}\le 2c(n)\eta^2$$
            once we take $M=C(n)$ large enough, for any $z\in B_{1/2}^{n-2}$.
            Once we take $c(n)$ small enough, by the weak $L^1$ bound we can then find
            $$z'\in B_{r/2}^{n-2}(z)\cap\mathcal{G}^\eta$$
            (where we use slices of radius $\mz$ in the definition of $\mathcal{G}^\eta$), showing the claim.

            As a consequence of \cref{zero-set-distance-barycenter-slice}, we deduce that
            $$|h(z_k)-y_k|=|h(z_k)-h_0(z_k)|\le\epsilon.$$
            We immediately deduce that
            $\tilde Z$ is included in a $C(n)\epsilon$-neighborhood of the graph of $h$, which is the only consequence of the claim which we will use in the sequel.
            
            Now let $\bar\rho:=\max\{M\epsilon,\epsilon^{1/(1+\alpha)}\}$
            (with $M$ as in \cref{e:radii}) and
            consider \emph{another} finite collection of points $\{z_k\}\subset B_{1/2}^{n-2}$ such that the balls $B_{\bar\rho}^{n-2}(z_k)$ are disjoint and the dilated balls $B_{4{\bar\rho}}^{n-2}(z_k)$ cover $B_{1/2}^{n-2}$.
            Let $x_k=(y_k,z_k)$ be a point in $\tilde Z$ for each $k$.
            
            On $B_{10\bar\rho}^{n}(x_k)$ we consider the Lipschitz approximation $h_k$ built with respect to the rotated picture, obtained as a graph over $S_k:=x_k+S(x_k,10{\bar\rho})$. When viewed as a graph over $\R^{n-2}$, it becomes a function $\tilde h_k$ defined on the slightly smaller ball $B_{5{\bar\rho}}^{n-2}(z_k)$.
            
            By a scaled version of \cref{Lipschitz-approximation-thm}, we have
            $$\int_{B_{30{\bar\rho}/4}^n(x_k)\cap S_k}|dh_k|^2\le C{\bar\rho}^{n-2}\E_1(u,\nabla,B_{10{\bar\rho}}^n(x_k),S(x_k,10{\bar\rho}))\le C{\bar\rho}^{n-2+2\alpha}.$$
            In particular, by Poincaré,
            $$\int_{B_{30{\bar\rho}/4}^n(x_k)\cap S_k}|h_k-(h_k)|^2\le C{\bar\rho}^{n+2\alpha}.$$
            Now, as in \cref{Lipschitz-tilt-estimate}, we observe that
            $$|\tilde h_k(z)-h_k(z)-A_k(z)|\le C{\bar\rho}\sqrt{\E_1(u,\nabla,B_{10{\bar\rho}}^n(x_k),S(x_k,10{\bar\rho}))}\le C{\bar\rho}^{1+\alpha}$$
            for a suitable affine function $A_k$
            (where, with abuse of notation, $h_k(z)$ means $h_k$ composed with the isometry $\R^{n-2}\to S_k$). Combining these two bounds, we get
            $$\int_{B_{5{\bar\rho}}^{n-2}(z_k)}|\tilde h_k-A_k'|^2\le C{\bar\rho}^{n+2\alpha}$$
            for another affine function $A_k'$.

            We now take a partition of unity
            $\varphi_k$ subordinated to the cover $\{B_{4\bar\rho}^{n-2}(z_k)\}$
            and we let
            $$f:=\sum_k\varphi_k\tilde h_k.$$
            Since the zero set is within a $C\epsilon$-neighborhood of the graph of $\tilde h_k$
            (on the set $B_{1/2}^2\times B_{5{\bar\rho}}^{n-2}(z_k)$), we deduce that
            $$|\tilde h_k-\tilde h_{k'}|\le C\epsilon$$
            on $B_{5{\bar\rho}}^{n-2}(z_k)\cap B_{5{\bar\rho}}^{n-2}(z_{k'})$. Thus, we also have
            $$|A_k-A_{k'}|\le C\epsilon+C{\bar\rho}^{1+\alpha}$$
            whenever $B_{4{\bar\rho}}^{n-2}(z_k)\cap B_{4{\bar\rho}}^{n-2}(z_{k'})\neq\emptyset$.
            Since $\epsilon\le\bar\rho^{1+\alpha}$,
            this allows us to conclude that
            $$\int_{B_{2{\bar\rho}}^{n-2}(z)}|f-A_z|^2\le C{\bar\rho}^{n+2\alpha}$$
            for any $z\in B_{1/2}^{n-2}$, for a suitable affine function $A_z$ depending on $z$.
            
			Thus, taking a standard mollifier $\chi_{\bar \rho}$,
			setting
			$$g:=\chi_{\bar \rho}*f$$
			and using the previous bound, we deduce that
			$$|dg-dA_z|\le C\bar\rho^{\alpha}\quad\text{on }B_{\bar\rho}^{n-2}(z),$$
            and in fact
            $$[dg]_{C^{0,\alpha}(B_{\bar\rho}^{n-2}(z))}\le C.$$
            
			Finally, recalling that $dA_k$ is the slope of the plane $P_{S(x_k,10\bar\rho)}$, we also have
            $$|dA_k-dA_{k'}|\le C|z_k-z_{k'}|^\alpha$$
            by the H\"older continuity \cref{holder.planes}, while
            $$|dA_z-dA_k|\le C\bar\rho^{\alpha}\quad\text{for $z\in B_{4\bar\rho}^{n-2}(z_k)$.}$$
            From these bounds, we easily deduce that
            $$|dg(z)-dg(z')|\le C|z-z'|^\alpha\quad\text{for $|z-z'|\ge\bar\rho$},$$
            completing the proof of the $C^{1,\alpha}$ regularity of $g$. Since $|g-f|\le C\bar\rho$,
            it follows that the vorticity set is included in a $C\bar\rho$-neighborhood of the graph of $g$.
			
			It is clear from the proof that we can actually make $[dg]_{C^{0,\alpha}}$ arbitrarily small,
			up to decreasing $\tau_0$ and $\epsilon_0$.
			\end{proof}

            \section{Constructing competitors for local minimizers: a good gauge}
            In this section we prepare the ground to construct competitors for minimizers and to show that the full excess decays as long as it is above $\epsilon^\beta$, for any $\beta>0$, giving a proof of \cref{tilt-excess-decay-minimizer}. To investigate minimizers, we construct competitors with the same boundary conditions and compare the energies to show that the excess $\E$ is effectively approximated by the Dirichlet energy of the harmonic approximation.
            
            To do this, we need to construct competitors modeled on graphs in the interior and then glue them to the boundary condition, while controlling the error terms. We pullback the $\epsilon$-rescaled degree-one solution along the graph of the Lipschitz approximation, as obtained in \cref{Lipschitz-approximation-thm}. Then we gauge fix in balls of size $\epsilon|\ln\epsilon|$ and, using the estimates at that scale, we define a global gauge by a partition of unity. In this gauge we can interpolate between the intial pair and the new one with good estimates.
            \subsection{The pullback pair}
            Here we introduce the pullback pair $(u_f,\nabla_f)$ whose zero set is prescribed to be the graph of a Lipschitz function $f:B^{n-2}\rightarrow B^2_1$. We prove that the excess of these pairs are well approximated by the Dirichlet energy of $f$, as in the following proposition.
            \begin{proposition}[Constructing the pullback pair]\label{pull-back-proposition}
                There exist $\eta_0(n),\epsilon_0(n)>0$ small enough with the following property. Given any $\epsilon\leq \epsilon_0$ and a Lipschitz function $f: B^{n-2}_1\rightarrow B^2_{1/2}$ with $Lip(f)=\eta\leq\eta_0$, there exists a pair $(u_f,\nabla_f)$ obeying the following estimate:
                \begin{align*}
                    \frac{1}{2\pi}\int_{B^2_1\times B_1^{n-2}} e_\epsilon(u_f,\nabla_f) = |B^{n-2}_1| + (1+O(\eta^2))\int_{B^{n-2}_1} \frac{|df|^2}{2} + O(e^{-{K}/{\epsilon}}),
                \end{align*}
                with implicit constants $C(n)$.
                Moreover, we have that
                \begin{align*}
                    u_f^{-1}(0) = \operatorname{graph}(f).
                \end{align*}
                \begin{proof}
                    To construct $(u_f,\nabla_f)$ we pull back the planar degree-one solution of Taubes \cite{Taubes}, via the map $Q_\epsilon:B^2_1\times B^{n-2}_1\rightarrow \R^2$ given by
                    \begin{align*}
                        Q_\epsilon(x) = \frac{(x_1,x_2) - f(x_3,\dots,x_n)}{\epsilon}.
                    \end{align*}
                    Then we define $(u_f,\nabla_f)$ by
                    \begin{align}\label{pullback-definition}
                        (u_f,\nabla_f) := Q_\epsilon^*(u_0,\nabla_0),
                    \end{align}
                    where $(u_0,\nabla_0)$ is the degree-one solution in \cite{Taubes} with $u_0(0)=0$ (unique up to change of gauge). First, we note 
                    that, since $dQ_\epsilon(x)[e_k]=-\de_{e_k}f_1(x_3,\dots,x_n)e_1-\de_{e_k}f_2(x_3,\dots,x_n)e_2$, we have
                    the following identities for $k=3,\dots,n$:
                    \begin{align}\label{pull-back-identity-1}
                    \begin{aligned}
                        |(\nabla_f)_k u_f|^2(x) &= \epsilon^{-2}|\de_{e_k} f_1 (\nabla_0)_{e_1}u_0 + \de_{e_k} f_2 (\nabla_0)_{e_2}u_0|^2\left(Q_\epsilon(x)\right)\\
                        &= \epsilon^{-2}\frac{|\de_k f|^2}{2} |\nabla_0 u_0|^2\left(Q_\epsilon(x)\right),
                    \end{aligned}
                    \end{align}
                    where we omitted the argument of $f$ and we used the fact that $(\nabla_0)_{e_2}u_0 =i(\nabla_0)_{e_1}u_0$ for solutions of the vortex equations \cref{vortex-equations}. We also have
                    \begin{align}\label{pull-back-identity-2}
                        |(\nabla_f)_{e_1} u_f|^2(x) + |(\nabla_f)_{e_2} u_f|^2(x) = \epsilon^{-2}|\nabla_0u_0|^2\left(Q_\epsilon(x)\right).
                    \end{align}
                    We also compute for the curvature term $-i\omega_f:=F_{\nabla_f} = F_{Q_\epsilon^*(\nabla_0)} = Q_\epsilon^*(F_{\nabla_0})$ that
                    \begin{align}\label{pull-back-identity-3}
                        \sum_{j=1,2}\epsilon^2\omega_f(e_j,e_k)^2(x) = \epsilon^{-2}\omega_0(e_1,e_2)^2\left(Q_\epsilon(x)\right)|\de_{e_j} f|^2 \quad\text{for $j=1,2$ and $k=3,\dots,n$},
                    \end{align}
                    and moreover
                    \begin{align}
                        \epsilon^2\omega_f(e_1,e_2)^2(x) &= \epsilon^{-2}\omega_0^2(e_1,e_2)\left(Q_\epsilon(x)\right),\label{pull-back-identity-4}
                    \end{align}
                    as well as
                    \begin{align}\label{pull-back-identity-5}
                    \epsilon^2\omega_f(e_k,e_\ell)^2(x) &\leq \epsilon^{-2}|df|^4 \omega_0(e_1,e_2)^2\left(Q_\epsilon(x)\right)\quad\text{for $3\leq k<\ell\le n$}.
                    \end{align}
                    To compute $E_\epsilon(u_f,\nabla_f)$, we use \cref{pull-back-identity-1}--\cref{pull-back-identity-5} to see that
                    \begin{align*}
                        \int_{B^2_1\times B^{n-2}_1} e_\epsilon(u,\nabla)
                        &=\int_{B^{n-2}_1} \left[\int_{B^2_1} \epsilon^{-2}e_1(u_0,\nabla_0)(Q_\epsilon(x))\left(1+\frac{|df|^2}{2} + O(|df|^4)\right) + O(e^{-{K}/{\epsilon}})\right]\\
                        &= 2\pi \left[ |B^{n-2}_1|  + \int_{B_1^{n-2}} \frac{|df|^2}{2} + O(|df|^4) \right] + O(e^{-{K}/{\epsilon}}).
                    \end{align*}
                    In the above display we used the exponential decay from \cite[Chapter III, Theorem 8.1]{Taubes-2}:
                    \begin{align*}
                        \int_{\R^2\setminus B_{1/(2\epsilon)}^2} e_1(u_0,\nabla_0) = O(e^{-{K}/{\epsilon}}).
                    \end{align*}
                    Recalling that $|df| \leq \eta$ we get the desired estimate.
                \end{proof}
            \end{proposition}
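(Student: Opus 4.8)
The plan is to build $(u_f,\nabla_f)$ by pulling back the planar degree-one vortex after straightening it along $\operatorname{graph}(f)$. Let $(u_0,\nabla_0)$ be the entire solution of the first-order vortex equations \cref{vortex-equations} of degree one with $u_0^{-1}(0)=\{0\}$, which is unique up to change of gauge by Taubes \cite{Taubes,Taubes-1}, and let $Q_\epsilon\colon B_1^2\times B_1^{n-2}\to\R^2$ be the fibrewise dilation $Q_\epsilon(x):=\epsilon^{-1}\big((x_1,x_2)-f(x_3,\dots,x_n)\big)$. I would then set $(u_f,\nabla_f):=Q_\epsilon^*(u_0,\nabla_0)$; this is only a Lipschitz pair, since $f$ is Lipschitz, but that is harmless for the energy identity. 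The zero-set claim is immediate: $u_f(x)=0$ iff $Q_\epsilon(x)=0$ iff $(x_1,x_2)=f(x_3,\dots,x_n)$, so $u_f^{-1}(0)=\operatorname{graph}(f)$.

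The core step is to express $e_\epsilon(u_f,\nabla_f)$ pointwise in terms of the density of $u_0$ at $Q_\epsilon(x)$. Since $dQ_\epsilon[e_j]=\epsilon^{-1}e_j$ for $j=1,2$ and $dQ_\epsilon[e_k]=-\epsilon^{-1}df[e_k]$ for $k\ge3$, and since a vortex solution obeys $(\nabla_0)_{e_2}u_0=i(\nabla_0)_{e_1}u_0$, hence $|(\nabla_0)_{e_1}u_0|^2=|(\nabla_0)_{e_2}u_0|^2=\tfrac{1}{2}|\nabla_0u_0|^2$, one gets $|\nabla_f u_f|^2(x)=\epsilon^{-2}\big(1+\tfrac{1}{2}|df|^2\big)|\nabla_0u_0|^2(Q_\epsilon(x))$ (with $df$ evaluated at $(x_3,\dots,x_n)$) and $(1-|u_f|^2)^2(x)=(1-|u_0|^2)^2(Q_\epsilon(x))$. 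For the curvature one uses that $F_{\nabla_f}=Q_\epsilon^*F_{\nabla_0}$; since $F_{\nabla_0}=-i\,\omega_0(e_1,e_2)\,dy^1\wedge dy^2$ has a single component, expanding $Q_\epsilon^*(dy^1\wedge dy^2)=dQ_\epsilon^1\wedge dQ_\epsilon^2$ gives $\epsilon^2|F_{\nabla_f}|^2(x)=\epsilon^{-2}\big(1+|df|^2+O(|df|^4)\big)\omega_0(e_1,e_2)^2(Q_\epsilon(x))$, the $|df|^2$ from the mixed components $\omega_f(e_j,e_k)$ ($j\le2<k$) and the $O(|df|^4)$ from the purely tangential ones.

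To integrate, I would fix $z=(x_3,\dots,x_n)$ and change variables $y=Q_\epsilon(x)$ on the slice $B_1^2\times\{z\}$, with $dx_1\,dx_2=\epsilon^2\,dy$; because $f$ maps into $B_{1/2}^2$, the image $Q_\epsilon(B_1^2\times\{z\})$ contains $B_{1/(2\epsilon)}^2$, so Taubes's exponential energy decay $\int_{\R^2\setminus B_{1/(2\epsilon)}^2}e_1(u_0,\nabla_0)=O(e^{-K/\epsilon})$ (\cite[Chapter III, Theorem 8.1]{Taubes-2}) lets me replace the domain of integration by $\R^2$ up to $O(e^{-K/\epsilon})$. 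Writing $A:=\int_{\R^2}|\nabla_0u_0|^2$, $B:=\int_{\R^2}\omega_0(e_1,e_2)^2$, $C:=\int_{\R^2}\tfrac{1}{4}(1-|u_0|^2)^2$, the slice integral becomes $2\pi+\big(\tfrac{A}{2}+B\big)|df(z)|^2+O(|df(z)|^4)+O(e^{-K/\epsilon})$, using $A+B+C=2\pi$; since the self-dual equation forces $\omega_0(e_1,e_2)^2=\tfrac{1}{4}(1-|u_0|^2)^2$ pointwise, we have $B=C$ and hence $\tfrac{A}{2}+B=\tfrac{1}{2}(A+2B)=\tfrac{1}{2}(A+B+C)=\pi$. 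Thus the slice integral is $2\pi\big(1+\tfrac{1}{2}|df(z)|^2\big)+O(|df(z)|^4)+O(e^{-K/\epsilon})$; integrating over $z$ and using $|df|\le\eta$ to absorb $O(|df|^4)$ into a factor $(1+O(\eta^2))$ yields the stated identity.

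I expect the only slightly delicate point to be the bookkeeping of the pullback curvature — in particular checking that the tangential components $\omega_f(e_k,e_\ell)$, $k,\ell\ge3$, are of size $O(|df|^2)$, so that they enter only at order $|df|^4$ after squaring; this uses that $Q_\epsilon$ lands in a two-dimensional target, so $\omega_0$ has only the $(e_1,e_2)$ component. One must also verify that $Q_\epsilon(B_1^2\times\{z\})\supseteq B_{1/(2\epsilon)}^2$ for every slice, which is precisely where the hypothesis $f(B_1^{n-2})\subset B_{1/2}^2$ is used, together with the elementary identity $\tfrac{A}{2}+B=\pi$ above. Everything else is a direct computation from the first-order vortex equations and the exponential decay of the standard planar solution.
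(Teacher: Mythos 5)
Your proposal is correct and follows essentially the same route as the paper: pull back the Taubes degree-one vortex via $Q_\epsilon$, compute the pulled-back energy density pointwise using $(\nabla_0)_{e_2}u_0=i(\nabla_0)_{e_1}u_0$ and the structure of $Q_\epsilon^*(dy^1\wedge dy^2)$, and use Taubes's exponential decay to replace each slice by $\R^2$. Your slice-wise bookkeeping with $A,B,C$ and $B=C$ merely makes explicit the pointwise self-dual identity $\omega_0(e_1,e_2)=\tfrac{1-|u_0|^2}{2}$ that the paper uses implicitly when writing the density as $\epsilon^{-2}e_1(u_0,\nabla_0)(Q_\epsilon(x))\left(1+\tfrac{|df|^2}{2}+O(|df|^4)\right)$.
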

            
            \subsection{Constructing the interpolation gauge} In this section we find a gauge transformation $(u,\nabla)\mapsto (e^{i\xi}u,\nabla -id\xi)$ for which the new pair is $L^2$-close to the pullback pair $(u_h,\nabla_h)$ constructed in \cref{pull-back-proposition}, where $h:B^{n-2}_1\rightarrow B^2_{1/2}$ is the Lipschitz approximation built in \cref{Lipschitz-approximation-thm}. Since this is the most technical part of the paper, we provide an overview of the arguments. 
            
            \fbox{\textit{Step 1.}} We cover the vortex set with cylinders $\{B_{5C|\epsilon\ln\epsilon|}^2(y_k)\times B^{n-2}_{5C\epsilon}(x_k)\}_{k=1}^N$ with $x_k = (y_k,z_k)\in \R^{2}\times \R^{n-2}$ such that $B^{n-2}_{5C\epsilon}(z_k)$ is a Vitali cover of $B^{n-2}_1$. Then we name a cylinder \textit{good} if the excess on it is small, and \textit{bad} otherwise. We also define a partition of unity, subordinate to this cover, with derivatives at most $C\epsilon^{-1}$.
            
            \fbox{\textit{Step 2.}} In each cylinder we pass to the Coulomb gauge, via a function $\xi_k$ with mean equal to the mean of $\theta_h-\theta$ on an appropriate annulus away from the vortex set (note that $\theta-\theta_h$ is well-defined far from the vortex set of both $u$ and $u_h$) Then we use Gaffney- and Poincaré-type inequalities from \cref{poincare-section} to derive estimates for $e^{i\xi_k}u-u_h$ and $(\alpha+d\xi_k)-\alpha_h$, where we write $\nabla=d-i\alpha$. Far from the vorticity set we modify the gauge so that $e^{i\xi_k}u$ and $u_h$ have the same phase. Then we use the exponential decay away from the vortex set, which is where the error $\epsilon^\beta$ comes from; however, this will be enough to show the classification result in all dimensions, since we are free to take $\beta$ arbitrarily large.

            \fbox{\textit{Step 3.}} We use the estimates on $(\alpha+d\xi_k)-\alpha_h$ (and the mean condition) and Poincaré--Gaffney-type inequalities from \cref{poincare-section} to bound $\xi_j - \xi_k$ on overlapping cylinders.
            
            \fbox{\textit{Step 4.}} We patch together the $\xi_k$'s with the partition of unity defined in the first step to obtain the function $\xi$. Then we use the bounds on $\xi_j-\xi_k$ to derive estimates on $(\alpha+d\xi)-\alpha_h$ and $e^{i\xi}u-u_h$.

            \begin{proposition}[The interpolation gauge]\label{Interpolation-gauge-proposition} For any $\beta>0$ there exist $\tau_0(n,\beta),\epsilon_0(n,\beta)>0$ and $C_0(n,\beta)>0$ with the following property. Let $(u,\nabla)$ be a critical pair for $E_\epsilon$ on $\R^n$, with $u(0)=0$, $\epsilon\leq\epsilon_0$, and the energy bound
            $$\frac{1}{|B^{n-2}_2|}\int_{B_2^n}e_\epsilon(u,\nabla)\le2\pi+\tau_0.$$
            Moreover, let $h:B_1^{n-2}\rightarrow B^2_{1/2}$ be the Lipschitz approximation defined in \cref{Lipschitz-approximation-thm} (for a suitable $\eta$ chosen later on) and let $(u_h,\nabla_h)$ be the pullback pair constructed in \cref{pull-back-proposition}. Then, on a given annulus $$\mathcal{A}_{s,\delta} := B^2_1\times (B^{n-2}_{s+\delta}\setminus\bar B^{n-2}_{s})$$ with $\delta \in [C_0\epsilon,\frac1{16}]$ and $s\leq \frac34$, we can find a gauge transformation
            \begin{align*}
                (u,\nabla) \mapsto (e^{i\xi}u, \nabla-id\xi),
            \end{align*}
                via a smooth function $\xi:\mathcal{A}_{s,\delta}\rightarrow \R$ such that:
            \begin{enumerate}[label=(\roman*)]
            \item letting $P:\R^{n}\rightarrow\R^{n-2}$ be the projection onto the last $n-2$ coordinates, we have
            \begin{align*}
                &\int_{\mathcal{A}_{s,\delta}} [\epsilon^{-2}|e^{i\xi}u-u_h|^2 + |(\alpha+d\xi)-\alpha_h|^2]\\
                & \leq C(n,\beta)|\ln\epsilon|^{8}\int_{P(\mathcal{A}_{s-\delta,3\delta})} [\E_z+\uno_{\mathcal{G}^\eta}\E_z|\log\E_z|^2]+ \epsilon^\beta;
            \end{align*}
            \item letting
            \begin{align*}
                Z := \{|u|\leq 3/4 \},\quad Z_{C_0\epsilon|\ln\epsilon|} := \{x=(y,z) \,:\, \operatorname{dist}(x,Z\cap(B_1^2\times\{z\})) \leq C_0\epsilon|\ln\epsilon|\},
            \end{align*}
            the function $e^{i\xi}u\to\C\setminus\{0\}$ has the same phase as $u_h$ far from the vortex set, i.e.,
            \begin{align*}
                \frac{e^{i\xi}u}{u_h} \in \R^+\quad \text{on } \mathcal{A}_{s,\delta} \setminus Z_{5C_0|\epsilon\ln\epsilon|}.
            \end{align*}
            \end{enumerate}
            \begin{proof}
            First we choose $\epsilon_0,\tau_0$ small enough so that \cref{Main-result-epsilon-neighbourhood-of-graph} applies (for $\alpha=0$). We divide the proof into several steps.
                
                \fbox{\textit{Covering arguments and a partition of unity.}} To begin with,
                by \cref{Main-result-epsilon-neighbourhood-of-graph}, we can find a collection of points $$\{x_k=(y_k,z_k)\}_{k=1}^N \subset Z\cap\mathcal{A}_{s,\delta}$$
                satisfying the following.
                \begin{enumerate}[label=(\roman*)]
                    \item The projected collection $\{z_k=P(x_k)\}_{k=1}^N\subset B^{n-2}_{s+\delta}\backslash B^{n-2}_{s}$ gives a Vitali covering of the projected annulus $P(\mathcal{A}_{s,\delta})$:
                    \begin{align}\label{covering-1}
                    \begin{aligned}
                        &P(\mathcal{A}_{s,\delta})= B^{n-2}_{s+\delta}\setminus\bar B^{n-2}_{s} \subseteq \bigcup_{k=1}^{N}B^{n-2}_{5C_0\epsilon}(z_k),\\
                        &B_{C_0\epsilon}^{n-2}(z_j)\cap B_{C_0\epsilon}^{n-2}(z_k) = \emptyset\quad\text{for all }j\neq k.
                    \end{aligned}
                    \end{align}
                    This last line shows in particular that $N\leq C(n)\epsilon^{2-n}$, where by now $C_0$ depends only on $n$.
                    \item As a direct consequence of \cref{Main-result-epsilon-neighbourhood-of-graph}, we can guarantee that
                    \begin{align}\label{covering-2}
                        Z_{C_0\epsilon|\ln\epsilon|}\cap \mathcal{A}_{s,\delta} \subseteq \bigcup_{k=1}^N B_{5C_0\epsilon|\ln\epsilon|}^2(y_k)\times B^{n-2}_{5C_0\epsilon}(z_k).
                    \end{align}
                    \item We say that a point $x_k=(y_k,z_k)$ is a \textit{good} point if
                    \begin{align*}
                        \sup_{\epsilon \le r \le 10C_0\epsilon}  r^{2-n}\E_1(u,\nabla,B^2_1\times B_r^{n-2}(z_k),\R^{n-2}) \leq \eta_0^2,
                    \end{align*}
                    where with a certain abuse of notation we have set 
                    \begin{align}\label{e:new-excess}
                    \E_1(u,\nabla,B^2_1\times B_r^{n-2}(z),\R^{n-2})=
                    \int_{B^2_1\times B_r^{n-2}(z)} \left[\sum_{k=3}^n |\nabla_{e_k}u|^2 + \epsilon^2\sum_{(j,k)\neq(1,2)} \omega(e_j,e_k)^2\right]  
                    \end{align}
                    (note the absence of normalization)
                    Let the set of \textit{good indices} be $G$. We also denote the set of bad ones by $B := \{1,\dots,N\}\setminus G$.
                    \item Again as a direct consequence of \cref{Main-result-epsilon-neighbourhood-of-graph} we get that
                    \begin{align}\label{covering-3}
                        |u_h|,|u|>\frac34\quad \text{on } \bigcup_{k=1}^N( B^{2}_{5C_0\epsilon|\ln\epsilon|}(y_k)\setminus B^2_{C_0\epsilon|\ln\epsilon|}(y_k))\times B^{n-2}_{5C_0\epsilon}(z_k).
                    \end{align}
                    Since both $u$ and $u_h$ have degree $1$ on each of the previous domains (up to conjugating $(u,\nabla)$ on $\R^n$),
                    the \emph{difference of phases} $\theta-\theta_h$ is well-defined on these domains.
                    \item We also define a partition of unity $\{\phi_k\}_{k=1}^N$, subordinate to the cylinders:
                    \begin{align}\label{partition-of-unity-1}
                        \phi_k\in C^{1}_c( B^{2}_{5C_0\epsilon|\ln\epsilon|}(y_k)\times B^{n-2}_{5C_0\epsilon}(z_k)),\quad 0\leq \phi_k \leq 1,
                    \end{align}
                    and
                    \begin{align*}
                        \sum_{k=1}^{N} \phi_k = 1 \quad \text{on }  Z_{C_0\epsilon|\ln\epsilon|} \cap \mathcal{A}_{s,\delta}.
                    \end{align*}
                    We also require that $|d\phi_k| \leq \tilde C\epsilon^{-1}$ for all $k=1,\dots,N$.
                    \item Up to modifying the $\phi_k$'s, we can define $\phi_0\in C^{1}_c(\mathcal{A}_{s,\delta}\setminus Z_{C_0|\epsilon\ln\epsilon|})$ with $0 \leq \phi_0 \leq 1$ and
                    \begin{align*}
                        &\phi_0=1\quad\text{on } \mathcal{A}_{s,\delta}\setminus Z_{5C_0|\epsilon\ln\epsilon|}, \\
                        &\phi_0 + \sum_{k=1}^{N}\phi_k = 1 \quad \text{on } \mathcal{A}_{s,\delta}.
                    \end{align*}
                    Lastly, $|d\phi_0| \leq \tilde C|\epsilon\ln\epsilon|^{-1}$.
                \end{enumerate}
                Note that $C_0>0$ is some large enough constant that we are still free to choose later on ($\tilde C$ above depends on $C_0$). In the sequel we also use the following notation for the excess on each ball:
                \begin{align*}
                    \E(k) := \E(u,\nabla, B^2_1 \times B^{n-2}_{10C_0\epsilon}(z_k),\R^{n-2}).
                \end{align*}
                We define $\E(k) = \E_1(k) + \E_2(k)$ similarly. Since the balls $B^{n-2}_{C_0\epsilon}(z_k)$ are disjoint,
                the dilated balls have bounded overlap (i.e., at most $C(n)$ of them intersect), giving
                \begin{align*}
                    \sum_{k=1}^{N}\E(k) \leq C\int_{P(\mathcal{A}_{s-\delta,3\delta})} \E_z,
                \end{align*}
                and the same holds for $\E_1$ and $\E_2$. With abuse of notation, we also write
                \begin{equation}\label{e:convention}
                    |\log\E|^2\E(k) := \begin{cases}\int_{B^{n-2}_{5C_0\epsilon}(z_k)} |\log\E_z|^2\E_z&\text{if }k\in G,\\
                    \int_{B^{n-2}_{5C_0\epsilon}(z_k)} \E_z&\text{if }k\in B.\end{cases}
                \end{equation}
                
                \begin{rmk}
                We will often use the following observation implicitly.
                For all $z\in B_1^{n-2}$, the results in the previous section show that
                $$r^{2-n}\E_1(u,\nabla,B_1^2\times B_{r}^{n-2}(z),\R^{n-2})\le \tilde\eta_0^2$$
                for an (arbitrarily) small $\tilde\eta_0$ and all radii $\Lambda\epsilon\le r\le\mz$, for some $\Lambda$
                depending on $n,\tilde\eta_0$.
                If $k\in G$ then, for all $z\in B_{5C_0\epsilon}^{n-2}(z_k)$,
                we have
                $$(C_{0}\epsilon)^{{2-n}}\E_1(u,\nabla,B_1^2\times B_{C_0\epsilon}^{n-2}(z),\R^{n-2})\le 10^{n-2}\eta_0^2.$$
                As a consequence of \cref{cone}, if we take $C_0\ge C(n)$ large and $\eta_0^2$ very small,
                we have
                $$r^{2-n}\E_1(u,\nabla,B_1^2\times B_{r}^{n-2}(z),\R^{n-2})\le \tilde\eta_0^2$$
                also for $r\in(0,\Lambda\epsilon)$, since we have $C^1$ control on the pair at this scale.                
                We now fix $\tilde\eta_0$ such that we actually apply \cref{Lipschitz-approximation-thm} for $\eta:=\tilde\eta_0$,
                so that \emph{every $z\in  B_{5C_0\epsilon}^{n-2}(z_k)$ gives a good slice}.
                \end{rmk}
                
                \fbox{\textit{Gauge fixing on each small cylinder with bounds.}} Fix $k\in\{1,\dots,n\}$ and consider 
                the unique solution $\xi_k$ to the following Neumann boundary value problem:
                    \begin{align}\label{gauge-transformation-definition}
                    \begin{cases}
                        \Delta \xi_k = d^*(\alpha-\alpha_h) & \text{in } \mathcal{C}_k:=B_{5C_0\epsilon|\ln\epsilon|}^2(y_k)\times B^{n-2}_{5C_{0}\epsilon}(z_k),\\
                        \de_\nu\xi_k = -(\alpha-\alpha_h)(\nu) & \text{at } \de\mathcal{C}_k,\\
                        \int_{\mathcal{A}_k} [(\theta + \xi_k)-\theta_h]=0&\text{where }\mathcal{A}_k:=(B^2_{5C_0\epsilon|\ln\epsilon|}(y_k)\setminus B^2_{C_0\epsilon|\ln\epsilon|}(y_k))\times B^{n-2}_{5C_0\epsilon}(z_k).
                    \end{cases}
                    \end{align}
                    
                    Recall that $\theta-\theta_h$ is well-defined on $\mathcal{A}_k$. Then we perform the following gauge transformation in $\mathcal{C}_k$:
                    writing $\nabla=d-i\alpha$, we transform
                    \begin{align*}
                        (u,\alpha) \mapsto (e^{i\xi_k}u , \alpha + d\xi_k).
                    \end{align*}
                    Since $d^*[(\alpha+d\xi_k)-\alpha_h] = 0$ and $[(\alpha+d\xi_k)-\alpha_h](\nu)=0$, we can use the Gaffney--Poincaré-type inequality in \cref{gaffney-poincare-thin-cylinder-lemma}:
                    \begin{align}\label{Gaffney-in-epsilon-ball}
                        \int_{\mathcal{C}_k} |(\alpha+d\xi_k) - \alpha_h|^2 \leq C(n,C_0)|\epsilon\ln\epsilon|^2 \int_{\mathcal{C}_k} |d\alpha - d\alpha_h|^2.
                    \end{align}
                    We bound separately the contributions of the good set and the bad set, using \cref{min.good.set} and \cref{min.bad.set} below,
                    obtaining
                    \begin{align}\label{gauge-estimate-on-balls}
                    \int_{\mathcal{C}_k} |(\alpha+d\xi_k) - \alpha_h|^2 \leq 
                    C|\ln\epsilon|^2|\log\E|^4\E{(k)} + C\epsilon^{\beta+3n}
                    \end{align}
                    for some $C=C(n,C_0,\beta)=C(n,\beta)$.

                    \fbox{\textit{Gauge fixing far from the vortex set with bounds.}} Far from the vortex set, in the set $\mathcal{A}_{s,\delta}\setminus Z_{C_0|\epsilon\ln\epsilon|}$, we gauge fix via a function $\xi_0$ such that $e^{i\xi_0}u/u_h$ is real-valued. Hence, we define
                    \begin{align}\label{far-away-gauge-definition}
                        \xi_0 := \theta_h-\theta\quad\text{on } \mathcal{A}_{s,\delta}\setminus Z_{C_0|\epsilon\ln\epsilon|};
                    \end{align}
                    note that a priori $\xi_0$ is well-defined only in the quotient $\R/2\pi\Z$, but this is enough to have a well-defined gauge transformation (in fact, since the vorticity set is included in a $C\epsilon$-neighborhood of a graph, we can check that $\theta_h-\theta$ is a well-defined real number).
                    
                    We can estimate $e^{i\xi_0}u-u_h$ and $(\alpha+d\xi_0)-\alpha_h$ in this domain using the exponential decay (\cref{exponential-decay-proposition}), as follows:
                    \begin{align*}
                        &\int_{\mathcal{A}_{s,\delta} \setminus Z_{C_0|\epsilon\ln\epsilon|}} [\epsilon^{-2}|e^{i\xi_0}u - u_h|^2 + |(\alpha+d\xi_0)-\alpha_h|^2] \\
                        &\le \int_{\mathcal{A}_{s,\delta} \setminus Z_{C_0|\epsilon\ln\epsilon|}} [\epsilon^{-2}||u|-|u_h||^2 + 2|\alpha-d\theta|^2 + 2|\alpha_h-d\theta_h|^2]\\
                        &\leq C(n)\epsilon^{-2} e^{-K(n)C_0|\ln\epsilon|}.
                    \end{align*}
                    In the last inequality, we used the following observation: since each slice intersects the zero set and $|\ln\epsilon|\ge C(n)$,
                    using \cref{cone} we see that on $B_1^2\times\{z\}$ the distance from $\{|u|\le3/4\}$ is comparable
                    with the distance from  $(B_1^2\times\{z\})\cap\{|u|\le3/4\}$.
                    
                    Taking $C_0$ large enough, we see that
                    \begin{align}\label{gauge-away-from-vrotex-set-estimate-1}
                        \int_{\mathcal{A}_{s,\delta} \setminus Z_{C_0|\epsilon\ln\epsilon|}} [\epsilon^{-2}|e^{i\xi_0}u - u_h|^2 + |(\alpha+d\xi_0)-\alpha_h|^2] \leq \epsilon^{\beta+3n}.
                    \end{align}
                    
                    \fbox{\textit{Difference of local gauges in the overlap regions.}} Fix $1\leq j<k \leq N$ such that
                    \begin{align*}
                        \Omega_{j,k}:=\mathcal{C}_j\cap\mathcal{C}_k \neq \emptyset.
                    \end{align*}
                    Notice that we can bound the $L^2$ norm of the difference $d\xi_k-d\xi_j$ as follows:
                    \begin{align*}
                        \int_{\Omega_{j,k}} |d\xi_j-d\xi_k|^2 
                        \le 2\int_{\mathcal{C}_j}|(\alpha+d\xi_j)-\alpha_h|^2+2\int_{\mathcal{C}_k}|(\alpha+d\xi_k)-\alpha_h|^2.
                    \end{align*}
                    By \cref{gauge-estimate-on-balls} we then have
                    \begin{align}\label{pre-poincare-overlap-estimate-1}
                        \int_{\Omega_{j,k}} |d(\xi_j-\xi_k)|^2 \leq C|\ln\epsilon|^4(|\log\E|^2\E{(j)} + |\log\E|^2\E{(k)}) + C\epsilon^{\beta+3n}.
                    \end{align}
                    
                    Our goal is to apply a Poincaré-type inequality on $\Omega_{j,k}$ to estimate $\xi_k-\xi_j$. To this aim, we first look at $\xi_j,\xi_k$ on an appropriate annulus. By the definition of $\mathcal{C}_j,\mathcal{C}_k$ and the structure of the vortex set in \cref{Main-result-epsilon-neighbourhood-of-graph} we can see that $|x_j-x_k| \leq 20C_0\epsilon$. We name the midpoint $x_{j,k}=(y_{j,k},z_{j,k}) := \frac{x_j+x_k}{2}$ and we see that
                    \begin{align*}
                        \mathcal{A}_{j,k} := [B^2_{3C_0\epsilon|\ln\epsilon|}(y_{j,k})\setminus B^2_{2C_0\epsilon|\ln\epsilon|}(y_{j,k})] \times [B^{n-2}_{5C_0\epsilon}(z_j)\cap B^{n-2}_{5C_0\epsilon}(z_k)]\subseteq\mathcal{A}_j\cap\mathcal{A}_k\subseteq \Omega_{j,k}.
                    \end{align*}
                    So we can compute that
                    \begin{align*}
                        \int_{\mathcal{A}_{j,k}} |\xi_j-\xi_k|^2 
                        \le 2\int_{\mathcal{A}_j}|(\theta +\xi_j) - \theta_h|^2+2\int_{\mathcal{A}_k}|(\theta +\xi_k) - \theta_h|^2.
                    \end{align*}
                    We know that $(\theta+\xi_j)-\theta_h$ and $(\theta+\xi_k)-\theta_h$ have zero mean on $\mathcal{A}_j$ and $\mathcal{A}_k$, respectively. Hence, we can apply \cref{Poincare-on-thin-annulus-appendix} on each annulus to see that
                    \begin{align*}
                        \int_{\mathcal{A}_{j,k}} \epsilon^{-2}|\xi_j-\xi_k|^2
                        &\leq C\int_{\mathcal{A}_j}|d(\theta +\xi_j) - d\theta_h|^2+ C\int_{\mathcal{A}_k}|d(\theta +\xi_k) - d\theta_h|^2,
                    \end{align*}
                    and we can bound
                    $$|d(\theta +\xi_j) - d\theta_h|\le|\alpha-d\theta|+|\alpha_h-d\theta_h|+|\alpha+d\xi_j-\alpha_h|.$$
                    As before, on $\mathcal{A}_j$ we have
                    $$|\alpha-d\theta|^2+|\alpha_h-d\theta_h|^2\le|u|^{-2}|\nabla u|^2+|u_h|^{-2}|\nabla_hu_h|^2\le\epsilon^{\beta+3n}$$
                    by exponential decay, and the same holds for $k$. Together with \cref{gauge-estimate-on-balls} we thus estimate
                    \begin{align}\label{pre-poincare-overlap-estimate-2}
                        &\int_{\mathcal A_{j,k}} \epsilon^{-2}|\xi_j-\xi_k|^2 \leq C|\ln\epsilon|^{4}(|\log\E|^2\E{(j)}+|\log\E|^2\E{(k)}) + C\epsilon^{\beta+3n}.
                    \end{align}
                    By \cref{pre-poincare-overlap-estimate-1}--\cref{pre-poincare-overlap-estimate-2}, using \cref{Poincare-on-thin-annulus-appendix} and \cref{rmk:finale}, we arrive at
                    \begin{align}\label{gauge-overlap-estimate}
                        \int_{\Omega_{j,k}} \epsilon^{-2}|\xi_j-\xi_k|^2 \leq C|\ln\epsilon|^{4}(|\log\E|^2\E{(j)}+|\log\E|^2\E{(k)})+\epsilon^{\beta+3n}.
                    \end{align}
                    We also need to estimate the difference of $\xi_k-\xi_0$ for all $1\leq k \leq N$. Defining
                    \begin{align*}
                        \Omega_{0,k} := \mathcal{C}_k \cap [\mathcal{A}_{s,\delta}\setminus Z_{C_0|\epsilon\ln\epsilon|}],
                    \end{align*}
                    we see that
                    \begin{align*}
                        \Omega_{0,k} \subseteq \mathcal{A}_k':= [B^2_{5C_0\epsilon|\ln\epsilon|}(y_k)\setminus B^2_{(C_0/2)\epsilon|\ln\epsilon|}(y_k)]\times B^{n-2}_{5C_0\epsilon}(z_k).
                    \end{align*}
                    Note that by \cref{far-away-gauge-definition} we have $\xi_0 = \theta_h - \theta$ in $\Omega_{0,k}$. Hence, we can apply \cref{Poincare-on-thin-annulus-appendix} and compute that
                    \begin{align*}
                        \epsilon^{-2}\int_{\Omega_{0,k}} |\xi_k - \xi_0|^2 &\leq  \epsilon^{-2}\int_{\mathcal{A}_k'} |(\theta+\xi_k) - \theta_h|^2\\
                        &\leq  \int_{\mathcal{A}_k'} |d(\theta+\xi_k) - d\theta_h|^2\\
                        &\leq  \int_{\mathcal{A}_k'} [|\alpha-d\theta|^2 + |\alpha_h-d\theta_h|^2 +|(\alpha+d\xi_k)-\alpha_h|^2],
                    \end{align*}
                    where again we used the fact that $(\theta+\xi_k) - \theta_h$ has zero mean on $\mathcal{A}_k\subset\mathcal{A}_k'$.
                    Summing the previous bounds and noting that
                    there are at most $(C\epsilon^{2-n})^2$ pairs $(j,k)$, while
                    any point belongs to at most $C=C(n,C_0)$ domains $\Omega_{j,k}$, we arrive at
                    \begin{align}\label{gauge-overlap-estimate-summed}
                        \sum_{0\leq j < k \leq N}\int_{\Omega_{j,k}} \epsilon^{-2}|\xi_j-\xi_k|^2 \leq C|\ln\epsilon|^{4}\int_{P(\mathcal{A}_{s-\delta,3\delta})}|\log\E_z|^2\E_z+C\E+ C\epsilon^{\beta+1},
                    \end{align}
                    for some $C=C(n,\beta)$ (recall \cref{e:convention}).
                    
                    \fbox{\textit{Constructing the global gauge via the partition of unity}.} Recall the definition of the partition of unity in \cref{partition-of-unity-1}. We define the global gauge transformation function as follows:
                    \begin{align}\label{global-gauge-definition}
                        \xi := \sum_{k=0}^{N} \phi_k\xi_k \quad\text{on } \mathcal{A}_{s,\delta}.
                    \end{align}
                    Then we estimate
                    \begin{align*}
                        \int_{\mathcal{A}_{s,\delta}} \epsilon^{-2}\left|e^{i\xi}u - \sum_{k=0}^N \phi_k e^{i\xi_k}u\right|^2
                        &\leq \epsilon^{-2}\int_{\mathcal{A}_{s,\delta}} \sum_{k=0}^N\phi_k|\xi - \xi_k|^2\\
                        &\leq 2\sum_{j<k} \int_{\Omega_{j,k}}\epsilon^{-2}|\xi_j - \xi_k|^2\\
                        &\leq C|\ln\epsilon|^{4}\int_{P(\mathcal{A}_{s-\delta,3\delta})}|\log\E_z|^2\E_z+C\E + C\epsilon^{\beta+1}.
                    \end{align*}
                    In particular,
                    \begin{align}\label{pre-final-gauge-estimate-3}
                    \begin{aligned}
                        \int_{\mathcal{A}_{s,\delta}} \epsilon^{-2}|e^{i\xi}u - u_h|^2 &\leq  2\int_{\mathcal{A}_{s,\delta}} \left[\epsilon^{-2}\left|e^{i\xi}u - \sum_{k=0}^{N}\phi_k e^{i\xi_k}u\right|^2 + \sum_{k=0}^{N}\phi_k|e^{i\xi_k}u - u_h|^2\right] \\
                        &\leq C|\ln\epsilon|^{8}\int_{P(\mathcal{A}_{s-\delta,3\delta})}|\log\E_z|^2\E_z +C\E +C\epsilon^{\beta+1},
                    \end{aligned}
                    \end{align}
                    where we used \cref{min.good.set} and \cref{min.bad.set} to estimate the term involving $e^{i\xi_k}u - u_h$.
                    
                    Moreover, for the connection part, we can bound
                    \begin{align}\label{pre-final-gauge-estimate-1}\begin{aligned}
                        &\int_{\mathcal{A}_{s,\delta}}|(\alpha+d\xi)-\alpha_h|^2 \\
                        &\leq 2\int_{\mathcal{A}_{s,\delta}} \left[\sum_{k=0}^{N}\phi_k|(\alpha+d\xi_k)-\alpha_h|^2 + \left|\sum_{k=0}^N d\phi_k \xi_k\right|^2\right]. 
                    \end{aligned}\end{align}
                    The first term is bounded by \cref{gauge-estimate-on-balls} and \cref{gauge-away-from-vrotex-set-estimate-1}.
                    We are left to bound the last term. Since the functions $\phi_k$ form a partition of unity, we have
                    \begin{align*}
                        \sum_{k=0}^N d\phi_k(z) = 0.
                    \end{align*}
                    We can then write
                    $$\sum_{k=0}^N d\phi_k \xi_k=\sum_{j,k=0}^N \phi_j d\phi_k(\xi_k-\xi_j).$$
                    Since $|d\phi_k|\le C\epsilon^{-1}$, the last term above is bounded by
                    $$C\epsilon^{-2}\sum_{j<k}\int_{\Omega_{j,k}}|\xi_j-\xi_k|^2,$$
                    which is a quantity that we already estimated.
                    Combining these bounds, we see that
                    \begin{align*}
                        \int_{\mathcal{A}_{s,\delta}} [\epsilon^{-2}|e^{i\xi}u - u_h|^2 + |(\alpha+d\xi)-\alpha_h|^2]
                        \leq C|\ln\epsilon|^{8} \int_{P(\mathcal{A}_{s-\delta,3\delta})} [\E_z+\uno_{\mathcal{G}^\eta}\E_z|\log\E_z|^2]+ \epsilon^{\beta}.
                    \end{align*}
                    This is indeed the desired conclusion.
            \end{proof}
            \end{proposition}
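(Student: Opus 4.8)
The plan is to realize, with quantitative control, the four-step scheme sketched right before the statement. I would first fix the geometry: by \cref{Main-result-epsilon-neighbourhood-of-graph} (applied with $\alpha=0$), inside $\mathcal{A}_{s,\delta}$ the vorticity set $Z$ lies in a $C\epsilon$-tube around a Lipschitz graph, so I can choose points $x_k=(y_k,z_k)\in Z\cap\mathcal{A}_{s,\delta}$ whose projections $z_k$ form a Vitali cover of $P(\mathcal{A}_{s,\delta})$ at scale $\epsilon$, with the balls $B^{n-2}_{C_0\epsilon}(z_k)$ disjoint; then their dilates, and the associated thin cylinders $\mathcal{C}_k=B^2_{5C_0\epsilon|\ln\epsilon|}(y_k)\times B^{n-2}_{5C_0\epsilon}(z_k)$, have bounded overlap and $N\le C(n)\epsilon^{2-n}$. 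I call $k$ \emph{good} when $\E_1$ is below a fixed threshold at all scales in $[\epsilon,10C_0\epsilon]$ around $z_k$; via \cref{cone} and the interior $C^1$ bounds at scale $\epsilon$ in a local Coulomb gauge, every slice over $B^{n-2}_{5C_0\epsilon}(z_k)$ is then a good slice for \cref{Lipschitz-approximation-thm}. Finally I fix a partition of unity $\{\phi_k\}_{k=0}^N$ subordinate to the $\mathcal{C}_k$ together with a ``far'' piece $\phi_0$ supported in $\mathcal{A}_{s,\delta}\setminus Z_{C_0|\epsilon\ln\epsilon|}$, with $|d\phi_k|\le C\epsilon^{-1}$ and $|d\phi_0|\le C|\epsilon\ln\epsilon|^{-1}$.

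On each $\mathcal{C}_k$ I would solve a Neumann problem producing $\xi_k$ with $(\alpha+d\xi_k)-\alpha_h$ coclosed and with vanishing normal trace, normalizing the additive constant so that $(\theta+\xi_k)-\theta_h$ has zero mean on the annular part of $\mathcal{C}_k$, where $u$ and $u_h$ are both nonvanishing so the phase difference is well defined. A Gaffney--Poincar\'e inequality on the thin cylinder, tracking the correct factor $|\epsilon\ln\epsilon|^2$, bounds $\int_{\mathcal{C}_k}|(\alpha+d\xi_k)-\alpha_h|^2$ by $C|\epsilon\ln\epsilon|^2\int_{\mathcal{C}_k}|d\alpha-d\alpha_h|^2$; the latter, and the $L^2$ distance between sections, I would control by the ball-excess $\E(k)$ through comparison estimates that, slice by slice, use the two-dimensional stability \cref{ymh-dim-2-thm} (treating good and bad indices separately, the bad ones being negligible thanks to their measure bound). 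Away from $Z$ I instead take $\xi_0:=\theta_h-\theta$, which is a genuine real number because $Z$ sits near a graph, and absorb its contribution into $\epsilon^\beta$ using the exponential decay of energy in \cref{exponential-decay-proposition}; this is precisely the step where the final error becomes a freely chosen power of $\epsilon$ rather than something proportional to $\E$.

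On each nonempty $\Omega_{j,k}=\mathcal{C}_j\cap\mathcal{C}_k$ I would bound $|d(\xi_j-\xi_k)|$ directly from the per-cylinder estimates, and $\xi_j-\xi_k$ itself by a Poincar\'e inequality on the thin annulus $\mathcal{A}_{j,k}\subset\mathcal{A}_j\cap\mathcal{A}_k$, using the zero-mean normalizations and the exponential smallness of $|\alpha-d\theta|$ and $|\alpha_h-d\theta_h|$ there; the bound for $\xi_k-\xi_0$ is analogous. Summing over the $\le (C\epsilon^{2-n})^2$ pairs, with bounded overlap and $\sum_k\E(k)\le C\int_{P(\mathcal{A}_{s-\delta,3\delta})}\E_z$, controls $\sum_{j<k}\epsilon^{-2}\int_{\Omega_{j,k}}|\xi_j-\xi_k|^2$. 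Then I set $\xi:=\sum_{k=0}^N\phi_k\xi_k$; since $\sum_k d\phi_k=0$ one writes $\sum_k d\phi_k\,\xi_k=\sum_{j,k}\phi_j\,d\phi_k(\xi_k-\xi_j)$, so the extra term in $(\alpha+d\xi)-\alpha_h$ is at most $C\epsilon^{-2}\sum_{j<k}\int_{\Omega_{j,k}}|\xi_j-\xi_k|^2$, while pointwise $|e^{i\xi}u-\sum_k\phi_k e^{i\xi_k}u|^2\le\sum_k\phi_k|\xi-\xi_k|^2\le\sum_{j<k}|\xi_j-\xi_k|^2$; combining these with the per-cylinder $L^2$ bounds gives conclusion (i), and conclusion (ii) is immediate since $\xi=\xi_0$ wherever $\phi_0\equiv1$.

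The main obstacle, I expect, is the local step: extracting from $(u,\nabla)$ — which in an arbitrary gauge can be extremely irregular — dimensionally correct, $|\ln\epsilon|$-explicit bounds relating $\int_{\mathcal{C}_k}|d\alpha-d\alpha_h|^2$ and $\epsilon^{-2}\int_{\mathcal{C}_k}|e^{i\xi_k}u-u_h|^2$ to $\E(k)$, and then checking that, once collected over $\sim\epsilon^{2-n}$ thin cylinders with all the $\epsilon^{\beta+3n}$-type error terms, everything still adds up to $C|\ln\epsilon|^8\int_{P(\mathcal{A}_{s-\delta,3\delta})}[\E_z+\uno_{\mathcal{G}^\eta}\E_z|\log\E_z|^2]+\epsilon^\beta$. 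This rests on the thin-cylinder Poincar\'e/Gaffney inequalities and on the two-dimensional stability used on each slice; keeping consistent bookkeeping of the overlaps and of the phase matching near $\de\mathcal{C}_k$ is the remaining delicate point.
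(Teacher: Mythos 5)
Your outline reproduces the paper's own scheme essentially verbatim (covering and partition of unity, Neumann gauge on each thin cylinder with the zero-mean phase normalization, the Gaffney--Poincar\'e bound with the $|\epsilon\ln\epsilon|^2$ factor, the far-field gauge $\xi_0=\theta_h-\theta$ absorbed into $\epsilon^\beta$ by exponential decay, the overlap estimates via the thin-annulus Poincar\'e inequality, and the patching identity $\sum_k d\phi_k\,\xi_k=\sum_{j,k}\phi_j d\phi_k(\xi_k-\xi_j)$). However, there is a genuine gap exactly where you flag your ``main obstacle'': the per-cylinder estimates on good cylinders, i.e.\ the analogue of \cref{min.good.set}, bounding $\int_{\mathcal{C}_k}\epsilon^2|d\alpha-d\alpha_h|^2$ and $\epsilon^{-2}\int_{\mathcal{C}_k}|e^{i\xi_k}u-u_h|^2$ by $|\ln\epsilon|$-powers of $|\log\E|^2\E(k)$. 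The tool you name for this, \cref{ymh-dim-2-thm}, is not sufficient: that stability estimate is an infimum over the whole moduli space $\mathcal{F}$, so it compares the slice with \emph{some} minimizer in \emph{some} gauge, and gives no control in the specific gauge $e^{i\xi_k}u$ relative to the fixed representative $u_h$, no control of the curvature component $d\alpha(e_1,e_2)-d\alpha_h(e_1,e_2)$ needed on the right-hand side of \cref{Gaffney-in-epsilon-ball}, and no control of the phase difference in the region where $|u_h|$ degenerates near the zero. The paper closes this by a chain of three further ingredients that your sketch does not supply: the refined slice-wise weighted estimates of \cite{Halavati-stability} (Theorem 3.1 and Corollary 3.3 there), which control $\epsilon^{-2}\bigl||u|-|u_{h_0}|\bigr|^2$, the weighted quantity $|u_{h_0}|^{5/2}|(\alpha-d\theta)-(\alpha_{h_0}-d\theta_{h_0})|^2$, and $\epsilon^2|d\alpha(e_1,e_2)-d\alpha_{h_0}(e_1,e_2)|^2$ by $\E_z$; the barycenter estimate \cref{zero-set-distance-barycenter-slice}, needed to translate between the zero-centered profile $u_{h_0}$ and the barycenter-centered $u_h$, which is where the $|\ln\epsilon|^2|\log\E_z|^2\E_z$ terms originate; and the weighted Caffarelli--Kohn--Nirenberg inequality \cref{CKN-poincare-lemma-appendix}, which converts the weighted phase-gradient control into control of $\epsilon^{-2}|u_{h_0}|^2|\theta+\xi_k-\theta_{h_0}|^2$ across the vortex core at the cost of $|\ln\epsilon|^4$, and is then closed using the zero-mean condition on $\mathcal{A}_k$ together with \cref{Poincare-on-thin-annulus-appendix} and \cref{gauge-estimate-on-balls}. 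Without this chain the claimed $|\ln\epsilon|^8$ bookkeeping cannot be verified, so the central estimate of the proposition remains unproved in your proposal.

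Two smaller remarks. For bad cylinders the operative mechanism is not that they are ``negligible by their measure bound'': one uses the trivial $L^\infty$ bound $C|\ln\epsilon|^2\epsilon^{n-2}$ on each bad cylinder and absorbs it because badness means $\E_1(k)>\eta_0^2\epsilon^{n-2}$, so the trivial bound is dominated by $C|\ln\epsilon|^2\E_1(k)$; phrased via measures alone the needed comparison with $\int\E_z$ does not follow. Also, conclusion (ii) is not quite ``immediate since $\xi=\xi_0$ wherever $\phi_0\equiv1$'': you must also observe that $\phi_0=1$ on all of $\mathcal{A}_{s,\delta}\setminus Z_{5C_0|\epsilon\ln\epsilon|}$ by construction of the partition, which is how the paper arranges it; this is easy but should be said.
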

            
            We now turn to the bounds which were postponed in the previous proof.
            
			\begin{lemma}\label{min.good.set}
			Assume that $k\in G$. Then
			$$\int_{\mathcal{C}_k}\epsilon^{-2}|e^{i\xi_k}u - u_h|^2
						\leq C|\ln\epsilon|^{8} |\log\E|^2\E(k) + C\epsilon^{\beta+3n}$$
						and
			$$\int_{\mathcal{C}_k}\epsilon^2|d\alpha-d\alpha_h|^2
			\leq C|\ln\epsilon|^{4} \eta^{-2} |\log\E|^2\E(k) + C\epsilon^{\beta+3n}.$$
			\end{lemma}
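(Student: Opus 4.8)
The plan is to rescale by $\epsilon$ (the map $x\mapsto x_k+\epsilon x$), which reduces the statement to the case $\epsilon=1$ on a cylinder of radius $\sim C_0|\ln\epsilon|$ in the first two variables and $\sim C_0$ in the last $n-2$, and then to argue slice by slice. Fix $k\in G$. By the remark preceding \cref{gauge-transformation-definition}, every $z\in B^{n-2}_{5C_0\epsilon}(z_k)$ is a good slice, so \cref{cone} gives that $u(\cdot,z)$ vanishes exactly once in $B^2_{1/2}$, at a point $h_0(z)$, and by \cref{energy-identity-on-slice} together with $\deg(u/|u|,\de B^2_1\times\{z\})=1$ (up to conjugating the pair on $\R^n$) its two-dimensional $E_\epsilon$-energy on $B^2_1\times\{z\}$ equals $2\pi(1+\E_z)+O(\epsilon e^{-K/\epsilon})$. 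After extending $u(\cdot,z)|_{B^2_1}$ to a finite-energy pair on $\R^2$ whose $E_\epsilon$-energy exceeds that on $B^2_1\times\{z\}$ by at most $O(e^{-K/\epsilon})$ (interpolating to $|u|\equiv1$ outside $B^2_1$), the $\epsilon$-dilated version of \cref{ymh-dim-2-thm} (whose $L^\infty$ hypothesis in the form needed by \cref{Jacobian-energy-stability-prop} follows from the Modica-type bounds \cref{modica-bounds} after rescaling) yields an $\epsilon$-dilated degree-one Taubes solution $u_0^z$, in a suitable gauge and with zero at $h_0(z)$, such that $\int_{B^2_1\times\{z\}}\big(\epsilon^{-2}|u(\cdot,z)-u_0^z|^2+|F_\nabla-F_{\nabla_0^z}|^2\big)\le C\big(\E_z+e^{-K/\epsilon}\big)$.

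Next I would use that $u_h(\cdot,z)$, the restriction to the slice of the pullback pair of \cref{pull-back-proposition}, is exactly the $\epsilon$-dilated Taubes solution with zero at $h(z)$ in the pullback gauge, with $\epsilon\omega_h(e_1,e_2)=\tfrac{1-|u_h|^2}{2\epsilon}$ and the remaining components of $\omega_h$ bounded by $|dh|^2$ times the curvature of $u_0$, by \cref{pull-back-identity-3,pull-back-identity-4,pull-back-identity-5}. Since $u_0^z$ and $u_h(\cdot,z)$ are the \emph{same} dilated Taubes solution up to moving its zero from $h_0(z)$ to $h(z)$ and a gauge change $e^{i\zeta_z}$, the zero mismatch contributes an $L^2$-error $\le C|h_0(z)-h(z)|$, which on the good slice is $\le C\epsilon|\log\E_z|\sqrt{\E_z}+Ce^{-K/\epsilon}$ by \cref{zero-set-distance-barycenter-slice}; this is where the $|\log\E|^2$ factor enters. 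The crux is that the gauge $\xi_k$ defined by \cref{gauge-transformation-definition} makes $e^{i\xi_k}u$ close to $u_h$, i.e.\ that $e^{i\zeta_z}$ is close to the identity on the whole cylinder. On the annulus $\mathcal{A}_k$ this is immediate: away from the vortex one has $d\theta\approx\alpha$ and $d\theta_h\approx\alpha_h$ up to exponentially small terms, so $d\big((\theta+\xi_k)-\theta_h\big)\approx(\alpha+d\xi_k)-\alpha_h$; the latter one-form is co-closed with vanishing normal trace, hence its $L^2$ norm on $\mathcal{C}_k$ is $\le C|\epsilon\ln\epsilon|\,\|d\alpha-d\alpha_h\|_{L^2}$ by the Gaffney--Poincaré inequality \cref{gaffney-poincare-thin-cylinder-lemma}, and since $(\theta+\xi_k)-\theta_h$ has zero mean on $\mathcal{A}_k$, the Poincaré inequality \cref{Poincare-on-thin-annulus-appendix} bounds its $L^2$ norm by $C|\epsilon\ln\epsilon|$ times the same quantity.

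To propagate the matching across the vortex core I would pass to the rescaled picture, where \cref{cone} (and the good condition) gives uniform $C^\ell$ bounds in a local Coulomb gauge and a non-degenerate zero ($|Ju_0|(0)>0$). Then $\zeta_z$ is a bounded function whose differential is small in $L^2(\mathcal{C}_k)$, being the sum of the co-closed error $(\alpha+d\xi_k)-\alpha_h$ (small by Gaffney) and the two-dimensional curvature-stability error (small by the rescaled \cref{ymh-dim-2-thm} plus one more Gaffney estimate); being $\approx 0\pmod{2\pi}$ on $\mathcal{A}_k$ and having small gradient, a further Poincaré inequality forces $\zeta_z\approx 0$ on all of $\mathcal{C}_k$. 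Combining the two-dimensional stability error, the zero mismatch $h_0-h$, and the gauge mismatch $\zeta_z$, then integrating over $z\in B^{n-2}_{5C_0\epsilon}(z_k)$ (using the bounded overlap of the dilated balls, the convention \cref{e:convention}, and the slice-wise Dirichlet bound $|dh|^2(z)\lesssim\E_z$ valid on $\G^\eta$ from \cref{Lipschitz-approximation-thm}), and absorbing the per-slice $O(\epsilon e^{-K/\epsilon})$ together with the Taubes tail $e^{-KC_0|\ln\epsilon|}=\epsilon^{KC_0}$ into $\epsilon^{\beta+3n}$ by taking $C_0=C_0(n,\beta)$ large, yields the first estimate; the powers $|\ln\epsilon|^8$, $|\ln\epsilon|^4$ and the factor $\eta^{-2}$ are bookkeeping coming from the diameters $\sim\epsilon|\ln\epsilon|$ of $\mathcal{C}_k$ and $\mathcal{A}_k$ in the successive Gaffney/Poincaré inequalities and from the measure bounds for the slices where $u$ and $u_h$ are compared only crudely.

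The second estimate is simpler: on each slice $\epsilon(\omega-\omega_h)(e_1,e_2)=\big(\epsilon\omega(e_1,e_2)-\tfrac{1-|u|^2}{2\epsilon}\big)+\tfrac{|u_h|^2-|u|^2}{2\epsilon}$, whose $L^2$ is controlled by $\E_2(k)$ and the modulus difference $\epsilon^{-1}\,\||u|-|u_h|\|_{L^2}$ (bounded by the two-dimensional stability as above), while the remaining components of $\omega-\omega_h$ are bounded pointwise by the $\E_1$-integrand and by $|dh|^2$ times the curvature of $u_0$, hence integrate to $C\E_1(k)+Ce^{-K/\epsilon}$. The main obstacle is the propagation step into the vortex core: closeness of the two pairs on $\mathcal{A}_k$ alone does not control them across the center, and one genuinely needs the explicit non-degeneracy of the Taubes zero and the rescaled elliptic estimates to carry the gauge comparison through, while keeping the accumulation of $|\ln\epsilon|$ powers under control.
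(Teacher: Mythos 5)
Your treatment of the curvature bound is broadly sound and close in spirit to the paper's (split off the $(1,2)$-component, use the vortex equation satisfied by $u_h$ on each slice, control the other components by the $\E_1$-integrand and by $|dh|^2$), and your use of the translation estimate via \cref{zero-set-distance-barycenter-slice} to pass from the zero at $h_0(z)$ to the zero at $h(z)$ is exactly what the paper does. The genuine gap is in the first estimate, at the step you yourself flag as the crux: matching the gauge of the stability-provided Taubes representative with the gauge $\xi_k$ across the vortex core. The stability estimate \cref{ymh-dim-2-stability-estimate} of \cref{ymh-dim-2-thm} only controls $\|u-u_0\|_{L^2}$ and $\|F_\nabla-F_{\nabla_0}\|_{L^2}$ for \emph{some} gauge representative $(u_0,\nabla_0)$; it gives no control whatsoever on $\alpha-\alpha_0$ in $L^2$. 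Your proposed fix --- that $d\zeta_z$ is small because it is the sum of $(\alpha+d\xi_k)-\alpha_h$ (fine, by \cref{gaffney-poincare-thin-cylinder-lemma}) and a ``two-dimensional curvature-stability error small by one more Gaffney estimate'' --- does not close: Gaffney requires $d^*(\alpha-\alpha_0^z)=0$ and vanishing normal trace, conditions the stability-provided representative has no reason to satisfy, and re-gauging $u_0^z$ to enforce them destroys the $L^2$ closeness to $u$ unless you already control the re-gauging function, which is circular. Note also that you cannot instead run the argument on the phase difference $\theta-\theta_h$ in unweighted $L^2$: $d\theta$ and $d\theta_h$ have $1/r$-type singularities centered at the slightly different points $h_0(z)$ and $h(z)$, so their difference is not small in $L^2$ near the core; only a suitably weighted difference is controlled by the excess.

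This is precisely why the paper does not use \cref{ymh-dim-2-thm} here but the sharper, gauge-invariantly formulated weighted estimates of \cite[Theorem 3.1 and Corollary 3.3]{Halavati-stability}, which on each good slice control
\[
\int_{B_1^2\times\{z\}}\Bigl[\epsilon^{-2}\bigl||u|-|u_{h_0}|\bigr|^2+|u_{h_0}|^{2+1/2}\bigl|(\alpha-d\theta)_{(1,2)}-(\alpha_{h_0}-d\theta_{h_0})_{(1,2)}\bigr|^2+\epsilon^2|d\alpha(e_1,e_2)-d\alpha_{h_0}(e_1,e_2)|^2\Bigr]\le C\,\E_z ,
\]
i.e.\ a weighted bound on the gauge-invariant quantity $\alpha-d\theta$ with a weight degenerating at the vortex; this, combined with the Caffarelli--Kohn--Nirenberg-type weighted Poincar\'e inequality \cref{CKN-poincare-lemma-appendix} (whose weight $|u_{h_0}|^2$ vanishes exactly at the zero), is what carries the phase comparison through the core, with the $\xi_k$-Neumann gauge and the zero-mean condition on $\mathcal{A}_k$ entering only through \cref{gaffney-poincare-thin-cylinder-lemma} and \cref{Poincare-on-thin-annulus-appendix}. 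Without this weighted input (or an equivalent substitute), the plain $L^2$ stability you invoke cannot produce the bound on $\epsilon^{-2}|e^{i\xi_k}u-u_h|^2$, so the first inequality of the lemma is not proved by your outline.
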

			
            \begin{proof}        
                    We bound each part separately.
                    
                    \fbox{\textit{Estimating $d\alpha-d\alpha_h$}.} Recalling the definition of $\E_1$ in \cref{e:new-excess} and the construction of $\alpha_h$, we have
                    $$\int_{\mathcal{C}_k}\epsilon^2|d\alpha-d\alpha_h|^2
                    \le \int_{\mathcal{C}_k}\epsilon^2|d\alpha(e_1,e_2)-d\alpha_h(e_1,e_2)|^2
                    +C\E_1(k)+C\int_{B_{5C_0\epsilon}^{n-2}(z_k)}|dh|^2.$$

                    We are going to use some estimates from \cite{Halavati-stability} which are slightly more refined than
                    \cref{ymh-dim-2-stability-estimate}. Compared to the main result of \cite{Halavati-stability}, these hold under some additional assumptions, which are however satisfied on good slices:
                    in particular, for any $z\in B_{5C_0\epsilon}^{n-2}(z_k)$, the function $u$ vanishes linearly at a unique point along the slice $B_1^2\times\{z\}$. We will often compare $u$ with the function
                    $u_{h_0}$, where $h_0$ is the function built in \cref{Lipschitz-approximation-of-zero-loci-lemma}, whose graph approximates the zero set; along the good slice, this function vanishes at the same point as $u$,
                    and is just a translation of the standard degree-one planar solution.
                    
                    Specifically, using an $\epsilon$-rescaling of \cite[Theorem 3.1]{Halavati-stability} and \cite[Corollary 3.3]{Halavati-stability}
                    (applied with $N=1$; see also the part below \cite[Corollary 3.3]{Halavati-stability} for the choice of functions to which this result is applied),
                     we have the following estimate:
                    \begin{align}\label{curvature-estimate-temp-2}
                    \begin{aligned}
                        &\int_{B^2_1\times\{z\}} [\epsilon^{-2}||u|-|u_{h_0}||^2+|u_{h_0}|^{2+1/2}|(\alpha-d\theta)_{(1,2)} - (\alpha_{h_0}-d\theta_{h_0})_{(1,2)}|^2 \\
                        &\phantom{\int_{B^2_1\times\{z\}} [}+\epsilon^2|d\alpha(e_1,e_2) - d\alpha_{h_0}(e_1,e_2)|^2] \\
                        & \leq C\E_z,
                    \end{aligned}
                    \end{align}
                    for an absolute constant $C$, where the subscript $(1,2)$ means that we restrict the one-form along the slice.
                    
                    Now by the construction in \cref{pull-back-proposition} we can see that $(u_h,\alpha_h)$, along the slice $B^2_1\times \{z\}$, is equal to $(u_{h_0},\alpha_{h_0})$ translated to vanish at the barycenter $\Phi_{\chi(x_1,x_2)}(z)$. As shown in \cref{zero-set-distance-barycenter-slice},
                    the translation is by a vector $v$ with $|v| \leq C\epsilon|\log\E_z|\sqrt{\E_z}+e^{-K/\epsilon}$. By the mean value theorem, we then have
                    \begin{align}\label{translation-estimate-for-taubes-solution}
                    \begin{aligned}
                        &\int_{B_{5C_0\epsilon|\log\epsilon|}^2(y_k)\times \{z\}}
                        [\epsilon^2|d\alpha_h(e_1,e_2) - d\alpha_{h_0}(e_1,e_2)|^2
                        + |(\alpha_h-\alpha_{h_0})_{(1,2)}|^2 \\
                        &\phantom{\int_{B_{5C_0\epsilon|\log\epsilon|}^2(y_k)\times \{z\}}
                                                [}+ \epsilon^{-2} |u_h-u_{h_0} |^2]\\
                        &\le C\epsilon^2|\log\epsilon|^2\cdot |v|^2\cdot C\epsilon^{-4}\\
                        &\le C|\ln\epsilon|^2|\log\E_z|^2\E_z+e^{-K/\epsilon},
                    \end{aligned}
                    \end{align}
                    since $\E_z$ is bounded on good slices and the differential of each quantity
                    (such as $\epsilon d\alpha_h(e_1,e_2)$ and so on) is bounded by $C\epsilon^{-2}$.
                    The claimed estimate follows by combining the previous bounds (together with item (iv) from \cref{Lipschitz-approximation-thm}, which gives $|dh|^2(z)\le C\E_z+e^{-K/\epsilon}$).

                    \fbox{\textit{Estimating $e^{i\xi_k}u-u_h$}.} Writing formally $u=|u|e^{i\theta}$ and using a similar notation for $u_h$ and $u_{h_0}$, recall that on the annulus
                    $$\mathcal{A}_{k,z}:=[B^2_{5C_0\epsilon|\ln\epsilon|}(y_k)\setminus B^2_{C_0\epsilon|\ln\epsilon|}(y_k)]\times \{z\}$$
                    the differences $\theta-\theta_h$, $\theta-\theta_{h_0}$ and $\theta_h-\theta_{h_0}$ are well-defined.
                    We record the following estimate:
                    \begin{align}\label{phase-translation-estimate}
                        \int_{\mathcal{A}_{k,z}} \epsilon^{-2}|\theta_h-\theta_{h_z}|^2
                        \leq C|\log\E_z|^2\E_z.
                    \end{align}
                    This holds again by the mean value theorem, since $|(d\theta_h)_{(1,2)}|(y)\le C|y-y_k|^{-1}$.
                    We are going to use the Caffarelli--Kohn--Nirenberg-type inequality from \cref{CKN-poincare-lemma-appendix}, which implies that
                    \begin{align*}
                        \int_{B_R^2(y_k)} |y-h_{0}(z)|^2|f(y)|^2 \leq CR^{3/2}\int_{B_R^2(y_k)}|y-h_{0}(z)|^{2+1/2}|df(y)|^2,\quad\text{for $f\in C^1_c(B_R^2(y_k))$},
                    \end{align*}
                    with $R:=5C_0\epsilon|\log\epsilon|$ (since there exists a biLipschitz transformation sending $B_R^2(y_k)$ to itself and mapping the origin to $h_0(z)$).
                    Recalling that the standard degree-one solution vanishes linearly at the origin, by the construction of $u_{h_0}$ in \cref{pull-back-proposition} we have
                    \begin{align*}
                        C^{-1} \leq \frac{|u_{h_0}|(y,z)}{\min\{\epsilon^{-1}|y-h_0(z)|,1\}} \leq C
                    \end{align*}
                    on the good slice,
                    for some universal constant $C$. Moreover,
                    \begin{align*}
                        1\leq \frac{\epsilon^{-1}|y-h_0(z)|}{\min\{\epsilon^{-1}|y-h_0(z)|,1\}} \leq C|\ln\epsilon|
                        \quad\text{for all $y\in B_{5C_0\epsilon|\log\epsilon|}^2(y_k)$}.
                    \end{align*}
                    Hence, given a $C^1$ function $f$ on $B^{2}_{5C_0\epsilon|\ln\epsilon|}(y_k)\times \{z\}$
                    vanishing near the boundary, we can write
                    \begin{align}\label{ckn-temp-1}
                        \int_{B^{2}_{5C_0\epsilon|\ln\epsilon|}(y_k)\times \{z\}} |u_{h_0}|^2|f|^2
                        &\leq C\epsilon^{2}|\ln\epsilon|^{4}\int_{B^{2}_{5C_0\epsilon|\ln\epsilon|}(y_k)\times \{z\}} |u_{h_0}|^{2+1/2}|df|^2.
                    \end{align}
                    
                    To estimate $ue^{i\xi_k}-u_{h_0}$, we first notice that $u$ and $u_{h_0}$ have the same unique zero point (with the same degree around it), and hence the difference $\theta-\theta_{h_0}$ gives a well-defined smooth function on the full slice.
                    
                    We define a cut-off $\chi:B^2_1 \to \R$ with $\chi = 1$ on $B^2_{C_0\epsilon|\ln\epsilon|}(y_k)$ and $\chi=0$ outside of $B^2_{5C_0\epsilon|\ln\epsilon|}(y_k)$, with $|d\chi| \leq C|\epsilon\ln\epsilon|^{-1}$. Then we use the first term of \cref{curvature-estimate-temp-2} to bound $|u|-|u_{h_z}|$ and \cref{ckn-temp-1} to see that
                    \begin{align*}
                        &\epsilon^{-2}\int_{B^{2}_{5C_0\epsilon|\ln\epsilon|}(y_k)\times \{z\}} \chi^2|e^{i\xi_k}u - u_{h_0}|^2 \\
                        &\leq C\int_{B^{2}_{5C_0\epsilon|\ln\epsilon|}(y_k)\times \{z\}} \chi^2\left[\frac{\left||u| - |u_{h_0}|\right|^2}{\epsilon^2} + \frac{|u_{h_0}|^2}{\epsilon^2}|(\theta+\xi_k)-\theta_{h_0}|^2\right]\\
                        &\leq C\E_z + C(\bm{\mathrm{I}}+\bm{\mathrm{II}}),
                    \end{align*}
                    where
                    \begin{align*}
                    \bm{\mathrm{I}}&:=|\ln\epsilon|^4\int_{B^{2}_{5C_0\epsilon|\ln\epsilon|}(y_k)\times \{z\}} |u_{h_0}|^{2+1/2}\chi^2|d(\theta+\xi_k-\theta_{h_0})_{(1,2)}|^2,\\
                    \bm{\mathrm{II}}&:=|\ln\epsilon|^4\int_{B^{2}_{5C_0\epsilon|\ln\epsilon|}(y_k)\times \{z\}}  |u_{h_0}|^{2+1/2}|d\chi|^2|\theta+\xi_k-\theta_{h_0}|^2.
                    \end{align*}
                    First we estimate $\textbf{I}$ using the second term in \cref{curvature-estimate-temp-2} and \cref{translation-estimate-for-taubes-solution} (to replace $\alpha_{h_0}$ with $\alpha_h$):
                    \begin{align*}
                        \textbf{I} &\leq C|\ln\epsilon|^6|\log\E_z|^2\E_z + C|\ln\epsilon|^4\int_{B^{2}_{5C_0\epsilon|\ln\epsilon|}(y_k)\times \{z\}}\chi^2|(\alpha+d\xi_k) - \alpha_{h}|^2.
                    \end{align*}
                    Then we estimate $\textbf{II}$: we note that $|d\chi|$ is supported in $\mathcal{A}_{k,z}$ and $|d\chi| \leq C|\epsilon\ln\epsilon|^{-1}$, and hence we can use \cref{phase-translation-estimate} to estimate
                    \begin{align*}
                        \textbf{II} &\leq C|\ln\epsilon|^2|\log\E_z|^2\E_z+ C|\ln\epsilon|^2\int_{\mathcal{A}_{k,z}} \epsilon^{-2}|\theta+\xi_k-\theta_h|^2.
                    \end{align*}
                    Putting $\textbf{I}$ and $\textbf{II}$ together and  integrating over $B^{n-2}_{5C_0\epsilon}(z_k)$, we see that
                    \begin{align*}
                        &\int_{\mathcal{C}_k} \epsilon^{-2}|e^{i\xi_k}u - u_{h_0}|^2 \\
                        & \leq C|\ln\epsilon|^6|\log\E|^2\E{(k)}+C|\ln\epsilon|^4\int_{\mathcal{C}_k}|\alpha+d\xi_k - \alpha_h|^2
                        + C|\ln\epsilon|^2\int_{\mathcal{A}_k} \epsilon^{-2}|\theta+\xi_k-\theta_h|^2 \\
                        &\leq C|\ln\epsilon|^8|\log\E|^2\E{(k)}+ C|\ln\epsilon|^2\int_{\mathcal{A}_k} \epsilon^{-2}|\theta+\xi_k-\theta_h|^2+\epsilon^{\beta+3n},
                    \end{align*}
                    where we used \cref{gauge-estimate-on-balls} (which uses only the previous bound on $d\alpha-d\alpha_h$).
                    Recalling that we imposed
                    \begin{align*}
                        \int_{\mathcal{A}_k}(\theta+\xi_k-\theta_h) =0,
                    \end{align*}
                    we can apply \cref{Poincare-on-thin-annulus-appendix} (suitably rescaled) and \cref{gauge-estimate-on-balls} another time to see that
                    \begin{align}\label{phase-estimate-on-good-balls}\begin{aligned}
                        &|\ln\epsilon|^2\int_{\mathcal{A}_k} \epsilon^{-2}|\theta+\xi_k-\theta_h|^2 \\
                        &\leq C|\ln\epsilon|^2\int_{\mathcal{A}_k}|d(\theta+\xi_k) - d\theta_h|^2\\
                        &\leq C|\ln\epsilon|^2\int_{\mathcal{A}_k} [|\alpha-d\theta|^2 + |\alpha_h-d\theta_h|^2 + |(\alpha+d\xi_k)-\alpha_h|^2] \\
                        &\leq C|\ln\epsilon|^6|\log\E|^2\E{(k)}+ \epsilon^{\beta+3n}
                    \end{aligned}\end{align}
                    up to taking $C_0$ large enough (the last inequality follows from the exponential decay of energy);
                    combining these bounds with  \cref{translation-estimate-for-taubes-solution},
                    we get the desired bound for $e^{i\xi_k}u-u_h$.
%
            \end{proof}

            \begin{lemma}\label{min.bad.set}
            For $k\in B$ we have
            $$\int_{\mathcal{C}_k} [\epsilon^{-2}|e^{i\xi_k}u - u_h|^2 + \epsilon^2|d\alpha - d\alpha_h|^2]
            \leq C|\ln\epsilon|^2\E_1{(k)}.$$
            \end{lemma}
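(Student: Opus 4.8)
The plan is to notice that for a bad index $k$ there is essentially nothing to prove beyond a dimensional count: the cylinder $\mathcal{C}_k=B^2_{5C_0\epsilon|\ln\epsilon|}(y_k)\times B^{n-2}_{5C_0\epsilon}(z_k)$ is so thin that the integrands on the left-hand side, bounded crudely by $C(n)\epsilon^{-2}$ pointwise, already integrate to at most $C(n)|\ln\epsilon|^2\epsilon^{n-2}$, while the very definition of the bad set forces $\E_1(k)$ to be bounded below by a fixed multiple of $\epsilon^{n-2}$.

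First I would record the pointwise bounds on $\mathcal{C}_k$. Since $|u|\le1$ by \cref{u-le-1} and $|u_h|=|u_0\circ Q_\epsilon|\le1$ (the Taubes solution satisfies $|u_0|\le1$ by \cref{u-le-1} applied in dimension $2$), we have $\epsilon^{-2}|e^{i\xi_k}u-u_h|^2\le2\epsilon^{-2}(|u|^2+|u_h|^2)\le4\epsilon^{-2}$. For the curvature term $\epsilon^2|d\alpha-d\alpha_h|^2\le2\epsilon^2|F_\nabla|^2+2\epsilon^2|F_{\nabla_h}|^2$, the Modica-type bound \cref{modica-bounds} gives $\epsilon^2|F_\nabla|^2\le\frac{(1-|u|^2)^2}{4\epsilon^2}\le\frac1{4\epsilon^2}$, and the pullback identities \cref{pull-back-identity-3}, \cref{pull-back-identity-4}, \cref{pull-back-identity-5}, together with $|dh|\le C(n)\eta$ and the bound $|\omega_0|\le\frac12$ for the Taubes solution (again a consequence of \cref{modica-bounds} in dimension $2$), yield $\epsilon^2|F_{\nabla_h}|^2\le C(n)\epsilon^{-2}$. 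Hence the full integrand on $\mathcal{C}_k$ is at most $C(n)\epsilon^{-2}$; since $|\mathcal{C}_k|\le C(n)|\ln\epsilon|^2\epsilon^{n}$, integration gives
\[
\int_{\mathcal{C}_k}\bigl[\epsilon^{-2}|e^{i\xi_k}u-u_h|^2+\epsilon^2|d\alpha-d\alpha_h|^2\bigr]\le C(n)\,\epsilon^{-2}|\mathcal{C}_k|\le C(n)|\ln\epsilon|^2\epsilon^{n-2}.
\]

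Next I would use the definition of the bad set: $k\in B$ means that there is a radius $r\in[\epsilon,10C_0\epsilon]$ with $r^{2-n}\E_1(u,\nabla,B^2_1\times B^{n-2}_r(z_k),\R^{n-2})>\eta_0^2$, in the unnormalized sense of \cref{e:new-excess}. Since the integrand there is nonnegative and $B^{n-2}_r(z_k)\subseteq B^{n-2}_{10C_0\epsilon}(z_k)$, this forces $\E_1(k)\ge\E_1(u,\nabla,B^2_1\times B^{n-2}_r(z_k),\R^{n-2})>\eta_0^2 r^{n-2}\ge\eta_0^2\epsilon^{n-2}$. Combining this with the previous display and recalling that $\eta_0=\eta_0(n)$ is a fixed dimensional constant,
\[
\int_{\mathcal{C}_k}\bigl[\epsilon^{-2}|e^{i\xi_k}u-u_h|^2+\epsilon^2|d\alpha-d\alpha_h|^2\bigr]\le C(n)|\ln\epsilon|^2\epsilon^{n-2}\le\frac{C(n)}{\eta_0^2}\,|\ln\epsilon|^2\,\E_1(k),
\]
which is the assertion with $C=C(n)$.

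There is no genuine obstacle in this lemma: the entire point of isolating the ``bad'' indices is precisely to secure the lower bound $\E_1(k)\ge\eta_0^2\epsilon^{n-2}$ that makes the brutal $\epsilon^{-2}\times\text{volume}$ estimates affordable. The only points requiring a little care are the verification of the pointwise estimate $\epsilon^2|F_{\nabla_h}|^2\le C(n)\epsilon^{-2}$ from the explicit form of the pullback pair $(u_h,\nabla_h)$ in \cref{pull-back-proposition}, and keeping track of which normalization convention is in force for $\E_1(k)$; the argument goes through verbatim with either the normalized or the unnormalized definition, the matching of powers of $\epsilon$ being cleanest in the unnormalized one used above.
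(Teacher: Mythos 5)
Your proposal is correct and is essentially the paper's own argument: the paper's proof is exactly the two-step $L^\infty$ estimate (pointwise bounds of order $\epsilon^{-2}$ on both terms, times $|\mathcal{C}_k|\le C|\ln\epsilon|^2\epsilon^n$, giving $C|\ln\epsilon|^2\epsilon^{n-2}$) followed by the lower bound $\E_1(k)\ge\eta_0^2\epsilon^{n-2}$ coming from the definition of a bad index, with the unnormalized convention \cref{e:new-excess}. You have merely spelled out the pointwise bounds (via \cref{u-le-1}, \cref{modica-bounds} and the pullback identities) that the paper leaves implicit.
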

                    
            \begin{proof}
                    On the bad set we simply use $L^\infty$ bounds: we have
                    \begin{align}\label{gauge-estimate-on-bad-balls}\begin{aligned}
                        &\int_{\mathcal{C}_k} [\epsilon^{-2}|e^{i\xi_k}u - u_h|^2 + \epsilon^2|d\alpha - d\alpha_h|^2]  \\ 
                        &\leq C|\ln\epsilon|^2\epsilon^{n-2} \\
                        &\leq C|\ln\epsilon|^2\E_1{(k)},
                    \end{aligned}\end{align}
                    where we used the definition of bad index in the last inequality.
            \end{proof}
            
            \begin{corollary}\label{interp.cor}
            We have
            \begin{align*}
                            &\int_{\mathcal{A}_{s,\delta}} [\epsilon^{-2}|e^{i\xi}u-u_h|^2 + |(\alpha+d\xi)-\alpha_h|^2] \leq C(n,\beta)|\ln\epsilon|^{10}\int_{P(\mathcal{A}_{s-\delta,3\delta})} \E_z+ \epsilon^\beta.
                        \end{align*}
            \end{corollary}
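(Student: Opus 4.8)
The plan is to obtain \cref{interp.cor} from \cref{Interpolation-gauge-proposition} by absorbing the factor $|\log\E_z|^2$ that weights the good-set contribution in that proposition, at the cost of two extra powers of $|\ln\epsilon|$. Since the exponent $\beta$ in \cref{Interpolation-gauge-proposition} may be taken arbitrarily large, I would first apply that proposition with $\beta+2$ in place of $\beta$, obtaining
\begin{equation*}
\int_{\mathcal{A}_{s,\delta}} [\epsilon^{-2}|e^{i\xi}u-u_h|^2 + |(\alpha+d\xi)-\alpha_h|^2] \le C(n,\beta)|\ln\epsilon|^{8}\int_{P(\mathcal{A}_{s-\delta,3\delta})}\bigl[\E_z + \uno_{\mathcal{G}^\eta}\E_z|\log\E_z|^2\bigr] + \epsilon^{\beta+2}.
\end{equation*}
It then remains only to estimate the term $|\ln\epsilon|^{8}\int_{P(\mathcal{A}_{s-\delta,3\delta})}\uno_{\mathcal{G}^\eta}\E_z|\log\E_z|^2$.

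Next I would split the region of integration according to the size of $\E_z$. On the set where $\E_z\ge\epsilon^{\beta+2}$, the elementary bound $|\log\E_z|\le(\beta+2)|\log\epsilon|$ gives the pointwise inequality $\E_z|\log\E_z|^2\le(\beta+2)^2|\log\epsilon|^2\,\E_z$, so this portion of the integral is controlled by $C(n,\beta)|\ln\epsilon|^{10}\int_{P(\mathcal{A}_{s-\delta,3\delta})}\E_z$, which is exactly of the required form. On the complementary set, where $\E_z<\epsilon^{\beta+2}$, I would instead use that $t\mapsto t|\log t|^2$ is increasing on $(0,e^{-2})$ and that $\epsilon^{\beta+2}<e^{-2}$ once $\epsilon\le\epsilon_0(n,\beta)$, whence $\E_z|\log\E_z|^2\le(\beta+2)^2\epsilon^{\beta+2}|\log\epsilon|^2$ pointwise; since $P(\mathcal{A}_{s-\delta,3\delta})$ has measure at most $C(n)$ (being contained in $B_1^{n-2}$, as $s\le\frac34$ and $\delta\le\frac1{16}$), integrating and multiplying by $|\ln\epsilon|^8$ leaves a term bounded by $C(n,\beta)|\ln\epsilon|^{10}\epsilon^{\beta+2}$, which is at most $\epsilon^{\beta+1}$ for $\epsilon\le\epsilon_0(n,\beta)$. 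Collecting this with the residual $\epsilon^{\beta+2}$ from the proposition and noting that $\epsilon^{\beta+2}+\epsilon^{\beta+1}\le\epsilon^{\beta}$ for $\epsilon$ small yields the stated inequality.

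I do not expect any genuine obstacle here: this is purely a bookkeeping step. The two points to keep track of are that one is free to run \cref{Interpolation-gauge-proposition} with an exponent strictly larger than $\beta$ and absorb the resulting extra powers of $\epsilon$ into $\epsilon^\beta$ — precisely the mechanism for which it matters, as emphasized before \cref{tilt-excess-decay-minimizer}, that $\beta$ is unrestricted — and that the monotonicity of $t\mapsto t|\log t|^2$ is only available near the origin, which forces the threshold separating the two cases to be a small positive power of $\epsilon$ rather than a fixed constant.
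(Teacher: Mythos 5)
Your argument is correct and is essentially the paper's own proof: the paper likewise splits the integral of $\E_z|\log\E_z|^2$ according to whether $\E_z$ exceeds a small power of $\epsilon$ (the paper uses threshold $\epsilon^{\beta+1}$), bounding $|\log\E_z|^2\le C|\log\epsilon|^2$ on the large-excess part to produce the $|\ln\epsilon|^{10}$ factor and bounding $\E_z|\log\E_z|^2\le C\epsilon^{\beta+1}|\log\epsilon|^2$ on the small-excess part, which is absorbed into $\epsilon^\beta$. Your extra step of running \cref{Interpolation-gauge-proposition} with $\beta+2$ in place of $\beta$ is just a slightly more explicit way of doing the same absorption.
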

            
            \begin{proof}
            Recalling that for a good $z$ the sliced excess $\E_z$ is small,
            it suffices to split the integral of $\E_z|\log\E_z|^2$ on the two sets $\{\E_z\le\epsilon^{\beta+1}\}$
            and $\{\E_z>\epsilon^{\beta+1}\}$. On the second one, we bound $|\log\E_z|^2\le C|\log\epsilon|^2$,
            while on the first one we have $\E_z|\log\E_z|^2\le C\epsilon^{\beta+1}|\log\epsilon|^2$.
            \end{proof}

            \section{Proof of a stronger decay of tilt-excess for local minimizers}
            \subsection{Strong approximation of the excess for minimizers}
            In this section we use variational arguments and the estimates from \cref{Interpolation-gauge-proposition} to construct competitors. As a consequence, we prove that the full excess $\E$ is well approximated by the Dirichlet energy of a harmonic approximation $w$ built as in \cref{harmonic-approx-prop}.
            
            \begin{proposition}[Strong harmonic approximation of minimizers]\label{strong-approximation-of-excess-proposition}
                For any $\nu,\beta>0$ and any radius $0<s<1$ there exist small constants $\epsilon_0(n,s,\nu,\beta),\tau_0(n,s,\nu,\beta),\eta_0(n,\nu,\beta)>0$ with the following properties. Let $(u,\nabla)$ be a minimizer of $E_\epsilon$ defined on $B^n_2(0)$, with $\epsilon\leq \epsilon_0$
                and the energy bound
                $$\frac{1}{|B_2^{n-2}|}\int_{B_2^n}e_\epsilon(u,\nabla)\le 2\pi+\tau_0.$$
                After a suitable rotation, let $h:B^{n-2}_1\to B^2_1$ be the Lipschitz approximation defined in \cref{Lipschitz-approximation-thm} with $\eta:=\eta_0$. Then the following holds,
                assuming
                $$Ce^{-K/\epsilon}\le C\epsilon^\beta\le\E:=\E(u,\nabla,B^n_{2},\R^{n-2})$$
                for some $C=C(n,\nu,\beta)$ and $K=K(n)$:
                there exists a harmonic function $w:B_1^{n-2}\to\R^2$ such that
                \begin{enumerate}[label=(\roman*)]
                    \item $\int_{B_1^{n-2}}|dw|^2\le C(n)$;
                    \item we have $$\int_{B_1^{n-2}}\left|\frac{h-(h)_{B_1^{n-2}}}{\sqrt{\E}}-w\right|^2\le\nu,$$ where $(h)_{B_1^{n-2}}$ is the average;
                    \item most importantly, we have
                    $$\int_{B_s^{n-2}}\E_z\le\E\int_{B_s^{n-2}}\frac{|dw|^2}{2}+\nu\E.$$
                \end{enumerate}
                \begin{proof}
                	We prove the statement by compactness and contradiction. Fix $\nu,\beta,s$ and assume that there exist  sequences $\epsilon_k,\tau_k\to0$ and a sequence of minimizers $(u_k,\nabla_k)$ for $E_{\epsilon_k}$ with the previous energy bound for $\tau_0=\tau_k$, violating the conclusion. Moreover, let $h_k:B^{n-2}_1\to B^2_1$ be the Lipschitz approximation for the threshold $\eta_0$, to be chosen below.
                	
                	\fbox{\emph{Lower bound on the energy of the given pair.}}
                	By item (i) in \cref{Lipschitz-approximation-thm} we have
                    \begin{align*}
                        \int_{B^{n-2}_1} |dh_k|^2 \leq C(n)\E^{(k)},\quad \E^{(k)}:=\E(u_k,\nabla_k,B^n_{2},\R^{n-2}),
                    \end{align*}
                    for $k$ large enough.
                    Hence, up to a subsequence, we can extract a weak limit
                    $$\frac{h_k-(h_k)_{B_1^{n-2}}}{\sqrt{\E^{(k)}}}\weakto w$$
                    in $W^{1,2}$, so that
                    $$\int_{B_1^{n-2}}|dw|^2\le C(n).$$
                    By \cref{harmonic-approximation-lemma-1} and \cref{harmonic-approx-prop}, $w$ is harmonic with $w(0)=0$.
                    This shows that the first two conclusions hold, so we must have
                    \begin{align}\label{contra.nu}
                    \int_{B_s^{n-2}}\E_z^{(k)}>\E^{(k)}\int_{B_s^{n-2}}\frac{|dw|^2}{2}+\nu\E^{(k)}.
                    \end{align}
                    By \cref{energy-identity-on-slice} and the bound $Ce^{-K/\epsilon}\le\frac{\nu}{5}\E^{(k)}$, this gives
                    $$\frac{1}{2\pi}\int_{B_1^2\times B_s^{n-2}}e_{\epsilon_k}(u_k,\nabla_k)>|B_s^{n-2}|
                    +\E^{(k)}\int_{B_s^{n-2}}\frac{|dw|^2}{2}+\frac{4\nu}{5}\E^{(k)}.$$
                    
                    Let $a,b\in(s,1)$ with $a<b$, which we write as $b=a+4\delta$.
                    Calling $\mathcal{G}^{k}$ the good set for $(u_k,\nabla_k)$,
                    since the indicator function $\uno_{(B^{n-2}_a\setminus B^{n-2}_s)\cap\mathcal{G}^{k}}\to1$
                    strongly $L^2(B^{n-2}_a\setminus B^{n-2}_s)$, we have
                    $$\uno_{(B^{n-2}_a\setminus B^{n-2}_s)\cap\mathcal{G}^{k}}\frac{dh_k}{\sqrt{\E^{(k)}}}\weakto dw$$
                    weakly in this space, and hence
                    $$\int_{B^{n-2}_a\setminus B^{n-2}_s} \frac{|dw|^2}{2}
                    \le\liminf_{k\to\infty}\int_{(B^{n-2}_a\setminus B^{n-2}_s)\cap\mathcal{G}^{k}}\frac{|dh_k|^2}{2\E^{(k)}}.$$
                    Using  item (iv) in \cref{Lipschitz-approximation-thm} and the assumption \(\E^{(k)}\ge C\epsilon_{k}^{\beta}\ge Ce^{-K/\epsilon_{k}}\), we deduce
                    $$\int_{B^{n-2}_a\setminus B^{n-2}_s}\frac{|dw|^2}{2}
                    \le\liminf_{k\to\infty}\int_{(B^{n-2}_a\setminus B^{n-2}_s)\cap\mathcal{G}^{k}}\frac{\E_z^{(k)}}{\E^{(k)}}.$$
                    Combined with \cref{contra.nu}, this gives
                    $$\int_{B^{n-2}_a}\E_z^{(k)}>\E^{(k)}\int_{B_a^{n-2}}\frac{|dw|^2}{2}+\frac{3\nu}{4}\E^{(k)}.$$
                    Using again \cref{energy-identity-on-slice} and \(\E^{(k)}\ge C\epsilon_{k}^{\beta}\ge Ce^{-K/\epsilon_{k}}\), we obtain
                    \begin{align}\label{lb.a}
                    \frac{1}{2\pi}\int_{B_1^2\times B_a^{n-2}}e_{\epsilon_k}(u_k,\nabla_k)>|B_a^{n-2}|
                                        +\E^{(k)}\int_{B_a^{n-2}}\frac{|dw|^2}{2}+\frac{3\nu}{5}\E^{(k)}.
                    \end{align}
                    
                    Note that, for a fixed small $\delta>0$ to be specified later, we can find $a$ and $b=a+4\delta$ in $(s,1)$ such that
                    \begin{align}\label{pigeonhole}
                    \int_{B_{b}^{n-2}\setminus B_a^{n-2}}[|dh_k|^2+\E^{(k)}|dw|^2+\E_z^{(k)}]
                    \le C(n,s)\delta\E^{(k)},
                    \end{align}
                    along a subsequence, by the classical pigeonhole argument.
                    
                    Now we take a cut-off function $\chi$ such that $\chi=1$ on $B_{a}^{n-2}$ and $\chi=0$
                    outside of $B_{a+\delta}^{n-2}$, and we let
                    $$f_k:=(1-\chi)h_k+\chi(\sqrt{\E^{(k)}}w+(h_k)_{B_1^{n-2}}).$$
                    Since $\|h_k-(h_k)_{B_1^{n-2}}-\sqrt{\E^{(k)}}w\|_{L^2}^2=o(\E^{(k)})$,
                    the Dirichlet energy of $f_k$ on $B_b^{n-2}\setminus B_a^{n-2}$ is
                    $$\int_{B_b^{n-2}\setminus B_a^{n-2}}\left[(1-\chi)^2\frac{|dh_k|^2}{2}+\E^{(k)}(2\chi-\chi^2)\frac{|dw|^2}{2}\right]+o(\E^{(k)}).$$
                    In particular, by \cref{pigeonhole} we have
                    \begin{align}\label{pigeon.f}
                    \int_{B_b^{n-2}\setminus B_a^{n-2}}|df_k|^2\le C\delta\E^{(k)}.
                    \end{align}
                    We apply \cref{pull-back-proposition} to obtain a new pair $(u_{f_k},\nabla_{f_k})$.
                    
                    \fbox{\emph{Construction of the competitor.}}
                    We want to glue the latter to $(u_k,\nabla_k)$ in a suitable annular region
                    and obtain a new pair whose energy in $B_1^2\times B_b^{n-2}$ is strictly lower than $(u_k,\nabla_k)$, obtaining a contradiction to minimality.
                    From now on, we restrict attention to the region $B_1^2\times B_b^{n-2}$.
                    We will also drop the subscript $k$ in the sequel. Note that $f=h$ on $B_{a+4\delta}^{n-2}\setminus B_{a+\delta}^{n-2}$.
                    
                    For technical reasons, it will be convenient to glue on an annulus of width $\sqrt\epsilon$.
                    We first select $t\in[a+2\delta,a+3\delta]$ such that
                    \begin{align}\label{pigeon.bis}
                    \int_{B_{t+2\sqrt\epsilon}^{n-2}\setminus B_{t-\sqrt\epsilon}^{n-2}}[\E_z+|dh|^2]
                    \le C(n,s,\delta)\sqrt\epsilon\E.
                    \end{align}
                    
                    We first apply \cref{Interpolation-gauge-proposition} and \cref{interp.cor} to replace $(u,\nabla)$
                    with a gauge-equivalent pair, still denoted $(u,\nabla)$,
                    such that $\frac{u}{|u|}=\frac{u_f}{|u_f|}$ on $(B_1^2\setminus B_{1/2}^2)\times B_b^{n-2}$, with
                    $$
                    \int_{\mathcal{A}}[\epsilon^{-2}|u-u_h|^2+|\alpha-\alpha_h|^2]\le C|\ln\epsilon|^{10}
                    \int_{P(\hat{\mathcal{A}})}\E_z+\epsilon^\beta,
                    $$
                    where $\mathcal{A}:=B_1^2\times(B_{t+\sqrt\epsilon}^{n-2}\setminus B_{t}^{n-2})$
                    and $\hat{\mathcal{A}}:=B_1^2\times(B_{t+2\sqrt\epsilon}^{n-2}\setminus B_{t-\sqrt\epsilon}^{n-2})$.
                    In particular, by \cref{pigeon.bis} we have
                    \begin{align}\label{interp.est}
                    \int_{\mathcal{A}}[\epsilon^{-2}|u-u_h|^2+|\alpha-\alpha_h|^2]\le C\sqrt\epsilon|\ln\epsilon|^{10}
                    \int_{P(\hat{\mathcal{A}})}\E_z+\epsilon^\beta
                    =o(\E)+\epsilon^\beta,
                    \end{align}
                    where the notation $o(\E)=o(\E^{(k)})$ indicates a quantity infinitesimal with respect to $\E^{(k)}$, as $k\to\infty$.
                    
                    We take another cut-off function $\varphi$ with $\varphi=1$ on $B_1^2\times B_t^{n-2}$
                    and $\varphi=0$ outside of $B_1^2\times B_{t+\sqrt\epsilon}^{n-2}$.
                    On $B_1^2\times B_b^{n-2}$, we define
                    $$\tilde u:=(1-\varphi)u+\varphi u_h,\quad\tilde\alpha:=(1-\varphi)\alpha+\varphi\alpha_h.$$
                    
                    We claim that
                    \begin{align}\label{claim.palloso}
                    \int_{P(\mathcal{A})}\tilde\E_z\le
                    o(\E)+C\epsilon^\beta.\end{align}
                    Once this is done, using \cref{energy-identity-on-slice}, we obtain
                    $$\frac{1}{2\pi}\int_{P(\mathcal{A})}e_\epsilon(\tilde u,\tilde \nabla)\le
                    |P(\mathcal{A})|+o(\E_z)+C\epsilon^\beta,
                    $$
                    and hence by \cref{pull-back-proposition}, together with \cref{pigeonhole} and \cref{pigeon.f}, we get
                    \begin{align*}
                    &\frac{1}{2\pi}\int_{B_1^2\times B_b^{n-2}}e_\epsilon(\tilde u,\tilde \nabla)\\
                    &\le
                    |B_b^{n-2}|+(1+O(\eta_0^2))\int_{B_{b}^{n-2}}\frac{|df|^2}{2}
                    +\int_{B_b^{n-2}\setminus B_a^{n-2}}\E_z
                    +o(\E)+C\epsilon^\beta\\
                    &\le|B_b^{n-2}|+\E\int_{B_{a}^{n-2}}\frac{|dw|^2}{2}+\int_{B_b^{n-2}\setminus B_a^{n-2}}[|df|^2+\E_z]
                    +C\eta_0^2\E+o(\E)+C\epsilon^\beta\\
                    &\le|B_b^{n-2}|+\E\int_{B_{a}^{n-2}}\frac{|dw|^2}{2}+\frac{\nu}{5}\E,
                    \end{align*}
                    once we take $\eta_0$ and $\delta$ small enough.
                    In the same way, \cref{lb.a} gives
                    $$\frac{1}{2\pi}\int_{B_1^2\times B_b^{n-2}}e_{\epsilon}(u,\nabla)>|B_b^{n-2}|
                    +\E\int_{B_a^{n-2}}\frac{|dw|^2}{2}+\frac{2\nu}{5}\E.$$
                    This gives a contradiction: near $\de B_1^2\times B_{t+\sqrt\epsilon}^{n-2}$,
                    using the fact that $\tilde u$ and $u_f$ have the same phase,
                    it is easy to modify $(\tilde u,\tilde\nabla)$ in order to make it agree with
                    $(u,\nabla)$ (while this already holds on $\de B_1^2\times (B_b^{n-2}\setminus B_{t+\sqrt\epsilon}^{n-2})$),
                    in a way which changes the energy by $O(e^{-K/\epsilon})\le\frac{\nu}{5}\E$:
                    it is enough to interpolate between the two pairs on the set
                    $$(B_1^2\setminus B_{1/2}^2)\times B_b^{n-2},$$
                    using the fact that here the energy density is exponentially small,
                    and hence we can write $|\tilde u-u_f|=|(1-|\tilde u|)-(1-|u_f|)|\le e^{-K/\epsilon}$ and $|\tilde\alpha-\alpha_f|\le e^{-K/\epsilon}$ (since, writing $\tilde u=e^{i\tilde\theta}$, we have
                    $|d\tilde\theta-\tilde\alpha|\le|\tilde u|^{-1}|\tilde \nabla \tilde u|$ and similarly $|d\tilde\theta-\alpha_f|=|d\theta_f-\alpha_f|\le|u_f|^{-1}|\nabla_fu_f|$).
                    
                    \fbox{\emph{Bounding the energy on the interpolation annulus.}}
                    It remains to check the previous claim.
                    We first write
                    \begin{align*}
                    2\pi\tilde\E_z&=\int_{B^{2}_{1}\times \{z\}}\sum_{j\ge3}|\tilde\nabla_{e_j}\tilde u|^2
	                +\sum_{(j,j')\neq(1,2)}\epsilon^2|d\tilde\alpha(e_j,e_{j'})|^2
	                +|\tilde\nabla_{e_1}\tilde u+i\tilde\nabla_{e_2}\tilde u|^2
	                +\left|\epsilon d\tilde\alpha(e_1,e_2)-\frac{1-|\tilde u|^2}{2\epsilon}\right|^2\\
	                &=:\bm{\mathrm{I}}+\bm{\mathrm{II}}+\bm{\mathrm{III}}+\bm{\mathrm{IV}}.
                    \end{align*}
                    
                    We start by bounding $\bm{\mathrm{II}}$:
                    we have
                    $$d\tilde\alpha=(1-\varphi)d\alpha+\varphi d\alpha_h+d\varphi\wedge(\alpha_h-\alpha).$$
                    Hence, using the fact that $|d\varphi|\le C\epsilon^{-1/2}$, we have
                    $$\bm{\mathrm{II}}\le C\E_z+C|dh|^2(z)+C\epsilon^2\cdot C\epsilon^{-1}\int_{B^{2}_{1}\times \{z\}}|\alpha_h-\alpha|^2.$$
                    The last term is bounded by the left-hand side of \cref{interp.est}; together with \cref{pigeon.bis}, this gives the desired bound.
                    
                    As for $\bm{\mathrm{IV}}$, we note that
                    $$\frac{1-|\tilde u|^2}{2\epsilon}=(1-\varphi)\frac{1-|u|^2}{2\epsilon}+\varphi\frac{1-|u_h|^2}{2\epsilon}
                    +O(\epsilon^{-1}|u-u_h|),$$
                    and hence $\bm{\mathrm{IV}}$ is the squared norm of
                    $$(1-\varphi)\left[\epsilon d\alpha(e_1,e_2)-\frac{1-|u|^2}{2\epsilon}\right]+\varphi\left[\epsilon d\alpha_h(e_1,e_2)-\frac{1-|u_h|^2}{2\epsilon}\right]+O(\sqrt\epsilon|\alpha-\alpha_h|)+O(\epsilon^{-1}|u-u_h|).$$
                    Thus, we have
                    $$\bm{\mathrm{IV}}\le C\E_z+C\epsilon\int_{B^{2}_{1}\times \{z\}}|\alpha-\alpha_h|^2+C\epsilon^{-2}\int_{B^{2}_{1}\times \{z\}}|u-u_h|^2,$$
                    again bounded by \cref{pigeon.bis} and \cref{interp.est}.
                    
                    We finally turn to $\bm{\mathrm{I}}$; the bound for $\bm{\mathrm{III}}$
                    is obtained in the same way and hence will be skipped.
                    We note that
                    \begin{align*}
                    \tilde\nabla\tilde u&=d[(1-\varphi)u+\varphi u_h]-i[(1-\varphi)u+\varphi u_h][(1-\varphi)\alpha+\varphi\alpha_h]\\
                    &=(1-\varphi)\nabla u+\varphi\nabla_h u_h+(u_h-u)d\varphi+O(|u-u_h||\alpha-\alpha_h|).
                    \end{align*}
                    Hence,
                    $$\int_{B^{2}_{1}\times \{z\}}|\tilde\nabla_{e_j}\tilde u|^2
                    \le C\E_z+C|dh|^2(z)+C\epsilon^{-1}\int_{B^{2}_{1}\times \{z\}}|u-u_h|^2+C\int_{B^{2}_{1}\times \{z\}}|\alpha-\alpha_h|^2.$$
                    Again, the last two terms are bounded by the left-hand side of \cref{interp.est}.
                    This completes the proof of \cref{claim.palloso}, and hence the proof of the proposition.
                \end{proof}
            \end{proposition}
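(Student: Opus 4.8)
The plan is to argue by compactness and contradiction, matching the soft one‑sided comparison coming from the harmonic approximation with a competitor construction that uses minimality to produce the reverse inequality. Fix $\nu,\beta,s$ and suppose the conclusion fails: there are $\epsilon_k,\tau_k\to0$ and minimizers $(u_k,\nabla_k)$ on $B_2^n$ with normalized energy $\le 2\pi+\tau_k$, with $C\epsilon_k^\beta\le\E^{(k)}:=\E(u_k,\nabla_k,B_2^n,\R^{n-2})$, but violating one of (i)--(iii). Since $\tau_k\to0$, we are in the setting of \cref{Lipschitz-approximation-thm} and \cref{excess-vanishes}: the energy concentration varifold is a multiplicity‑one plane, the limiting cycle is $\pm\llbracket\R^{n-2}\rrbracket$, and $\E^{(k)}\to0$; in particular the bad sets $B_{3/4}^{n-2}\setminus\mathcal{G}^{k}$ have measure $\le C\E^{(k)}/\eta_0^2\to0$, so $\uno_{\mathcal{G}^{k}}\to1$ in $L^2$. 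Let $h_k$ be the Lipschitz approximation of \cref{Lipschitz-approximation-thm}. By item~(i) there, $\int_{B_1^{n-2}}|dh_k|^2\le C(n)\E^{(k)}$, hence the normalized functions $\tilde h_k:=(h_k-(h_k)_{B_1^{n-2}})/\sqrt{\E^{(k)}}$ are bounded in $W^{1,2}$; extracting a weak limit $w$ and using Rellich together with \cref{harmonic-approximation-lemma-1} and \cref{harmonic-approx-prop}, $w$ is harmonic, $w(0)=0$, $\int|dw|^2\le C(n)$, and $\tilde h_k\to w$ in $L^2$. Thus (i) and (ii) hold for large $k$, so the failure is in (iii): $\int_{B_s^{n-2}}\E_z^{(k)}>\E^{(k)}\int_{B_s^{n-2}}\tfrac{|dw|^2}{2}+\nu\E^{(k)}$.

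The next step is to upgrade this to an energy lower bound on a cylinder. By \cref{energy-identity-on-slice} (and the standing hypothesis $Ce^{-K/\epsilon}\le\tfrac{\nu}{5}\E^{(k)}$), the last inequality gives $\tfrac{1}{2\pi}\int_{B_1^2\times B_s^{n-2}}e_{\epsilon_k}(u_k,\nabla_k)>|B_s^{n-2}|+\E^{(k)}\int_{B_s^{n-2}}\tfrac{|dw|^2}{2}+\tfrac{4\nu}{5}\E^{(k)}$. Using the weak $L^2$ convergence $\uno_{(B_a^{n-2}\setminus B_s^{n-2})\cap\mathcal{G}^{k}}\,dh_k/\sqrt{\E^{(k)}}\weakto dw$ (good slices fill up in the limit) and item~(iv) of \cref{Lipschitz-approximation-thm}, one enlarges $B_s^{n-2}$ to a slightly larger ball $B_a^{n-2}$ and still gets $\tfrac{1}{2\pi}\int_{B_1^2\times B_a^{n-2}}e_{\epsilon_k}(u_k,\nabla_k)>|B_a^{n-2}|+\E^{(k)}\int_{B_a^{n-2}}\tfrac{|dw|^2}{2}+\tfrac{3\nu}{5}\E^{(k)}$. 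Here I would also invoke the classical pigeonhole principle to choose $a$ and $b=a+4\delta$ in $(s,1)$ along which the interface quantities $|dh_k|^2+\E^{(k)}|dw|^2+\E_z^{(k)}$ integrate to $\le C\delta\E^{(k)}$, and later a radius $t\in[a+2\delta,a+3\delta]$ along which, on the thin annulus of width $\sqrt{\epsilon_k}$ around $t$, the quantity $\E_z^{(k)}+|dh_k|^2$ integrates to $\le C\sqrt{\epsilon_k}\,\E^{(k)}$; this makes all gluing errors negligible compared with $\E^{(k)}$.

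Now for the competitor. On $B_1^2\times B_b^{n-2}$ set $f_k:=(1-\chi)h_k+\chi(\sqrt{\E^{(k)}}w+(h_k)_{B_1^{n-2}})$, with $\chi$ a cutoff equal to $1$ on $B_a^{n-2}$ and supported in $B_{a+\delta}^{n-2}$, so $f_k=h_k$ near radius $b$ and, by the $L^2$ convergence plus the pigeonhole choice, $\int_{B_b^{n-2}\setminus B_a^{n-2}}|df_k|^2\le C\delta\E^{(k)}+o(\E^{(k)})$; applying \cref{pull-back-proposition} to $f_k$ gives a pair $(u_{f_k},\nabla_{f_k})$ with $\tfrac{1}{2\pi}\int_{B_1^2\times B_b^{n-2}}e_{\epsilon_k}(u_{f_k},\nabla_{f_k})\le |B_b^{n-2}|+(1+O(\eta_0^2))\int_{B_b^{n-2}}\tfrac{|df_k|^2}{2}+O(e^{-K/\epsilon})\le |B_b^{n-2}|+\E^{(k)}\int_{B_a^{n-2}}\tfrac{|dw|^2}{2}+\tfrac{\nu}{5}\E^{(k)}$ for $\eta_0,\delta$ small. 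To interpolate between $(u_k,\nabla_k)$ outside and $(u_{h_k},\nabla_{h_k})$ inside I would first invoke \cref{Interpolation-gauge-proposition} and \cref{interp.cor} on the thin annulus $\mathcal{A}=B_1^2\times(B_{t+\sqrt{\epsilon_k}}^{n-2}\setminus B_t^{n-2})$ to replace $(u_k,\nabla_k)$ by a gauge‑equivalent pair with $\tfrac{u}{|u|}=\tfrac{u_{f_k}}{|u_{f_k}|}$ on $(B_1^2\setminus B_{1/2}^2)\times B_b^{n-2}$ and $\int_{\mathcal{A}}[\epsilon^{-2}|u-u_{h_k}|^2+|\alpha-\alpha_{h_k}|^2]\le C\sqrt{\epsilon_k}|\ln\epsilon_k|^{10}\int_{P(\hat{\mathcal{A}})}\E_z+\epsilon_k^\beta=o(\E^{(k)})+\epsilon_k^\beta$; then, with $\varphi$ a cutoff equal to $1$ on $B_1^2\times B_t^{n-2}$ and supported in $B_1^2\times B_{t+\sqrt{\epsilon_k}}^{n-2}$, set $\tilde u:=(1-\varphi)u+\varphi u_{h_k}$, $\tilde\alpha:=(1-\varphi)\alpha+\varphi\alpha_{h_k}$. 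Expanding the sliced excess $\tilde\E_z$ into its four standard pieces and using $|d\varphi|\le C\epsilon_k^{-1/2}$, each piece is bounded by $C\E_z+C|dh_k|^2(z)$ plus interpolation‑gauge errors, which integrate over $P(\mathcal{A})$ to $o(\E^{(k)})+C\epsilon_k^\beta$; by \cref{energy-identity-on-slice}, $\tfrac{1}{2\pi}\int_{P(\mathcal{A})}e_{\epsilon_k}(\tilde u,\tilde\nabla)\le|P(\mathcal{A})|+o(\E^{(k)})+C\epsilon_k^\beta$. Gluing $(u_{f_k},\nabla_{f_k})$ inside to this modified pair outside (they already agree near $\de B_1^2\times(B_b^{n-2}\setminus B_{t+\sqrt{\epsilon_k}}^{n-2})$, and can be matched near $\de B_1^2\times B_{t+\sqrt{\epsilon_k}}^{n-2}$ by one more interpolation in the region $(B_1^2\setminus B_{1/2}^2)\times B_b^{n-2}$ where the energy density is exponentially small, costing $O(e^{-K/\epsilon})\le\tfrac{\nu}{5}\E^{(k)}$) yields a competitor with the same boundary data on $\de(B_1^2\times B_b^{n-2})$ and $\tfrac{1}{2\pi}\int_{B_1^2\times B_b^{n-2}}$(energy)$\le|B_b^{n-2}|+\E^{(k)}\int_{B_a^{n-2}}\tfrac{|dw|^2}{2}+\tfrac{\nu}{5}\E^{(k)}$, strictly below the lower bound $|B_b^{n-2}|+\E^{(k)}\int_{B_a^{n-2}}\tfrac{|dw|^2}{2}+\tfrac{2\nu}{5}\E^{(k)}$ established for $(u_k,\nabla_k)$ — contradicting minimality and completing the argument.

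The main obstacle is the gauge control on the interpolation annulus: a generic minimizer may be wildly irregular in an arbitrary gauge, so even to define $\tilde u=(1-\varphi)u+\varphi u_h$ sensibly one must first transform $(u,\nabla)$ into a gauge in which it is $L^2$‑close to the pullback pair and has phase matching $u_h$ away from the vorticity set — precisely the content of \cref{Interpolation-gauge-proposition}, whose proof rests on the sharp slice‑by‑slice stability of \cite{Halavati-stability}, a partition‑of‑unity gauge patched from local Coulomb gauges on cylinders of size $\epsilon|\ln\epsilon|$, and Poincaré--Gaffney inequalities. Granting \cref{Interpolation-gauge-proposition}, \cref{interp.cor}, \cref{pull-back-proposition} and \cref{energy-identity-on-slice}, the remaining work — the pigeonhole selection of $a,b,t$, the lower‑semicontinuity passage from $B_s^{n-2}$ to $B_a^{n-2}$, and the term‑by‑term estimate of $\tilde\E_z$ on the gluing annulus — is careful but essentially routine; the one point to watch is that every spurious error ($\sqrt\epsilon\,|\ln\epsilon|^{10}\int\E_z$, $\epsilon^\beta$, $e^{-K/\epsilon}$) is genuinely $o(\E)$ or absorbable into a fraction of $\nu\E$, which is exactly where the standing hypotheses $C\epsilon^\beta\le\E$ and $Ce^{-K/\epsilon}\le\E$ are invoked.
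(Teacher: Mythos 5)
Your proposal is correct and follows essentially the same route as the paper's own proof: compactness and contradiction, harmonic approximation of the normalized Lipschitz approximation, the slice energy identity to turn the failure of (iii) into an energy lower bound on a slightly larger cylinder, pigeonhole selection of $a,b,t$, the pullback competitor from \cref{pull-back-proposition}, the interpolation gauge of \cref{Interpolation-gauge-proposition}/\cref{interp.cor} on the $\sqrt{\epsilon}$-annulus, and the term-by-term bound of the sliced excess before invoking minimality. The only difference is that you summarize the estimate of the four excess pieces on the gluing annulus rather than writing it out, but the argument and the absorption of errors via $C\epsilon^\beta\le\E$ and $Ce^{-K/\epsilon}\le\E$ match the paper.
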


            \subsection{Proof of \cref{tilt-excess-decay-minimizer}}
            In this section we finish the proof of the stronger decay of excess for minimizers. 
				We rescale $B_1^n(0)$ to $B_2^n(0)$ and
				apply \cref{strong-approximation-of-excess-proposition}, with some $s\in(0,\mz)$ and $\nu>0$ to be chosen later
				and with $\beta+1$ in place of $\beta$.
				We obtain that either $\E=\E(u,\nabla,B_2^n(0),\R^{n-2})\le C\epsilon^{\beta+1}$ or
				the conclusions of \cref{strong-approximation-of-excess-proposition} hold true
				(provided that the picture is rotated in such a way that $\E$ is small enough).
				
				In the first situation, we clearly have
				$\min_S\E(u,\nabla,B_2^n,S)\le\epsilon^\beta$ for $\epsilon$ small enough and we are done.
				Hence, in the sequel, we can assume that we are in the second situation.
				
				We will assume for simplicity that $\R^{n-2}$ minimizes $\E(u,\nabla,B_2^n,\cdot)$ and that 
				\begin{equation}\label{e:delta}
			|dw(0)|\le\delta
			\end{equation}
			 with $\delta>0$ to be chosen momentarily.
				
				Since $|dw(z)-dw(0)|\le s\sup_{B_s^{n-2}}|D^2w|$ on $B_s^{n-2}$, we have
				$$\int_{B_s^{n-2}}|dw|^2\le C(n)s^{n-2}\delta^2+C(n)s^{n}\sup_{B_s^{n-2}}|D^2w|^2
				\le C(n)s^{n-2}(\delta^2+s^2),$$
				where the last inequality comes from the bound $\|dw\|_{L^2}\le C(n)$ and standard elliptic estimates.
				
				By item (iii) from \cref{strong-approximation-of-excess-proposition} we then have
				$$\int_{B_s^{n-2}}\E_z\le \E\int_{B_s^{n-2}}\frac{|dw|^2}{2}+\nu\E
				\le C(n)s^{n-2}(\delta^2+s^2+s^{2-n}\nu)\E.$$
				This immediately gives
				$$\E(u,\nabla,B_s^n,\R^{n-2})\le C(n)(\delta^2+s^2+s^{2-n}\nu)\E.$$
				The theorem follows under the assumption \eqref{e:delta}  by taking  $\delta,s$ and \emph{subsequently} $\nu$ small enough.
				The general case can be reduced to this one by the very same argument of  Section \ref{Tilt-subsection-label};
				the only differences here are that we use item (ii) from \cref{strong-approximation-of-excess-proposition} in order to bound
				$$\|h-(h)_{B_1^{n-2}}-\sqrt{\E} w\|_{L^2}^2\le\nu\E$$
				and that $\E_1$ is replaced by $\E$ throughout that argument. 

            \subsection{Proof of \cref{main-result-global-n4}: the case of minimizers}
            We prove the following theorem, which contains the second part of \cref{main-result-global-n4}.
            \begin{thm}
                For $n\geq2$, there exists $\tau_0(n)>0$ such that the following holds. If $(u,\nabla)$ is an entire, local minimizer for the energy $E_1$, with $u(0)=0$ and the energy bound
                \begin{align*}
                    &\frac{1}{|B^{n-2}_R|}\int_{B_R^n} \left[|\nabla u|^2 + |F_\nabla|^2 + \frac14(1-|u|^2)^2\right] \leq 2\pi+\tau_0,
                \end{align*}
                then $(u,\nabla)$ is two-dimensional. More precisely,
                we have $(u,\nabla) = P^*(u_0,\nabla_0)$ up to a change of gauge, where $P$ is the orthogonal projection onto a two-dimensional subspace and $(u_0,\nabla_0)$ is the standard degree-one solution of Taubes \cite{Taubes} (or its conjugate), centered at the origin.
                \begin{proof}
                	We can assume $n\ge3.$
                	We proceed exactly as in the proof of \cref{main-result-global-n4}: letting $\beta:=n-2>0$,
                	it is enough to prove that
                	$$\lim_{R\to\infty}R^\beta\min_S\E_1(u,\nabla,B_R^n,S)=0.$$
                	This follows from the stronger excess decay statement for minimizers, using
                	the same iteration argument employed in the proof of \cref{main-result-global-n4}.
                \end{proof}
            \end{thm}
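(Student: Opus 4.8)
The plan is to mimic the proof of \cref{iteration.pf} line by line, replacing the tilt-excess decay \cref{tilt-excess-decay-statement} by its stronger counterpart \cref{tilt-excess-decay-minimizer}. We may assume $n\ge3$. As there, everything reduces to establishing
\[
\lim_{R\to\infty}R^{n-2}\min_{S\in\Gr(n,n-2)}\E_1(u,\nabla,B_R^n,S)=0 .
\]
Granting this, choose oriented planes $S(R)$ realizing the minimum; since $R^{2-n}\int_{B_R^n}e_1(u,\nabla)$ stays bounded, along a subsequence $S(R)\to S$ in $\Gr(n,n-2)$, and using continuity of the excess integrand in the plane together with monotonicity in $R$ we get, for every fixed $R_0$,
\[
\frac1{2\pi}\int_{B_{R_0}^n}\Big[\textstyle\sum_{a\ge3}|\nabla_{e_a}u|^2+\sum_{(a,b)\neq(1,2)}\omega(e_a,e_b)^2\Big]=0,
\]
where $\{e_a\}$ is an orthonormal basis adapted to $S$. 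Taking $S=\R^{n-2}$ without loss of generality, this forces $\nabla_{e_a}u\equiv0$ for $a\ge3$ and $\omega(e_a,e_b)\equiv0$ unless $\{a,b\}=\{1,2\}$, so exactly as in the proof of \cref{excess-vanishes-when-excess-1-vanishes} (following \cite[Proposition 6.7]{Pigati-1}) the pair depends, up to a change of gauge, only on $(x_1,x_2)$. Its restriction to $\R^2\times\{0\}$ is a finite-energy critical point of $E_1$ on $\R^2$ whose energy equals $2\pi$ by \cref{excess-vanishes}; by \cref{ymh-dim-2-thm} it is therefore a minimizer with vortex number $\pm1$, characterized by its zero set, which is $\{0\}$ since $u(0)=0$. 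Hence $(u,\nabla)=P^*(u_0,\nabla_0)$ for the standard degree-$\pm1$ solution of \cite{Taubes}, centered at the origin, up to gauge.

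To prove the displayed limit we iterate \cref{tilt-excess-decay-minimizer}, applied with a fixed exponent $\beta$ \emph{strictly larger than} $n-2$ (this is where we exploit that $\beta$ there is arbitrary). Fix $\rho\in(0,\tfrac12)$ with $C\rho^2\le\rho$ for the constant $C=C(n,\beta)$ of \cref{tilt-excess-decay-minimizer}, fix $C'>1/\epsilon_0$, and set $\E(k):=\E(u,\nabla,B^n_{C'\rho^{-k}},S(C'\rho^{-k}))$, so that rescaling $B^n_{C'\rho^{-k}}$ to $B_1^n$ makes the effective parameter comparable to $\rho^k$. \cref{tilt-excess-decay-minimizer} then gives, at each stage, either $\E(k)\le\rho\,\E(k+1)$ or $\E(k)\le C\rho^{\beta k}$. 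With $f(k):=\log\E(k)+(n-2)k\log\rho^{-1}$ and $g(k):=(n-2-\beta)k\log\rho^{-1}+O(1)$, the dichotomy becomes $f(k)\le f(k+1)-\lambda$ (with $\lambda=(n-1)\log\rho^{-1}>(n-2)\log\rho^{-1}$) or $f(k)\le g(k)$, while \cref{excess-vanishes} gives $\E(k)\to0$ and $\beta>n-2$ gives $g(k)\to-\infty$. The same abstract iteration used in the proof of \cref{iteration.pf} yields $f(k)\le\sup_{m\ge k}g(m)\to-\infty$, i.e.\ $\rho^{-(n-2)k}\E(k)\to0$; since $\E_1\le\E$ and, as in \cref{iteration.pf}, the unoriented plane $S(C'\rho^{-k})$ is asymptotically a minimizer of $\E_1$, we obtain $R^{n-2}\min_S\E_1(u,\nabla,B_R^n,S)\to0$ along the scales $R=C'\rho^{-k}$, which is enough for the argument above.

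All the heavy lifting is already done in \cref{tilt-excess-decay-minimizer} (and behind it \cref{pull-back-proposition}, \cref{Interpolation-gauge-proposition} and \cref{strong-approximation-of-excess-proposition}); the present argument is essentially bookkeeping. The two points to watch are: (i) running the iteration with the floor exponent $\beta$ strictly above the tracked exponent $n-2$, so that the ``non-decay'' alternative genuinely shrinks rather than merely stabilizing; and (ii) the soft passage — via Fatou's lemma and compactness of the Grassmannian — from decay of the excess at all scales to the \emph{exact} vanishing of the transverse energy over $\R^n$, which is what triggers the two-dimensional rigidity through \cref{ymh-dim-2-thm}.
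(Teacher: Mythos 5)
Your proposal is correct and follows essentially the same route as the paper: reduce the classification to showing that $R^{n-2}\min_S\E_1(u,\nabla,B_R^n,S)\to0$, and obtain this by iterating the stronger minimizer decay \cref{tilt-excess-decay-minimizer} exactly as in the iteration scheme of \cref{iteration.pf}, then conclude planarity via \cref{excess-vanishes-when-excess-1-vanishes} and the two-dimensional classification \cref{ymh-dim-2-thm}. The one substantive point where you diverge is the choice of the floor exponent: the paper takes $\beta=n-2$, whereas you insist on $\beta>n-2$ strictly, and this is in fact what makes the bookkeeping close cleanly. With $\beta=n-2$ the ``non-decay'' alternative $\E\le\epsilon^{\beta}$ at scale $R$ only gives $R^{n-2}\E\le O(1)$, i.e.\ the comparison function $g$ in the abstract iteration is bounded rather than tending to $-\infty$, so the iteration as written yields boundedness of the tracked quantity, not its vanishing (e.g.\ $f\equiv g=\mathrm{const}$ is compatible with the dichotomy); since \cref{tilt-excess-decay-minimizer} is valid for every $\beta$, taking $\beta>n-2$ costs nothing and restores $g(k)\to-\infty$, hence the desired limit. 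Apart from this refinement (and the usual caveat, which the paper also handles, that one may need to conjugate the pair so that $\E(k)\to0$), your argument matches the paper's.
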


                    \appendix
                    \section{Barycenter and variance of good slices}
				 We show two lemmas which give a more refined control of a critical pair on a \emph{good slice}
				 $B_1^2\times\{z\}$, with $z\in\mathcal{G}^\eta$, the good set defined in \cref{good-set-definition}.
				 We assume that $(u,\nabla)$ is a critical pair for $E_\epsilon$, defined on $B_1^2\times B_1^{n-2}$,
				 with $\epsilon\le\epsilon_0$ and
				 $$E_\epsilon(u,\nabla)\le|B_1^{n-2}|(2\pi+\tau_0)$$
				 (as well as \cref{u-le-1}--\cref{modica-bounds}).
				 Under this assumption, we have
				 $$\int_{(B_{3/4}^2\setminus B_{1/2}^2)\times\{z\}}e_\epsilon(u,\nabla)\le e^{-K(n)/\epsilon}$$
				 for $z\in B_{3/4}^{n-2}$, since this part of the slice is far from the vorticity set. Recall that the barycenter
				 $$h(z)=\Phi_{\chi(x_1,x_2)}(z)$$
				 was defined using a cut-off function $\chi$ supported in $B_{3/4}^2$, with $\chi=1$ on $B_{1/2}^2$
				 (the notation in the subscript means $\chi\cdot(x_1,x_2)$).
				 
                 \begin{lemma}[Barycenter of a good slice]\label{zero-set-distance-barycenter-slice}
                    For $\epsilon_0,\tau_0,\eta_0>0$ small enough, if $\eta\leq\eta_0$ and $z\in \G^\eta$, then we have the following estimate (for a possibly different $K=K(n)$):
                    \begin{align*}
                        |h(z) - h_0(z)| \leq C(n)\epsilon|\log(\E_2)_z|(\E_2)_z^{1/2}+e^{-K/\epsilon},
                    \end{align*}
                    where $h_0$ is the map from \cref{Lipschitz-approximation-of-zero-loci-lemma} giving the zero set on good slices.
                 \end{lemma}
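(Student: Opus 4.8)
The plan is to identify the barycenter $h(z)=\Phi_{\chi(x_1,x_2)}(z)$ with a first moment of the sliced Jacobian and to compare it, slice by slice, with the model pair $(u_{h_0},\nabla_{h_0})$. Unravelling \cref{slicing-definition} and the definition of $\Gamma_\epsilon$ (only the $e_1\wedge e_2$-component of $J$ survives the wedge with $dx_3\wedge\cdots\wedge dx_n$, and $J$ is gauge invariant), one has $h(z)=\frac{1}{2\pi}\int_{B_1^2\times\{z\}}\chi(y)\,y\,J(u,\nabla)(e_1,e_2)\,dy$. Let $(u_{h_0},\nabla_{h_0})$ be the pullback pair of \cref{pull-back-proposition} along the graph of the function $h_0$ of \cref{Lipschitz-approximation-of-zero-loci-lemma}; along the good slice $\{z\}$ it is the $\epsilon$-rescaled standard degree-one Taubes solution translated to vanish at $h_0(z)$, which is rotationally symmetric about its vortex, so $J(u_{h_0},\nabla_{h_0})(e_1,e_2)(y,z)=\epsilon^{-2}j(|y-h_0(z)|/\epsilon)$ for a radial profile with $\int_{\R^2}j=2\pi$. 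Using the exponential decay of the Taubes solution (\cite{Taubes-2}) and $\chi\equiv1$ on $B_{1/2}^2$, the first moment of $j$ vanishes by oddness and one gets $\frac{1}{2\pi}\int_{B_1^2\times\{z\}}\chi(y)\,y\,J(u_{h_0},\nabla_{h_0})(e_1,e_2)\,dy=h_0(z)+O(e^{-K/\epsilon})$. Subtracting and writing $y=(y-h_0(z))+h_0(z)$, the $h_0(z)$-piece is harmless, since both $\int_{B_1^2\times\{z\}}J(u,\nabla)(e_1,e_2)$ and $\int_{B_1^2\times\{z\}}J(u_{h_0},\nabla_{h_0})(e_1,e_2)$ equal $2\pi+O(e^{-K/\epsilon})$ — for the former combine \cref{energy-identity-on-slice} and \cref{display-1}, recalling $\deg(u/|u|,\partial B_1^2\times\{z\})=1$ on a good slice after possibly conjugating — so we are left with $|h(z)-h_0(z)|\le \frac{1}{2\pi}\int_{B_1^2\times\{z\}}|y-h_0(z)|\,\bigl|J(u,\nabla)(e_1,e_2)-J(u_{h_0},\nabla_{h_0})(e_1,e_2)\bigr|\,dy+O(e^{-K/\epsilon})$.

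It then remains to prove the weighted $L^1$ bound $\int_{B_1^2\times\{z\}}|y-h_0(z)|\,\bigl|J(u,\nabla)(e_1,e_2)-J(u_{h_0},\nabla_{h_0})(e_1,e_2)\bigr|\,dy\le C\epsilon|\log(\E_2)_z|(\E_2)_z^{1/2}+e^{-K/\epsilon}$. I would split the integral at the radius $r_0:=\frac{2\epsilon}{K}|\log(\E_2)_z|$, capped at $\tfrac34$ (the degenerate case $r_0>\tfrac34$ forces $(\E_2)_z$ so small that the whole right-hand side is already $\le e^{-K/\epsilon}$). On $\{|y-h_0(z)|\le r_0\}$ one bounds the weight by $r_0$ and invokes the refined two-dimensional stability estimates of \cite{Halavati-stability} — the same ones used in the proof of \cref{min.good.set} to derive \cref{curvature-estimate-temp-2}, valid precisely because on a good slice $u$ vanishes linearly at the unique point $h_0(z)$ — which, since the two-dimensional energy discrepancy of the slice equals $2\pi(\E_2)_z+O(\epsilon e^{-K/\epsilon})$ by \cref{display-1} and \cref{energy-identity-on-slice}, yield the slice version of \cref{Jacobian-energy-stability-prop}, namely $\|J(u,\nabla)(e_1,e_2)-J(u_{h_0},\nabla_{h_0})(e_1,e_2)\|_{L^1(B_1^2\times\{z\})}\le C(\E_2)_z^{1/2}+Ce^{-K/\epsilon}$; multiplying by $r_0$ gives $C\epsilon|\log(\E_2)_z|(\E_2)_z^{1/2}+Ce^{-K/\epsilon}$. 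On $\{|y-h_0(z)|>r_0\}$ I would use pointwise exponential decay of both Jacobians: for $J(u_{h_0},\nabla_{h_0})(e_1,e_2)$ from the Taubes profile, and for $J(u,\nabla)(e_1,e_2)\le e_\epsilon(u,\nabla)$ from \cref{exponential-decay-proposition} together with \cref{cone}, which on the good slice confines $\{|u|\le\tfrac34\}$ to a $C\epsilon$-neighbourhood of $h_0(z)$ and hence forces $\dist(\cdot,Z)\ge\tfrac12|y-h_0(z)|$ there; this gives $|J(u,\nabla)(e_1,e_2)|+|J(u_{h_0},\nabla_{h_0})(e_1,e_2)|\le C\epsilon^{-2}e^{-K|y-h_0(z)|/(2\epsilon)}$ for $|y-h_0(z)|\ge C\epsilon$, and integrating the weight against this over $\{|y-h_0(z)|>r_0\}$ produces $C\epsilon\,e^{-Kr_0/(4\epsilon)}=C\epsilon(\E_2)_z^{1/2}$, absorbed into the main term. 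Collecting the two pieces and renaming $K$ yields the claim.

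The main obstacle is the input from \cite{Halavati-stability}: one genuinely needs closeness of the slice Jacobian to the Jacobian of the minimizer $u_{h_0}$ centered at the \emph{actual} zero $h_0(z)$, not merely to the best-approximating minimizer in the moduli space $\mathcal F$ — using the latter would leave us having to bound the displacement of that minimizer's vortex from $h_0(z)$, which is itself a barycenter-type quantity and hence essentially circular. This refined closeness is exactly the additional estimate from \cite{Halavati-stability} already exploited in \cref{min.good.set}, whose hypotheses hold on good slices because there $u$ has a single, linearly vanishing zero. The remaining ingredients — that the slice discrepancy is comparable up to $e^{-K/\epsilon}$ to $(\E_2)_z$, that $J(u,\nabla)(e_1,e_2)$ decays exponentially away from $h_0(z)$ along the slice (needing \cref{cone} to locate $Z$ near the slice), the moment computation for the radial Taubes Jacobian, the dyadic split, and the bookkeeping of the various $e^{-K/\epsilon}$-type errors into one — are all routine.
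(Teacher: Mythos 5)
Your proposal is correct and matches the paper's own argument almost step for step: both express $h(z)-h_0(z)$ as a first moment of $J(u,\nabla)-J(u_{h_0},\nabla_{h_0})$ about $h_0(z)$ (using radial symmetry of the Taubes profile and total Jacobian mass $2\pi+O(e^{-K/\epsilon})$), then split at a radius of order $\epsilon|\log(\E_2)_z|$, using the $L^1$ Jacobian stability of \cite{Halavati-stability} centered at the actual zero on the inner region and exponential decay via \cref{cone} on the outer region. The only differences are cosmetic (your derivation of the slice Jacobian mass from \cref{energy-identity-on-slice} and \cref{display-1} versus the paper's citation of \cite{Pigati-1}, and your slightly looser wording of the degenerate case $r_0>3/4$, which is handled the same way the paper handles $r>\frac14$).
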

                    In other words, the barycenter of the good slice is close to the actual zero of $u$ here
                    (unique in $B_{1/2}^2\times\{z\}$).
                    \begin{proof}
                    	Recall that, by definition, we have
                        \begin{align*}
                            h(z)=\Phi_{\chi(x_1,x_2)}(z) = \frac{1}{2\pi}\int_{B^2_{1/2}\times\{z\}} (x_1,x_2)J(u,\nabla)(e_1,e_2)
                            +Ce^{-K/\epsilon}.
                        \end{align*}
                        Since the integral of the Jacobian on $B_{1/2}^2\times\{z\}$
                        is $2\pi+O(e^{-K/\epsilon})$ (see, e.g., the proof of \cite[Lemma 6.11]{Pigati-1}),
                        we get
                        $$h(z)-h_0(z)=\frac{1}{2\pi}\int_{B^2_{1/2}\times\{z\}} [(x_1,x_2)-h_0(z)]J(u,\nabla)(e_1,e_2)
                                                    +Ce^{-K/\epsilon}.$$
                        On the other hand, using the notation from \cref{pull-back-proposition},
                        we have
                        $$\left|\frac{1}{2\pi}\int_{B_{1/2}^2\times\{z\}}[(x_1,x_2)-h_0(z)]J(u_{h_0},\nabla_{h_0})(e_1,e_2)\right|
                                                    \le Ce^{-K/\epsilon},$$
                        by symmetry of the standard planar solution.
                        
                        Moreover, $u(\cdot,z)$ vanishes linearly at $h_0(z)$, as observed in \cref{cone}.
                        We can then apply a rescaling of \cite[Theorem C.1]{Halavati-stability}, which gives
                        \begin{align}\label{halast}
                        \int_{B^2_{1/2}\times\{z\}} |J(u,\nabla)(e_1,e_2)-J(u_{h_0},\nabla_{h_0})(e_1,e_2)| \leq C\sqrt{(\E_2)_z} + Ce^{-{K}/{\epsilon}}.
                        \end{align}
                        Selecting a radius $C(n)\epsilon\le r\le\frac{1}{4}$, we have
                        $$e_\epsilon(u,\nabla)(y,z)\le C(n)\epsilon^{-2}e^{-K|y-h_0(z)|/\epsilon}\quad\text{on }[B_{1/2}^2\setminus B_r^2(h_0(z))]\times\{z\}$$
                        for a possibly different $K$,
                        since as observed in \cref{cone} the distance from the vorticity set $Z$ is comparable to the distance from
                        $$Z\cap (B_{3/4}^2\times\{z\})\subseteq B_{C(n)\epsilon}^2(h_0(z))\times\{z\},$$
                        on good slices.
                        Hence,
                        \begin{align*}
                        &\int_{B^2_{1/2}\times\{z\}}|(x_1,x_2)-h_0(z)||J(u,\nabla)(e_1,e_2)-J(u_{h_0},\nabla_{h_0})(e_1,e_2)|\\ &\leq r\int_{B_r^2(h_0(z))\times\{z\}}|J(u,\nabla)(e_1,e_2)-J(u_{h_0},\nabla_{h_0})(e_1,e_2)|\\
                        &\quad+C\epsilon^{-2}\int_{B_{1/2}^2\setminus B_r^2(h_0(z))}|y-h_0(z)|e^{-K|y-h_0(z)|/\epsilon}\,dy\\
                        &\le Cr\sqrt{\E_z} + C\epsilon e^{-{Kr}/{\epsilon}}
                        \end{align*}
                        for a possibly different $K$.
                        Taking $r:=M\epsilon|\log\E_z|$ for big enough $M$, we get
                        $$r\sqrt{\E_z} + \epsilon e^{-{Kr}/{\epsilon}}\le M\epsilon\sqrt{\E_z}|\log\E_z|+\epsilon\sqrt{\E_z}
                        \le C(n)\epsilon\sqrt{\E_z}|\log\E_z|$$
                        (recall that $\E_z\le\mz$, by definition of good set), unless $r<C(n)\epsilon$ or $r>\frac14$.
                        The situation $r<C(n)\epsilon$ cannot happen, once $M$ is taken large enough, while in the last case we obtain
                        $\E_z\le e^{-K'/\epsilon}$ and thus we are done again, by taking $r:=\frac14$ above.
                    \end{proof}
                
                Next we show that on a good slice the variance is close to $\epsilon^2 v_0$, where $v_0$ is the variance of the standard degree-one planar solution.
                    \begin{lemma}[Variance of a good slice]\label{variance-of-good-slices}
                        For any $\sigma\in(\epsilon,1)$ such that $|h_0(z)|\le\sigma$, we have the following estimate on good slices, for any $c\in\R^2$ with $|c|\le\sigma$:
                        \begin{align*}
                        &\left|\frac{1}{2\pi}\int_{B_{1/2}^2\times\{z\}}[(x_1-c_1)^2+(x_2-c_2)^2]e_\epsilon(u,\nabla)
                        -\epsilon^2v_0\right|\\
                        &\le C(n)\epsilon^2 |\log(\E_2)_z|^2\sqrt{(\E_2)_z} + C(n)\sigma^2(\E_1)_z + C(n)|h(z)-c|^2\sqrt{(\E_2)_z} + C(n)e^{-{K\sigma}/{\epsilon}},
                        \end{align*}
                        for a possibly different $K=K(n)$.
                        \begin{proof}
                        	First of all, since the integrand in the definition of excess $\E=\E_1+\E_2$ upper bounds $e_\epsilon(u,\nabla)-J(u,\nabla)$, we can replace $e_\epsilon(u,\nabla)$ with $J(u,\nabla)$, up
                        	to an error bounded as follows: for $\E_1$, we bound separately the contribution of $B_{2\sigma}^2$ and the complement (where we use exponential decay) obtaining the error $$9\sigma^2(\E_1)_z+C(n)e^{-K\sigma/\epsilon};$$
                            as for $\E_2$, we argue as in the previous proof, obtaining the error
                            $$C(n)|h_0(z)-c|^2(\E_2)_z+C(n)\epsilon^2|\log(\E_2)_z|^2(\E_2)_z+e^{-K/\epsilon},$$
                            where the first term comes from replacing $c$ with the actual location $h_0(z)$ of the zero.
                            Moreover, by definition of $v_0$, we have
                            \begin{align*}
                                \frac{1}{2\pi}\int_{B^2_{1/2}\times\{z\}} |(x_1,x_2)-h_0(z)|^2 J(u_{h_0},\nabla_{h_0})(e_1,e_2) = \epsilon^2 v_0 + O(e^{-{K}/{\epsilon}});
                            \end{align*}
                            since $|(x_1,x_2)-h_0(z)|^2-|(x_1,x_2)-c|^2=2\ang{(x_1,x_2)-h_0(z),c-h_0(z)}-|c-h_0(z)|^2$
                            and
                            $$\int_{B_{1/2}^2\times\{z\}}[(x_1,x_2)-h_0(z)]J(u_{h_0},\nabla_{h_0})=O(e^{-K/\epsilon}),$$
                            we obtain
                            $$\frac{1}{2\pi}\int_{B^2_{1/2}\times\{z\}} |(x_1,x_2)-c|^2 J(u_{h_0},\nabla_{h_0})(e_1,e_2) = |h_0(z)-c|^2+\epsilon^2 v_0 + O(e^{-{K}/{\epsilon}}).$$
                            As in the previous proof, we can replace $J(u_0,\nabla_0)$ with $J(u,\nabla)$ here, up to an error of the form $C(n)(|h_0(z)-c|^2 + \epsilon^2|\log\E_z|^2)\sqrt{(\E_2)_z}$ (using \cref{Jacobian-energy-stability-estimate} from \cref{Jacobian-energy-stability-prop}).
                            Finally, we can bound
                            $$|h_0(z)-c|^2\le 2|h_0(z)-h(z)|^2+2|h(z)-c|^2 \le 2|h(z)-c|^2+C(n)\epsilon^2|\log(\E_2)_z|^2(\E_2)_z$$
                            using the previous proposition, and the claim follows.
                        \end{proof}
                    \end{lemma}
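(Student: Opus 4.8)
The plan is to reduce the weighted energy integral to a weighted integral of the gauge-invariant Jacobian, replace the pair on the slice by the standard Taubes vortex vanishing at $h_0(z)$, and evaluate the resulting model integral by scaling and symmetry. Write $w:=(x_1-c_1)^2+(x_2-c_2)^2$. For the first step I would use the pointwise completion of squares behind \cref{energy-identity-on-slice} (cf.\ \cref{display-1}): $e_\epsilon(u,\nabla)-J(u,\nabla)(e_1,e_2)$ equals the sum of the two nonnegative integrands whose slice-integrals are $(\E_1)_z$ and $(\E_2)_z$, so $\frac{1}{2\pi}\int_{B^2_{1/2}\times\{z\}}w\,e_\epsilon(u,\nabla)=\frac{1}{2\pi}\int_{B^2_{1/2}\times\{z\}}w\,J(u,\nabla)(e_1,e_2)+R$, where $R$ is the weighted integral of those two integrands.

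Next I would bound $R$ by splitting. For the $\E_1$-part, split at radius $2\sigma$ about the origin: on $B^2_{2\sigma}$ one has $w\le9\sigma^2$, producing $9\sigma^2(\E_1)_z$; outside, the slice lies a distance $\gtrsim\sigma$ from $\{|u|\le\tfrac34\}$ (on a good slice $\{|u|\le\tfrac34\}\cap(B^2_1\times\{z\})\subseteq B^2_{C\epsilon}(h_0(z))$ with $|h_0(z)|\le\sigma$, by \cref{cone}), so \cref{exponential-decay-proposition} bounds the rest by $Ce^{-K\sigma/\epsilon}$. For the $\E_2$-part, split at radius $r:=M\epsilon|\log(\E_2)_z|$ about $h_0(z)$ (truncated to a fixed multiple of $\epsilon$, or to $\tfrac14$, in the degenerate regimes): inside $B^2_r(h_0(z))$ use $w\le2r^2+2|h_0(z)-c|^2$ and $\frac{1}{2\pi}\int_{B^2_1\times\{z\}}(\E_2\text{-integrand})\le(\E_2)_z$ to get $C(\epsilon^2|\log(\E_2)_z|^2+|h_0(z)-c|^2)(\E_2)_z$; outside, exponential decay from the vorticity set and the coarea formula absorb the tail into $C\epsilon^2|\log(\E_2)_z|^2\sqrt{(\E_2)_z}+Ce^{-K/\epsilon}$ once $M=M(n)$ is large.

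Then I would compare $J(u,\nabla)$ with $J(u_{h_0},\nabla_{h_0})$, where $u_{h_0}$ is the $\epsilon$-rescaled degree-one Taubes solution translated to vanish at $h_0(z)$ along the slice (the slice of the pullback pair of \cref{pull-back-proposition} with $h_0$ in place of $f$). On a good slice $u(\cdot,z)$ also vanishes linearly at $h_0(z)$, so an $\epsilon$-rescaling of the refined two-dimensional stability estimate (cf.\ \cref{Jacobian-energy-stability-prop}, \cref{halast}, and \cite[Appendix C]{Halavati-stability}) gives $\int_{B^2_{1/2}\times\{z\}}|J(u,\nabla)(e_1,e_2)-J(u_{h_0},\nabla_{h_0})(e_1,e_2)|\le C\sqrt{(\E_2)_z}+Ce^{-K/\epsilon}$; since both Jacobians are exponentially concentrated near $h_0(z)$, splitting the weighted difference at the same radius $r$ and using $w\le2|x-h_0(z)|^2+2|h_0(z)-c|^2$ bounds $\frac{1}{2\pi}\int w\,[J(u,\nabla)-J(u_{h_0},\nabla_{h_0})](e_1,e_2)$ by $C(\epsilon^2|\log(\E_2)_z|^2+|h_0(z)-c|^2)\sqrt{(\E_2)_z}+Ce^{-K/\epsilon}$. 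For the model integral I would expand $|x-c|^2=|x-h_0(z)|^2+2\langle x-h_0(z),h_0(z)-c\rangle+|h_0(z)-c|^2$ and use: (i) $\frac{1}{2\pi}\int_{B^2_{1/2}\times\{z\}}|x-h_0(z)|^2J(u_{h_0},\nabla_{h_0})(e_1,e_2)=\epsilon^2v_0+O(e^{-K/\epsilon})$ (change of variables $x\mapsto(x-h_0(z))/\epsilon$, the definition of $v_0$, and the exponential decay of $e_1(u_0,\nabla_0)$ from \cite[Chapter III, Theorem 8.1]{Taubes-2}); (ii) the vanishing, up to $O(e^{-K/\epsilon})$, of the first moment of the symmetric Taubes solution about its zero; and (iii) $\frac{1}{2\pi}\int_{B^2_{1/2}\times\{z\}}J(u_{h_0},\nabla_{h_0})(e_1,e_2)=1+O(e^{-K/\epsilon})$. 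This yields $\frac{1}{2\pi}\int w\,J(u_{h_0},\nabla_{h_0})(e_1,e_2)=\epsilon^2v_0+|h_0(z)-c|^2+O(e^{-K/\epsilon})$; finally \cref{zero-set-distance-barycenter-slice} gives $|h_0(z)-h(z)|\le C\epsilon|\log(\E_2)_z|\sqrt{(\E_2)_z}+e^{-K/\epsilon}$, so $|h_0(z)-c|^2$ is traded for $|h(z)-c|^2$ up to $C\epsilon^2|\log(\E_2)_z|^2(\E_2)_z+Ce^{-K/\epsilon}$, and collecting all the above contributions (using $(\E_2)_z\le\tfrac12$) gives the stated estimate.

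The main obstacle is the tail analysis in the two middle steps: one must choose the intermediate scale $r\sim\epsilon|\log(\E_2)_z|$ so that the part near $h_0(z)$ produces exactly the sharp factor $\epsilon^2|\log(\E_2)_z|^2$ while the exponential tail beyond radius $r$ is genuinely absorbed, and treat separately the boundary regimes where $r$ would drop below a fixed multiple of $\epsilon$ (where one already has $C^1$ control on the pair) or exceed $\tfrac14$ (where $(\E_2)_z$ is super-exponentially small, so the whole $\E_2$-tail is $O(e^{-K/\epsilon})$, after enlarging $K$ if needed). By contrast, the reduction in the first step and the algebraic manipulations in the last step are routine given the completion-of-squares identity and the cited stability results.
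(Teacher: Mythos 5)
Your proposal follows the paper's own proof essentially verbatim: the same reduction of the weighted energy to the weighted Jacobian via the completion of squares, the same splitting at radius $2\sigma$ (for the $\E_1$ part) and at $r\sim\epsilon|\log(\E_2)_z|$ around $h_0(z)$ (for the $\E_2$ part), the same replacement of $J(u,\nabla)$ by $J(u_{h_0},\nabla_{h_0})$ via the two-dimensional stability estimate, the same model computation giving $\epsilon^2v_0+|h_0(z)-c|^2$ from the second moment, vanishing first moment and unit mass of the Taubes solution, and the same trade of $|h_0(z)-c|$ for $|h(z)-c|$ via \cref{zero-set-distance-barycenter-slice}. It is correct (your tail analysis even spells out what the paper dispatches with ``as in the previous proof''), with the same caveat as the paper's own argument: what the collection actually yields is the bound with $C|h(z)-c|^2$ rather than $C|h(z)-c|^2\sqrt{(\E_2)_z}$, which is the form needed in the application to the excess decay.
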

                    
                    \begin{rmk}\label{concave}
                    Since $t\mapsto t|\log t|^2$ is concave for $t>0$ small enough,
                    we have
                    $$\int_S\E_z|\log\E_z|^2\le \left(\int_S\E_z\right)\left|\log\left(\fint_S\E_z\right)\right|^2
                    \le C\left(\int_S\E_z\right)\left|\log\left(\fint_S\E_z\right)\right|^2$$
                    for sets $S\subseteq\mathcal{G}^\eta$ of measure comparable with $1$.
                    \end{rmk}
                    
                    \section{Poincaré--Gaffney-type inequalities}\label{poincare-section}
                    In the construction of the interpolation gauge in \cref{Interpolation-gauge-proposition} we make frequent use of Poincaré-type inequalities for functions and differential forms. These inequalities are well known; we present the special cases used in this paper for the convenience of the reader.
                    
                    The following lemma is a consequence of results first appeared in the original paper of Gaffney \cite{Gaffney} (see also \cite{nonlinear-hodge} for a systematic treatment on manifolds with boundary), but for our application we need it to hold uniformly for cylinders of the form $B^2_1\times B^{n-2}_r$ of arbitrarily small width $r>0$.
                    \begin{lemma}[Poincaré--Gaffney-type inequality for thin cylinders]\label{gaffney-poincare-thin-cylinder-lemma}
                        Given a $1$-form $\alpha \in \Omega^{1}(\bar B^2_1\times \bar B_r^{n-2})$ with $r\leq 1$ and the Neumann boundary condition $\alpha(\nu) = 0$ on $\de(B^2_1\times B^{n-2}_r)$, the following inequality holds:
                        \begin{align*}
                            \int_{B^2_1\times B_r^{n-2}} |\alpha|^2 \leq C(n)\int_{B^2_1\times B^{n-2}_r} [|d\alpha|^2 + |d^*\alpha|^2].
                        \end{align*}
                        \begin{proof}
                            Since $B^2_1\times B^{n-2}_r$ is a convex domain and $\iota_\nu\alpha=0$ at its boundary, we can apply \cite[Remark 9]{CSATO2018461} to see that
                            \begin{align*}
                                \int_{B^2_1\times B^{n-2}_r} |\nabla \alpha|^2 \leq \int_{B^2_1\times B^{n-2}_r }[|d\alpha|^2 + |d^*\alpha|^2].
                            \end{align*}
                            Now we rescale the domain with the map $\phi:B^2_1\times B^{n-2}_1 \rightarrow B^2_1\times B_r^{n-2}$ given by $$\phi(x_1,\dots,x_n) := (x_1,x_2,rx_3,\dots,rx_n),$$ and define $\tilde{\alpha}(x) := \alpha(\phi(x))$ (notice that this is different from the pullback $\phi^*(\alpha)$). Then we claim that there exists a constant $C(n)>0$ such that
                            \begin{align}\label{Gaffney-appendix-1}
                                \int_{B^2_1\times B^{n-2}_1}|\tilde\alpha|^2 \leq C(n)\int_{B^2_1\times B^{n-2}_1}|\nabla\tilde\alpha|^2\,.
                            \end{align}
                            We prove this by compactness and contradiction. By homogeneity, suppose there exists a sequence $\tilde\alpha_k$ with $\iota_\nu\tilde\alpha_k = 0$ on $\de(B_1^2\times B^{n-2}_1)$ and
                            \begin{align*}
                                \int_{B^2_1\times B^{n-2}_1}|\tilde\alpha_k|^2=1,\quad\lim_{k\rightarrow \infty}\int_{B^2_1\times B^{n-2}_1}|\nabla\tilde\alpha_k|^2 = 0.
                            \end{align*}
                            Note that by the display above we have the bound $\|\tilde\alpha_k\|_{W^{1,2}(B^2_1\times B_1^{n-2})} \leq 2$ for all large $k\geq0$.
                            Up to extracting a subsequence, we can assume that $\tilde\alpha_{k}$ converges weakly
                            to $\tilde\alpha_\infty$ in $W^{1,2}$. By Rellich--Kondrachov, the convergence is strong in $L^2$. Thus,
                            $$\int_{B^2_1\times B^{n-2}_1}|\tilde\alpha_\infty|^2=1,\quad \nabla\tilde\alpha_\infty=0.$$
                            Hence, $\tilde\alpha_\infty=v$ is a constant covector. The boundary condition passes to the limit, giving that $v(\nu)=0$ on $\de(B^2_1\times B^{n-2}_1)$; since the normal vectors to the boundary of this domain span all of $\R^n$, we get that $v=0$, a contradiction establishing \cref{Gaffney-appendix-1}. Then we compute that
                            \begin{align*}
                                \int_{B^2_1\times B^{n-2}_r} |\alpha|^2 &= r^{n-2}\int_{B^2_1\times B^{n-2}_1} |\tilde\alpha|^2 \\ &\leq C(n)r^{n-2} \int_{B^2_1\times B^{n-2}_1} |\nabla\tilde\alpha|^2 \\
                                & \leq C(n)\int_{B^2_1\times B^{n-2}_r}|\nabla \alpha|^2\\
                                &\leq C(n)\int_{B^2_1\times B^{n-2}_r} [|d\alpha|^2 + |d^*\alpha|^2],
                            \end{align*}
                            as desired.
                        \end{proof}
                    \end{lemma}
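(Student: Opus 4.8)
The plan is to reduce the estimate to the corresponding one on the fixed unit cylinder $B_1^2\times B_1^{n-2}$ by an anisotropic dilation, and there to combine two ingredients that carry no dependence on $r$: a Bochner/Gaffney identity exploiting the convexity of the domain, and a compactness argument excluding nonzero constant $1$-forms compatible with the boundary condition. The delicate point throughout is to keep every constant independent of $r$, so the argument has to be organised so that no estimate with an $r$-degenerating constant is ever invoked.

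\emph{Step 1: controlling the full gradient.} First I would record that on any bounded convex domain $\Omega\subset\R^n$ with smooth boundary, every $\alpha\in\Omega^1(\bar\Omega)$ with $\iota_\nu\alpha=0$ on $\partial\Omega$ satisfies $\int_\Omega|\nabla\alpha|^2\le\int_\Omega[|d\alpha|^2+|d^*\alpha|^2]$. This is the standard computation: integrating by parts the Weitzenböck identity $\Delta_H\alpha=\nabla^*\nabla\alpha$ (there is no curvature term, the ambient space being flat) produces a boundary integral whose integrand is controlled by the second fundamental form of $\partial\Omega$, which has the favourable sign precisely when $\Omega$ is convex; this is exactly \cite[Remark 9]{CSATO2018461}. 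Applying it with $\Omega=B_1^2\times B_r^{n-2}$ gives the bound with a constant \emph{independent of} $r$, and this is the step where convexity is essential.

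\emph{Step 2: a Poincaré inequality for forms on the unit cylinder.} Next I would prove that there is $C(n)$ with $\int_{B_1^2\times B_1^{n-2}}|\beta|^2\le C(n)\int_{B_1^2\times B_1^{n-2}}|\nabla\beta|^2$ for every $\beta\in\Omega^1(\bar B_1^2\times\bar B_1^{n-2})$ satisfying $\iota_\nu\beta=0$ on the boundary, by compactness and contradiction. If it failed there would be $\beta_k$ with $\|\beta_k\|_{L^2}=1$ and $\|\nabla\beta_k\|_{L^2}\to0$; then $\{\beta_k\}$ is bounded in $W^{1,2}$, so up to a subsequence $\beta_k\rightharpoonup\beta_\infty$ weakly in $W^{1,2}$ and $\beta_k\to\beta_\infty$ strongly in $L^2$ by Rellich--Kondrachov, whence $\nabla\beta_\infty=0$ and $\beta_\infty=v$ is a constant covector with $\|v\|_{L^2}=1$. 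The linear condition $\iota_\nu\beta_k=0$ passes to the limit (via the trace, or by testing the distributional identity against tangential fields), forcing $v(\nu)=0$ along $\partial(B_1^2\times B_1^{n-2})$; since the outward normals of this product domain span all of $\R^n$, this gives $v=0$, a contradiction.

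\emph{Step 3: rescaling and conclusion.} Finally I would glue the two steps through the map $\phi(x_1,\dots,x_n):=(x_1,x_2,rx_3,\dots,rx_n)$, which sends $B_1^2\times B_1^{n-2}$ onto $B_1^2\times B_r^{n-2}$, and I would set $\tilde\alpha:=\alpha\circ\phi$ (precomposition, not the pullback, so that pointwise norms are not distorted). The boundary condition is preserved, $\int_{B_1^2\times B_1^{n-2}}|\tilde\alpha|^2=r^{2-n}\int_{B_1^2\times B_r^{n-2}}|\alpha|^2$, and since $r\le1$ the chain rule yields the pointwise bound $|\nabla\tilde\alpha(x)|\le|(\nabla\alpha)(\phi(x))|$, hence $\int_{B_1^2\times B_1^{n-2}}|\nabla\tilde\alpha|^2\le r^{2-n}\int_{B_1^2\times B_r^{n-2}}|\nabla\alpha|^2$. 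Chaining Step 2, then Step 1, with these identities gives
\[
\int_{B_1^2\times B_r^{n-2}}|\alpha|^2\le C(n)\int_{B_1^2\times B_r^{n-2}}|\nabla\alpha|^2\le C(n)\int_{B_1^2\times B_r^{n-2}}[|d\alpha|^2+|d^*\alpha|^2],
\]
as claimed. The main obstacle is the uniformity in $r$: this is secured by the sign of the second fundamental form in Step 1 (so that step loses nothing under scaling) and by the exact cancellation of the two powers $r^{n-2}$ — one from the volume factor, one from the chain rule — in Step 3; the geometric input needed in Step 2, namely that the normals of the cylinder span $\R^n$, is manifestly stable under the dilation $\phi$.
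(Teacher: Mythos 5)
Your proposal is correct and follows essentially the same route as the paper: the Gaffney inequality on the convex thin cylinder via \cite[Remark 9]{CSATO2018461}, a compactness/contradiction Poincaré inequality for Neumann $1$-forms on the unit cylinder (ruling out constant covectors since the normals span $\R^n$), and the anisotropic rescaling with precomposition $\tilde\alpha=\alpha\circ\phi$, where the chain rule with $r\le1$ and the matching $r^{n-2}$ volume factors give the uniformity in $r$. The only difference is cosmetic ordering — you state the unit-cylinder Poincaré inequality as a standalone lemma before rescaling, while the paper runs the compactness argument directly on $\tilde\alpha$.
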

                    The next lemma is a weighted Poincaré estimate for functions in two dimensions.
                    \begin{lemma}\label{CKN-poincare-lemma-appendix}
                        There exists a constant $C>0$ such that for any compactly supported function $f\in C^1_c(B^2_R)$ the following weighted Poincare type estimate holds:
                        \begin{align*}
                            \int_{B^2_R} |x|^2|f|^2(x) \leq C R^{3/2}\int_{B^2_R} |x|^{5/2}|df|^2(x).
                        \end{align*}
                        \begin{proof}
                            By scaling the domain, we can assume that $R=1$. Then by \cite[eq.\ (1.4)]{CKN} (for the choice of constants $\alpha:=5/4$, $a:=1$, $p=q=r:=2$, $\gamma,\sigma:=1/4$) we can see that
                            \begin{align*}
                                \int_{B^2_1}|x|^2|f|^2(x) \leq \int_{B^2_1}|x|^{1/2}|f|^2(x) \leq C\int_{B^2_1} |x|^{5/2}|df|^2(x).
                            \end{align*}
                            This is indeed the desired conclusion.
                        \end{proof}
                    \end{lemma}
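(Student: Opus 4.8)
\section*{Proof proposal}

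The plan is to reduce the weighted estimate on the disk $B_R^2$ to a known Caffarelli--Kohn--Nirenberg inequality by a simple scaling, and then to verify that the specific exponents we need fall within the admissible range of that inequality. First I would normalize: given $f\in C^1_c(B_R^2)$, set $\tilde f(x):=f(Rx)$, a $C^1_c$ function on $B_1^2$. Under the substitution $x\mapsto Rx$ one has $\int_{B_R^2}|x|^a|f|^2 = R^{2+a}\int_{B_1^2}|x|^a|\tilde f|^2$ and, since $d\tilde f(x)=R\,df(Rx)$, also $\int_{B_R^2}|x|^b|df|^2 = R^{2+b-2}\int_{B_1^2}|x|^b|d\tilde f|^2 = R^{b}\int_{B_1^2}|x|^b|d\tilde f|^2$. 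With $a=2$ and $b=5/2$ this gives $\int_{B_R^2}|x|^2|f|^2 = R^4\int_{B_1^2}|x|^2|\tilde f|^2$ and $R^{3/2}\int_{B_R^2}|x|^{5/2}|df|^2 = R^{3/2}\cdot R^{5/2}\int_{B_1^2}|x|^{5/2}|d\tilde f|^2 = R^4\int_{B_1^2}|x|^{5/2}|d\tilde f|^2$, so the two sides scale identically and it suffices to prove the case $R=1$.

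On $B_1^2$ I would argue in two steps. Step one is the elementary observation that $|x|\le 1$ on $B_1^2$, hence $|x|^2\le|x|^{1/2}$ pointwise and $\int_{B_1^2}|x|^2|f|^2\le\int_{B_1^2}|x|^{1/2}|f|^2$. Step two is to invoke the CKN inequality in the scaling-invariant form of \cite[eq.~(1.4)]{CKN}: in dimension $n=2$, for the parameter choices $p=q=r=2$, $a=1$ (the first-order case), and weight exponents $\gamma=\sigma=1/4$ together with the intermediate exponent $\alpha=5/4$, the hypotheses of that inequality (the dimensional balance condition $1/r+\gamma/n = a(1/p-1/n)+(1-a)(1/q+\sigma/n)$ relating the exponents, and the inequalities $\gamma-\sigma\le a$, $0\le\gamma-\sigma$ which govern when $\alpha$ may be taken as the convex interpolation $\alpha=a\sigma+(1-a)\gamma$ is \emph{not} forced, i.e.\ $a>0$ allows $\alpha$ in a range) are satisfied, yielding $\int_{B_1^2}|x|^{1/2}|f|^2\le C\int_{B_1^2}|x|^{5/2}|df|^2$ for an absolute constant $C$. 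Chaining the two steps gives $\int_{B_1^2}|x|^2|f|^2\le C\int_{B_1^2}|x|^{5/2}|df|^2$, and undoing the scaling finishes the proof.

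The only delicate point is bookkeeping of the CKN exponents: one must check that with $n=2$ the weights $|x|^{1/2}$ on the left and $|x|^{5/2}$ on the right are compatible, i.e.\ that these correspond to admissible $(\gamma,\sigma)$ with the correct homogeneity, and that the interpolation parameter $a=1$ (no $L^q$ factor, since we take $p=q$) is permitted. This is purely a matter of plugging numbers into the constraint equations of \cite{CKN} and verifying the admissibility inequalities there; I expect no genuine obstacle, only the need to state the parameter choice explicitly (as already indicated in the statement's parenthetical remark $\alpha:=5/4$, $a:=1$, $p=q=r:=2$, $\gamma,\sigma:=1/4$). An alternative, fully self-contained route, should one prefer to avoid quoting \cite{CKN}, is a direct computation: write $f$ in polar coordinates, integrate the radial derivative from $r$ to $1$ using $f(1,\theta)=0$, apply Cauchy--Schwarz with the weight $\rho^{5/2}$, and integrate the resulting one-dimensional weighted Hardy inequality against $d\theta$; the weight powers $2$ and $5/2$ differ by $1/2<2$, which is exactly what makes the one-dimensional Hardy step work with a finite constant. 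Either way the constant is absolute, as claimed.
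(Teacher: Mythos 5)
Your proposal is correct and follows essentially the same route as the paper: rescale to $R=1$, use the pointwise bound $|x|^2\le|x|^{1/2}$ on $B_1^2$, and invoke \cite[eq.\ (1.4)]{CKN} with exactly the parameters $\alpha=5/4$, $a=1$, $p=q=r=2$, $\gamma=\sigma=1/4$ (your recalled balance condition should read $1/r+\gamma/n=a(1/p+(\alpha-1)/n)+(1-a)(1/q+\sigma/n)$, which these parameters do satisfy, with $\alpha-\sigma=1$ at the admissible endpoint). The only addition beyond the paper is your sketched self-contained alternative via polar coordinates and a weighted one-dimensional Hardy inequality, which is a reasonable backup but not needed.
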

                    \begin{lemma}[Poincaré inequality on a thin annulus]\label{Poincare-on-thin-annulus-appendix}
                        Given $a>b\ge c\ge 0$, there exists a constant $C(n,a,b,c)>0$ with the following property. Let $f$ be a function in $W^{1,2}((B^2_a\backslash B^2_c)\times \Omega)$, where $\Omega\subseteq\R^{n-2}$
                        is a convex bounded domain, such that
                        \begin{align*}
                            \int_{(B^2_{a}\backslash B^2_b)\times \Omega} f = 0.
                        \end{align*}
                        Then the following Poincaré inequality holds:
                        \begin{align*}
                            \int_{(B^2_{a}\backslash B^2_c)\times \Omega} |f|^2 \leq C(n,a,b,c)\operatorname{diam}(\Omega)^2\int_{(B^2_{a}\backslash B^2_c)\times \Omega} |df|^2.
                        \end{align*}
                        \begin{proof}
                            First we apply the standard Poincaré inequality on each two dimensional slice $(B^2_a\backslash B^2_c)\times\{z\}$ for any $z\in \Omega$:
                            \begin{align*}
                                \int_{\Omega}\left[\int_{(B^2_a\backslash B^2_c)\times \{z\}}|f|^2\right]\,dz
                                &\leq C(a,b,c)\int_{(B^2_a\backslash B^2_c)\times\Omega}|df|^2 + \int_{\Omega}\left|\int_{(B^2_c\backslash B^2_b)\times\{z\}}f\right|^2\,dz.
                            \end{align*}
                            Notice that the function $g(z):=\int_{(B^2_a\backslash B^2_b)\times\{z\}}f$ has zero average on $\Omega$. Hence we can apply the Poincaré inequality on $\Omega$ to see that
                            $$\int_{\Omega}|g|^2\le C(n)\operatorname{diam}(\Omega)^2\int_{\Omega}|dg|^2.$$
                            Indeed, it is well-known that the Poincaré inequality on a convex domain holds with a constant depending only on its diameter and $n$.
                            This yields the desired conclusion.
                        \end{proof}
                    \end{lemma}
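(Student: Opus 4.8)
The plan is to reduce this product-domain inequality to two ``one-variable'' Poincar\'e inequalities: a two-dimensional Poincar\'e--Wirtinger inequality on each horizontal slice $(B^2_a\setminus B^2_c)\times\{z\}$, and an $(n-2)$-dimensional Poincar\'e inequality on the convex set $\Omega$, applied to the function $z\mapsto m(z)$ obtained by averaging $f(\cdot,z)$ over the sub-annulus $B^2_a\setminus B^2_b$. The hypothesis $\int_{(B^2_a\setminus B^2_b)\times\Omega}f=0$ is precisely what forces $m$ to have zero mean on $\Omega$, which is what will let us apply the $\Omega$-Poincar\'e inequality to it.

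First I would fix $z\in\Omega$, set $m(z):=\fint_{B^2_a\setminus B^2_b}f(\cdot,z)$, and invoke the standard Poincar\'e--Wirtinger inequality with average taken over a fixed subdomain: since $B^2_a\setminus B^2_c$ is a bounded, connected, Lipschitz planar domain (a disk when $c=0$, an annulus when $c>0$) and $B^2_a\setminus B^2_b$ is an open subset of positive measure, one has
\[
\int_{B^2_a\setminus B^2_c}|f(\cdot,z)-m(z)|^2\le C(a,b,c)\int_{B^2_a\setminus B^2_c}|d_{(x_1,x_2)}f(\cdot,z)|^2 .
\]
Integrating over $z$, and using $|f|^2\le 2|f-m(z)|^2+2|m(z)|^2$ together with $|B^2_a\setminus B^2_c|<\infty$, this gives
\[
\int_{(B^2_a\setminus B^2_c)\times\Omega}|f|^2\le 2C(a,b,c)\int_{(B^2_a\setminus B^2_c)\times\Omega}|df|^2+2|B^2_a\setminus B^2_c|\int_\Omega|m(z)|^2\,dz .
\]

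It then remains to estimate $\int_\Omega|m|^2$. By Fubini the hypothesis says $\int_\Omega m=0$, so, $\Omega$ being convex and bounded, the Poincar\'e inequality on $\Omega$ — whose constant depends only on $n$ and $\diam(\Omega)$, by the classical Payne--Weinberger argument — yields $\int_\Omega|m|^2\le C(n)\diam(\Omega)^2\int_\Omega|d_z m|^2$. Differentiating $m$ under the integral sign and using Cauchy--Schwarz over the sub-annulus gives $|d_z m(z)|^2\le\fint_{B^2_a\setminus B^2_b}|d_z f(\cdot,z)|^2$, hence $\int_\Omega|d_z m|^2\le|B^2_a\setminus B^2_b|^{-1}\int_{(B^2_a\setminus B^2_c)\times\Omega}|df|^2$. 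Plugging this into the previous display produces the asserted bound, with $C(n,a,b,c)$ absorbing the geometric factors $|B^2_a\setminus B^2_c|$, $|B^2_a\setminus B^2_b|^{-1}$ and $C(a,b,c)$.

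There is no genuinely hard step here; the points to be slightly careful about are: (i) justifying the slicewise restriction $f(\cdot,z)\in W^{1,2}$ for a.e.\ $z$ and the differentiation of $m$ under the integral sign for a merely $W^{1,2}$ function $f$, both handled by proving the estimate first for smooth $f$ and then approximating in $W^{1,2}((B^2_a\setminus B^2_c)\times\Omega)$, since both sides of the claimed inequality are continuous under $W^{1,2}$ convergence; (ii) quoting that the Poincar\'e constant of a bounded convex domain is controlled purely by its diameter and the dimension, which is where the factor $\diam(\Omega)^2$ originates; and (iii) the first term $2C(a,b,c)\int|df|^2$ coming from the slice step carries no $\diam(\Omega)^2$, so strictly speaking one obtains $\int_{(B^2_a\setminus B^2_c)\times\Omega}|f|^2\le C(n,a,b,c)\bigl(1+\diam(\Omega)^2\bigr)\int_{(B^2_a\setminus B^2_c)\times\Omega}|df|^2$, which coincides with the stated form whenever $\diam(\Omega)$ is bounded below in terms of $a,b,c$ — the situation in every application of this lemma, after rescaling the annulus to fixed radii.
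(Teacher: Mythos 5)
Your argument is correct and is essentially the paper's own proof: a two-dimensional Poincaré inequality on each slice with the mean taken over the sub-annulus $B^2_a\setminus B^2_b$, followed by the diameter-controlled Poincaré inequality on the convex domain $\Omega$ applied to the slice averages, which have zero mean by the hypothesis. Your caveat (iii) is a fair observation that applies verbatim to the paper's proof as well — the slicewise term carries no $\operatorname{diam}(\Omega)^2$ factor, so the literal statement requires $\operatorname{diam}(\Omega)$ to be bounded below in terms of $a,b,c$ (equivalently, the bound should read $C(1+\operatorname{diam}(\Omega)^2)$), a point the paper leaves implicit.
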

\begin{rmk}\label{rmk:finale}
 The same conclusion holds if we assume that 
   \begin{equation*}
                            \int_{(B^2_{a}\backslash B^2_b)\times \Omega'} f = 0
                        \end{equation*}
                        for some \(\Omega'\) with \(|\Omega'|\ge \alpha |\Omega|\) (the constant depending also on \(\alpha\)).
                        \end{rmk}
            \printbibliography
        \end{document}